\setlist{leftmargin=9mm}
\numberwithin{equation}{section}
\newcommand{\N}{\mathbb{N}}
\newcommand{\R}{\mathbb{R}}
\newcommand{\pnorm}[2]{\lVert #1\rVert_{#2}}
\newcommand{\biggpnorm}[2]{\bigg\lVert#1\bigg\rVert_{#2}}
\newcommand{\abs}[1]{\lvert#1\rvert}
\newcommand{\bigabs}[1]{\big\lvert#1\big\rvert}
\newcommand{\biggabs}[1]{\bigg\lvert#1\bigg\rvert}
\newcommand{\iprod}[2]{\langle#1,#2\rangle}
\renewcommand{\epsilon}{\varepsilon}
\renewcommand{\d}[1]{\mathrm{d}#1}
\newcommand{\floor}[1]{\lfloor #1 \rfloor}
\newcommand{\bigop}{\mathcal{O}_{\mathbf{P}}}
\newcommand{\smallo}{\mathfrak{o}}
\newcommand{\bigo}{\mathcal{O}}
\renewcommand{\tilde}{\widetilde}
\DeclareMathOperator{\E}{\mathbb{E}}
\DeclareMathOperator{\Prob}{\mathbb{P}}
\DeclareMathOperator{\var}{Var}
\let\liminf\relax
\DeclareMathOperator*\liminf{\underline{lim}}
\let\limsup\relax
\DeclareMathOperator*\limsup{\overline{lim}}
\DeclareMathOperator*{\argmax}{arg\,max\,}
\newcommand{\beq}{\begin{equation}}
\newcommand{\eeq}{\end{equation}}
\newcommand{\beqa}{\begin{equation} \begin{aligned}}
\newcommand{\eeqa}{\end{aligned} \end{equation}}
\newcommand{\beqas}{\begin{equation*} \begin{aligned}}
\newcommand{\eeqas}{\end{aligned} \end{equation*}}
\newcommand{\bit}{\begin{itemize}}
	\newcommand{\eit}{\end{itemize}}
\newcommand{\bmat}{\begin{bmatrix}}
	\newcommand{\emat}{\end{bmatrix}}
\theoremstyle{definition}\newtheorem{problem}{Problem}[section]
\theoremstyle{definition}\newtheorem{definition}[problem]{Definition}
\theoremstyle{definition}
\theoremstyle{remark}\newtheorem{assumption}{Assumption}
\theoremstyle{remark}\newtheorem{remark}{Remark}
\theoremstyle{definition}
\theoremstyle{plain}\newtheorem{theorem}[problem]{Theorem}
\theoremstyle{plain}\newtheorem{question}{Question}
\theoremstyle{plain}\newtheorem{lemma}[problem]{Lemma}
\theoremstyle{plain}\newtheorem{proposition}[problem]{Proposition}
\theoremstyle{plain}\newtheorem{corollary}[problem]{Corollary}
\theoremstyle{plain}
	\def\MR#1{}
\begin{document}

\title[Thompson sampling: precise arm-pull dynamics and adaptive inference]{Thompson sampling: precise arm-pull dynamics and adaptive inference}

\author[Q. Han]{Qiyang Han}

\address[Q. Han]{
Department of Statistics, Rutgers University, Piscataway, NJ 08854, USA.
}
\email{qh85@stat.rutgers.edu}

\date{\today}

\keywords{adaptive inference, invariant distribution, multi-armed bandits, parabolic H\"ormander condition, semigroup, sequential analysis, Stratonovich integral, stochastic differential equation, Stroock-Varadhan support theorem, Thompson sampling}
\subjclass[2000]{60E15, 60G15}

\begin{abstract}
Adaptive sampling schemes are well known to create complex dependence that may invalidate conventional inference methods based on i.i.d. data. A recent line of work, however, shows that this need not be the case for UCB-type algorithms in multi-armed bandits. A central emerging theme is a `stability' property with asymptotically deterministic arm-pull counts in these algorithms, making inference essentially as easy as in the i.i.d. setting.

In this paper, we study the precise arm-pull dynamics in another canonical class of Thompson-sampling type algorithms. We show that the phenomenology is qualitatively different: the arm-pull count is asymptotically deterministic if and only if the arm is suboptimal or is the unique optimal arm; otherwise it converges in distribution to a non-degenerate limit given by the unique invariant distribution of a stochastic differential equation (SDE). This dichotomy uncovers a unifying principle behind many existing (in)stability results: an arm is stable if and only if its interaction with statistical noise is asymptotically negligible.

As an application, we show that normalized arm means obey the same dichotomy, with Gaussian limits for stable arms and a semi-universal, non-Gaussian limit for unstable arms. This not only enables the construction of confidence intervals for the unknown mean rewards despite non-normality, but also reveals the potential of developing tractable inference procedures by understanding the precise stochastic dynamics of arm pulls beyond the stable regime.

The proofs rely on two new approaches tailored to suboptimal and optimal arms. For suboptimal arms, we develop an `inverse process' approach that characterizes the inverse of the arm-pull count process as an approximate integrator in a deterministic Stieltjes integral. For optimal arms, we adopt an `unnatural reparametrization' of the arm-pull and noise processes that reduces the difficulty of formalizing a natural SDE with a singular initial condition to proving the uniqueness of the invariant distribution of another SDE. We prove the latter by establishing the strong Feller property and irreducibility of the associated Markov semigroup, via a set of analytic tools including the parabolic H\"ormander condition and the Stroock-Varadhan support theorem.
\end{abstract}

\maketitle
\setcounter{tocdepth}{1}
\tableofcontents

\sloppy

\section{Introduction}

\subsection{Motivating questions}
The multi-armed bandit problem \cite{thompson1933likelihood,robbins1952some} is a fundamental theoretical paradigm that captures the inherent exploration-exploitation trade-off in modern sequential decision-making. To fix notation, we consider a standard \(K\)-armed bandit: at each round \(t\in [T]\), the player selects an arm \(A_t\in [K]\) based on the past rewards \(R_1,\ldots,R_{t-1}\), and then observes a reward generated according to
\begin{align}\label{eqn:bandit_seq_model}
R_t \equiv \mu_{A_t}+\sigma\cdot \xi_t .
\end{align}
Here \(\mu_a\) denotes the mean reward of arm \(a\in [K]\), \(\{\xi_t\}_{t\in [T]}\) are i.i.d. standardized noise variables, and $\sigma$ is the noise level. We write \(\mu_\ast\equiv\max_{b\in [K]}\mu_b\) for the optimal mean reward, and \(\Delta_a\equiv\mu_\ast-\mu_a\ge 0\) for the suboptimality gap of arm \(a\in [K]\). We also write \(\mathcal{A}_0\equiv\{a\in [K]:\Delta_a=0\}\) and \(\mathcal{A}_+\equiv\{a\in [K]:\Delta_a>0\}\) as the sets of optimal and suboptimal arms, respectively.

In this paper, we focus on a canonical Bayesian approach to the $K$-armed bandit problem \eqref{eqn:bandit_seq_model}, namely Thompson sampling \cite{thompson1933likelihood}. Roughly speaking, Thompson sampling places a prior on the unknown mean rewards in \eqref{eqn:bandit_seq_model} and maintains a posterior distribution for each arm to balance exploration and exploitation. Owing to its flexibility for more complex models and strong empirical performance, Thompson sampling has become one of the most popular bandit algorithms; see, e.g., \cite{chapelle2011empirical,agrawal2012analysis,kaufmann2012thompson,korda2013thompson,russo2014learning,russo2016information,russo2018tutorial,bubeck2023first}.

A large body of theoretical work \cite{agrawal2012analysis,agrawal2017near,kaufmann2012thompson,lattimore2020bandit} on Thompson sampling has been devoted to understanding its regret behavior. This line of research, together with the powerful technical toolkits developed therein, establishes the (near) regret optimality of various Thompson-sampling algorithms, both in the classical asymptotic sense of Lai-Robbins \cite{lai1985asymptotically} and in the worst-case minimax sense of \cite{auer2002nonstochastic}.

From a statistical perspective, while the (near) regret optimality of Thompson sampling is desirable for reward maximization, this optimality criterion does not directly pinpoint its utility in the equally important problem of statistical inference. Indeed, it is classical knowledge that adaptive sampling schemes can induce complicated dependence structures that invalidate conventional inference methods developed for i.i.d. data; see, e.g., \cite{white1958limiting,dickey1979distribution,lai1982least}. Such complications have also been observed for bandit algorithms; see, e.g., \cite{deshpande2018accurate,zhang2020inference,deshpande2023online,lin2024statistical,ying2024adaptive,khamaru2025near,lin2025semi,halder2025stable}. This gap naturally leads to the following question:

\begin{question}\label{question:inference}
	Can standard Thompson sampling be used for statistical inference of the unknown mean rewards in \eqref{eqn:bandit_seq_model}?
\end{question}

A recent line of work \cite{khamaru2024inference,han2024ucb,fan2024precise,fan2025statistical} suggests a new approach to statistical inference in bandits for a closely related, general class of Upper Confidence Bound (UCB) algorithms by studying the precise behavior of arm pulls. A central theme in these works is that many UCB algorithms enjoy a \emph{stability} property, in that the arm-pull counts \(\{n_{a;T}\}\) are asymptotically deterministic. Consequently, inference for the unknown reward means \(\{\mu_a\}\) can be carried out using the empirical means \(\hat{\mu}_{a;T}\) via conventional asymptotic normality, essentially as if the data were collected in an i.i.d. manner.  Here, for each arm \(a\in [K]\), we write
\begin{align}\label{def:num_arm_pull}
n_{a;T}\equiv \sum_{s \in [T]} \bm{1}_{A_s=a},\quad \hat{\mu}_{a;T}\equiv \frac{1}{n_{a;T}}\sum_{s \in [T]} \bm{1}_{A_s=a} R_s,\quad \forall a \in [K],
\end{align}
where it is understood that \(0/0=0\).

Unfortunately, the stability of arm pulls appears substantially more subtle for Thompson sampling. For instance, in the two-armed case \(K=2\), if both arms are optimal, then neither arm-pull count \(\{n_{a;T}\}\) concentrates under the model \eqref{eqn:bandit_seq_model} and beyond; see, e.g., \cite{kalvit2021closer,halder2025stable}. In contrast, if one arm is suboptimal, then both arm-pull counts are provably asymptotically deterministic \cite{fan2022typical}, and hence satisfy the stability notion described above.

This mixed and fragmented picture of arm-pull dynamics under Thompson sampling motivates the following question:

\begin{question}\label{question:arm_pull}
	Can we provide a complete characterization of the arm pull dynamics for Thompson sampling?
\end{question}

\subsection{Precise arm-pull dynamics}
\begin{algorithm}[!t]
	\caption{Generalized Gaussian Thompson sampling}\label{alg:ts}
	\raggedright
	\textbf{Input:} (i) Number of epoch $T$, (ii) noise level $\sigma$, (iii) sampling distribution $\mathsf{Z}$.\\
	\textbf{Initialization:} Set $\bar{n}_{a;0}\gets 1$ and $\bar{\mu}_{a;0}\gets 0$ for all $a\in[K]$.\\
	\textbf{For} $t=1,\ldots,T$ \textbf{do}:
	\begin{enumerate}
		\item Pull arm $A_{t} \in \argmax_{a \in [K]}\big\{ \bar{\mu}_{a;t-1}+ \frac{\sigma }{  \sqrt{\bar{n}_{a;t-1}}} Z_{a;t} \big\}$, where $Z_{\cdot;\cdot} \stackrel{\mathrm{i.i.d.}}{\sim}\mathsf{Z}$, and observe the reward $R_t$ according to (\ref{eqn:bandit_seq_model}).
		\item For all $a \in [K]$, update 
		\begin{align*}
		\bar{n}_{a;t}\gets \bar{n}_{a;t-1}+\bm{1}_{A_t=a},\quad \bar{\mu}_{a;t}\gets \frac{1}{ \bar{n}_{a;t}}\big(\bar{n}_{a;t-1}\bar{\mu}_{a;t-1}+\bm{1}_{A_t=a} R_t\big).
		\end{align*}
	\end{enumerate}
	\textbf{End for}\\[0.2em]
\end{algorithm}

The first goal of this paper is to give an affirmative answer to Question \ref{question:arm_pull} by analyzing a generalized Gaussian Thompson sampling algorithm for the model \eqref{eqn:bandit_seq_model}, detailed in Algorithm \ref{alg:ts}. When the sampling distribution $\mathsf{Z}\sim \mathcal{N}(0,1)$, Algorithm \ref{alg:ts} reduces to the usual Gaussian Thompson sampling corresponding to a $\mathcal{N}(0,1)$ prior on the unknown mean rewards\footnote{Our analysis and results in fact hold for any normal prior $\mathcal{N}(\mu_{\mathrm{p}},\sigma_{\mathrm{p}}^2)$. To keep notation simple, we focus on the canonical prior $\mathcal{N}(0,1)$.}.

Our first set of main results provide a complete characterization of the arm-pull dynamics $\{n_{a;T}\}$ for Algorithm \ref{alg:ts}:
\begin{itemize}
	\item(\emph{Suboptimal arms}). For each suboptimal arm $a \in \mathcal{A}_+$ with $\Delta_a>0$, Theorem \ref{thm:suboptimal_arm_pull} shows that $n_{a;T}$ is asymptotically deterministic in the sense that
	\begin{align}\label{eqn:intro_suboptimal_arm}
	\frac{n_{a;T}/\sigma^2}{[\bar{\Phi}^-(1/T)/\Delta_a]^2} \stackrel{\Prob}{\to} 1.
	\end{align}
	Here $\bar{\Phi}(\cdot)\equiv\Prob(\mathsf{Z}>\cdot)$ and $\bar{\Phi}^-$ denotes its generalized inverse.
	\item (\emph{Optimal arms}). For optimal arms $a \in \mathcal{A}_0$, Theorem \ref{thm:optimal_arm_pull} shows a qualitatively different behavior: the vector of arm-pull proportions satisfies the joint weak convergence
	\begin{align}\label{eqn:intro_optimal_arm}
	\bigg(\bigg\{\frac{n_{a;T}}{T}\bigg\}_{a \in \mathcal{A}_0}\bigg)\rightsquigarrow \mathbb{L}_{\abs{\mathcal{A}_0}}^{[r],\ast}.
	\end{align}
	Here the limiting random variable $\mathbb{L}_{\abs{\mathcal{A}_0}}^{[r],\ast}$ is specified as (part of) the unique invariant distribution of a stochastic differential equation (SDE) (Eqn. \eqref{def:sde_time_change_renor}). Moreover, the law of $\mathbb{L}_{\abs{\mathcal{A}_0}}^{[r],\ast}$ depends only on the number of optimal arms $\abs{\mathcal{A}_0}$, and is degenerate with a point mass at $1$ when $\abs{\mathcal{A}_0}=1$.
\end{itemize}

At a deeper level, our proofs of the dichotomy in \eqref{eqn:intro_suboptimal_arm}-\eqref{eqn:intro_optimal_arm} reveal a fundamental principle underlying the notion of \emph{stability} mentioned above: an arm has an asymptotically deterministic pull count (and is thus stable) if and only if its interaction with the statistical noise $\{\xi_t\}$ is asymptotically negligible. In particular, we show that (i) the effect of statistical noise on all suboptimal arms and on the unique optimal arm vanishes as $T\to\infty$, whereas (ii) when there are multiple optimal arms, noise interacts with the dynamics in a nontrivial way even as $T\to\infty$, and this interaction is captured by the invariant distribution of an SDE. This principle also unifies several recent stability and instability results for related bandit algorithms. For example, the stability of UCB-type algorithms in \cite{khamaru2024inference,han2024ucb,fan2024precise,fan2025statistical} can be viewed as a consequence of the dominance of exploration rates over statistical noise, while the instability results for other UCB-type algorithms in \cite{praharaj2025instability} correspond to the opposite regime where exploration is insufficient relative to noise. We refer the reader to Section \ref{subsection:stability} for further discussion.

It is also worth noting that \eqref{eqn:intro_suboptimal_arm} sheds further light on potential limitations of the Lai-Robbins lower bound \cite{lai1985asymptotically} (see, e.g., \eqref{ineq:lai_robbins}) as a descriptor of typical bandit behavior. In particular:
\begin{itemize}
	\item In the special case of Gaussian Thompson sampling, \eqref{eqn:intro_suboptimal_arm} shows that the typical growth of $n_{a;T}$ is indeed well captured by the logarithmic rate suggested by the Lai-Robbins lower bound for $\E n_{a;T}$.
	\item For general sampling schemes $\mathsf{Z}$, the pull count $n_{a;T}$ may grow \emph{arbitrarily slowly}, so its typical behavior can differ substantially from that of $\E n_{a;T}$ predicted by the Lai-Robbins lower bound.
\end{itemize}
This observation complements the heavy-tail phenomenon for the pseudo-regret of a broad class of bandit algorithms \cite{fan2024fragility} to an extreme extent, that the pseudo-regret may have such heavy tails that its first moment can already be arbitrarily larger in order than its typical magnitude.

\subsection{Adaptive inference}

The second goal of this paper is to leverage the precise characterizations of the arm-pull dynamics in \eqref{eqn:intro_suboptimal_arm} and \eqref{eqn:intro_optimal_arm} to give an affirmative answer to Question \ref{question:inference}.

As discussed above, when an arm’s pull count is provably asymptotically deterministic, the arm is stable in the sense of \cite{lai1982least}, and conventional inference based on asymptotic normality becomes available. For the generalized Gaussian Thompson sampling scheme in Algorithm \ref{alg:ts}, our theory \eqref{eqn:intro_suboptimal_arm}-\eqref{eqn:intro_optimal_arm} implies that stability holds if and only if the arm is suboptimal or is the unique optimal arm. Consequently, the main inferential challenge arises precisely when there are multiple optimal arms.

Our inference proposal hinges on the fact that \eqref{eqn:intro_suboptimal_arm}-\eqref{eqn:intro_optimal_arm} is strong enough to yield a distributional limit for the normalized empirical arm mean (formalized in Theorem \ref{thm:arm_mean_dist}):
\begin{align}\label{eqn:intro_arm_mean_dist}
\big(n_{a;T}/\sigma^2\big)^{1/2}\big(\hat{\mu}_{a;T}-\mu_a\big) \rightsquigarrow
\begin{cases}
\mathcal{N}(0,1), & \text{if } a \in \mathcal{A}_+ \text{ is suboptimal},\\
\mathscr{N}_{\abs{\mathcal{A}_0}}, & \text{if } a \in \mathcal{A}_0 \text{ is optimal}.
\end{cases}
\end{align}
Here the law of \(\mathscr{N}_{\abs{\mathcal{A}_0}}\) depends only on the number of optimal arms \(\abs{\mathcal{A}_0}\), and it coincides with \(\mathcal{N}(0,1)\) when \(\abs{\mathcal{A}_0}=1\). Although the analytic properties of \(\mathscr{N}_{\abs{\mathcal{A}_0}}\) remain largely unknown, its distribution is straightforward to simulate, which enables the construction of valid confidence intervals for the unknown means \(\{\mu_a\}\) based on the empirical means \(\hat{\mu}_{a;T}\) and the arm-pull counts $\{n_{a;T}\}$.

From a broader perspective, the literature on inference with adaptively collected data has developed two main methodological approaches. The first exploits the martingale structure of the data and debiasing techniques to construct confidence intervals via martingale central limit theorems; see, e.g., \cite{deshpande2018accurate,zhang2020inference,bibaut2021post,hadad2021confidence,zhan2021off,deshpande2023online,syrgkanis2023post,lin2024statistical,ying2024adaptive,khamaru2025near,lin2025semi}. The second builds on concentration inequalities for self-normalized martingales (cf.\ \cite{de2004self,de2009self}), typically yielding considerably wider confidence intervals; see, e.g., \cite{abbasi2011improved,shin2019bias,waudby2024anytime}.

Our approach is qualitatively different from both above lines, and, as mentioned above, is actually more closely aligned with recent proposals \cite{khamaru2024inference,han2024ucb,fan2024precise,fan2025statistical} that aim to recover conventional, asymptotic-normality-based inference through arm stability. Our inference proposal based on the theory \eqref{eqn:intro_arm_mean_dist} highlights a further potential of this paradigm: inference can be enabled by understanding the precise stochastic dynamics of arm pulls beyond the stable regime.

\subsection{Proof techniques}

The proof for our theory in \eqref{eqn:intro_suboptimal_arm}-\eqref{eqn:intro_optimal_arm} relies on two new technical approaches, developed separately for suboptimal and optimal arms.

To prove \eqref{eqn:intro_suboptimal_arm} for suboptimal arms $a\in\mathcal{A}_+$, we develop an `inverse-process' approach to characterize the asymptotically deterministic behavior of $\{n_{a;T}\}$. At a high level, instead of working directly with the process $t\mapsto n_{a;t}$, we study its inverse process, which we show can be approximately characterized as the integrator in a deterministic Stieltjes integral. A subsequent inversion of this Stieltjes integral then yields the asymptotic behavior of $n_{a;T}$ as $T\to\infty$. Technically, our approach moves beyond the limitations of the existing method in \cite{fan2022typical}, which relies crucially on the unique optimal arm setting and thereby avoids the essential difficulty arising from noise interactions among multiple optimal arms.

To prove \eqref{eqn:intro_optimal_arm} for optimal arms $a\in\mathcal{A}_0$, our method relies on an `unnatural reparametrization' of the arm-pull count and noise processes with a time change and renormalization. This reduces the problem of rigorously formalizing a `natural' SDE with a singular initial condition to establishing uniqueness of the invariant distribution of a time-changed and renormalized SDE. We prove the latter by establishing the strong Feller property and (topological) irreducibility of the associated Markov semigroup, using analytic tools including a suitable localized form of the parabolic H\"ormander condition \cite{hormander1967hypoelliptic,hairer2011malliavin} and the Stroock-Varadhan support theorem \cite{stroock1972support,benarous1994holder,millet1994simple}.

\subsection{Organization}

The rest of the paper is organized as follows. Section \ref{section:main_results} formally presents our theory \eqref{eqn:intro_suboptimal_arm}-\eqref{eqn:intro_optimal_arm}, along with the minimal background needed on the associated stochastic differential equation. Section \ref{section:adaptive_inf} details the distributional theory \eqref{eqn:intro_arm_mean_dist} and the resulting inference method, together with some illustrative numerical demonstrations. Proof outlines for \eqref{eqn:intro_suboptimal_arm} and \eqref{eqn:intro_optimal_arm} are given in Sections \ref{section:proof_outline_suboptimal} and \ref{section:proof_outline_optimal}, respectively, while complete proofs are deferred to Sections \ref{section:proof_suboptimal}-\ref{section:proof_adaptive_inf} and the appendices.

\subsection{Notation}\label{section:notation}

For any two integers $m,n \in \mathbb{Z}$, let $[m:n]\equiv \{m,m+1,\ldots,n\}$ when $m\leq n$ and $\emptyset$ otherwise. Let $[m:n)\equiv [m:n-1]$, $(m:n]\equiv [m+1:n]$, and we write $[n]\equiv [1:n]$. For $a,b \in \R$, $a\vee b\equiv \max\{a,b\}$ and $a\wedge b\equiv\min\{a,b\}$. For $a \in \R$, let $a_\pm \equiv (\pm a)\vee 0$. For $a>0$, let $\log_+(a)\equiv 1\vee \log(a)$. For $x \in \R^n$, let $\pnorm{x}{p}$ denote its $p$-norm $(0\leq p\leq \infty)$, and $B_{n;p}(R)\equiv \{x \in \R^n: \pnorm{x}{p}< R\}$. We simply write $\pnorm{x}{}\equiv\pnorm{x}{2}$ and $B_n(R)\equiv B_{n;2}(R)$. For two integers $k_1>k_2$, we interpret $\sum_{k=k_1}^{k_2}\equiv 0, \prod_{k=k_1}^{k_2}\equiv 1$.

We use $C_{x}$ to denote a generic constant that depends only on $x$, whose numeric value may change from line to line unless otherwise specified. $a\lesssim_{x} b$ and $a\gtrsim_x b$ mean $a\leq C_x b$ and $a\geq C_x b$, abbreviated as $a=\bigo_x(b), a=\Omega_x(b)$ respectively;  $a\asymp_x b$ means $a\lesssim_{x} b$ and $a\gtrsim_x b$, abbreviated as $a=\Theta_x(b)$. For two nonnegative sequences $\{a_n\}$ and $\{b_n\}$, we write $a_n\ll b_n$ (respectively~$a_n\gg b_n$) if $\lim_{n\rightarrow\infty} (a_n/b_n) = 0$ (respectively~$\lim_{n\rightarrow\infty} (a_n/b_n) = \infty$).  $\bigo$ and $\smallo$ (resp. $\mathcal{O}_{\mathbf{P}}$ and $\mathfrak{o}_{\mathbf{P}}$) denote the usual big and small O notation (resp. in probability). 

For a random variable $X$, we denote $\mathscr{L}(X)$ as its law, and we use $\Prob_X,\E_X$ (resp. $\Prob^X,\E^X$) to indicate that the probability and expectation are taken with respect to $X$ (resp. conditional on $X$).  The notation $\rightsquigarrow$ is reserved for weak convergence. 

For Euclidean space $E \subset \R^n$, we write $\mathcal{B}(E)$ as the Borel $\sigma$-algebra on $E$, $B(E)$ as the space of all bounded, measurable functions on $E$, and $C_b(E)$ as the space of all bounded, continuous functions on $E$. For a monotonic non-increasing function $F:\R \to \R$, let $F^-(u)\equiv \inf\{x\in \R: F(x)\leq u\}$ denote its generalized inverse.

\section{Precise arm-pull dynamics}\label{section:main_results}

\subsection{Assumptions}

First, we state our minimal assumptions on the noise variables $\{\xi_t\}$ in the statistical model \eqref{eqn:bandit_seq_model}.

\begin{assumption}\label{assump:noise}
	The noises $\{\xi_t\}$ are i.i.d. variables with mean 0 and variance 1.
\end{assumption}

Next, we state our assumptions on the sampling scheme $\mathsf{Z}$ used in Algorithm \ref{alg:ts}. We introduce the notation
\begin{align*}
\Phi(z) &\equiv \mathbb{P}(\mathsf{Z}\le z), 
\qquad \bar{\Phi}(z) \equiv 1-\Phi(z).
\end{align*}
\begin{assumption}\label{assump:sampling_dist}
	Suppose the following conditions hold:
	\begin{enumerate}
		\item[(B1)] For any $z \in \R$, $\Phi(z)\in(0,1)$. Moreover, $\Phi\in C^1(\R)$ and $\pnorm{\Phi'}{\infty}<\infty$.
		\item[(B2)] For any $c>1$, it holds that
		\begin{align*}
		\lim_{z\uparrow \infty} \frac{\log z}{-\log \Phi(-z)}= 0,\quad \lim_{z\uparrow \infty} \max\bigg\{\frac{\log z}{-\log \bar{\Phi}(z)},\frac{z^2\bar{\Phi}(cz)}{\bar{\Phi}(z)}\bigg\}  = 0.
		\end{align*}	
	\end{enumerate}
\end{assumption}

Condition (B1) is mild: it requires $\mathsf{Z}$ to have unbounded support and a bounded Lebesgue density. Condition (B2) imposes a left tail-decay requirement on $\Phi$, and a right tail-decay requirement and a regularly varying property on $\bar{\Phi}$. A simple calculation shows that (B2) holds if there exist $\alpha_\pm,c_\pm>0$ such that
\begin{align}\label{cond:assump_B_tail}
\lim_{z \uparrow \infty} \frac{-\log \Phi(-z)}{z^{\alpha_-}} = c_-,\quad \lim_{z \uparrow \infty} \frac{-\log \bar{\Phi}(z)}{z^{\alpha_+}} = c_+.
\end{align}
Condition \eqref{cond:assump_B_tail} covers many light-tailed distribution classes, including Gaussian, exponential, Gamma, and Weibull. Other distribution classes with even lighter or heavier tails can be accommodated by replacing the polynomial function $z^{\alpha_\pm}$ with an appropriate alternative.

\subsection{Sub-optimal arms}

The following theorem describes the asymptotically deterministic behavior of the arm-pull count $n_{a;T}$ for suboptimal arms $a \in \mathcal{A}_+$.

\begin{theorem}\label{thm:suboptimal_arm_pull}
	Suppose Assumptions \ref{assump:noise} and \ref{assump:sampling_dist} hold. Fix $\epsilon \in (0,1/2)$. Then there exists some $c_0=c_0(K,\Delta,\epsilon,\sigma,\mathscr{L}(\mathsf{Z}))>1$ such that 
	\begin{align*}
	&\inf_{T\geq c_0}\Prob\bigg( 1-\epsilon \leq \frac{n_{a;T}/\sigma^2}{[\bar{\Phi}^-(1/T)/\Delta_a]^2} \leq 1+\epsilon+ \frac{c_0}{[\bar{\Phi}^-(1/T)]^2},\, \forall a \in \mathcal{A}_+\bigg)\geq 1-\epsilon.
	\end{align*}
	Here recall $\bar{\Phi}^-$ is the generalized inverse of $\bar{\Phi}$.
\end{theorem}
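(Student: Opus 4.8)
The plan follows the ``inverse-process'' route: instead of $t\mapsto n_{a;t}$ we study its inverse $\tau_a(n):=\min\{t\ge 1: n_{a;t}=n\}$, the round of the $n$-th pull of arm $a$. The goal is to show that, on a high-probability event and for a small fixed $\delta>0$, the increments $W_n:=\tau_a(n)-\tau_a(n-1)$ satisfy $W_n\simeq 1/\bar{\Phi}\big(\sqrt{n-1}\,(\Delta_a\pm C\delta)/\sigma\big)$, so that $\tau_a$ behaves like a deterministic Stieltjes integrator, $\mathrm{d}\tau_a(u)\simeq\mathrm{d}u/\bar{\Phi}\big(\sqrt{u}\,(\Delta_a\pm C\delta)/\sigma\big)$, and hence $T\simeq\tau_a(n_{a;T})\simeq\int_0^{n_{a;T}}\mathrm{d}u/\bar{\Phi}\big(\sqrt{u}\,(\Delta_a\pm C\delta)/\sigma\big)$. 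The last integral equals its endpoint value $1/\bar{\Phi}\big(\sqrt{n_{a;T}}\,(\Delta_a\pm C\delta)/\sigma\big)$ up to a $\mathrm{poly}(\sqrt{n_{a;T}})$ factor; and under (B2) one has $\bar{\Phi}^-\big((\bar{\Phi}^-(1/T))^{k}/T\big)=(1+o(1))\bar{\Phi}^-(1/T)$ for every fixed $k$, so neither this factor nor the later additive constants disturb the leading order. Inverting then yields $n_{a;T}=(1+o(1))\,\sigma^2\big(\bar{\Phi}^-(1/T)\big)^2/(\Delta_a\pm C\delta)^2$, and the theorem follows on choosing $\delta$ small relative to $\epsilon\min_{a'\in\mathcal A_+}\Delta_{a'}$ and absorbing a constant number of ``burn-in'' pulls into the $c_0/[\bar{\Phi}^-(1/T)]^2$ term.

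The engine is a two-sided estimate, valid on a regularity event $\mathcal G$, for the conditional pull probability
\[
p^{(s)}:=\Prob(A_s=a\mid\mathcal F_{s-1})=\E\Big[\bar{\Phi}\Big(\tfrac{\sqrt{\bar{n}_{a;s-1}}}{\sigma}\big(M_{-a;s}-\bar{\mu}_{a;s-1}\big)\Big)\,\Big|\,\mathcal F_{s-1}\Big],\qquad M_{-a;s}:=\max_{b\ne a}\Big(\bar{\mu}_{b;s-1}+\tfrac{\sigma}{\sqrt{\bar{n}_{b;s-1}}}Z_{b;s}\Big),
\]
the key being that $M_{-a;s}$ concentrates at $\mu_\ast$. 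On $\mathcal G$ the empirical means lie within $\delta$ of the true means once the corresponding arm has been pulled a constant number $n_0$ of times, the most-pulled optimal arm carries an $\Omega(s/K)$ share of the pulls, and $\max_b|Z_{b;s}|\le c\sqrt s$ for $s$ past a constant; consequently $M_{-a;s}\in[\mu_\ast-2\delta,\mu_\ast+2\delta]$ outside an event of probability $\Phi(-c_2\sqrt s)$ -- super-polynomially small in $s$ by (B2) -- plus a term made small by enlarging $n_0$. This gives, for $s$ past the burn-in,
\[
c_1\,\bar{\Phi}\Big(\tfrac{\sqrt{n_{a;s-1}}}{\sigma}(\Delta_a+C\delta)\Big)\ \le\ p^{(s)}\ \le\ \bar{\Phi}\Big(\tfrac{\sqrt{n_{a;s-1}}}{\sigma}(\Delta_a-C\delta)\Big)+\Phi(-c_2\sqrt s)
\]
for absolute $c_1\in(0,1)$, $C,c_2>0$; the upper bound needs only the lower control of $M_{-a;s}$ (an inflated competitor can only shrink $p^{(s)}$), while the lower bound also needs that no competing index runs far above $\mu_\ast$.

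For the upper bound, put $N_+:=\lceil(1+\epsilon)\sigma^2(\bar{\Phi}^-(1/T)/\Delta_a)^2\rceil+\lceil c_0\sigma^2/\Delta_a^2\rceil$. A stopping-time union bound over the at most $T$ possible lengths of the wait for the $N_+$-th pull of $a$, using that on $\mathcal G$ every round of that wait has $p^{(s)}\le p^*:=\bar{\Phi}(\tfrac{\sqrt{N_+-1}}{\sigma}(\Delta_a-C\delta))+\Phi(-c_2\sqrt{N_+-1})$, yields $\Prob(\{n_{a;T}\ge N_+\}\cap\mathcal G)\le Tp^*+\sum_{m\ge N_+-1}\Phi(-c_2\sqrt m)$. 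Since $N_+-1\ge(1+\epsilon)\sigma^2(\bar{\Phi}^-(1/T)/\Delta_a)^2$, choosing $\delta$ small gives $\tfrac{\sqrt{N_+-1}}{\sigma}(\Delta_a-C\delta)\ge c_\epsilon\bar{\Phi}^-(1/T)$ with $c_\epsilon>1$, so the regular-variation clause of (B2) at $z=\bar{\Phi}^-(1/T)$ forces $Tp^*=o(z^{-2})+o(1)\to0$, while $\sum_{m\ge N_+-1}\Phi(-c_2\sqrt m)\to0$ by the super-polynomial decay of $\Phi(-\cdot)$; with $\Prob(\mathcal G^c)\le\epsilon/2$ this closes the upper bound for $T\ge c_0$. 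For the lower bound, use the Doob decomposition $n_{a;t}=M_t+\sum_{s\le t}p^{(s)}$: were $n_{a;T}<N_-:=\lceil(1-\epsilon)\sigma^2(\bar{\Phi}^-(1/T)/\Delta_a)^2\rceil$, then $n_{a;s-1}\le N_-$ throughout, so on $\mathcal G$ the lower estimate gives $\sum_{s\le T}p^{(s)}\ge c_1(T-O(1))\bar{\Phi}(\tfrac{\sqrt{N_-}}{\sigma}(\Delta_a+C\delta))$; with $\delta$ small the argument is $\le c_\epsilon'\bar{\Phi}^-(1/T)$, $c_\epsilon'<1$, and iterating (B2) gives $\bar{\Phi}(c_\epsilon'z)\gg z^{k}\bar{\Phi}(z)$ for every $k$, whence $T\bar{\Phi}(c_\epsilon'\bar{\Phi}^-(1/T))\gg(\bar{\Phi}^-(1/T))^2\gtrsim N_-$, so $\sum_{s\le T}p^{(s)}\ge 2N_-$ for $T$ large, forcing $M_T<-N_-$ -- which a Freedman inequality (with $\langle M\rangle_T\le\sum p^{(s)}\lesssim N_+$, controlled by the upper tail already shown) rules out up to probability $e^{-cN_-}$. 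Taking a union bound over the $\le K$ suboptimal arms (folded into $c_0$) gives the simultaneous statement, and since every $\xi$-related concentration in $\mathcal G$ is obtained from a distribution-free maximal inequality (H\'ajek--R\'enyi), $c_0$ may be chosen independent of $\mathscr L(\xi)$.

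I expect the main obstacle to be the construction of $\mathcal G$ -- precisely the two structural facts underlying the \emph{lower} control of $M_{-a;s}$: (i) a crude, non-sharp a priori bound $n_{a';t}\lesssim(\bar{\Phi}^-(1/t))^2$ for every suboptimal $a'$, obtained by a loose run of the same argument and used to force the most-pulled optimal arm to carry an $\Omega(t)$-fraction of pulls; and (ii) that past a constant round no arm is starved (pulled fewer than $n_0$ times) and no empirical mean runs far above its true mean. The latter needs a self-improving argument with no analogue in the single-optimal-arm treatment of \cite{fan2022typical}, which sidestepped precisely the noise interactions among several near-optimal arms that we must here confront. The rest is organizational: threading $\delta$, $n_0$, the burn-in parameters, and the two one-sided tail clauses of (B2) so that all additive errors are summable and every multiplicative slack collapses into the single factor $1\pm\epsilon$.
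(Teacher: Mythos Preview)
Your approach shares the paper's core ingredients---the inverse-process viewpoint, the two-sided estimate on the conditional pull probability $p^{(s)}\approx\bar\Phi(\sqrt{n_{a;s-1}}\Delta_a/\sigma)$, and the identification of the a~priori ``no arm starved / optimal arms carry the bulk'' facts as the real work---but organizes the conversion from $p^{(s)}$-bounds to $n_{a;T}$-bounds differently. The paper packages the martingale step as a second-moment bound \emph{parametrized by pull counts}: for stopping times $\tau_\pm$, $\E^\xi\big(n_{a;\tau_+}-n_{a;\tau_-}-\sum_{t\in(\tau_-,\tau_+]}\mathfrak p_{a;t}^\xi\big)^2\le \E^\xi(n_{a;\tau_+}-n_{a;\tau_-})$. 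This yields the approximate Stieltjes relation $n-m\approx\sum_{k\in(m,n]}(\tau_{a;k}-\tau_{a;k-1})\bar\Phi(\sqrt{k}\Delta_{a;\epsilon}^\pm)$, which is then inverted. Your route---$Tp^*$ for the upper bound, Doob/Freedman for the lower---is more elementary and also works, but one step needs repair.

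The gap is in your Freedman application: you claim $\langle M\rangle_T\le\sum_{s\le T}p^{(s)}\lesssim N_+$ ``by the upper tail already shown'', but the upper tail controls $n_{a;T}$, not $\sum_s p^{(s)}$. On the bad event $\{n_{a;T}<N_-\}$ you have just argued that $\sum_s p^{(s)}\ge 2N_-$, and in fact $\sum_s p^{(s)}$ can be of order $T\bar\Phi(c'_\epsilon\bar\Phi^-(1/T))\gg N_-$, which makes the Freedman bound $\exp(-N_-^2/(2\langle M\rangle_T+\cdots))$ useless. The fix is to insert a stopping time: let $\tau=\min\{t:\sum_{s\le t}p^{(s)}\ge 2N_-\}\wedge T$; on $\{n_{a;T}<N_-\}\cap\mathcal G$ you have $\tau<T$, $\langle M\rangle_\tau\le 2N_-+1$, and $M_\tau\le n_{a;\tau}-2N_-<-N_-$, so Freedman now gives the desired $e^{-cN_-}$. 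This is precisely why the paper's variance bound is indexed by pull-count increments (so the right-hand side is $n-m$) rather than by rounds. With this correction your argument goes through.
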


The proof of Theorem \ref{thm:suboptimal_arm_pull} relies on a new `inverse process' approach: instead of working directly with the arm-pull count process $t\mapsto n_{a;t}$, we work with its inverse process $n\mapsto \tau_{a;n}$ (see Definition \ref{def:hitting_time}). We refer the reader to Section \ref{section:proof_outline_suboptimal} for a detailed explanation of the technical merit and a proof outline of this approach.

As a consequence of Theorem \ref{thm:suboptimal_arm_pull}, we have the following corollary (formally proved in Section \ref{subsection:proof_suboptimal_arm_rates}):
\begin{corollary}\label{cor:suboptimal_arm_rates}
Suppose Assumptions \ref{assump:noise} and \ref{assump:sampling_dist} hold. Then (\ref{eqn:intro_suboptimal_arm}) holds for all $a \in \mathcal{A}_+$. In particular, for Gaussian Thompson sampling, 
\begin{align}\label{ineq:suboptimal_arm_pull_GTS}
\max_{a \in \mathcal{A}_+} \biggabs{\frac{n_{a;T}/\sigma^2 }{(2\log T)/\Delta_a^2}-1}\stackrel{\Prob}{\to} 0.
\end{align}
Moreover, for any slowly growing $r_T \uparrow \infty$, there exists a sampling scheme $\mathsf{Z}$ in Algorithm \ref{alg:ts}, such that 
\begin{align}\label{ineq:suboptimal_arm_pull_slow}
\max_{a \in \mathcal{A}_+} n_{a;T}= \bigop(r_T). 
\end{align}
\end{corollary}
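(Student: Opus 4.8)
The plan is to derive all three assertions from Theorem~\ref{thm:suboptimal_arm_pull}; only \eqref{ineq:suboptimal_arm_pull_slow} will require a genuine construction, everything else being a mechanical corollary.

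For \eqref{eqn:intro_suboptimal_arm} I would first observe that, under (B1), the survival function $z \mapsto \bar{\Phi}(z)$ is $(0,1)$-valued on all of $\R$ and decreases to $0$ at $+\infty$, so its generalized inverse obeys $\bar{\Phi}^-(1/T) \uparrow \infty$ as $T \to \infty$; hence the remainder $c_0/[\bar{\Phi}^-(1/T)]^2$ in Theorem~\ref{thm:suboptimal_arm_pull} vanishes. Fixing $\epsilon \in (0,1/2)$, there is then $T_\epsilon \ge c_0$ so that for all $T \ge T_\epsilon$,
\[
\Prob\Big( 1 - \epsilon \le \frac{n_{a;T}/\sigma^2}{[\bar{\Phi}^-(1/T)/\Delta_a]^2} \le 1 + 2\epsilon \ \ \forall a \in \mathcal{A}_+ \Big) \ge 1 - \epsilon ;
\]
since $\epsilon$ is arbitrary and $\mathcal{A}_+$ is finite, this is exactly \eqref{eqn:intro_suboptimal_arm} (uniformly over $a \in \mathcal{A}_+$). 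For \eqref{ineq:suboptimal_arm_pull_GTS} it then remains only to evaluate $\bar{\Phi}^-(1/T)$ for $\mathsf{Z} \sim \mathcal{N}(0,1)$: the Mills-ratio expansion $-\log\bar{\Phi}(z) = z^2/2 + \log z + O(1)$ as $z\to\infty$, together with $\bar{\Phi}^-(1/T) \to \infty$ (so $\log\bar{\Phi}^-(1/T) = o(\log T)$), forces $[\bar{\Phi}^-(1/T)]^2 = 2\log T\,(1 + o(1))$; multiplying the ratio in \eqref{eqn:intro_suboptimal_arm} by $[\bar{\Phi}^-(1/T)]^2/(2\log T) \to 1$ and maximizing over the finite set $\mathcal{A}_+$ gives \eqref{ineq:suboptimal_arm_pull_GTS}.

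For \eqref{ineq:suboptimal_arm_pull_slow} I would use that Theorem~\ref{thm:suboptimal_arm_pull} also gives, with probability at least $1-\epsilon$ for $T \ge c_0$,
\[
\max_{a \in \mathcal{A}_+} n_{a;T} \le \frac{\sigma^2}{\Delta_{\min}^2}\big( (1+\epsilon)\,[\bar{\Phi}^-(1/T)]^2 + c_0 \big), \qquad \Delta_{\min} \equiv \min_{a \in \mathcal{A}_+} \Delta_a ,
\]
so, since $r_T \to \infty$ absorbs the additive constant, it suffices to exhibit a sampling scheme $\mathsf{Z}$ obeying Assumption~\ref{assump:sampling_dist} with $[\bar{\Phi}^-(1/T)]^2 \le r_T$ for all large $T$. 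First I would replace $r_T$ by $\rho_T \equiv \min\{r_T, \log T\}$, which is still non-decreasing, still diverges, and is at most $r_T$, so without loss of generality $r_T$ is non-decreasing with $r_T \le \log T$. Then, setting $r^{\dagger}(v) \equiv \sup\{T \ge e :\, r_T \le v\}$ --- finite since $r_T \to \infty$, non-decreasing, with $r^{\dagger}(v) \ge e^{v}$ (because $r_{e^v} \le \log e^v = v$) and $r^{\dagger}(r_T) \ge T$ --- I would choose a $C^\infty$, strictly increasing, convex function $g : [0,\infty) \to [0,\infty)$ with $g(0) = 0$ and $g(z) \ge \log r^{\dagger}(z^2)$ for all $z$; such a convex majorant of the non-decreasing map $z \mapsto \log r^{\dagger}(z^2)$ exists by a routine construction (dominate its integer samples by a convex sequence, interpolate linearly, mollify). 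Take $\mathsf{Z}$ to be any law with full support, a bounded $C^1$ density, a standard Gaussian left tail, and right tail $\bar{\Phi}(z) = e^{-g(z)}$ for $z \ge z_0$. Then $\bar{\Phi}(\sqrt{r_T}) = e^{-g(\sqrt{r_T})} \le e^{-\log r^{\dagger}(r_T)} \le 1/T$, so $[\bar{\Phi}^-(1/T)]^2 \le r_T$ for large $T$; and (B1) is immediate, while for (B2) the Gaussian left tail gives $-\log\Phi(-z) \asymp z^2$, the bound $r^{\dagger}(z^2) \ge e^{z^2}$ gives $-\log\bar{\Phi}(z) = g(z) \ge z^2$ hence $\log z / g(z) \to 0$, and convexity with $g(0)=0$ gives $g(cz) \ge c\,g(z)$ for $c > 1$, so $z^2\bar{\Phi}(cz)/\bar{\Phi}(z) = z^2 e^{g(z) - g(cz)} \le z^2 e^{-(c-1)z^2} \to 0$. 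This verifies Assumption~\ref{assump:sampling_dist}, and the displayed bound then yields $\max_{a \in \mathcal{A}_+} n_{a;T} = \bigop(r_T)$.

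The hard part is the construction behind \eqref{ineq:suboptimal_arm_pull_slow}: the right tail of $\mathsf{Z}$ must be light enough to force $\bar{\Phi}^-(1/T) \le \sqrt{r_T}$ yet regular enough to satisfy (B2), especially the regularly varying condition $z^2\bar{\Phi}(cz)/\bar{\Phi}(z) \to 0$. Encoding the tail through $\log r^{\dagger}(z^2)$ and first reducing to $r_T \le \log T$ is exactly what reconciles the two demands --- it forces $g(z) \gtrsim z^2$, which neutralizes the $\log z$ and polynomial factors in (B2) --- after which convexity of $g$ handles the regularly varying condition for free. I expect the remaining friction to be purely bookkeeping: checking that the convex majorant $g$ can be chosen so that $e^{-g}$ admits a bounded $C^1$ density, and patching the left tail and the region near $z_0$; none of this is conceptually delicate.
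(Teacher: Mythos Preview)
Your derivation of \eqref{eqn:intro_suboptimal_arm} and \eqref{ineq:suboptimal_arm_pull_GTS} is correct and matches the paper's proof essentially line for line: both reduce to Theorem~\ref{thm:suboptimal_arm_pull} plus the Mills-ratio asymptotic $\log T = x^2/2 + \log x + O(1)$ for $x = \bar{\Phi}^-(1/T)$.

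For \eqref{ineq:suboptimal_arm_pull_slow} your route is genuinely different. The paper simply picks an integer $m$ with $\limsup_T \log_{[m]}^+ T / r_T < \infty$ and takes $\mathsf{Z}$ with an $m$-fold iterated-exponential right tail, $\bar{\Phi}(x) \sim c_m \exp(-\exp_{[m-1]}(x))$, so that $\bar{\Phi}^-(1/T) \sim \log_{[m]}^+ T$; this is a one-line construction once $m$ is fixed. Your approach instead builds a bespoke tail from $r_T$ itself: after reducing to $r_T \le \log T$, you invert to $r^\dagger$, dominate $\log r^\dagger(z^2)$ by a smooth convex $g$ with $g(0)=0$, and set $\bar{\Phi}(z) = e^{-g(z)}$. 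The trade-off is that the paper's argument is much shorter but tacitly assumes some iterated logarithm is eventually $O(r_T)$, which can fail for sufficiently slowly growing $r_T$ (one can diagonalize against the family $\{\log_{[m]}^+\}_m$); your construction covers every $r_T \uparrow \infty$ at the cost of more bookkeeping. The key structural insight in your version --- first forcing $r_T \le \log T$ so that $g(z) \ge z^2$, then using convexity with $g(0)=0$ to get $g(cz) \ge c\,g(z)$ --- is exactly what makes the regularly-varying clause of (B2) automatic, and is a nice device the paper does not need because the iterated-exponential tails satisfy (B2) by inspection.
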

It is interesting to compare Corollary \ref{cor:suboptimal_arm_rates} with the seminal  Lai-Robbins lower bound \cite{lai1985asymptotically}. In our model (\ref{eqn:bandit_seq_model}) with Gaussian errors $\xi_i \stackrel{\mathrm{i.i.d.}}{\sim} \mathcal{N}(0,1)$, this lower bound states that for any fixed $K \in \N$ and suboptimal arm $a \in \mathcal{A}_+$ with suboptimality gap $\Delta_a>0$, the arm-pull count $n_{a;T}(\mathscr{A})$ for any `uniformly good' bandit algorithm $\mathscr{A}$ satisfies 
\begin{align}\label{ineq:lai_robbins}
\liminf_{T\to \infty} \frac{\E n_{a;T}(\mathscr{A})/\sigma^2}{2\log T} \geq  \frac{1}{\Delta_a^2}.
\end{align}
We have two observations:
\begin{itemize}
	\item While Gaussian Thompson sampling is known to attain the Lai-Robbins lower bound (\ref{ineq:lai_robbins}) \cite{agrawal2017near}, here our (\ref{ineq:suboptimal_arm_pull_GTS}) proves a much stronger result in that the \emph{typical behavior} of suboptimal arm pulls is exactly tracked by (\ref{ineq:lai_robbins}). 
	\item On the other hand, the Lai-Robbins lower bound (\ref{ineq:lai_robbins}) does not, in general, indicate the typical behavior of general Thompson sampling schemes. In particular, (\ref{ineq:suboptimal_arm_pull_slow}) shows the arm-pull count $n_{a;T}$ in Thompson sampling can actually grow \emph{arbitrarily} slowly for all suboptimal arms.
\end{itemize}
In a related direction, \cite{fan2024fragility} shows that many regret-optimal bandit algorithms in the sense of (\ref{ineq:lai_robbins}) must exhibit heavy tails, including the Gaussian Thompson sampling scheme. Here the discrepancy between (\ref{ineq:suboptimal_arm_pull_slow}) and (\ref{ineq:lai_robbins}) shows that heavy tails are so extreme that the typical behavior of $n_{a;T}$ and its first moment $\E n_{a;T}$ can be drastically different for generalized Thompson sampling schemes.

\begin{remark}
\cite[Theorem 1]{fan2022typical} proves \eqref{ineq:suboptimal_arm_pull_GTS} for the two-armed setting, and outlines a sketch for the multi-armed case with a unique optimal arm. Here \eqref{ineq:suboptimal_arm_pull_GTS} is proved for the most general multi-armed case with possibly multiple optimal arms, and our more general formula \eqref{eqn:intro_suboptimal_arm} applies well beyond the Gaussian sampling scheme. As will be clear below, the presence of multiple optimal arms brings about essential technical differences compared to the unique-optimal-arm case.
\end{remark}

\subsection{Optimal arms}

For $a \in \mathcal{A}_0$, let $p_{a}: \R_{>0}^{\mathcal{A}_0}\times \R^{\mathcal{A}_0}\to \R_{\geq 0}$ be defined by
\begin{align}\label{def:p_a}
p_{a}\big(r_{\mathcal{A}_0},\xi_{\mathcal{A}_0}\big)= \E_{\mathsf{Z}} \prod_{b \in \mathcal{A}_0 \setminus [a]} \Phi\,\bigg[\sqrt{r_b}\cdot \bigg(\frac{\xi_a}{r_a}-\frac{\xi_b}{r_b}+\frac{\mathsf{Z}}{\sqrt{r_a}}\bigg) \bigg].
\end{align}
When $\abs{\mathcal{A}_0}=1$, we interpreted the above display as  $p_a\equiv 1$.

\begin{definition}\label{def:SDE_ts}
The stochastic differential equation (SDE) for the generalized Gaussian Thompson sampling in Algorithm \ref{alg:ts} is given by
\begin{align}\label{def:sde_time_change_renor}
\begin{cases}
\d{u_a}(t) = \big(p_a(u_\cdot(t),w_\cdot(t))-u_a(t)\big)\,\d{t},\\
\d{w_a}(t) = -\frac{1}{2} w_a(t)\,\d{t}+\sqrt{p_a(u_\cdot(t),w_\cdot(t))}\,\d{B_a(t)},
\end{cases}
\forall a \in \mathcal{A}_0,\, t \in \R.
\end{align}
Here $\{B_a: a \in \mathcal{A}_0\}$ are independent two-sided Brownian motions with $B_a(0)=0$. 
\end{definition}
For notational simplicity, we also write 
\begin{align}\label{def:Delta_circ}
\Delta_{\mathcal{A}_0}^\circ\equiv \bigg\{(u_a)_{a \in \mathcal{A}_0}: \sum_{a \in \mathcal{A}_0} u_a = 1, \inf_{a \in \mathcal{A}_0}u_a>0\bigg\},\quad E_0\equiv \Delta_{\mathcal{A}_0}^\circ\times \R^{\mathcal{A}_0}.
\end{align}
The following proposition shows the well-posedness of the SDE (\ref{def:sde_time_change_renor}).
\begin{proposition}\label{prop:sde_strong_sol}
	Suppose (B1) in Assumption \ref{assump:sampling_dist} holds. For any initial condition $(u_{\cdot}(0),w_{\cdot}(0))\in E_0$, there exists a unique strong solution $(u_{\cdot}(t),w_{\cdot}(t))$ of the SDE (\ref{def:sde_time_change_renor}) on $[0,\infty)$ with $(u_{\cdot}(t)) \in \Delta_{\mathcal{A}_0}^\circ$ for all $t \in [0,\infty)$.
\end{proposition}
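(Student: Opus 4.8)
The plan is to establish local well-posedness from local Lipschitz regularity of the coefficients, and then to upgrade this to a global solution on $[0,\infty)$ by two a priori confinement estimates that prevent the solution from exiting $E_0$ in finite time: a deterministic barrier for the simplex component $u_\cdot$, and a Lyapunov estimate for the noise component $w_\cdot$. Throughout we may assume $\abs{\mathcal{A}_0}\ge2$, the case $\abs{\mathcal{A}_0}=1$ being immediate (then $\Delta_{\mathcal{A}_0}^\circ$ is the single point $u_\cdot\equiv1$ and $p_a\equiv1$, so $u_\cdot(t)\equiv1$ and $w_\cdot$ solves a standard Ornstein--Uhlenbeck equation). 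It is convenient to regard the $u$-system as living on the open simplex through the affine relation $\sum_{a\in\mathcal{A}_0}u_a=1$: since $(p_a(r,\xi))_{a\in\mathcal{A}_0}$ is the vector of arm-selection probabilities in the renormalized dynamics, one has $\sum_{a\in\mathcal{A}_0}p_a\equiv1$ on $\R_{>0}^{\mathcal{A}_0}\times\R^{\mathcal{A}_0}$, so the drift of $\sum_a u_a$ equals $1-\sum_a u_a$ and the relation $\sum_a u_a=1$ is preserved; equivalently one may eliminate one coordinate and read \eqref{def:sde_time_change_renor} as an SDE on an open subset of Euclidean space.

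\emph{Step 1 (local strong solution).} Under (B1), for each fixed $z$ the map $(r_{\mathcal{A}_0},\xi_{\mathcal{A}_0})\mapsto\prod_b\Phi\big[\sqrt{r_b}\,(\xi_a/r_a-\xi_b/r_b+z/\sqrt{r_a})\big]$ is $C^1$ on $\R_{>0}^{\mathcal{A}_0}\times\R^{\mathcal{A}_0}$, with $(r,\xi)$-gradient bounded, locally uniformly in $(r,\xi)$, by $C(r,\xi)(1+\abs{z})$ — the $\xi$-derivatives carry no factor of $z$ at all, since $z$ enters only additively, while the $r$-derivatives pick up at most one linear factor of $z$ — where one uses $\pnorm{\Phi'}{\infty}<\infty$. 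Because (B2) forces $\bar\Phi(z)$ and $\Phi(-z)$ to decay faster than any polynomial, $\mathsf{Z}$ has finite moments of all orders, in particular $\E\abs{\mathsf{Z}}<\infty$; hence one may differentiate under $\E_{\mathsf{Z}}$ and conclude that each $p_a$ is $C^1$ with locally bounded gradient, hence locally Lipschitz, on $\R_{>0}^{\mathcal{A}_0}\times\R^{\mathcal{A}_0}$. Moreover $\Phi(z)\in(0,1)$ for every $z$ gives $p_a>0$ pointwise on $E_0$, so $\sqrt{p_a}$ is locally Lipschitz on $E_0$ too, and $0<p_a\le1$. Thus the drift and diffusion coefficients of \eqref{def:sde_time_change_renor} are locally Lipschitz on $E_0$, and the classical It\^o / Ikeda--Watanabe theory yields, for any $(u_\cdot(0),w_\cdot(0))\in E_0$, a pathwise-unique strong solution defined up to the first exit time $\zeta$ of $E_0$ (obtained along a compact exhaustion $K_n\uparrow E_0$, with $\zeta=\lim_n\zeta_n$ and $\zeta_n$ the exit time of $K_n$).

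\emph{Step 2 (no finite-time exit).} For the simplex component, on $[0,\zeta)$ the equation $\d{u_a}=(p_a-u_a)\,\d{t}$ is, pathwise, an ODE with $p_a\ge0$, so $\log u_a$ has derivative $p_a/u_a-1\ge-1$, whence $u_a(t)\ge u_a(0)\,e^{-t}>0$ for all $t<\zeta\wedge\infty$; combined with $\sum_a u_a(t)\equiv1$ this shows $u_\cdot(t)\in\Delta_{\mathcal{A}_0}^\circ$ for all $t<\zeta$, so the $u$-component alone can never trigger an exit. For the noise component, It\^o's formula applied to $V(w)\equiv\sum_{a\in\mathcal{A}_0}w_a^2$ together with $\sqrt{p_a}\le1$ gives the generator bound $\mathcal{L}V=\sum_a(p_a-w_a^2)\le\abs{\mathcal{A}_0}-V$, whence $\E\,V\big(w_\cdot(t\wedge\zeta_n)\big)\le V\big(w_\cdot(0)\big)+\abs{\mathcal{A}_0}\,t$ for all $n,t$; by the standard Khasminskii non-explosion test this rules out $\abs{w_\cdot(t)}\to\infty$ in finite time. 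Since on $\{\zeta<\infty\}$ the path must escape every $K_n$ as $t\uparrow\zeta$, and we have just excluded both the $u$-boundary (the path staying at distance $\ge u_\cdot(0)e^{-\zeta}$ from it) and $\abs{w_\cdot}\to\infty$, we get $\zeta=\infty$ almost surely. This is the unique global strong solution on $[0,\infty)$, and it satisfies $(u_\cdot(t))\in\Delta_{\mathcal{A}_0}^\circ$ for every $t\ge0$.

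\emph{Main obstacle.} The substantive difficulty is that the coefficients of \eqref{def:sde_time_change_renor} are not globally Lipschitz and the diffusion is degenerate at the boundary of $E_0$: as $u_\cdot$ approaches $\partial\Delta_{\mathcal{A}_0}^\circ$ (some $u_a\downarrow0$), the selection probability $p_a$ — hence the diffusion coefficient $\sqrt{p_a}$ — can vanish, so no off-the-shelf global Lipschitz or non-degeneracy statement applies and one cannot a priori trap the solution in a single compact subset of $E_0$ uniformly in time. What makes Step 2 go through is the special additive form of the $u$-drift, $p_a-u_a$ with $p_a\ge0$: it produces the \emph{explicit deterministic} barrier $u_a(t)\ge u_a(0)e^{-t}$, which confines the solution, on each finite horizon, to a compact subset of the open simplex (depending on the horizon) — precisely what localization and patching require. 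Two lesser technical points to be careful with are the differentiation under $\E_{\mathsf{Z}}$ in Step 1 (where one records that $\mathsf{Z}$ has all moments finite under (B2) and that the $\xi$-derivatives of the integrand are bounded uniformly in $z$), and — unless one works in free coordinates — the identity $\sum_{a\in\mathcal{A}_0}p_a\equiv1$ that renders the affine constraint $\sum_a u_a=1$ invariant under the flow.
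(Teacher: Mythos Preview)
Your approach is essentially the same as the paper's: both rely on the deterministic barrier $u_a(t)\ge u_a(0)e^{-t}$ for the simplex component, an It\^o/Lyapunov estimate on $\sum_{a}w_a^2$ for the noise component, and localization to assemble a global strong solution; the paper carries this out by an explicit construction (truncated coefficients $b^{(n)},\sigma^{(n)}$, stopping times $\tau^{(n)}$, BDG, and a Gr\"onwall pathwise-uniqueness argument), while you invoke the equivalent packaged theory (local existence up to the exit time and Khasminskii's non-explosion test). One point of care: you appeal to (B2) to secure $\E\abs{\mathsf{Z}}<\infty$ for the $r$-derivatives of $p_a$, but the proposition assumes only (B1), and (B1) alone does not force finite moments (e.g.\ the Cauchy law satisfies (B1)); the paper simply asserts the localized coefficients are globally Lipschitz without discussing this, so your write-up is if anything more transparent about what regularity is actually being used.
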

The initial time is chosen to be $t=0$ merely for convenience. A technical difficulty in proving the well-posedness of the SDE \eqref{def:sde_time_change_renor} is that the functions $\{p_a\}$ in \eqref{def:p_a} are not globally Lipschitz. We address this issue by deriving suitable apriori localization estimates for the solution $(u_\cdot,w_\cdot)$.  Details of the proof of Proposition \ref{prop:sde_strong_sol} are provided in Section \ref{subsection:proof_sde_strong_sol}.

We now recall the notions of a (Markov) semigroup, transition function, and invariant measure; see, e.g., \cite[Section 2.1]{daprato1996ergodicity}. For the state space $E_0$, we also recall the notation $\mathcal{B}(E_0)$ and $B(E_0)$ introduced in Section \ref{section:notation}.

\begin{definition}\label{def:semigroup}
	Let $\big\{X(t)\equiv (u_a(t),w_a(t))_{a \in \mathcal{A}_0}\big\}_{t\geq 0} \subset E_0$ be the unique strong solution to the SDE (\ref{def:sde_time_change_renor}) in Proposition \ref{prop:sde_strong_sol}, and let $\E_x$ denote expectation with respect to the law of $X(t)$ with initial condition $X(0)=x \in E_0$.
	\begin{enumerate}
		\item The semigroup $(P_t: B(E_0)\to B(E_0) )_{t\geq 0}$ is defined as follows: for any $x\equiv (u_a,w_a)_{a \in \mathcal{A}_0} \in E_0$ and bounded measurable test function $f \in B(E_0)$, let
		\begin{align*}
		P_tf (x) \equiv \E_x f(X(t)).
		\end{align*}
		\item The transition function $\big(P_t: E_0\times \mathcal{B}(E_0)\to [0,1]\big)_{t\geq 0}$ is defined by
		\begin{align*}
		P_t(x,A)\equiv P_t \bm{1}_A(x) = \Prob_x\big(X(t) \in A\big),\quad \forall (x,A) \in E_0\times \mathcal{B}(E_0).
		\end{align*}
		\item A measure $\mu$ on $E_0$ is called an \emph{invariant measure} associated with the semigroup $(P_t)$ if and only if $\mu = P_t^\ast \mu \equiv \int_{E_0} P_t(x, \cdot)\,\mu(\d x)$ for all $t\geq 0$.
	\end{enumerate}
\end{definition}

\begin{proposition}\label{prop:sde_invariant_measure}
	Suppose (B1) in Assumption \ref{assump:sampling_dist} holds. There exists a unique invariant probability measure $\mu_{\abs{\mathcal{A}_0}}^\ast$ on $E_0$ associated with the semigroup $(P_t)$. For $\abs{\mathcal{A}_0}=1$, $\mu_1^\ast=\delta_{\{1\}}\otimes \mathcal{N}(0,1)$.
\end{proposition}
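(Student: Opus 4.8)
The plan is to establish existence and uniqueness separately, and to settle the degenerate case $\abs{\mathcal{A}_0}=1$ by direct inspection.

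\emph{Existence.} I would run the Krylov--Bogolyubov averaging procedure from a fixed $x_0\in E_0$: set $\nu_T\equiv\frac1T\int_0^T P_t(x_0,\cdot)\,\d{t}$ and extract a weak subsequential limit of $\{\nu_T\}$. Two inputs are needed. First, tightness of $\{\nu_T\}$ on $E_0$: in the $w$-directions this is immediate from the Ornstein--Uhlenbeck-type structure of the second line of \eqref{def:sde_time_change_renor}, since $0\le p_a\le 1$ gives $\d{(w_a^2)}=(p_a-w_a^2)\,\d{t}+2w_a\sqrt{p_a}\,\d{B_a}$ and hence $\sup_{t\ge 0}\E_{x_0}\pnorm{w(t)}{}^2\le\pnorm{w(0)}{}^2+\abs{\mathcal{A}_0}$; in the $u$-directions the state is confined to the closed simplex, so tightness on $E_0$ can only fail through accumulation on the faces $\{u_a=0\}$. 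Second, I would rule out this boundary accumulation by showing $\sup_{T\ge 1}\E_{x_0}\big[\frac1T\int_0^T\phi(X(s))\,\d{s}\big]<\infty$ for some $\phi$ on $E_0$ with $\phi\to\infty$ along $\partial E_0$ --- equivalently, by exhibiting a Foster--Lyapunov function $V$ on $E_0$, with $\{V\le R\}$ compact in $E_0$, obeying $\mathcal{L}V\le C-\phi$ outside a compact set. Granting this, the subsequential limit $\mu$ is a probability measure on $E_0$, and the Feller property of $(P_t)$ --- continuous dependence of the strong solution on its initial condition on $E_0$, which follows from Proposition \ref{prop:sde_strong_sol} and the a priori localization estimates used in its proof --- yields $P_t^\ast\mu=\mu$, so $\mu\equiv\mu_{\abs{\mathcal{A}_0}}^\ast$ is invariant.

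\emph{Uniqueness and the degenerate case.} Here I would invoke Doob's (Khasminskii's) uniqueness theorem: a Markov semigroup that is strong Feller and topologically irreducible on $E_0$ admits at most one invariant probability measure. Both properties are exactly what the later sections of the paper establish --- the strong Feller property from a localized parabolic H\"ormander condition for the generator of \eqref{def:sde_time_change_renor} (noise enters only the $w$-coordinates, but is transported to the $u$-coordinates through the drift coupling $p_a$, so the iterated Lie brackets span the tangent space on $E_0$), and topological irreducibility from the Stroock--Varadhan support theorem together with an explicit controllability argument for the associated control system. Combined with the existence part, this pins down the unique $\mu_{\abs{\mathcal{A}_0}}^\ast$. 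For the degenerate case, when $\mathcal{A}_0=\{a\}$ is a singleton we have $\Delta_{\mathcal{A}_0}^\circ=\{1\}$ and $p_a\equiv 1$; the first line of \eqref{def:sde_time_change_renor} reduces to $\dot u_a=1-u_a$ with $u_a(0)=1$, forcing $u_a\equiv 1$, while the second line is the scalar Ornstein--Uhlenbeck equation $\d{w_a}=-\frac12 w_a\,\d{t}+\d{B_a}$, whose unique invariant law is $\mathcal{N}(0,1)$; hence $\mu_1^\ast=\delta_{\{1\}}\otimes\mathcal{N}(0,1)$.

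\emph{Main obstacle.} The crux is the boundary non-accumulation estimate in the existence step. The difficulty is that the functions $p_a$ in \eqref{def:p_a} are not bounded below near $\partial E_0$: when $u_a$ is small, $p_a$ can itself be exponentially small --- precisely when $w_a$ is negative (so arm $a$ looks unfavorable), or when some other $w_b$ is large and positive --- so no instantaneous drift bound of the naive form $\mathcal{L}(\sum_a u_a^{-\beta})\le C-c\sum_a u_a^{-\beta}$ can hold. The saving grace is dynamical rather than pointwise: the mean reversion in the $w$-equation keeps $w_a$ out of the unfavorable region for at most $\bigo(1)$ time, so $\dot u_a=p_a-u_a$ turns positive and pushes $u_a$ back before it can reach the boundary. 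Encoding this requires a genuinely joint Lyapunov function coupling $u_a$ to the $w$-coordinates, e.g.\ of the form $\sum_a u_a^{-\beta}\exp\big({-\lambda w_a+\mu\sum_{b\in\mathcal{A}_0\setminus\{a\}}(w_b)_+}\big)$ (or a variant), with $\beta,\lambda,\mu$ tuned so that the large negative drift contributed by the $w$-mean-reversion in the unfavorable regions dominates. Verifying such a bound --- or, alternatively, proving the occupation-time estimate $\E_{x_0}\int_0^T\bm{1}_{\{u_{\min}(s)<\epsilon\}}\,\d{s}\lesssim\epsilon^{c}\,T$ directly via an excursion/return-time analysis of the coupled $(u_a,w_a)$ dynamics --- is the main technical step; the remaining ingredients are either standard (Krylov--Bogolyubov, Doob) or supplied by later sections (strong Feller, irreducibility).
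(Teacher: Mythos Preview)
Your uniqueness argument and the degenerate case $\abs{\mathcal{A}_0}=1$ are correct and match the paper exactly: Doob's theorem via strong Feller plus topological irreducibility, with the latter two supplied by the parabolic H\"ormander and Stroock--Varadhan arguments.

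The existence argument is where you diverge. The paper does \emph{not} run Krylov--Bogolyubov on the SDE. Instead, existence is obtained as a byproduct of Proposition~\ref{prop:compact_conv_sde}: the time-changed, renormalized discrete Thompson-sampling processes $(u_{\cdot;T},w_{\cdot;T})$ are tight, any subsequential limit solves the SDE~\eqref{def:sde_time_change_renor}, and --- because the reparametrization $(u_{a;T}(t),w_{a;T}(t))=(e^{-t}r_{a;T}(e^t),\,e^{-t/2}\xi_{a;T}(e^t))$ makes the marginal law independent of $t$ by construction --- that limit is automatically stationary, hence an invariant probability measure on $E_0$. This completely sidesteps the boundary-accumulation problem you flag: the discrete dynamics already live in the interior of the simplex (every arm is pulled infinitely often a.s.), so the limiting measure is supported on $E_0$ without any Lyapunov work.

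Your route is natural in isolation, but the obstacle you identify is real and you have not closed it: the proposed Lyapunov candidate $\sum_a u_a^{-\beta}\exp(-\lambda w_a+\mu\sum_{b\neq a}(w_b)_+)$ is plausible in spirit, yet verifying $\mathcal{L}V\le C-\phi$ near the faces $\{u_a=0\}$ with $w_a$ large negative is delicate precisely because $p_a$ can be exponentially small there, and none of the drift terms in $\mathcal{L}V$ obviously dominate. So as written, your existence step has a genuine gap. The paper's construction buys you existence essentially for free (given the compact-convergence machinery it needs anyway for Theorem~\ref{thm:optimal_arm_pull}), whereas your approach would require an independent and nontrivial recurrence estimate for the SDE.
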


With the uniqueness of the invariant distribution of the SDE \eqref{def:sde_time_change_renor} guaranteed by the above proposition, we may now describe the joint limiting distribution of the arm-pull counts $\{n_{a;T}\}$ and the associated noise processes.

\begin{figure}[t]
	\begin{minipage}[t]{0.495\textwidth}
		\includegraphics[width=\textwidth]{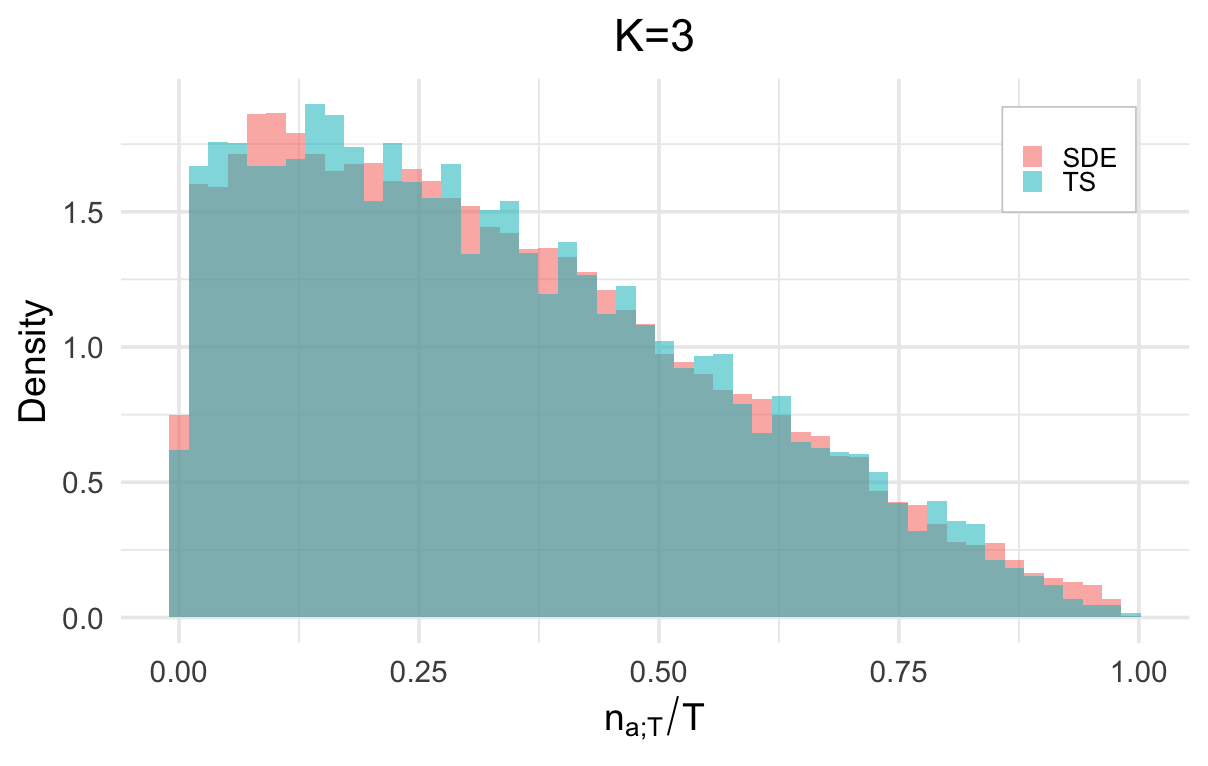}
	\end{minipage}
	\begin{minipage}[t]{0.495\textwidth}
		\includegraphics[width=\textwidth]{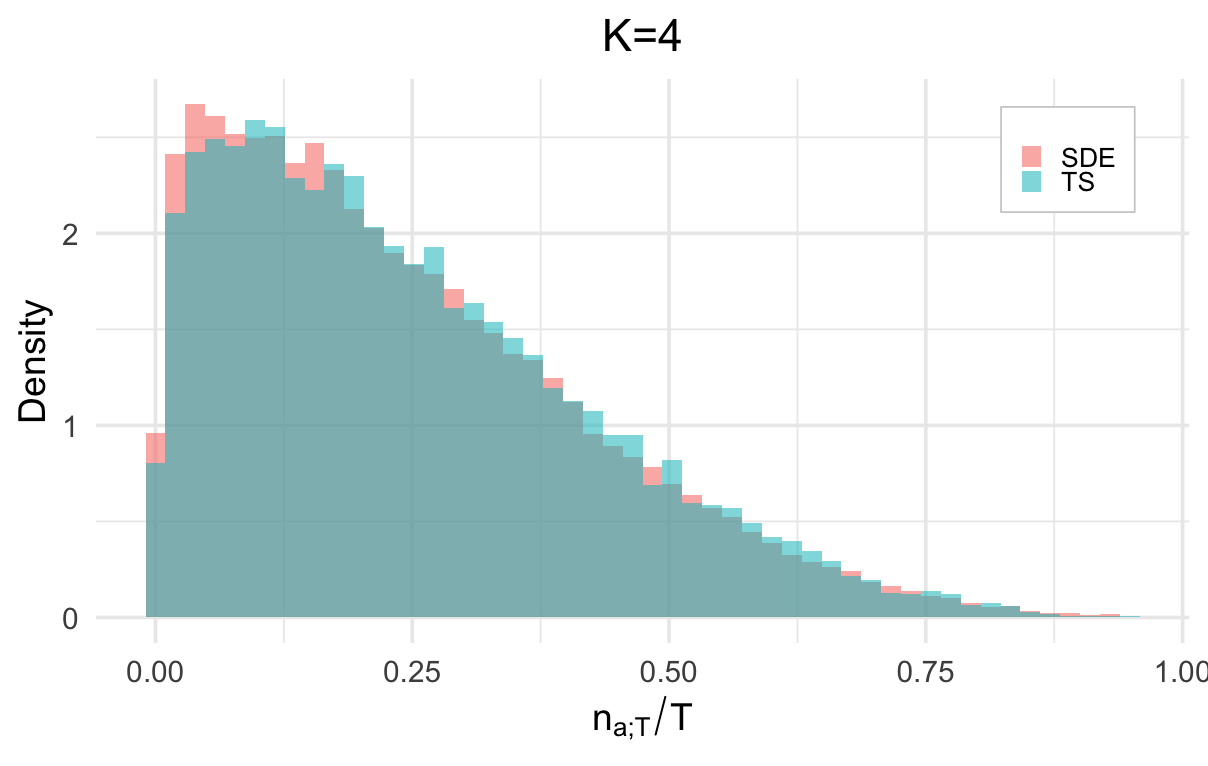}
	\end{minipage}
	\caption{Distributional comparison between $n_{a;T}/T$ and its limit distribution $\mathbb{L}_{\abs{\mathcal{A}_0};a}^{[r],\ast}$ for an optimal arm with $K=\abs{\mathcal{A}_0}=3,4$. \emph{Pink histogram}: distribution of $n_{a;T}/T$ via Gaussian Thompson sampling. \emph{Blue histogram}: distribution of $\mathbb{L}_{\abs{\mathcal{A}_0};a}^{[r],\ast}$ via SDE simulation.}
	\label{fig:arm_pull}
\end{figure}

\begin{theorem}\label{thm:optimal_arm_pull}
Suppose Assumptions \ref{assump:noise} and \ref{assump:sampling_dist} hold. Then 
\begin{align*}
\bigg(\bigg\{\frac{n_{a;T}}{T}\bigg\}_{a \in \mathcal{A}_0},\bigg\{\frac{1}{T^{1/2}}\sum_{s \in [T]}\bm{1}_{A_s=a}\xi_s\bigg\}_{a \in \mathcal{A}_0}\bigg)\rightsquigarrow \mathbb{L}_{\abs{\mathcal{A}_0}}^\ast.
\end{align*}
Here $\mathbb{L}_{\abs{\mathcal{A}_0}}^\ast=\big(\big(\mathbb{L}_{\abs{\mathcal{A}_0};a}^{[r],\ast}\big)_{a \in \mathcal{A}_0}, \big(\mathbb{L}_{\abs{\mathcal{A}_0};a}^{[\xi],\ast}\big)_{a \in \mathcal{A}_0}\big)$ is a random variable in $E_0$ with law corresponding to $\mu_{\abs{\mathcal{A}_0}}^\ast$ as specified in Proposition \ref{prop:sde_invariant_measure}.
\end{theorem}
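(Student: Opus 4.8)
\textbf{Proof proposal for Theorem \ref{thm:optimal_arm_pull}.}

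The plan is to establish the weak convergence by embedding the discrete arm-pull and noise processes into a continuous-time process that, after the `unnatural reparametrization' (time change and renormalization) advertised in the introduction, is shown to be asymptotically governed by the SDE \eqref{def:sde_time_change_renor}; then the desired limit $\mathbb{L}_{\abs{\mathcal{A}_0}}^\ast$ emerges as the invariant law $\mu_{\abs{\mathcal{A}_0}}^\ast$ from Proposition \ref{prop:sde_invariant_measure}. First I would restrict attention to the optimal arms: since all suboptimal arms are pulled only $o(T)$ times (by \eqref{eqn:intro_suboptimal_arm} / Theorem \ref{thm:suboptimal_arm_pull}, the count is $O(\log_+ T)$ in the Gaussian case and more generally $o(T)$), the proportions $\{n_{a;T}/T\}_{a\in\mathcal{A}_0}$ asymptotically sum to $1$, and conditionally on the event that suboptimal arms are negligible, the selection among optimal arms at round $t$ is driven only by the comparisons $\bar\mu_{a;t-1}+\sigma Z_{a;t}/\sqrt{\bar n_{a;t-1}}$ for $a\in\mathcal{A}_0$. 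Introducing the centered processes $U_{a;t}\equiv n_{a;t}/t$ and $W_{a;t}\equiv t^{-1/2}\sum_{s\le t}\bm 1_{A_s=a}\xi_s$, one writes one-step recursions: $U_{a;t}$ changes by $\approx (\bm 1_{A_t=a}-U_{a;t-1})/t$, and $W_{a;t}$ picks up the martingale increment $t^{-1/2}\bm 1_{A_t=a}\xi_t$ plus a drift term from the $t^{-1/2}$ normalization. Since $\mu_a=\mu_\ast$ is common across $a\in\mathcal{A}_0$, the empirical mean satisfies $\bar\mu_{a;t}-\mu_\ast \approx \sigma W_{a;t}/(t U_{a;t})$ up to the suboptimal-arm contamination, so the probability that arm $a$ is selected given the past is exactly $p_a(U_{\cdot;t-1},W_{\cdot;t-1})$ from \eqref{def:p_a} — this is the key algebraic identification and should be verified carefully by writing out the $\argmax$ event as a product of $\Phi$'s over arms in $\mathcal{A}_0\setminus[a]$ after conditioning on $Z_{a;t}$ (the ordering convention on $\mathcal{A}_0$ handling ties of the max). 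Under a logarithmic time change $t=e^s$, the $1/t$ drift becomes an $O(1)$ drift and the recursions for $(U,W)$ converge to the SDE \eqref{def:sde_time_change_renor}; this is a standard stochastic-approximation / diffusion-approximation argument (invoke e.g. a martingale-problem characterization after tightness, using Proposition \ref{prop:sde_strong_sol} for uniqueness of the limit).

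The second half of the argument is to pass from this diffusion approximation on a finite time window to the stationary limit. The point is that after the log time change, the natural SDE would start from a singular initial condition — at $t$ of order $1$ the arm-pull proportions are essentially arbitrary and $W$ is order $1$, but as $s=\log t\to\infty$ the system has run for an unboundedly long time — so instead of tracking the transient dynamics one argues that for any fixed time horizon $S$, the law of $(U_{\cdot;T},W_{\cdot;T})$ is close to $P_S^\ast$ applied to the (crude) law of the state at time $T e^{-S}$, and then lets $S\to\infty$: by Proposition \ref{prop:sde_invariant_measure} the semigroup $(P_t)$ has a \emph{unique} invariant measure, and (assuming, as the analytic tools of strong Feller + irreducibility behind Proposition \ref{prop:sde_invariant_measure} deliver) convergence $P_S^\ast\nu\to\mu_{\abs{\mathcal{A}_0}}^\ast$ as $S\to\infty$ for every initial law $\nu$ on $E_0$, the two limits $T\to\infty$ then $S\to\infty$ combine to give weak convergence to $\mu_{\abs{\mathcal{A}_0}}^\ast$. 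Quantitatively this requires an interchange-of-limits / $\epsilon$-$\delta$ argument: choose $S$ large so that $P_S^\ast$ is within $\epsilon$ of $\mu^\ast$ uniformly over initial laws supported on a compact set, then choose $T$ large so that the diffusion approximation on $[\log T-S,\log T]$ is within $\epsilon$, while separately controlling that $U_{\cdot;Te^{-S}}$ lives in a compact subset of $\Delta_{\mathcal{A}_0}^\circ$ with high probability (i.e.\ every optimal arm has already been pulled a constant fraction of times) and that $W_{\cdot;Te^{-S}}$ is tight.

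The main obstacles I anticipate are twofold. First, the \textbf{non-compactness and boundary behavior}: I need an a priori estimate that with high probability no optimal arm's pull proportion collapses toward $0$ along the way (so the state stays in a compact subset of $E_0$ where the localized H\"ormander/support-theorem estimates underlying Proposition \ref{prop:sde_invariant_measure} give uniform mixing), and symmetrically that $W$ does not escape — the functions $p_a$ are not globally Lipschitz, exactly the issue flagged for Proposition \ref{prop:sde_strong_sol}, so the same kind of apriori localization must be ported to the discrete process, which is more delicate because the discrete increments are not as smooth. Second, the \textbf{uniform-in-initial-condition convergence to stationarity}: Proposition \ref{prop:sde_invariant_measure} asserts uniqueness of the invariant measure but the statement as given does not explicitly assert a convergence rate or even ergodicity uniform over compact sets of initial conditions; extracting $\sup_{\nu}\|P_S^\ast\nu-\mu^\ast\|\to 0$ (over $\nu$ supported in a fixed compact set) from strong Feller + irreducibility on the non-compact space $E_0$ will require either a Lyapunov/drift condition for tightness of the occupation measures or a Doob-type argument, and marrying this with the discrete approximation window is where the real work lies. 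I expect the suboptimal-arm contamination to be a comparatively minor nuisance, controlled by Theorem \ref{thm:suboptimal_arm_pull} together with a union bound, since those arms contribute only $o(T)$ pulls and $o(T^{1/2})$ to the noise sums for $a\in\mathcal{A}_0$.
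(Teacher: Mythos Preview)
Your overall program---reparametrize via a log time change, identify the selection probability with $p_a$ from \eqref{def:p_a}, and show the rescaled process is governed by the SDE \eqref{def:sde_time_change_renor}---matches the paper's. The divergence is in the second half: you plan to reach stationarity by proving ergodic convergence $P_S^\ast\nu\to\mu_{\abs{\mathcal{A}_0}}^\ast$ (uniformly over initial laws on compacta) and then running a double limit $T\to\infty$ followed by $S\to\infty$. The paper avoids this entirely by exploiting a self-similarity of the reparametrization: with $u_{a;T}(t)=e^{-t}r_{a;T}(e^t)$ and $w_{a;T}(t)=e^{-t/2}\xi_{a;T}(e^t)$, the marginal at time $t\le 0$ of the $T$-indexed process coincides (up to $o(1)$) with the marginal at time $0$ of the $e^tT$-indexed process---shifting $t$ is the same as rescaling $T$. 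Consequently every subsequential weak limit of $\{(u_{\cdot;T},w_{\cdot;T})\}_T$ on any compact window $[-T_0,0]$ automatically has \emph{time-independent} marginals, so that common marginal is already an invariant measure for $(P_t)$; uniqueness (Proposition \ref{prop:sde_invariant_measure}) then pins every subsequential limit to $\mu_{\abs{\mathcal{A}_0}}^\ast$, giving full convergence. This sidesteps both of your anticipated obstacles: no uniform-in-initial-condition ergodicity is needed, and you never have to trap the state at an intermediate time $Te^{-S}$ in a compact subset of $E_0$---soft tightness of the whole process on $[-T_0,0]$ suffices. Your route would work in principle (strong Feller plus irreducibility, once established as in Propositions \ref{prop:sde_strong_feller}--\ref{prop:sde_irreducible}, do deliver the needed ergodicity via the full Doob theorem), but it is strictly more work, and your second obstacle---upgrading uniqueness to uniform convergence to equilibrium---is exactly the step the self-similarity trick renders unnecessary.
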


As an illustration of the distributional approximation in the theorem above, Figure \ref{fig:arm_pull} compares the distributions of $n_{a;T}/T$ and $\mathbb{L}_{\abs{\mathcal{A}_0};a}^{[r],\ast}$ for $\abs{\mathcal{A}_0}=3,4$. The close agreement shown in Figure \ref{fig:arm_pull} persists for other values of $\abs{\mathcal{A}_0}$.

The proof of Theorem \ref{thm:optimal_arm_pull} relies on a new approach based on an `unnatural reparametrization' for the so-called arm-pull and noise processes in (\ref{def:r_xi_T}) ahead with a time change and renormalization. This reparametrization reduces the essential difficulty in formalizing the mathematical meaning of a `natural SDE' (cf. Eqn. (\ref{def:sde_singular})) with a singular initial condition, to proving the uniqueness of the invariant distribution of the SDE \eqref{def:sde_time_change_renor}. We establish this uniqueness by leveraging deep tools from analysis and probability to verify the strong Feller property and (topological) irreducibility of the semigroup $(P_t)$, including a parabolic H\"ormander condition \cite{hormander1967hypoelliptic,hairer2011malliavin} and the Stroock-Varadhan support theorem \cite{stroock1972support,benarous1994holder,millet1994simple}. We refer the reader to Section \ref{section:proof_outline_optimal} for a technical outline of our approach.

\begin{remark}
	From the SDE \eqref{def:sde_time_change_renor}, it is immediate that the law of $\mathbb{L}_{\abs{\mathcal{A}_0}}^\ast$ (i.e., $\mu_{\abs{\mathcal{A}_0}}^\ast$) in Theorem \ref{thm:optimal_arm_pull} depends only on the number of optimal arms $\abs{\mathcal{A}_0}$. Moreover,  $\big(\mathbb{L}_{\abs{\mathcal{A}_0};a}^{[r],\ast}\big)_{a \in \mathcal{A}_0}$ and $\big(\mathbb{L}_{\abs{\mathcal{A}_0};a}^{[\xi],\ast}\big)_{a \in \mathcal{A}_0}$ have identical marginal laws across $a\in\mathcal{A}_0$.
\end{remark}

\subsection{Duality between arm `stability' and noise interaction}\label{subsection:stability}

A substantial recent literature \cite{kalvit2021closer,fan2022typical,khamaru2024inference,han2024ucb,fan2024precise,fan2025statistical,halder2025stable,praharaj2025instability} has examined various notions of `stability' for a bandit algorithm $\mathscr{A}$, largely inspired by the seminal work \cite{lai1982least} in a closely related, though different, context.

To clearly distinguish potentially different stability behaviors across arms, we adopt the following arm-specific notion.

\begin{definition}\label{def:stability}
	A bandit algorithm $\mathscr{A}$ is called \emph{stable} for arm $a\in [K]$, if there exists a sequence of deterministic real numbers $\{n_{a;T}^\ast(\mathscr{A}):T \in \mathbb{N}\}$ such that the number of pulls $n_{a;T}(\mathscr{A})$ satisfies $
	{n_{a;T}(\mathscr{A})}/{n_{a;T}^\ast(\mathscr{A})}\stackrel{\Prob}{\to} 1$.
\end{definition}

For Upper Confidence Bound (UCB)-type algorithms, \cite{fan2022typical,khamaru2024inference,fan2024precise,han2024ucb,fan2025statistical,praharaj2025instability} points to a common phenomenology: once the exploration rate is moderately large so as to suppress interactions with statistical noise, UCB algorithms exhibit stability for all arms. More concretely, \cite{han2024ucb} shows that, for a variant of the canonical \texttt{UCB1} algorithm \cite{lai1987adaptive,agrawal1995sample,auer2002finite}, all arms are stable in the sense of Definition \ref{def:stability}, provided that the exploration rate satisfies $\gamma_T\gg \sqrt{\log\log T}$; see \cite[Proposition 3.7]{han2024ucb}. Under such moderately large exploration, the arm-pull counts $\{n_{a;T}\}$ for \texttt{UCB1} behave essentially as if the statistical noises $\{\xi_t\}$ in \eqref{eqn:bandit_seq_model} were absent.

The situation for the Thompson sampling scheme studied here is markedly different. Indeed, Theorems \ref{thm:suboptimal_arm_pull} and \ref{thm:optimal_arm_pull} reveal a sharp dichotomy in arm-pull behavior, and therefore:
\begin{corollary}\label{cor:stability_ts}
	An arm under the generalized Thompson sampling algorithm in Algorithm \ref{alg:ts} is stable if and only if it is either a suboptimal arm or the unique optimal arm.
\end{corollary}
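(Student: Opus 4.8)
The plan is to read the dichotomy directly off Theorems~\ref{thm:suboptimal_arm_pull}--\ref{thm:optimal_arm_pull} and Proposition~\ref{prop:sde_invariant_measure}, handling the two stable cases and the single unstable case in turn. The two stable directions are immediate. If $a\in\mathcal{A}_+$, then Corollary~\ref{cor:suboptimal_arm_rates} (i.e.\ \eqref{eqn:intro_suboptimal_arm}) yields $n_{a;T}/n_{a;T}^\ast\stackrel{\Prob}{\to}1$ with the \emph{deterministic} sequence $n_{a;T}^\ast\equiv\sigma^2[\bar{\Phi}^-(1/T)/\Delta_a]^2$, so $a$ is stable. If instead $\mathcal{A}_0=\{a\}$ is a singleton, then $\mu_1^\ast=\delta_{\{1\}}\otimes\mathcal{N}(0,1)$ by Proposition~\ref{prop:sde_invariant_measure}, so the weak limit $\mathbb{L}_{1;a}^{[r],\ast}$ of $n_{a;T}/T$ supplied by Theorem~\ref{thm:optimal_arm_pull} is the point mass at $1$; weak convergence to a constant upgrades to convergence in probability, so $n_{a;T}/T\stackrel{\Prob}{\to}1$ and $a$ is stable with $n_{a;T}^\ast\equiv T$.

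It remains to prove the converse. Since the only arms not covered by the previous paragraph are the optimal arms belonging to a set $\mathcal{A}_0$ with $\abs{\mathcal{A}_0}\ge2$, it suffices to show that such an arm $a$ is \emph{not} stable. Suppose for contradiction that $n_{a;T}/n_{a;T}^\ast\stackrel{\Prob}{\to}1$ for some deterministic $\{n_{a;T}^\ast\}$. Theorem~\ref{thm:optimal_arm_pull} gives $n_{a;T}/T\rightsquigarrow\mathbb{L}_{\abs{\mathcal{A}_0};a}^{[r],\ast}$, a random variable that is a.s.\ positive since $\mu_{\abs{\mathcal{A}_0}}^\ast$ is supported on $E_0$ with $\inf_{b}u_b>0$ on $\Delta_{\mathcal{A}_0}^\circ$; in particular $n_{a;T}\to\infty$, hence $n_{a;T}^\ast\to\infty$, so that $n_{a;T}^\ast/n_{a;T}\stackrel{\Prob}{\to}1$ by the continuous mapping theorem. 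Writing $n_{a;T}^\ast/T=(n_{a;T}^\ast/n_{a;T})\cdot(n_{a;T}/T)$ and invoking Slutsky's theorem, $n_{a;T}^\ast/T\rightsquigarrow\mathbb{L}_{\abs{\mathcal{A}_0};a}^{[r],\ast}$; but the left-hand side is a deterministic sequence, and the limit of a weakly convergent deterministic sequence must be an a.s.\ constant (its limiting distribution function can only take the values $0$ and $1$). Therefore $\mathbb{L}_{\abs{\mathcal{A}_0};a}^{[r],\ast}$ is almost surely constant.

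The main obstacle is the final step: showing that $\mathbb{L}_{\abs{\mathcal{A}_0};a}^{[r],\ast}$ is in fact \emph{non-degenerate} for $\abs{\mathcal{A}_0}\ge2$, which contradicts the above. I would argue this from the regularity of the invariant law. In proving Proposition~\ref{prop:sde_invariant_measure} one establishes the strong Feller property of $(P_t)$ via a localized parabolic H\"ormander condition; the same hypoellipticity in fact endows each transition kernel $P_t(x,\cdot)$ with a smooth density relative to the natural volume on the $(2\abs{\mathcal{A}_0}-1)$-dimensional manifold $E_0=\Delta_{\mathcal{A}_0}^\circ\times\R^{\mathcal{A}_0}$, so $\mu_{\abs{\mathcal{A}_0}}^\ast=P_t^\ast\mu_{\abs{\mathcal{A}_0}}^\ast$ is an average of absolutely continuous measures, hence absolutely continuous; for $\abs{\mathcal{A}_0}\ge2$ its coordinate marginals --- in particular $\mathscr{L}\big(\mathbb{L}_{\abs{\mathcal{A}_0};a}^{[r],\ast}\big)$, the marginal of the coordinate $u_a$ --- then admit densities and cannot be point masses. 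An alternative that avoids the internals of that proof: if $\mathbb{L}_{\abs{\mathcal{A}_0};a}^{[r],\ast}$ were a.s.\ constant, then by the exchangeability of the marginals noted after Theorem~\ref{thm:optimal_arm_pull} together with the constraint $\sum_{b\in\mathcal{A}_0}u_b=1$, the $u$-marginal of $\mu_{\abs{\mathcal{A}_0}}^\ast$ would be concentrated at the barycenter $(1/\abs{\mathcal{A}_0},\dots,1/\abs{\mathcal{A}_0})$; feeding this into the stationary solution of \eqref{def:sde_time_change_renor}, whose $u$-equation has no Brownian term, forces $p_a(u_\cdot(t),w_\cdot(t))\equiv1/\abs{\mathcal{A}_0}$ for a.e.\ $t$, a.s., whereas with $u_\cdot$ frozen at the barycenter the $w$-coordinates form a nondegenerate diffusion with full support on $\R^{\mathcal{A}_0}$ (each $p_b>0$ since $\Phi>0$) while $w\mapsto p_a$ restricted there is a non-constant continuous function --- the desired contradiction. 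I expect the first route to be cleanest, the only care being to record explicitly, within the proof of Proposition~\ref{prop:sde_invariant_measure}, that H\"ormander's condition promotes the strong Feller property to absolute continuity of the transition kernels.
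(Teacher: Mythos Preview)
Your proposal is correct and, in fact, more careful than the paper's own treatment. The paper does not give a separate proof of this corollary; it simply states ``Theorems~\ref{thm:suboptimal_arm_pull} and~\ref{thm:optimal_arm_pull} reveal a sharp dichotomy in arm-pull behavior, and therefore:'' and records the corollary as immediate. Your stable directions match this exactly.

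Where you go further is in the unstable direction: you correctly isolate the one substantive point the paper leaves implicit, namely that $\mathbb{L}_{\abs{\mathcal{A}_0};a}^{[r],\ast}$ is genuinely non-degenerate when $\abs{\mathcal{A}_0}\ge 2$. Your first route is the right one and is already latent in the paper's machinery: Theorem~\ref{thm:hormander}, invoked in the proof of Proposition~\ref{prop:sde_strong_feller}, gives smooth transition densities for the localized diffusions $X^{(n)}$, and the stopping-time comparison in Step~2 of that proof (together with $\Prob_x(\tau^{(n)}\le t)\to 0$) transfers absolute continuity of $P_t^{(n)}(x,\cdot)$ to $P_t(x,\cdot)$ on $E_0$; then $\mu_{\abs{\mathcal{A}_0}}^\ast=P_t^\ast\mu_{\abs{\mathcal{A}_0}}^\ast$ is absolutely continuous on the $(2\abs{\mathcal{A}_0}-1)$-dimensional state space, so its $u_a$-marginal cannot be a point mass. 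Your second route also works but requires a bit more care than you indicate: from degeneracy at the barycenter one gets $p_a(\text{barycenter},w_\cdot(t))\equiv 1/\abs{\mathcal{A}_0}$ along stationary paths, and the contradiction comes from noting that the $w$-marginal must then be supported on the level set $\{w:p_a(\text{barycenter},w)=1/\abs{\mathcal{A}_0}\ \forall a\}$, which is a proper (Lebesgue-null) smooth hypersurface that the nondegenerate diffusion in $w$ cannot remain on. Either way, your identification of this step as the ``main obstacle'' is apt, and your resolution is sound.
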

At a fundamental level, the stability of suboptimal arms or the unique optimal arm in Thompson sampling is conceptually similar to the UCB setting, in that the statistical noise processes $\{\xi_t\bm{1}_{A_t=a}: t\ge 1\}$ become asymptotically negligible for such arms. In contrast, when there are multiple optimal arms, Thompson sampling is unstable because the interaction with the noise processes remains nontrivial even as $T\to\infty$.

The duality between arm stability and the effect of statistical noise also provides a unified principle for interpreting several recent stability and instability results:
\begin{enumerate}
	\item In \cite{halder2025stable}, a variance-inflated Thompson sampling scheme is proposed and shown to be stable in the two-armed setting ($K=2$) when both arms are optimal. Since variance inflation in Thompson sampling is equivalent to reducing the effective noise level in \eqref{eqn:bandit_seq_model}, the stability result in \cite{halder2025stable} can be interpreted as arising from asymptotically negligible noise interactions between optimal arms. As noted in \cite{halder2025stable}, the price for this stability is an inflated number of pulls of suboptimal arms, which deviates strictly from the Lai-Robbins lower bound \eqref{ineq:lai_robbins}.
	\item In \cite{praharaj2025instability}, a broad class of UCB-type algorithms is shown to be unstable when the exploration rate is of constant order. As observed in \cite{han2024ucb}, for \texttt{UCB1} to behave like its noiseless counterpart and remain stable, the exploration rate must stay above $\sqrt{\log\log T}$. The instability in \cite{praharaj2025instability} can therefore be understood as a consequence of nontrivial noise interactions once the exploration rate becomes too small.
\end{enumerate}

\begin{remark}
	The instability of Bernoulli Thompson sampling is observed in \cite{kalvit2021closer} in the two-armed setting when both arms are optimal, via an exact distributional characterization of the arm pulls for finite horizons $T$. It remains an open question to formulate a natural analogue of `statistical noise' that can fully characterize stability and instability for bandit algorithms with Bernoulli observations.
\end{remark}

\section{Application to adaptive inference}\label{section:adaptive_inf}

\subsection{Limit distribution of normalized empirical means}
\begin{definition}
For any $K \in \mathbb{N}$, let 
\begin{align*}
\mathscr{N}_{K}\equiv \hbox{the law of }
\begin{cases}
\mathcal{N}(0,1), & K=1;\\
\mathbb{L}_{\abs{[K]};1}^{[\xi],\ast}/\{\mathbb{L}_{\abs{[K]};1}^{[r],\ast}\}^{1/2}, & K\geq 2.
\end{cases}
\end{align*}
\end{definition}

Since the distributions $\big\{\big(\mathbb{L}_{\abs{[K]};a}^{[r],\ast},\mathbb{L}_{\abs{[K]};a}^{[\xi],\ast}\big)\big\}_{a \in [K]}$ are identical across $a\in [K]$, we may define $\mathscr{N}_{K}$ as the law of $\mathbb{L}_{\abs{[K]};a}^{[\xi],\ast}/\{\mathbb{L}_{\abs{[K]};a}^{[r],\ast}\}^{1/2}$ for any $a\in [K]$.

The following theorem describes the limiting distribution of the normalized empirical means for all arms; its proof is given in Section \ref{subsection:proof_arm_mean_dist}.

\begin{theorem}\label{thm:arm_mean_dist}
Suppose Assumptions \ref{assump:noise} and \ref{assump:sampling_dist} hold. Then
\begin{align*}
\big(n_{a;T}/\sigma^2\big)^{1/2}\cdot \big(\hat{\mu}_{a;T}-\mu_a\big) \rightsquigarrow
\begin{cases}
\mathcal{N}(0,1), & \hbox{ if } a \in \mathcal{A}_+;\\
\mathscr{N}_{ \abs{\mathcal{A}_0} }, & \hbox{ if }a \in \mathcal{A}_0.
\end{cases}
\end{align*}
\end{theorem}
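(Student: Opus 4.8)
The plan is to reduce the statement to the asymptotics of a single scalar statistic and then treat the suboptimal and optimal regimes separately, feeding in Theorems~\ref{thm:suboptimal_arm_pull} and~\ref{thm:optimal_arm_pull} respectively. First, on the event $\{n_{a;T}\ge 1\}$ --- which has probability tending to $1$ since $n_{a;T}\stackrel{\Prob}{\to}\infty$ by either theorem --- the model \eqref{eqn:bandit_seq_model} gives $\bm{1}_{A_s=a}(R_s-\mu_a)=\sigma\,\bm{1}_{A_s=a}\xi_s$, so that
\begin{align*}
\big(n_{a;T}/\sigma^2\big)^{1/2}\big(\hat\mu_{a;T}-\mu_a\big)=\frac{1}{n_{a;T}^{1/2}}\sum_{s\in[T]}\bm{1}_{A_s=a}\xi_s=:\frac{S_{a;T}}{n_{a;T}^{1/2}},
\end{align*}
and it remains to identify the limiting law of $S_{a;T}/n_{a;T}^{1/2}$ in each regime.

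For a suboptimal arm $a\in\mathcal{A}_+$, I would work with the natural filtration $\{\mathcal{F}_t\}$ under which $\bm{1}_{A_t=a}$ is $\mathcal{F}_{t-1}$-measurable (the choice $A_t$ uses past rewards and the fresh sampling variables $Z_{\cdot;t}$) while $\xi_t$ is independent of $\mathcal{F}_{t-1}$ with mean $0$ and variance $1$. Then $(S_{a;n})_n$ is a martingale with increments $\bm{1}_{A_n=a}\xi_n$ and predictable quadratic variation $\langle S_a\rangle_n=\sum_{s\le n}\bm{1}_{A_s=a}=n_{a;n}$. A clean route is to pass to the (a.s.\ finite, since $n_{a;T}\to\infty$) times $\sigma_1<\sigma_2<\cdots$ of the successive pulls of arm $a$: an optional-sampling argument shows that $(\xi_{\sigma_j})_{j\ge1}$ is an i.i.d.\ sequence with the law of $\xi$, so $S_{a;T}=\sum_{j=1}^{n_{a;T}}\xi_{\sigma_j}$ is a random-length sum of i.i.d.\ mean-zero, unit-variance variables. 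Since Theorem~\ref{thm:suboptimal_arm_pull} provides a deterministic sequence $c_T\equiv\sigma^2[\bar\Phi^-(1/T)/\Delta_a]^2\to\infty$ with $n_{a;T}/c_T\stackrel{\Prob}{\to}1$, Anscombe's theorem yields $S_{a;T}/n_{a;T}^{1/2}\rightsquigarrow\mathcal{N}(0,1)$; a union over the finitely many $a\in\mathcal{A}_+$ completes this regime. (Equivalently, one may invoke a self-normalized martingale CLT, normalizing by the deterministic $c_T$ to avoid circularity in the conditional Lindeberg condition, which then reduces to $\E[\xi^2\bm{1}_{\{|\xi|>\epsilon c_T^{1/2}\}}]\to0$.)

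For an optimal arm $a\in\mathcal{A}_0$, I would instead decompose
\begin{align*}
\frac{S_{a;T}}{n_{a;T}^{1/2}}=\frac{T^{-1/2}S_{a;T}}{(n_{a;T}/T)^{1/2}}
\end{align*}
and invoke Theorem~\ref{thm:optimal_arm_pull}, which gives the joint weak convergence of $\big((n_{a;T}/T)_{a\in\mathcal{A}_0},(T^{-1/2}S_{a;T})_{a\in\mathcal{A}_0}\big)$ to $\mathbb{L}_{\abs{\mathcal{A}_0}}^\ast$, a random element of $E_0=\Delta_{\mathcal{A}_0}^\circ\times\R^{\mathcal{A}_0}$. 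Since the law $\mu_{\abs{\mathcal{A}_0}}^\ast$ charges only $E_0$, the coordinate $\mathbb{L}_{\abs{\mathcal{A}_0};a}^{[r],\ast}$ is strictly positive almost surely, so the map $(r,\xi)\mapsto\xi/r^{1/2}$ is $\mathbb{L}_{\abs{\mathcal{A}_0}}^\ast$-a.s.\ continuous in the relevant two coordinates; the continuous mapping theorem then gives $S_{a;T}/n_{a;T}^{1/2}\rightsquigarrow\mathbb{L}_{\abs{\mathcal{A}_0};a}^{[\xi],\ast}/\{\mathbb{L}_{\abs{\mathcal{A}_0};a}^{[r],\ast}\}^{1/2}$, whose law is $\mathscr{N}_{\abs{\mathcal{A}_0}}$ by definition (using also that the joint laws of $(\mathbb{L}_{\abs{\mathcal{A}_0};a}^{[r],\ast},\mathbb{L}_{\abs{\mathcal{A}_0};a}^{[\xi],\ast})$ agree across $a\in\mathcal{A}_0$). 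When $\abs{\mathcal{A}_0}=1$, Proposition~\ref{prop:sde_invariant_measure} gives $\mu_1^\ast=\delta_{\{1\}}\otimes\mathcal{N}(0,1)$, so the limit is $\mathcal{N}(0,1)=\mathscr{N}_1$, consistently.

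The substantive content lies entirely in the two inputs, Theorems~\ref{thm:suboptimal_arm_pull} and~\ref{thm:optimal_arm_pull}; granted these, the only delicate point is the suboptimal regime, where $n_{a;T}$ plays simultaneously the roles of normalizer and of predictable quadratic variation of the martingale. The main obstacle there is to handle this self-normalization cleanly: one must (i) verify the i.i.d.\ structure of the reindexed noise $(\xi_{\sigma_j})$ via optional sampling, and (ii) certify the random-index CLT using the asymptotic determinism of $n_{a;T}$ furnished by Theorem~\ref{thm:suboptimal_arm_pull} --- precisely the ingredient that makes Anscombe's theorem (or the deterministically-normalized martingale CLT) applicable. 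For the optimal regime there is essentially nothing beyond the continuous mapping theorem, the sole check being positivity of the limiting $r$-coordinate, which is immediate from the support of $\mu_{\abs{\mathcal{A}_0}}^\ast$.
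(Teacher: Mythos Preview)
Your proposal is correct and essentially matches the paper's proof. For optimal arms you do exactly what the paper does: write the statistic as $(T^{-1/2}S_{a;T})/(n_{a;T}/T)^{1/2}$, invoke Theorem~\ref{thm:optimal_arm_pull}, and apply the continuous mapping theorem using that $\mu_{\abs{\mathcal{A}_0}}^\ast$ is supported on $E_0$.

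For suboptimal arms there is a mild packaging difference. The paper takes your parenthetical route directly: it sets $X_{T,s}=(n_{a;T}^\ast)^{-1/2}\bm{1}_{A_s=a}\xi_s$ with $n_{a;T}^\ast=\sigma^2[\bar\Phi^-(1/T)/\Delta_a]^2$, applies the martingale CLT (Lemma~\ref{lem:mtg_clt}) using $\sum_s\E[X_{T,s}^2\mid\mathscr{F}_{s-1}]=n_{a;T}/n_{a;T}^\ast\stackrel{\Prob}{\to}1$ and the Lindeberg condition $\E\xi_1^2\bm{1}_{\abs{\xi_1}>\epsilon(n_{a;T}^\ast)^{1/2}}\to 0$, and then converts back via $n_{a;T}^\ast/n_{a;T}\stackrel{\Prob}{\to}1$. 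Your primary route via Anscombe's theorem is an equally valid alternative; the reindexed noise $(\xi_{\sigma_j})$ is exactly the i.i.d.\ array $\{\xi_{a;i}\}$ the paper introduces in~\eqref{def:R_a_i}, and Anscombe's random-index CLT applies with $n_{a;T}/n_{a;T}^\ast\stackrel{\Prob}{\to}1$ regardless of the dependence between $n_{a;T}$ and the noise. The two arguments are interchangeable, the paper's choice simply avoids appealing to the i.i.d.\ reindexing and keeps everything in the martingale array framework already used elsewhere.
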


\begin{figure}[t]
	\begin{minipage}[t]{0.495\textwidth}
		\includegraphics[width=\textwidth]{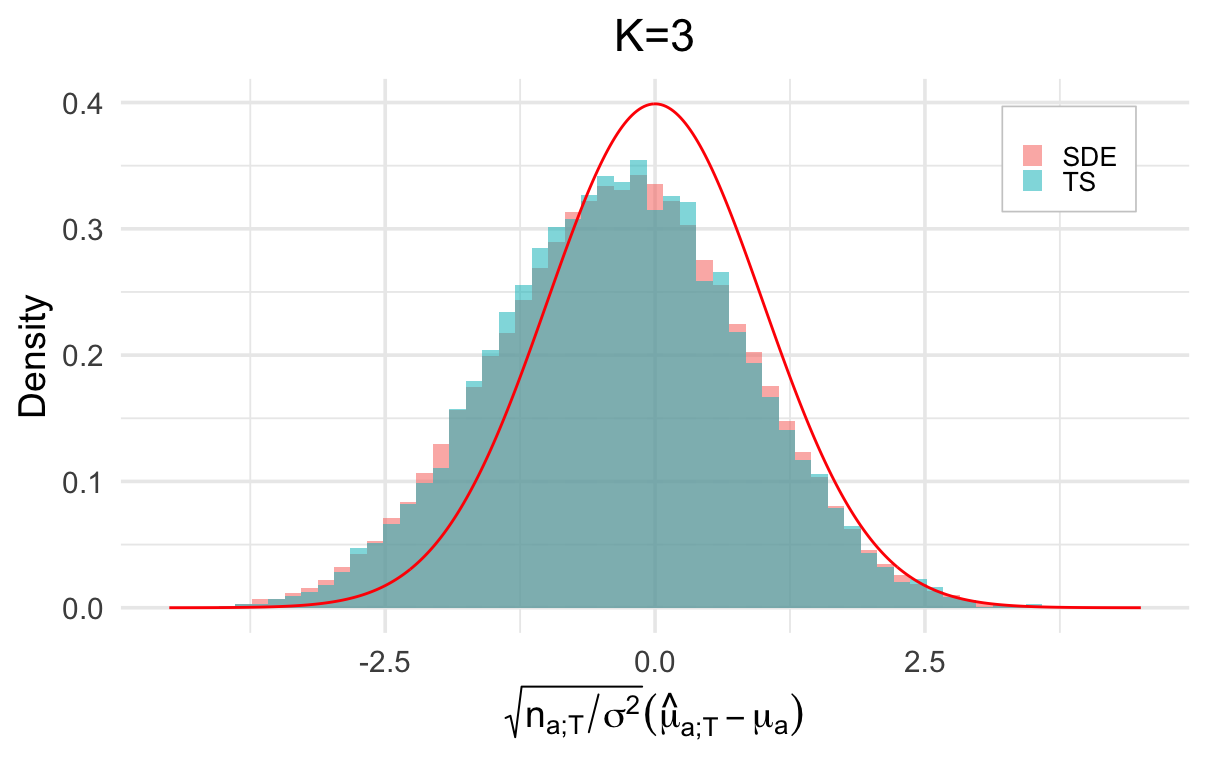}
	\end{minipage}
	\begin{minipage}[t]{0.495\textwidth}
		\includegraphics[width=\textwidth]{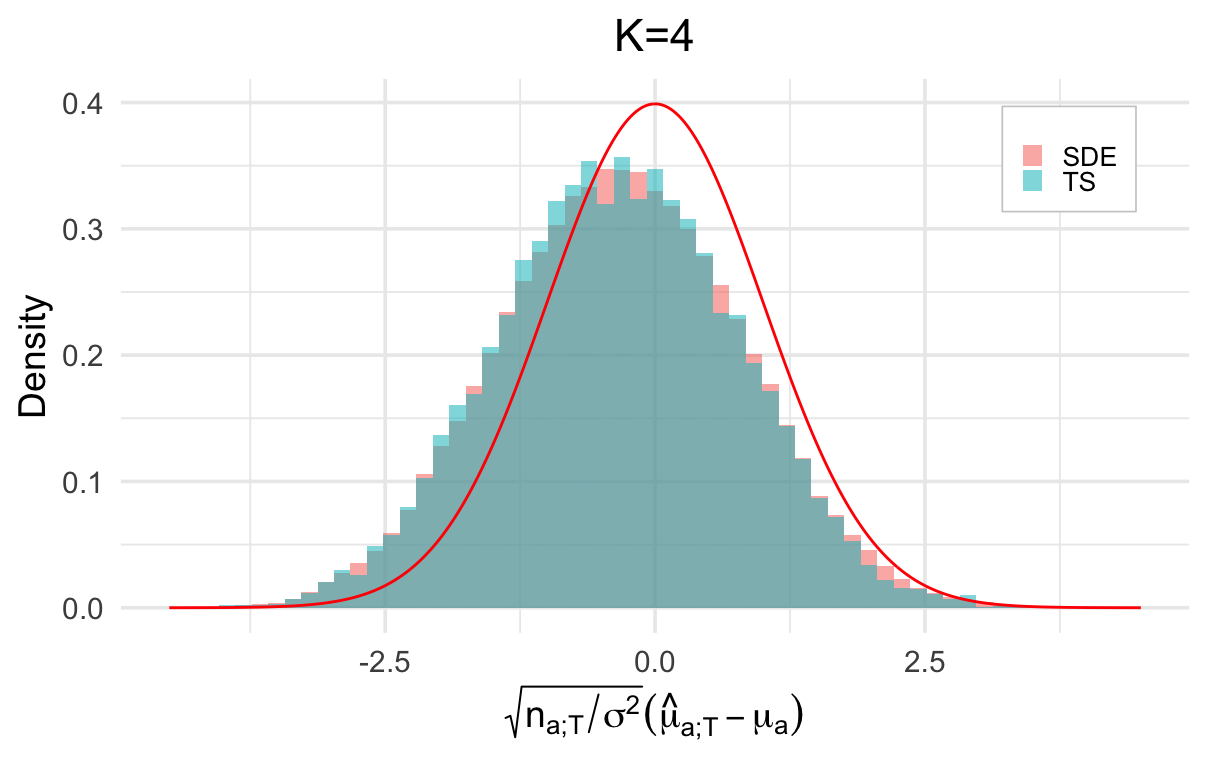}
	\end{minipage}
	\caption{Distributional comparison between $\mathscr{N}_K$ and $\mathcal{N}(0,1)$ for $K=3,4$ in Gaussian Thompson sampling. \emph{Pink histogram}: distribution of normalized empirical means. \emph{Blue histogram}: distribution of $\mathscr{N}_K$ from SDE. \emph{Red curve}: distribution of $\mathcal{N}(0,1)$.}
	\label{fig:comp_N_normal}
\end{figure}

While we do not have detailed analytic characterizations of the laws $\mathscr{N}_K$, numerical simulations suggest that they are genuinely different from $\mathcal{N}(0,1)$. Figure \ref{fig:comp_N_normal} compares the distributions of (i) the normalized empirical mean $\big(n_{a;T}/\sigma^2\big)^{1/2}\big(\hat{\mu}_{a;T}-\mu_a\big)$, (ii) the limiting distribution $\mathscr{N}_K$, and (iii) the standard normal $\mathcal{N}(0,1)$, for $K=3,4$. As seen in Figure \ref{fig:comp_N_normal}, the distributions in (i) and (ii) are very close, as predicted by Theorem \ref{thm:arm_mean_dist}, while both deviate substantially from $\mathcal{N}(0,1)$. Other values of $K\ge 2$ exhibit a similar qualitative behavior.

Theorem \ref{thm:arm_mean_dist} is closely tied to the notion of stability in Definition \ref{def:stability}. In particular, it shows that the normalized arm mean is asymptotically normal if and only if the arm is stable, and, in view of the discussion in Section \ref{subsection:stability}, if and only if the effect of statistical noise is asymptotically negligible.

From a broader perspective, the validity of asymptotic-normality-based inference has been established for stable bandit algorithms \cite{khamaru2024inference,han2024ucb,fan2024precise,fan2025statistical,halder2025stable}, while the failure of such inference has been documented in a number of unstable settings \cite{zhang2020inference,deshpande2023online,khamaru2025near}. Here Theorem \ref{thm:arm_mean_dist} provides the first example, in the context of canonical Thompson sampling, in which both the validity and invalidity of normality-based inference coexist across arms.

\subsection{Inference for the arm mean}

The limit distribution theory in Theorem \ref{thm:arm_mean_dist} can be naturally inverted to construct confidence intervals for both suboptimal and optimal arms using the empirical mean $\hat{\mu}_{a;T}$ and the arm-pull count $n_{a;T}$.

Specifically, let $z_\alpha$ and $z_\alpha(\mathscr{N}_K)$ denote the $\alpha$-quantiles of $\mathcal{N}(0,1)$ and $\mathscr{N}_K$, respectively. Consider the following $(1-\alpha)$ confidence interval (CI):
\begin{align}\label{def:CI_mean}
\mathsf{CI}_a(\alpha)\equiv 
\begin{cases}
\big[\hat{\mu}_{a;T}+ \frac{z_{\alpha/2}}{\{n_{a;T}/\sigma^2\}^{1/2}},\ \hat{\mu}_{a;T}+ \frac{z_{1-\alpha/2}}{\{n_{a;T}/\sigma^2\}^{1/2}}\big], & \text{if } a \in \mathcal{A}_+,\\[0.2em]
\big[\hat{\mu}_{a;T}+ \frac{z_{\alpha/2}(\mathscr{N}_{\abs{\mathcal{A}_0}})}{\{n_{a;T}/\sigma^2\}^{1/2}},\ \hat{\mu}_{a;T}+ \frac{z_{1-\alpha/2}(\mathscr{N}_{\abs{\mathcal{A}_0}})}{\{n_{a;T}/\sigma^2\}^{1/2}}\big], & \text{if } a \in \mathcal{A}_0.
\end{cases}
\end{align}
By Theorem \ref{thm:arm_mean_dist}, together with the observation that $\mathbb{L}_{\abs{[K]};1}^{[\xi],\ast}$ is atomless (from the second equation of the SDE \eqref{def:sde_time_change_renor}) and hence so is $\mathscr{N}_K$, the above CIs have the correct asymptotic coverage:
\begin{corollary}
	For any $\alpha \in (0,1)$, we have
	\begin{align*}
	\lim_{T \to \infty}\max_{a \in [K]}\,\abs{\Prob\big(\mu_a \in \mathsf{CI}_a(\alpha)\big)-(1-\alpha)}=0.
	\end{align*}
\end{corollary}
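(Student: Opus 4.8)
The plan is to reduce the statement to the per-arm distributional limit of Theorem~\ref{thm:arm_mean_dist} by an elementary rewriting of the coverage event, and then to pass to the limit using atomlessness of the limiting laws. Since $\max_{a\in[K]}$ runs over a fixed finite set, it suffices to prove $\lim_{T\to\infty}\Prob(\mu_a\in\mathsf{CI}_a(\alpha))=1-\alpha$ for each fixed $a\in[K]$. Write $S_{a;T}\equiv (n_{a;T}/\sigma^2)^{1/2}(\hat{\mu}_{a;T}-\mu_a)$ for the pivot appearing in Theorem~\ref{thm:arm_mean_dist}. First, by Theorems~\ref{thm:suboptimal_arm_pull} and~\ref{thm:optimal_arm_pull}, $n_{a;T}\stackrel{\Prob}{\to}\infty$ for every $a\in[K]$: for $a\in\mathcal{A}_+$ because $\bar{\Phi}^-(1/T)\uparrow\infty$ under (B1), and for $a\in\mathcal{A}_0$ because $n_{a;T}/T$ has a strictly positive weak limit. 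In particular $\Prob(n_{a;T}=0)=o(1)$, an event on which the CI in \eqref{def:CI_mean} is degenerate and can be discarded. On $\{n_{a;T}>0\}$, multiplying both defining inequalities of $\mathsf{CI}_a(\alpha)$ by $\{n_{a;T}/\sigma^2\}^{1/2}>0$ and rearranging gives the exact identity
\begin{align*}
\{\mu_a\in\mathsf{CI}_a(\alpha)\}\cap\{n_{a;T}>0\}=\{q_-\le -S_{a;T}\le q_+\}\cap\{n_{a;T}>0\},
\end{align*}
where $(q_-,q_+)=(z_{\alpha/2},z_{1-\alpha/2})$ for $a\in\mathcal{A}_+$ and $(q_-,q_+)=(z_{\alpha/2}(\mathscr{N}_{\abs{\mathcal{A}_0}}),z_{1-\alpha/2}(\mathscr{N}_{\abs{\mathcal{A}_0}}))$ for $a\in\mathcal{A}_0$.

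Next I would pass to the limit. By Theorem~\ref{thm:arm_mean_dist}, $S_{a;T}\rightsquigarrow L_a$ with $L_a=\mathcal{N}(0,1)$ for $a\in\mathcal{A}_+$ and $L_a=\mathscr{N}_{\abs{\mathcal{A}_0}}$ for $a\in\mathcal{A}_0$, hence $-S_{a;T}\rightsquigarrow -L_a$ by the continuous mapping theorem. Both limit laws are atomless: trivially for $\mathcal{N}(0,1)$, and for $\mathscr{N}_{\abs{\mathcal{A}_0}}$ because the diffusion coefficient $\sqrt{p_a}>0$ of the $w$-component of the SDE \eqref{def:sde_time_change_renor} is strictly positive, so that $\mathbb{L}_{\abs{[K]};1}^{[\xi],\ast}$ (and hence $\mathscr{N}_{\abs{\mathcal{A}_0}}$) has a continuous distribution function, as recorded just before the corollary. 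Thus $q_-,q_+$ are continuity points of the distribution function of $-L_a$, so by the portmanteau theorem and the $o(1)$ bound from the $\{n_{a;T}=0\}$ event,
\begin{align*}
\Prob\big(\mu_a\in\mathsf{CI}_a(\alpha)\big)=\Prob\big(q_-\le -S_{a;T}\le q_+\big)+o(1)\longrightarrow \Prob\big(q_-\le -L_a\le q_+\big)\qquad(T\to\infty).
\end{align*}
Finally, since $\mathcal{N}(0,1)$ and $\mathscr{N}_{\abs{\mathcal{A}_0}}$ are symmetric about the origin, $-L_a\equald L_a$, so that by atomlessness and the definition of the quantiles $q_\pm$, $\Prob(q_-\le -L_a\le q_+)=F_{L_a}(q_+)-F_{L_a}(q_-)=(1-\alpha/2)-(\alpha/2)=1-\alpha$. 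Taking the maximum over the finite index set $[K]$ yields the claim.

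The argument is a routine consequence of Theorem~\ref{thm:arm_mean_dist} once the two structural facts about the invariant law $\mu_{\abs{\mathcal{A}_0}}^\ast$ are in hand, and I do not anticipate any serious obstacle. The one point deserving care is the calibration in the last display for optimal arms, i.e., that the two-sided interval \eqref{def:CI_mean} captures exactly the central $(1-\alpha)$ mass of $\mathscr{N}_{\abs{\mathcal{A}_0}}$; this rests on the atomlessness of $\mathscr{N}_{\abs{\mathcal{A}_0}}$ (which is supplied) together with its symmetry about the origin, and beyond these properties of $\mu_{\abs{\mathcal{A}_0}}^\ast$ nothing substantial is needed. (If one prefers to avoid invoking symmetry, the same portmanteau step applied with the unconditional pair of quantiles of $L_a$ shows that the one-sided rearrangement $\{z_{\alpha/2}(L_a)\le S_{a;T}\le z_{1-\alpha/2}(L_a)\}$ also has limiting probability $1-\alpha$, which is the identity actually used in Step~1.)
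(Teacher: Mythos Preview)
Your overall approach---rewrite the coverage event in terms of the pivot $S_{a;T}$, apply Theorem~\ref{thm:arm_mean_dist}, and use atomlessness of the limit to invoke portmanteau---is exactly what the paper does (its proof is the single sentence immediately preceding the corollary). The rewriting and the weak-convergence step are both correct.

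The gap is the symmetry claim. You assert that $\mathscr{N}_{\abs{\mathcal{A}_0}}$ is symmetric about the origin, but this is not established anywhere in the paper; on the contrary, the paper explicitly remarks just after Table~\ref{table:quantile_N_K} that the simulated quantiles of $\mathscr{N}_K$ ``appear genuinely asymmetric.'' Your parenthetical fallback does not close the gap either: the event $\{z_{\alpha/2}(L_a)\le S_{a;T}\le z_{1-\alpha/2}(L_a)\}$ does have limiting probability $1-\alpha$, but your own rewriting shows that $\{\mu_a\in\mathsf{CI}_a(\alpha)\}$ equals $\{z_{\alpha/2}(L_a)\le -S_{a;T}\le z_{1-\alpha/2}(L_a)\}$, and these coincide only under the very symmetry you are trying to avoid. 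Without symmetry the limit is $\Prob(-z_{1-\alpha/2}(L_a)\le L_a\le -z_{\alpha/2}(L_a))$, which need not be $1-\alpha$. What your careful rewriting has actually surfaced is a sign issue in the interval \eqref{def:CI_mean} itself: inverting the pivot $S_{a;T}\rightsquigarrow L_a$ directly gives the interval $[\hat\mu_{a;T}-z_{1-\alpha/2}(L_a)/\{n_{a;T}/\sigma^2\}^{1/2},\ \hat\mu_{a;T}-z_{\alpha/2}(L_a)/\{n_{a;T}/\sigma^2\}^{1/2}]$, which agrees with \eqref{def:CI_mean} for $a\in\mathcal{A}_+$ (since $\mathcal{N}(0,1)$ is symmetric) but not for an asymmetric $\mathscr{N}_K$.
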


To construct the CIs in \eqref{def:CI_mean}, it is therefore crucial to know the quantiles $z_\alpha(\mathscr{N}_K)$. Although the laws $\{\mathscr{N}_K\}$ are generally not analytically known, their quantiles can be obtained easily by simulation. In Table \ref{table:quantile_N_K} below, we report a selected collection of these quantiles $\{z_\alpha(\mathscr{N}_K)\}$ for Gaussian Thompson sampling.
\begin{table}[ht]
	\centering
	\begin{tabular}{c|ccccccccc}
		\hline
		\hline
		$\bm{\alpha \%}$ & \textbf{2.5} & \textbf{5} & \textbf{10} & \textbf{25} & \textbf{50} & \textbf{75} & \textbf{90} & \textbf{95} & \textbf{97.5} \\
		\hline
		$\mathscr{N}_2$ & -2.57 & -2.20 & -1.76 & -1.02 & -0.22  & 0.53  & 1.19  & 1.57 & 1.90  \\
		$\mathscr{N}_3$ & -2.52 & -2.19 & -1.79 & -1.09 & -0.30 & 0.46 & 1.12 & 1.50 & 1.84 \\
		$\mathscr{N}_4$ & -2.52 & -2.18 & -1.78 & -1.10 & -0.34  & 0.42  & 1.08  &1.48  &1.82 \\
		$\mathscr{N}_5$ & -2.53 & -2.19 & -1.80 & -1.12 & -0.36  & 0.39  & 1.05  &1.45  &1.79 \\
		$\mathscr{N}_6$ & -2.51 & -2.17 & -1.78 & -1.11 & -0.37  & 0.38  & 1.05  &1.44  &1.78 \\
		\hline
		\hline
	\end{tabular}
    \caption{Simulated quantiles $z_\alpha(\mathscr{N}_K)$. }
    \label{table:quantile_N_K}
\end{table}

A notable feature of the simulated quantiles of $\mathscr{N}_K$ is that these distributions appear genuinely asymmetric. Consequently, both the lower and upper quantiles reported in Table \ref{table:quantile_N_K} should be used when constructing confidence intervals.

We also note that when the noise level $\sigma$ is unknown in \eqref{def:CI_mean}, we may replace it with any consistent estimator $\hat{\sigma}$. For instance, one may use the following averaged variance estimator over suboptimal arms:
\begin{align}\label{def:var_est}
\hat{\sigma}^2\equiv \frac{1}{\abs{\mathcal{A}_+}}\sum_{a \in \mathcal{A}_+} \bigg(\frac{1}{n_{a;T}}\sum_{t \in [T]} (R_t-\hat{\mu}_{a;t})^2\bm{1}_{A_t = a}\bigg).
\end{align}
Using a martingale argument in combination with Theorem \ref{thm:suboptimal_arm_pull}, we may prove the consistency of $\hat{\sigma}^2$; details are given in Section \ref{subsection:proof_var_consist}.
\begin{proposition}\label{prop:var_consist}
	Suppose Assumptions \ref{assump:noise} and \ref{assump:sampling_dist} hold, and
	\begin{align}\label{cond:var_consist}
	\limsup_{T \to \infty} \max_{a \in \mathcal{A}_+} \E n_{a;T}/n_{a;T}^\ast<\infty,\quad \text{where } n_{a;T}^\ast \equiv \sigma^2 [\bar{\Phi}^-(1/T)/\Delta_a]^2.
	\end{align}
	Then $\hat{\sigma}^2 \to \sigma^2$ in probability.
\end{proposition}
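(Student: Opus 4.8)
The plan is to reduce to a per-arm statement and prove that, for each fixed $a\in\mathcal{A}_+$,
\[
V_{a;T}\equiv\frac{1}{n_{a;T}}\sum_{t\in[T]}(R_t-\hat{\mu}_{a;t})^2\bm{1}_{A_t=a}\;\stackrel{\Prob}{\to}\;\sigma^2 ,
\]
after which averaging over the fixed finite set $\mathcal{A}_+$ yields $\hat{\sigma}^2\stackrel{\Prob}{\to}\sigma^2$. Fix $a\in\mathcal{A}_+$ and let $\{\mathcal{G}_t\}$ be the natural filtration, so that $\bm{1}_{A_t=a}$ is $\mathcal{G}_{t-1}$-measurable while $\xi_t$ is independent of $\mathcal{G}_{t-1}$ with mean $0$ and variance $1$. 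Writing $\bar{\xi}_{a;t-1}\equiv n_{a;t-1}^{-1}\sum_{s<t}\bm{1}_{A_s=a}\xi_s$, a short computation gives, on $\{A_t=a\}$, the exact identity $R_t-\hat{\mu}_{a;t}=\sigma\cdot\tfrac{n_{a;t-1}}{n_{a;t-1}+1}\,(\xi_t-\bar{\xi}_{a;t-1})$. Since $(n_{a;t-1}/(n_{a;t-1}+1))^2=1-\bigo(1/n_{a;t-1})$, expanding the square yields
\[
\frac{n_{a;T}}{\sigma^2}\,V_{a;T}=\sum_{t\in[T]}\bm{1}_{A_t=a}\xi_t^2\;-\;2\sum_{t\in[T]}\bm{1}_{A_t=a}\xi_t\bar{\xi}_{a;t-1}\;+\;\sum_{t\in[T]}\bm{1}_{A_t=a}\bar{\xi}_{a;t-1}^2\;+\;\rem_T ,
\]
where $\abs{\rem_T}\lesssim\bigop(1)+\sum_{t\in[T]}\bm{1}_{A_t=a}\,n_{a;t-1}^{-1}(\xi_t^2+\bar{\xi}_{a;t-1}^2)$. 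By Theorem \ref{thm:suboptimal_arm_pull}, $n_{a;T}/n_{a;T}^\ast\stackrel{\Prob}{\to}1$ and $n_{a;T}^\ast\to\infty$; hence it suffices to prove that the first sum equals $(1+\smallop(1))n_{a;T}$ and that the other three terms are $\smallop(n_{a;T})$.

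For the main sum, write $\sum_{t\in[T]}\bm{1}_{A_t=a}\xi_t^2=n_{a;T}+N_T$, where $N_T$ is a martingale since $\E[\bm{1}_{A_t=a}\xi_t^2\mid\mathcal{G}_{t-1}]=\bm{1}_{A_t=a}$; the goal is $N_T=\smallop(n_{a;T})$. Because Assumption \ref{assump:noise} supplies only a second moment on $\xi_t$, $N_T$ has no finite predictable quadratic variation a priori, so I would truncate: fix $L>0$ and split $\xi_t^2=\xi_t^2\bm{1}_{\abs{\xi_t}\le L}+\xi_t^2\bm{1}_{\abs{\xi_t}>L}$. The centered truncated part is a bounded martingale-difference sequence with conditional second moment $\le L^2\bm{1}_{A_t=a}$, so the associated martingale has second moment $\le L^2\,\E n_{a;T}$; dividing by $(n_{a;T}^\ast)^2$ and invoking \eqref{cond:var_consist} gives a bound $\lesssim L^2/n_{a;T}^\ast\to0$. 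The tail part $\sum_{t\in[T]}\bm{1}_{A_t=a}\xi_t^2\bm{1}_{\abs{\xi_t}>L}$ is nonnegative with expectation $\E n_{a;T}\cdot\E[\xi_1^2\bm{1}_{\abs{\xi_1}>L}]$ (using independence of $\xi_t$ from $\{A_t=a\}$), which by \eqref{cond:var_consist} is $\lesssim n_{a;T}^\ast\cdot\E[\xi_1^2\bm{1}_{\abs{\xi_1}>L}]$; dividing by $n_{a;T}^\ast$ and then letting $L\to\infty$ (so that $\E[\xi_1^2\bm{1}_{\abs{\xi_1}>L}]\to0$) makes this negligible. Combining the two pieces with $\E[\xi_1^2\bm{1}_{\abs{\xi_1}\le L}]\to1$ and $n_{a;T}/n_{a;T}^\ast\stackrel{\Prob}{\to}1$, a routine $\varepsilon$-of-room argument gives $\sum_{t\in[T]}\bm{1}_{A_t=a}\xi_t^2=(1+\smallop(1))n_{a;T}$. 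This is the only step where hypothesis \eqref{cond:var_consist} is used.

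For the remaining three terms, note that $\bar{\xi}_{a;t-1}$ is the average of the $n_{a;t-1}$ noises revealed along the first $n_{a;t-1}$ pulls of arm $a$, which form an i.i.d.\ sequence of copies of $\xi_1$, so $\E[\bar{\xi}_{a;t-1}^2\mid n_{a;t-1}]=1/n_{a;t-1}$ on $\{n_{a;t-1}\ge1\}$. Consequently, on the high-probability event $\{n_{a;T}\le(1+\varepsilon)n_{a;T}^\ast\}$ from Theorem \ref{thm:suboptimal_arm_pull}, both $\sum_{t\in[T]}\bm{1}_{A_t=a}\bar{\xi}_{a;t-1}^2$ and $\sum_{t\in[T]}\bm{1}_{A_t=a}n_{a;t-1}^{-1}(\xi_t^2+\bar{\xi}_{a;t-1}^2)$ have expectation $\bigo(\log n_{a;T}^\ast)=\smallo(n_{a;T}^\ast)$, so these terms (and $\rem_T$) are $\smallop(n_{a;T})$ by Markov's inequality. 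The cross term $\sum_{t\in[T]}\bm{1}_{A_t=a}\xi_t\bar{\xi}_{a;t-1}$ is itself a martingale whose predictable quadratic variation equals $\sum_{t\in[T]}\bm{1}_{A_t=a}\bar{\xi}_{a;t-1}^2=\smallop(n_{a;T}^\ast)$, hence it is $\smallop(n_{a;T}^\ast)$ by Lenglart's inequality. Assembling the bounds gives $\tfrac{n_{a;T}}{\sigma^2}V_{a;T}=n_{a;T}(1+\smallop(1))$, i.e.\ $V_{a;T}\stackrel{\Prob}{\to}\sigma^2$, and averaging over $a\in\mathcal{A}_+$ finishes the argument.

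The main obstacle is the weakness of Assumption \ref{assump:noise}: with only a second moment on the noise, $N_T$ has infinite quadratic variation in general, which forces the truncation above, and the truncated and tail contributions can then only be controlled through $\E n_{a;T}$. By the heavy-tail phenomenon for arm-pull counts discussed after Corollary \ref{cor:suboptimal_arm_rates}, $\E n_{a;T}$ may be of strictly larger order than the typical value $n_{a;T}^\ast$, which is precisely why an a priori bound of the form \eqref{cond:var_consist} is imposed; granted it, the rest is a routine combination of Doob/Lenglart martingale inequalities with the sharp pull-count control of Theorem \ref{thm:suboptimal_arm_pull}.
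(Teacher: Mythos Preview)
Your approach is essentially the same as the paper's: reduce to a per-arm statement, control the leading term $\tfrac{1}{n_{a;T}}\sum_{t}\bm{1}_{A_t=a}(\xi_t^2-\sigma^2)$ by truncation (the paper packages the very same truncation into a martingale weak law of large numbers, Lemma~\ref{lem:mtg_wlln}), and invoke Theorem~\ref{thm:suboptimal_arm_pull} for $n_{a;T}/n_{a;T}^\ast\stackrel{\Prob}{\to}1$. You are in fact more careful than the paper about the centering: you correctly expand $(R_t-\hat{\mu}_{a;t})^2$ and separate out the $\bar{\xi}_{a;t-1}$ terms, whereas the paper's displayed identity $\hat{\sigma}_a^2-\sigma^2=\tfrac{1}{n_{a;T}}\sum_t\bm{1}_{A_t=a}(\xi_t^2-\sigma^2)$ silently drops these lower-order pieces.

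There is, however, a genuine gap in your treatment of those extra pieces. The claim $\E[\bar{\xi}_{a;t-1}^{\,2}\mid n_{a;t-1}]=1/n_{a;t-1}$ is not correct: although the noises $\xi_{a;1},\xi_{a;2},\ldots$ along arm~$a$ are i.i.d., the count $n_{a;t-1}$ is adaptive and depends on them (e.g.\ a large $\xi_{a;1}$ inflates the running mean of arm~$a$ and influences subsequent pulls), so conditioning on $n_{a;t-1}$ biases the partial average. Consequently your moment computation for $\sum_t\bm{1}_{A_t=a}\bar{\xi}_{a;t-1}^{\,2}$ (and hence the Lenglart step for the cross term) is not justified as written. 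The fix is easy and already sitting in the paper: work on the pathwise event $E_\xi(x)$ of \eqref{def:event_E_xi} (Lemma~\ref{lem:max_err}), on which $\lvert\bar{\xi}_{a;t-1}\rvert\le x\,n_{a;t-1}^{-1/2}\log(n_{a;t-1}\vee e)$ deterministically. Then
\[
\sum_{t\in[T]}\bm{1}_{A_t=a}\,\bar{\xi}_{a;t-1}^{\,2}
=\sum_{k=2}^{n_{a;T}}\Bigl(\tfrac{1}{k-1}\sum_{i<k}\xi_{a;i}\Bigr)^{2}
\le x^{2}\sum_{k=2}^{n_{a;T}}\frac{\log^{2}k}{k-1}
=\bigo\big((\log n_{a;T})^{3}\big)=\smallop(n_{a;T}),
\]
which immediately handles both remainder sums and the predictable quadratic variation of the cross term; the rest of your argument then goes through unchanged.
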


Condition \eqref{cond:var_consist} is satisfied by Gaussian Thompson sampling; see, e.g., \cite{agrawal2017near}. It is worth noting, however, that \eqref{cond:var_consist} is not implied by Theorem \ref{thm:suboptimal_arm_pull}. Indeed, while Theorem \ref{thm:suboptimal_arm_pull} asserts that $\max_{a \in \mathcal{A}_+}\abs{n_{a;T}/n_{a;T}^\ast-1} \stackrel{\Prob}{\to} 0$, the expectation $\E n_{a;T}$ can be of substantially larger order than $n_{a;T}^\ast$; see, for example, the discussion following Corollary \ref{cor:suboptimal_arm_rates} (in particular, after \eqref{ineq:lai_robbins}).

\subsection{Some illustrative simulations}

We now present illustrative simulations for the proposed confidence intervals in \eqref{def:CI_mean}. Specifically, we compare the CIs in \eqref{def:CI_mean} with CIs constructed by (incorrectly) using the critical values of $\mathcal{N}(0,1)$ for all arms. To highlight the main points, we assume that the noise level is known and set $\sigma=1$.

We examine the performance of these two types of CIs in the following settings:
\begin{itemize}
	\item (\emph{Setting 1}). $K=4$ with $\Delta_1=\Delta_2=0$ and $\Delta_3=0.5$, $\Delta_4=1$.
	\item (\emph{Setting 2}). $K=5$ with $\Delta_1=\Delta_2=\Delta_3=0$ and $\Delta_4=0.5$, $\Delta_5=1$.
\end{itemize}
Note that in Setting~1 (resp.\ Setting~2), there are two (resp.\ three) optimal arms. Accordingly, the CIs in \eqref{def:CI_mean} for these optimal arms use the critical values of $\mathscr{N}_2$ (resp.\ $\mathscr{N}_3$) from Table \ref{table:quantile_N_K}.

\begin{figure}[t]
	\begin{minipage}[t]{0.495\textwidth}
		\includegraphics[width=\textwidth]{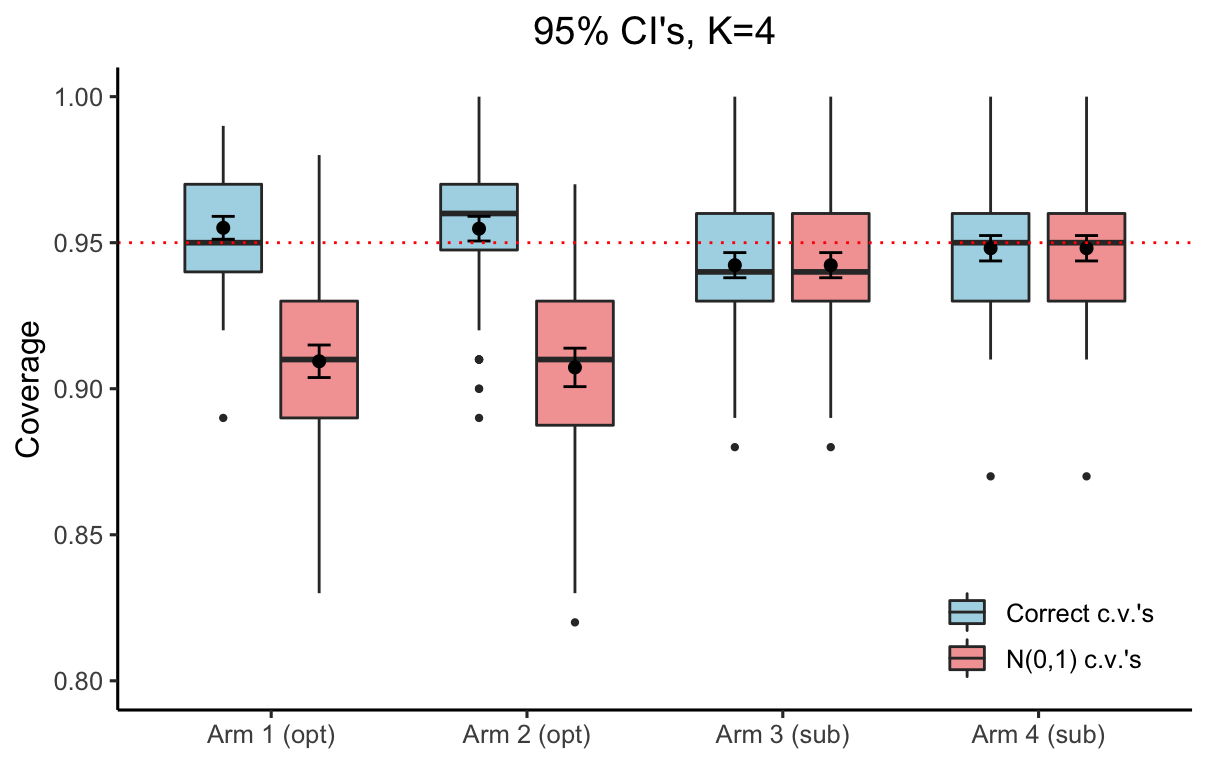}
	\end{minipage}
	\begin{minipage}[t]{0.495\textwidth}
		\includegraphics[width=\textwidth]{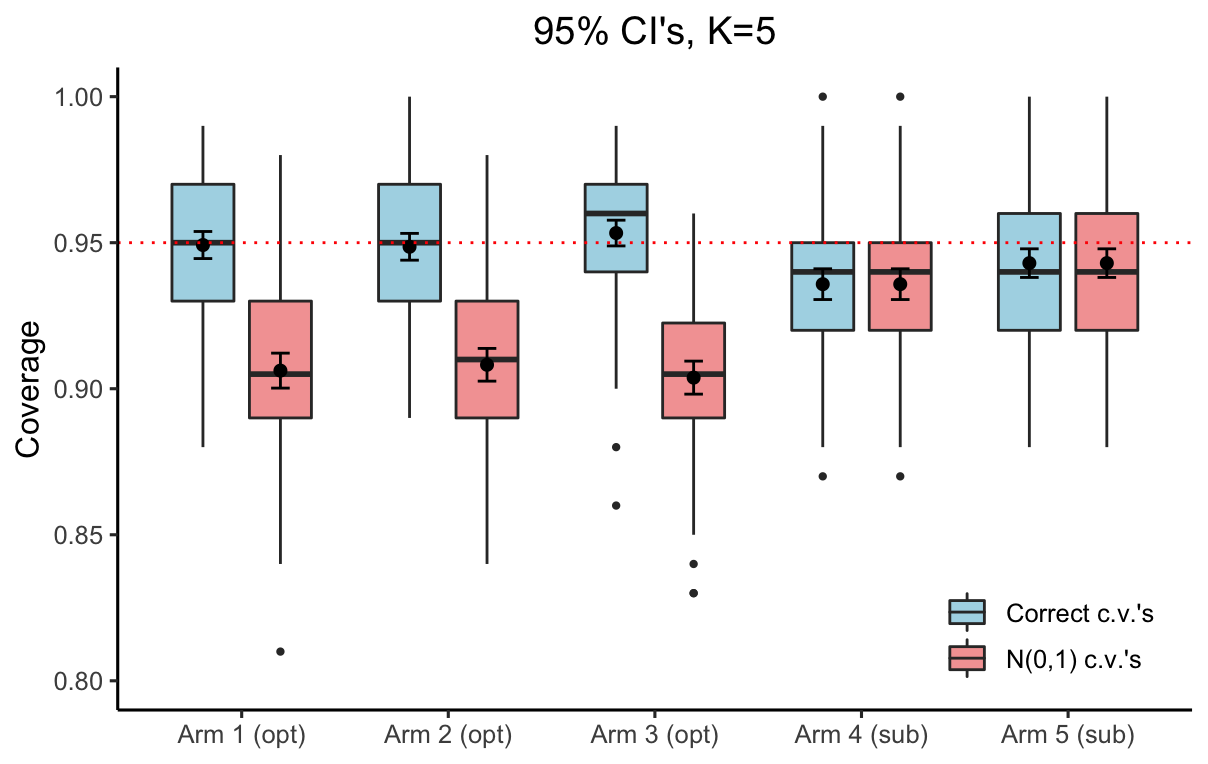}
	\end{minipage}
	\caption{Coverage of 95\% confidence intervals. \emph{Blue}: CIs using critical values from $\mathscr{N}_K$. \emph{Red}: CIs using critical values from $\mathcal{N}(0,1)$. \emph{Left panel}: $K=4$ with two optimal arms. \emph{Right panel} $K=5$ with three optimal arms.}
	\label{fig:CI}
\end{figure}

Figure \ref{fig:CI} reports the numerical results for the two types of CIs in the two settings above, with $T=2\times 10^4$. Each experiment consists of $100$ Monte Carlo replications, and the reported coverage is computed as the average of $100$ indicators of whether the true mean is contained in the CI. We observe that the CIs in \eqref{def:CI_mean}, which use the correct critical values, achieve valid coverage across all arms, whereas the CIs constructed using the (incorrect) $\mathcal{N}(0,1)$ critical values exhibit substantial under-coverage for optimal arms. These qualitative features persist across a range of settings and different numbers of optimal arms.

\section{Proof outline for Theorem \ref{thm:suboptimal_arm_pull}}\label{section:proof_outline_suboptimal}

\subsection{Some further notation}

We rewrite the reward sequence as  
\begin{align}\label{def:R_a_i}
R_{a,i} \equiv \mu_a + \sigma\cdot \xi_{a;i},\quad \forall a \in [K], \, i \in \N,
\end{align}
with $\{\xi_{a;i}\}_{a \in [K], i \in \N}\stackrel{\mathrm{i.i.d.}}{\sim} \xi_1$, so that  \eqref{eqn:bandit_seq_model} can be identified as $R_t = R_{A_t,n_{A_t;t}}$. 

With the notation in (\ref{def:R_a_i}), let us define the event
\begin{align}\label{def:event_E_xi}
E_{\xi}(x)\equiv  \bigg\{\max_{a \in [K]}\max_{t\geq 1 }\biggabs{\frac{1}{ (t+1)^{1/2}\log  \big((t+1)\vee e\big) }\sum_{i \in [t]}\xi_{a;i} }<  x\bigg\}.
\end{align}
For notational simplicity, we set $\sigma=1$ in all proofs below. The general case follows by replacing the gap $\Delta_a$ with $\Delta_a/\sigma$. 

\subsection{The inverse process approach}

\begin{definition}\label{def:hitting_time}
	Fix $a\in [K]$. For $n=0,1,2,\ldots$, we define the hitting time $
	\tau_{a;n}\equiv \inf\big\{t\geq 0:\ n_{a;t}\geq n\big\}$ 
	with the convention that $\tau_{a;0}\equiv 0$.
\end{definition}
In words, $\tau_{a;n}$ is the time at which arm $a$ is pulled for the $n$-th time. Clearly, $\tau_{a;n}$ is a stopping time with respect to $\{\mathscr{F}_t^Z\}$ (for any realization of $\{\xi_{a;i}\}$), where $\big\{\mathscr{F}_{t}^Z=\sigma(\{Z_{a;s}\}_{a \in [K],\, s \in [t]})\big\}_{t\geq 0}$ denotes the natural filtration generated by the random variables $\{Z_{a;t}\}$. Moreover, the map $n\mapsto \tau_{a;n}$ can be viewed as the inverse of $t\mapsto n_{a;t}$ in the sense that
\begin{align}\label{eqn:n_tau_equiv}
n_{a;t}\geq n\quad \Leftrightarrow\quad \tau_{a;n}\leq t.
\end{align}
The key technical advantage of working with the inverse process $n\mapsto \tau_{a;n}$ is that its asymptotic behavior can be approximately characterized as the integrator in the following approximate Stieltjes integral:
\begin{align}\label{eqn:tau_integral}
n \approx \int_0^n \bar{\Phi}\big(u^{1/2}\Delta_a\big)\,\d{\tau_a(u)},\qquad \text{as } n\to \infty.
\end{align}
Here $u\mapsto \tau_a(u)$ is the canonical right-continuous, nondecreasing step function associated with $n\mapsto \tau_{a;n}$, with $\tau_a(n)\equiv \tau_{a;n}$. 

Below we present a formal, quantitative version of \eqref{eqn:tau_integral}.

\begin{proposition}\label{prop:tau_n_char_eqn}
	Suppose Assumption \ref{assump:sampling_dist} holds. Fix $\epsilon \in (0,1/2)$. Then there exists some $c_0=c_0(K,\Delta,\epsilon,\mathscr{L}(\mathsf{Z}))>1$ such that $\Prob\big(E_\xi(c_0)\big)\geq 1-\epsilon$, and for all $\{\xi_{a;i}\} \in E_\xi(c_0)$, we have
	\begin{align*}
	&\inf_{0\leq m< n} \Prob^\xi\bigg(\max_{a \in \mathcal{A}_+}\bigg[n-m- \sum_{k \in (m:n]} (\tau_{a;k}-\tau_{a;k-1})\bar{\Phi}\big((k+\bm{1}_-)^{1/2}\Delta_{a;\epsilon}^\mp\big)\bigg]_\pm \\
	&\qquad\qquad\qquad \leq  c_0\cdot \big[\alpha_{n-m}(c_0) \bm{1}_-+\sqrt{n-m}\cdot (1\vee\abs{Y_{n,m}})\big]\bigg)\geq 1-\epsilon.
	\end{align*}
	Here  $\Delta_{a;\epsilon}^\pm\equiv (1\pm\epsilon)\Delta_a$,  $\alpha_{n}(c)\equiv n \bar{\Phi}\big(\bar{\Phi}_\ast^{-}(2n^{-1/2})/c \big)+\sqrt{n}$, and $Y_{n,m}$ is a random variable such that $\sup_{\{\xi_{a;i}\}} \E^\xi Y_{n,m}^2\leq 1$. The indicator $\bm{1}_-$ means that the term is present only for controlling the negative part. 
\end{proposition}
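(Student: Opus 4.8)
The plan is to pass from the counting process $t\mapsto n_{a;t}$ to the inverse process $n\mapsto\tau_{a;n}$, express the left-hand quantity as a \emph{stopped martingale} plus a \emph{drift}, and control the drift via a two-sided bound on the ``competitor index'' of the other arms. By a union bound over $a\in\mathcal{A}_+$ and over the two signs $\pm$ it suffices to prove, for each fixed pair $0\le m<n$ and each fixed suboptimal arm $a$, a bound holding with $\Prob^\xi$-probability at least $1-\epsilon$, after rescaling $\epsilon$ and absorbing $\abs{\mathcal{A}_+}\le K$ into $c_0$. The estimate $\Prob\big(E_\xi(c_0)\big)\ge 1-\epsilon$ is handled separately by dyadic blocking over $t\in[2^j,2^{j+1})$ together with the Doob/Kolmogorov $L^2$ maximal inequality, using only Assumption \ref{assump:noise}. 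On $E_\xi(c_0)$ one has the bookkeeping bound that the empirical mean of any arm $b$ after $j$ pulls, $\bar\mu_b^{(j)}$, satisfies $\abs{\bar\mu_b^{(j)}-\mu_b}\le c_0(j+1)^{-1/2}\log_+\big((j+1)\vee e\big)+\abs{\mu_b}/(j+1)\to 0$.

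Fix $a\in\mathcal{A}_+$ and partition the rounds into the windows $(\tau_{a;k-1},\tau_{a;k}]$, on each of which $\bar n_{a;\cdot}\equiv k$ and $\bar\mu_{a;\cdot}\equiv\bar\mu_a^{(k-1)}$. Write $\tilde q_{a;t}\equiv\Prob\big(A_t=a\mid\mathscr{F}_{t-1}^Z,\xi\big)$ and $\phi_{a;t}^\pm\equiv\bar\Phi\big((n_{a;t-1}+1+\bm{1}_-)^{1/2}\Delta_{a;\epsilon}^\mp\big)$. Since $\sum_{t\in(\tau_{a;m},\tau_{a;n}]}\bm{1}_{A_t=a}=n-m$ identically,
\begin{align*}
&n-m-\sum_{k\in(m:n]}(\tau_{a;k}-\tau_{a;k-1})\bar\Phi\big((k+\bm{1}_-)^{1/2}\Delta_{a;\epsilon}^\mp\big)\\
&\qquad=\sum_{t\in(\tau_{a;m},\tau_{a;n}]}\big(\bm{1}_{A_t=a}-\tilde q_{a;t}\big)+\sum_{t\in(\tau_{a;m},\tau_{a;n}]}\big(\tilde q_{a;t}-\phi_{a;t}^\pm\big).
\end{align*}
The first sum equals $N_{\tau_{a;n}}-N_{\tau_{a;m}}$ for the martingale $N_t\equiv\sum_{s\le t}(\bm{1}_{A_s=a}-\tilde q_{a;s})$; since $\mathsf{Z}$ has unbounded support every arm is pulled infinitely often and $\tau_{a;n}<\infty$ a.s., so optional stopping together with $\E^\xi\big[\langle N\rangle_{\tau_{a;n}}-\langle N\rangle_{\tau_{a;m}}\big]\le\E^\xi\big[\sum_{t\in(\tau_{a;m},\tau_{a;n}]}\tilde q_{a;t}\big]=n-m$ gives that this term has conditional second moment at most $n-m$; pooling over $a\in\mathcal{A}_+$ into a single normalized variable produces the $\sqrt{n-m}\,(1\vee\abs{Y_{n,m}})$ contribution with $\E^\xi Y_{n,m}^2\le 1$, and the same $Y_{n,m}$ serves both signs since only the drift depends on $\phi^\pm$.

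The crux is the drift, for which the key object is the competitor index $M_{a;t}\equiv\max_{b\ne a}\big(\bar\mu_{b;t-1}+Z_{b;t}/\sqrt{\bar n_{b;t-1}}\big)$, through which $\tilde q_{a;t}=\E\big[\bar\Phi\big(k^{1/2}(M_{a;t}-\bar\mu_a^{(k-1)})\big)\mid\mathscr{F}_{t-1}^Z,\xi\big]$ with $k=n_{a;t-1}+1$. I would establish two one-sided controls, each valid beyond a burn-in and with high $\Prob^\xi$-probability. \emph{(i) Lower bound:} $M_{a;t}\ge\mu_\ast-\epsilon'$ (with $\epsilon'$ comparable to $\epsilon\Delta_a$), by bounding $M_{a;t}$ below by the index of the most-pulled optimal arm, which carries $\Omega(t)$ pulls once the number of suboptimal pulls is $o(t)$ (a crude consistency fact I would obtain by a soft preliminary argument or by bootstrapping a weak form of the present bound), and controlling its fresh draw $Z_{b;t}$ by a summable tail estimate using (B2); combined with $\bar\mu_a^{(k-1)}\to\mu_a$ this forces $\tilde q_{a;t}\le\bar\Phi\big(k^{1/2}(1-\epsilon)\Delta_a\big)=\phi_{a;t}^+$ for all large $k,t$, whence the drift is nonpositive on those windows and the $[\,\cdot\,]_+$ estimate follows with the leftover rounds absorbed into $c_0$. \emph{(ii) Upper bound:} the ``overshoot'' count $\#\{t:\max_{b\ne a}\mathrm{index}_b(t)>\mu_\ast+\epsilon'\}$ is controlled -- this is self-limiting, since an overshoot triggers a pull of the overshooting arm -- so that $\sum_{t}\tilde p_t$, with $\tilde p_t\equiv\Prob(M_{a;t}>\mu_\ast+\epsilon'\mid\cdot)$, is $O(\alpha_{n-m}(c_0))$ with high probability; using $\tilde q_{a;t}\ge(1-\tilde p_t)\phi_{a;t}^-$ then bounds $\sum(\phi_{a;t}^--\tilde q_{a;t})\le\sum_t\tilde p_t$ and yields the $[\,\cdot\,]_-$ estimate. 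Finally, for the ``early'' windows (those $k$ below the threshold at which $\bar\mu_a^{(k-1)}$ has concentrated) a crude bound $M_{a;t}\le\mu_\ast+C+\max_{b\ne a}\abs{Z_{b;t}}$ (using $\bar n_{b;t-1}\ge 1$) gives $\tilde q_{a;t}\ge p_0>0$ uniformly, so each such window length is stochastically dominated by a geometric with success probability $p_0$; as there are only finitely many such windows, their total contribution to the drift is bounded with high probability.

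The main obstacle is step (i): the lower bound on $M_{a;t}$ must hold uniformly over all rounds, which forces one to quantify simultaneously that (a) some optimal arm provably accumulates a positive fraction of the pulls, requiring a preliminary sub-linear bound on suboptimal pull counts, and (b) the freshly drawn competitor samples $\{Z_{b;t}\}$ never conspire to depress every optimal index below $\mu_\ast-\epsilon'$. The error term $\alpha_n(c)=n\bar\Phi\big(\bar\Phi_\ast^-(2n^{-1/2})/c\big)+\sqrt n$ is precisely the price of the transient regime before this two-sided control takes effect -- the factor $n\bar\Phi(\cdot)$ matching an expected arm-$a$ pull count and $\sqrt n$ the martingale fluctuation scale -- and it is needed only for the $[\,\cdot\,]_-$ direction, the $[\,\cdot\,]_+$ direction being ``easier'' since an overshoot of the competitor only helps suppress $\tilde q_{a;t}$.
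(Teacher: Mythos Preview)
Your decomposition into a stopped martingale plus drift is exactly the paper's starting point, and the martingale term is handled identically (optional stopping, second-moment bound via predictable quadratic variation $=n-m$). The $[\,\cdot\,]_+$ direction is also essentially right once the apriori bounds are in place. However, two steps are genuine gaps.

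\textbf{The apriori bound is not bootstrapped.} You correctly flag step (i) as the main obstacle, but your proposed resolution---``a soft preliminary argument or bootstrapping a weak form of the present bound''---is circular. The paper avoids this entirely: Lemma~\ref{lem:apriori_arm_pull_lower_bound} proves directly, by a crude union bound over all subsets $\mathcal{T}\subset[t]$ with $\abs{[t]\setminus\mathcal{T}}\le w_t$ and all competitor arms, that $\min_{a}n_{a;t}\ge[\bar{\Phi}_\ast^{-}(t^{-1/2})/(c_1 x)]^2$ and $n_{\mathcal{A}_0;t}\ge t/(\log t)^2$ with high $\Prob^\xi$-probability. This uses only the unbounded support of $\mathsf{Z}$ and the noise event $E_\xi(x)$, and does not invoke any form of Proposition~\ref{prop:tau_n_char_eqn}. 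The two-sided bound on $\mathfrak{p}_{a;t}^\xi$ (Proposition~\ref{prop:cond_prop_estimate}) then follows from this apriori bound.

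\textbf{The $[\,\cdot\,]_-$ argument via ``self-limiting overshoot'' does not work as stated.} First, an overshoot $\{M_{a;t}>\mu_\ast+\epsilon'\}$ does \emph{not} trigger a pull of the overshooting arm when $\bar\theta_{a;t}$ is even higher, which is exactly the regime contributing to $\{A_t=a\}$. Second, and more fundamentally, the sum $\sum_{t\in(\tau_{a;m},\tau_{a;n}]}\tilde p_t$ runs over $\tau_{a;n}-\tau_{a;m}$ rounds, which is of order $1/\bar\Phi(n^{1/2}\Delta_a)\gg n$, so a bound of $O(\alpha_{n-m})$ on this sum is false in general. The paper instead uses the product bound $\mathfrak{p}_{a;t}^\xi\ge\bar\Phi(\bar n_{a;t-1}^{1/2}\Delta_{a;\epsilon}^+)\cdot[1-\bar\Phi(\bar\Phi_\ast^{-}(2n_0^{-1/2})/c)]^K$ from Proposition~\ref{prop:cond_prop_estimate}, pulls out the second factor uniformly, and then runs a two-step self-referential argument: first show $\sum_{t}\bar\Phi(\bar n_{a;t-1}^{1/2}\Delta_{a;\epsilon}^+)\le c_1(n-m+\sqrt{n_0}+\sqrt{n-m}\abs{Y_{n,m}})$ by rearranging the drift identity (this is where the $\phi^-$ weighting, not the raw overshoot count, matters), and only then multiply by the small factor $\bar\Phi(\bar\Phi_\ast^{-}(2n_0^{-1/2})/c)$ and choose $n_0=n-m$ to produce $\alpha_{n-m}$.
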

The key to the proof of Proposition \ref{prop:tau_n_char_eqn} is to provide two-sided estimates for the conditional probabilities
\begin{align}\label{def:cond_prob}
\mathfrak{p}_{a;t}^\xi\equiv \Prob^\xi\big(A_t= a |\mathscr{F}_{t-1}^Z\big),\quad \forall (a,t) \in [K]\times \N.
\end{align}

\begin{proposition}\label{prop:cond_prop_estimate}
	Suppose Assumption \ref{assump:sampling_dist} holds. Fix $x\geq 1$, errors $\{\xi_{a;i}\}$ such that $\{\xi_{a;i}\} \in E_{\xi}(x)$, and a suboptimal arm $a \in \mathcal{A}_+$. Then there exists some constant $c_0=c_0(K,\Delta,\mathscr{L}(\mathsf{Z}))>1$ and an event $\mathcal{E}_{x}$ with $\Prob^\xi(\mathcal{E}_{x}^c)\leq c_0\exp\big(-1/\{c_0\bar{\Phi}_\ast(c_0x^4)\}\big)$, such that for any $t\geq 2/[\bar{\Phi}_\ast(c_0x^4)]^2$, 
	\begin{align*}
	&\bar{\Phi}\big(\bar{n}_{a;t-1}^{1/2}[\Delta_a+c_0/x]\big)\cdot \big[1-\bar{\Phi}\big(\bar{\Phi}_\ast^{-}(2t^{-1/2})/(c_0 x^2)\big)\big]^K\\
	&\leq \mathfrak{p}_{a;t}^\xi\leq  \bar{\Phi}\big(\bar{n}_{a;t-1}^{1/2} [\Delta_a-c_0/x] \big)+2\Phi\big(-t^{1/2}/(c_0 x\log t)\big).
	\end{align*}
\end{proposition}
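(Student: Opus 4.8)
The plan is to condition on the $\sigma$-field $\mathscr{F}_{t-1}^Z$. Given $\mathscr{F}_{t-1}^Z$ and the fixed realization of $\{\xi_{a;i}\}$, all running quantities $\bar{n}_{b;t-1},\bar{\mu}_{b;t-1}$ ($b\in[K]$) are deterministic, and the only randomness left in $\{A_t=a\}$ is the fresh batch $\{Z_{b;t}\}_{b\in[K]}\stackrel{\mathrm{i.i.d.}}{\sim}\mathsf{Z}$, independent of $\mathscr{F}_{t-1}^Z$. Writing $I_b\equiv\bar{\mu}_{b;t-1}+\bar{n}_{b;t-1}^{-1/2}Z_{b;t}$ for the round-$t$ Thompson index of arm $b$, and using that $\mathsf{Z}$ has a Lebesgue density so ties have probability zero, one gets the exact representation $\mathfrak{p}_{a;t}^\xi=\Prob\big(I_a>\max_{b\neq a}I_b\,\big|\,\mathscr{F}_{t-1}^Z\big)$. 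On $E_\xi(x)$ a direct estimate of the partial sums of $\{\xi_{b;i}\}$ gives $\abs{\bar{\mu}_{b;t-1}-\mu_b}\lesssim_{K,\Delta}x\,\bar{n}_{b;t-1}^{-1/2}\log_+\bar{n}_{b;t-1}+\bar{n}_{b;t-1}^{-1}$ for all $b$, which is at most $c_0/x$ once $\bar{n}_{b;t-1}$ exceeds a suitable threshold. So the whole statement reduces to (i) securing pull-count lower bounds for the arms that matter, and (ii) elementary tail estimates for the independent indices $I_a$ and $\{I_b\}_{b\neq a}$.

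For (i), I would introduce a high-probability event $\mathcal{E}_x\in\mathscr{F}_{t-1}^Z$ on which the most-pulled optimal arm $b^\ast\equiv\argmax_{b\in\mathcal{A}_0}\bar{n}_{b;t-1}$ satisfies $\bar{n}_{b^\ast;t-1}\gtrsim t/(\log t)^2$ (so $\bar{n}_{b^\ast;t-1}^{1/2}$ is large and $\abs{\bar{\mu}_{b^\ast;t-1}-\mu_\ast}$ is negligible), and on which \emph{every} arm $b\in[K]$ has accumulated enough pulls — of order $[\bar{\Phi}_\ast^-(2t^{-1/2})]^2$, up to $(K,\Delta,\mathscr{L}(\mathsf{Z}))$-constants — so that $\abs{\bar{\mu}_{b;t-1}-\mu_b}\le c_0/x$. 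The estimate $\Prob^\xi(\mathcal{E}_x^c)\le c_0\exp(-1/\{c_0\bar{\Phi}_\ast(c_0x^4)\})$ should come from a supermartingale/Bernstein-type argument on the pull processes $s\mapsto n_{b;s}$: as long as arm $b$ has not reached its target pull count, $\bar{n}_{b;s-1}$ is small, so $\bar{n}_{b;s-1}^{1/2}(\Delta_b+c_0/x)$ is at scale $x^4$ and the conditional selection probability at that round is bounded below by a quantity comparable to $\bar{\Phi}_\ast(c_0x^4)$; hence staying below target over the $\gtrsim t\ge 2/[\bar{\Phi}_\ast(c_0x^4)]^2$ available rounds forces an unlikely accumulation of one-sided $\mathsf{Z}$-tail misses with probability at most $\exp(-1/\{c_0\bar{\Phi}_\ast(c_0x^4)\})$. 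Constructing $\mathcal{E}_x$ and proving this probability bound — uniform pull-count control for all arms (not just $b^\ast$) over the whole horizon, with the failure rate and the threshold on $t$ both matched to $\bar{\Phi}_\ast$ at scale $x^4$ — is the main obstacle; the rest is elementary once $\mathcal{E}_x$ is in hand.

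For the upper bound, on $\mathcal{E}_x$ discard all competitors except $b^\ast$, so $\{A_t=a\}\subseteq\{I_a>I_{b^\ast}\}$. Since $I_a-I_{b^\ast}\le-\Delta_a+c_0/x+\bar{n}_{a;t-1}^{-1/2}Z_{a;t}+\bar{n}_{b^\ast;t-1}^{-1/2}(Z_{b^\ast;t})_-$ on $\mathcal{E}_x$, fixing a margin $\delta\asymp1/x$ the event $\{I_a>I_{b^\ast}\}$ is contained in $\{Z_{a;t}>\bar{n}_{a;t-1}^{1/2}(\Delta_a-c_0/x)\}\cup\{Z_{b^\ast;t}<-\delta\bar{n}_{b^\ast;t-1}^{1/2}\}$ after relabeling $c_0$. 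The first has probability $\bar{\Phi}(\bar{n}_{a;t-1}^{1/2}(\Delta_a-c_0/x))$; the second, using $\bar{n}_{b^\ast;t-1}\gtrsim t/(\log t)^2$ on $\mathcal{E}_x$, is at most $2\Phi(-t^{1/2}/(c_0x\log t))$ after enlarging $c_0$. Adding these gives the stated upper estimate.

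For the lower bound, fix $\delta\asymp1/x$ and use $\{A_t=a\}\supseteq\{I_a\ge\mu_\ast+\delta\}\cap\bigcap_{b\neq a}\{I_b<\mu_\ast+\delta\}$; by independence of $\{Z_{b;t}\}_{b\in[K]}$ this probability factorizes. The factor for $a$ is $\Prob(Z_{a;t}\ge\bar{n}_{a;t-1}^{1/2}(\mu_\ast+\delta-\bar{\mu}_{a;t-1}))\ge\bar{\Phi}(\bar{n}_{a;t-1}^{1/2}(\Delta_a+c_0/x))$, using $\mu_\ast+\delta-\bar{\mu}_{a;t-1}\le\Delta_a+c_0/x$ on $\mathcal{E}_x$. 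Each factor for $b\neq a$ is $\Prob(Z_{b;t}<\bar{n}_{b;t-1}^{1/2}(\mu_\ast+\delta-\bar{\mu}_{b;t-1}))$, and on $\mathcal{E}_x$ the argument is $\ge\bar{n}_{b;t-1}^{1/2}\cdot\tfrac{\delta}{2}\ge\bar{\Phi}_\ast^-(2t^{-1/2})/(c_0x^2)$ by the pull-count lower bound, so by monotonicity of $\bar{\Phi}$ this factor is at least $1-\bar{\Phi}(\bar{\Phi}_\ast^-(2t^{-1/2})/(c_0x^2))$. Multiplying the at most $K$ such factors yields the claimed lower bound.
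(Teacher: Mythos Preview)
Your proposal is correct and follows essentially the same approach as the paper: the event $\mathcal{E}_x$ is exactly the apriori pull-count control supplied by Lemma~\ref{lem:apriori_arm_pull_lower_bound} (minimum pull count $\gtrsim[\bar{\Phi}_\ast^{-}(t^{-1/2})/x]^2$ for every arm and $n_{\mathcal{A}_0;t}\gtrsim t/(\log t)^2$), and your upper/lower bound decompositions via the best-pulled optimal arm and the threshold $\mu_\ast+\delta$ with conditional independence match the paper's argument line for line. The only minor difference is that the paper proves the pull-count lower bound (your ``main obstacle'') by a direct union bound over time-subsets rather than the supermartingale/Bernstein route you sketch, but either technique works here.
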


Roughly speaking, Proposition \ref{prop:cond_prop_estimate} makes rigorous the heuristic that
\begin{align}\label{ineq:cond_prop_estimate_informal}
\{A_t=a\}\approx \{\bar{\theta}_{a;t}>\mu_\ast\}.
\end{align}
Of course, \eqref{ineq:cond_prop_estimate_informal} is not literally correct, since a suboptimal arm $a\in\mathcal{A}_+$ must compete with both the other suboptimal arms and the optimal arms. Our proof of Proposition \ref{prop:cond_prop_estimate} shows that this competition becomes asymptotically negligible, so that \eqref{ineq:cond_prop_estimate_informal} is essentially correct as $t\to\infty$.

Details of the proof of Proposition \ref{prop:cond_prop_estimate} are given in Section \ref{subsection:proof_cond_prop_estimate}. We then prove Proposition \ref{prop:tau_n_char_eqn} in Section \ref{subsection:proof_tau_n_char_eqn}.

\subsection{Proof of Theorem \ref{thm:suboptimal_arm_pull}}

To use Proposition \ref{prop:tau_n_char_eqn} to prove Theorem \ref{thm:suboptimal_arm_pull}, we need to convert the estimate in Proposition \ref{prop:tau_n_char_eqn} into a two-sided bound for the inverse process $n\mapsto \tau_{a;n}$.

\begin{proposition}\label{prop:tau_n_est}
	Suppose Assumption \ref{assump:sampling_dist} holds. Fix $\epsilon \in (0,1/2)$. Then there exists some $c_0=c_0(K,\Delta,\epsilon,\mathscr{L}(\mathsf{Z}))>1$ such that $\Prob\big(E_\xi(c_0)\big)\geq 1-\epsilon$, and for all $\{\xi_{a;i}\} \in E_\xi(c_0)$ and $n\geq c_0$,
	\begin{align*}
	\Prob^\xi\Big(1/ \bar{\Phi}\big((n-c_0)_+^{1/2}\Delta_{a;\epsilon}^-\big) \leq \tau_{a;n}\leq 1/ \bar{\Phi}\big(n^{1/2}\Delta_{a;\epsilon}^+\big),\, \forall a \in \mathcal{A}_+\Big)\geq 1-\epsilon.
	\end{align*}
\end{proposition}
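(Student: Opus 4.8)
The plan is to invert the approximate Stieltjes relation of Proposition~\ref{prop:tau_n_char_eqn} directly, handling the two inequalities by separate single‑scale applications. Write $d_{a;k}\equiv\tau_{a;k}-\tau_{a;k-1}\ge0$, so that $\sum_{k\in(m:n]}d_{a;k}=\tau_{a;n}-\tau_{a;m}$. I would apply Proposition~\ref{prop:tau_n_char_eqn} not with $\epsilon$ but with a strictly smaller parameter $\delta\equiv\epsilon/4$: the point is that this leaves a genuine multiplicative gap between the exponents $\Delta^{\pm}_{a;\delta}=(1\pm\delta)\Delta_a$ appearing there and the exponents $\Delta^{\pm}_{a;\epsilon}=(1\pm\epsilon)\Delta_a$ demanded by the target, and that gap is what closes the argument. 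Since $\E^\xi Y_{n,m}^2\le1$, Chebyshev gives $\Prob^\xi(|Y_{n,m}|>1/\sqrt{\delta})\le\delta$, so on a conditional‑probability‑$(1-\delta)$ event the random quantity $1\vee|Y_{n,m}|$ may be replaced by the deterministic $1/\sqrt{\delta}$. I use two scale choices and two such Chebyshev bounds, so the total conditional failure probability is at most $4\delta=\epsilon$, and the event $E_\xi(c_0)$ has probability $\ge1-\delta\ge1-\epsilon$. All constants below depend only on $(K,\Delta,\epsilon,\mathscr{L}(\mathsf{Z}))$, and I enlarge $c_0$ finitely many times.

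For the \emph{lower bound} I use the ``$+$''-version of Proposition~\ref{prop:tau_n_char_eqn} at the single interval $(m:n]$ with $m=n-\ell$, where $\ell$ is a large constant to be fixed. On the good event, for every $a\in\mathcal{A}_+$,
\[
\sum_{k\in(n-\ell:n]}d_{a;k}\,\bar{\Phi}\big(k^{1/2}\Delta^-_{a;\delta}\big)\ \ge\ \ell-c_0\sqrt{\ell}\,(1\vee|Y_{n,n-\ell}|)\ \ge\ \ell/2,
\]
the last step holding once $\ell\ge4c_0^2/\delta$. Bounding each summand by its value at the smallest index, $\bar{\Phi}\big((n-\ell+1)^{1/2}\Delta^-_{a;\delta}\big)$, and using $\sum_{k\in(n-\ell:n]}d_{a;k}=\tau_{a;n}-\tau_{a;n-\ell}\le\tau_{a;n}$, I obtain $\tau_{a;n}\ge(\ell/2)/\bar{\Phi}\big((n-\ell+1)^{1/2}\Delta^-_{a;\delta}\big)$. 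Since $(1-\epsilon)<(1-\delta)$ and $\ell/2\ge1$, taking the final $c_0\ge\ell$ makes the right side at least $1/\bar{\Phi}\big((n-c_0)_+^{1/2}\Delta^-_{a;\epsilon}\big)$, as required. Nothing beyond monotonicity of $\bar{\Phi}$ is used here.

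The \emph{upper bound} is the delicate part. The naive approach — inverting the Stieltjes relation on a fine or dyadic grid of scales — forces a union bound over unboundedly many instances of Proposition~\ref{prop:tau_n_char_eqn}, which destroys the $1-\epsilon$ guarantee; so instead I use the ``$-$''-version at the single interval $(0:n]$. With $\tau_{a;0}=0$ and $\bar{\Phi}\big((k+1)^{1/2}\Delta^+_{a;\delta}\big)\ge\bar{\Phi}\big((n+1)^{1/2}\Delta^+_{a;\delta}\big)$ for $k\le n$, this gives
\[
\bar{\Phi}\big((n+1)^{1/2}\Delta^+_{a;\delta}\big)\,\tau_{a;n}\ \le\ \sum_{k\in(0:n]}d_{a;k}\,\bar{\Phi}\big((k+1)^{1/2}\Delta^+_{a;\delta}\big)\ \le\ n+c_0\big[\alpha_n(c_0)+\sqrt{n}\,(1\vee|Y_{n,0}|)\big].
\]
Using only $\bar{\Phi}\le1$ one has $\alpha_n(c_0)\le2n$, so on the Chebyshev event the right side is $\le C_0n$, i.e. $\tau_{a;n}\le C_0n/\bar{\Phi}\big((n+1)^{1/2}\Delta^+_{a;\delta}\big)$ — lossy by a factor $n$. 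The observation that rescues this is that the factor $n$ is absorbed by the exponent slack. Set $z\equiv n^{1/2}(1+2\delta)\Delta_a$ and $c\equiv(1+\epsilon)/(1+2\delta)>1$ (here is where $\delta<\epsilon/2$ enters). Condition (B2) in Assumption~\ref{assump:sampling_dist} gives $z^2\bar{\Phi}(cz)/\bar{\Phi}(z)\to0$, hence $\bar{\Phi}(z)/\bar{\Phi}(cz)\gg z^2\asymp n$, so $\bar{\Phi}(z)/\bar{\Phi}(cz)\ge C_0n$ for all $n\ge c_0$. Combined with the elementary inequality $(n+1)^{1/2}(1+\delta)\le n^{1/2}(1+2\delta)$ valid for large $n$ (so $\bar{\Phi}\big((n+1)^{1/2}\Delta^+_{a;\delta}\big)\ge\bar{\Phi}(z)$), and noting $n^{1/2}\Delta^+_{a;\epsilon}=cz$, this yields $\tau_{a;n}\le C_0n/\bar{\Phi}(z)\le1/\bar{\Phi}(cz)=1/\bar{\Phi}\big(n^{1/2}\Delta^+_{a;\epsilon}\big)$ for $n\ge c_0$, completing the proof.

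I expect this last step — absorbing the polynomial loss of the crude single‑scale inversion via the regularly‑varying tail hypothesis — to be the only genuine obstacle; it is also precisely why the proposition is stated with the inflated exponents $(1\pm\epsilon)\Delta_a$ rather than $\Delta_a$ itself, and why a cruder scale‑by‑scale telescoping (which would incur an unaffordable union bound) must be avoided.
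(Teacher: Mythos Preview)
Your proof is correct and follows essentially the same route as the paper's. Both arguments apply Proposition~\ref{prop:tau_n_char_eqn} at the full interval $(0:n]$ for the upper bound and at a short constant-length window $(n-\ell:n]$ for the lower bound, then absorb the polynomial prefactor $n$ into the exponent via the regularly-varying condition in (B2); the only cosmetic differences are that the paper runs Proposition~\ref{prop:tau_n_char_eqn} at level $\epsilon$ and outputs $\Delta_{a;2\epsilon}^\pm$ before relabeling, whereas you pre-shrink to $\delta=\epsilon/4$, and the paper extracts a single large increment $\beta_{a;k_0}$ by pigeonhole where you bound the whole sum $\sum_k d_{a;k}$ directly.
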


We note that if \eqref{eqn:tau_integral} held exactly (as an identity) for every $n$, then the inversion would be straightforward by applying \eqref{eqn:tau_integral} at two consecutive integers. The technical complication in Proposition \ref{prop:tau_n_est} arises precisely from the probabilistic sense in which \eqref{eqn:tau_integral} holds: it requires a \emph{growing} size of the interval $(m:n]$, rather than simply taking $m=n-1$. Details of the proof of Proposition \ref{prop:tau_n_est}, including how to handle this technical issue, are given in Section \ref{subsection:proof_tau_n_est}.

\begin{proof}[Proof of Theorem \ref{thm:suboptimal_arm_pull}]
	Let $c_0>0$ be the constant in Proposition \ref{prop:tau_n_est}. By the same proposition, for all $\{\xi_{a;i}\} \in E_\xi(c_0)$ and $n\geq c_0$, on an event $\mathcal{E}_{n;\epsilon}\equiv \mathcal{E}_{n;\epsilon}(\{\xi_{a;i}\})$ with $\Prob^\xi(\mathcal{E}_{n;\epsilon}^c)\leq \epsilon$, it holds that 
	\begin{align*}
	1/ \bar{\Phi}\big((n-c_0)_+^{1/2}\Delta_{a;\epsilon}^-\big) \leq \tau_{a;n}\leq 1/ \bar{\Phi}\big(n^{1/2}\Delta_{a;\epsilon}^+\big),\quad \forall a \in \mathcal{A}_+.
	\end{align*}
	For $t\geq 1$, let $n_{a;t}^\pm$ be any number satisfying $
	1/ \bar{\Phi}\big((n_{a;t}^+ -c_0)_+^{1/2}\Delta_{a;\epsilon}^-\big)  = t = 1/ \bar{\Phi}\big(n_{a;t}^{-,1/2}\Delta_{a;\epsilon}^+\big)$. 
	In particular, we may choose
	\begin{align*}
	n_{a;t}^+\equiv c_0+ \big((\Delta_{a;\epsilon}^-)^{-1} \cdot \bar{\Phi}^-(1/t) \big)^2, \quad n_{a;t}^-\equiv \big( (\Delta_{a;\epsilon}^+)^{-1} \cdot \bar{\Phi}^-(1/t) \big)^2.
	\end{align*}
	By (\ref{eqn:n_tau_equiv}), we have $n_{a;t}^-\leq n_{a;t}\leq n_{a;t}^+$. The claim now follows from Lemma \ref{lem:max_err}.
\end{proof}

\begin{remark}
For continuous, non-increasing $\bar{\Phi}$ satisfying the regularly varying condition in Assumption~B2, we define $\bar{\Phi}^+(u)\equiv \sup\{x\ge 0:\ \bar{\Phi}(x)=u\}$. It is then easy to show that $\bar{\Phi}^+(1/T)/\bar{\Phi}^-(1/T)\to 1$ as $T\to\infty$. Therefore, in the proof above we may choose the tighter lower bound $
n_{a;t}^-\equiv \big( (\Delta_{a;\epsilon}^+)^{-1}\cdot \bar{\Phi}^+(1/t)\big)^2$, which makes no effective difference for large values of $t$.
\end{remark}

\section{Proof outline for Theorem \ref{thm:optimal_arm_pull}}\label{section:proof_outline_optimal}

\subsection{The natural parametrization: What breaks down?}

As mentioned above, the proof of Theorem \ref{thm:optimal_arm_pull} relies on an `unnatural reparametrization' of the arm-pull and noise processes
\begin{align}\label{def:r_xi_T}
\begin{cases}
r_{a;T}(z)\equiv T^{-1}\big(\sum_{s=1}^{ \floor{z T}} \bm{1}_{A_s=a}+1\big),\\
\xi_{a;T}(z)\equiv T^{-1/2}\sum_{s=1}^{ \floor{z T}} \bm{1}_{A_s=a} \xi_s,
\end{cases}
\forall a\in\mathcal{A}_0,\, z \in [0,1].
\end{align}
To highlight the intrinsic difficulties of the `natural parametrization' \eqref{def:r_xi_T}, we first state two lemmas that provide approximate characterizations of the arm-pull process $\{r_{a;T}(\cdot)\}$ and the noise process $\{\xi_{a;T}(\cdot)\}$ via a self-consistent equation and a martingale property, respectively.

\begin{lemma}\label{lem:r_lim}
	Suppose Assumption \ref{assump:sampling_dist} holds. Fix $\epsilon \in (0,1/2]$ and $x\geq 1$. There exist some constants $c_1=c_1(K,\Delta,\mathscr{L}(\mathsf{Z}))>0$ and $c_2=c_2(\epsilon,x,K,\Delta,\mathscr{L}(\mathsf{Z}))>0$ such that for any $\{\xi_{a;i}\} \in E_\xi(x)$, if $T\geq c_2$, 
	\begin{align*}
	&\Prob^\xi\bigg(\sup_{z \in [\epsilon,1]}\biggabs{r_{a;T}(z)-r_{a;T}(\epsilon)- \frac{1}{T}\sum_{s=\floor{\epsilon T}+1}^{ \floor{z T}} p_{a}\big(r_{\mathcal{A}_0;T}(z_{s-1}^{(T)}), \xi_{\mathcal{A}_0;T}(z_{s-1}^{(T)})\big) } \\
	&\qquad \geq 2\Prob\big(\abs{\mathsf{Z}}>\{\bar{\Phi}_\ast^{-}(1\wedge c_2T^{-1/2})\}^{1/2}/c_2 \big)+\epsilon_T\bigg)\leq c_1\cdot  \big[e^{-1/\{c_1\bar{\Phi}_\ast(c_1x)\}}+\epsilon_T^{100}\big].
	\end{align*}
	Here $z_{t}^{(T)}\equiv t/T$, $\epsilon_T\equiv \sqrt{\log T/T}$, $\bar{\Phi}_\ast$ is defined in (\ref{def:Phi_Psi_ast}), and recall that $p_a$ is defined in (\ref{def:p_a}). The above estimate also holds when $\Delta_a\equiv 0$ for all $a \in [K]$.
\end{lemma}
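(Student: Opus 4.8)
The plan is to read Lemma~\ref{lem:r_lim} as a quantitative law of large numbers for the pull indicators of the fixed optimal arm $a\in\mathcal{A}_0$. Since $r_{a;T}(z)-r_{a;T}(\epsilon)=T^{-1}\sum_{s=\floor{\epsilon T}+1}^{\floor{zT}}\bm{1}_{A_s=a}$, and since with $z_{s-1}^{(T)}=(s-1)/T$ one has the exact identities $\bar{n}_{a;s-1}=T\,r_{a;T}(z_{s-1}^{(T)})$, $\sum_{i\le n_{a;s-1}}\xi_{a;i}=\sqrt{T}\,\xi_{a;T}(z_{s-1}^{(T)})$, and hence $\bar{\mu}_{a;s-1}-\mu_\ast=\big(\sqrt{T}\,\xi_{a;T}(z_{s-1}^{(T)})-\mu_\ast\big)/\bar{n}_{a;s-1}$, I would compare $\bm{1}_{A_s=a}$ first with its one-step conditional mean $\mathfrak{p}_{a;s}^\xi=\Prob^\xi(A_s=a\mid\mathscr{F}_{s-1}^Z)$ of \eqref{def:cond_prob}, and then $\mathfrak{p}_{a;s}^\xi$ with $p_a$ evaluated at the current state $\big(r_{\mathcal{A}_0;T}(z_{s-1}^{(T)}),\xi_{\mathcal{A}_0;T}(z_{s-1}^{(T)})\big)$ --- recalling that $p_a$ in \eqref{def:p_a} is exactly the idealized conditional probability that arm $a$ is the argmax \emph{among the optimal arms}. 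Accordingly I would write the quantity inside the supremum as $T^{-1}M_{\floor{zT}}+T^{-1}\sum_{s=\floor{\epsilon T}+1}^{\floor{zT}}\big(\mathfrak{p}_{a;s}^\xi-p_a(r_{\mathcal{A}_0;T}(z_{s-1}^{(T)}),\xi_{\mathcal{A}_0;T}(z_{s-1}^{(T)}))\big)$, where $M_m\equiv\sum_{s=\floor{\epsilon T}+1}^{m}(\bm{1}_{A_s=a}-\mathfrak{p}_{a;s}^\xi)$, and bound the two pieces separately.

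The martingale piece is routine: under $\Prob^\xi$ (the $\xi$'s being frozen), $\{M_m\}$ is an $\{\mathscr{F}_m^Z\}$-martingale with increments bounded by $1$ and predictable quadratic variation $\le T$, so a Freedman-type maximal inequality gives $\Prob^\xi(\max_m|M_m|\ge\lambda)\le 2\exp\big(-\lambda^2/(2T+2\lambda/3)\big)$; choosing $\lambda$ a suitably large multiple of $\sqrt{T\log T}$ yields the $\epsilon_T$ contribution to the error bound and a $T$-dependent failure probability $\le\epsilon_T^{100}$ for $T$ large, the supremum over $z$ being absorbed by the maximal form.

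The drift piece is the crux: one must show $\big|\mathfrak{p}_{a;s}^\xi-p_a(r_{\mathcal{A}_0;T}(z_{s-1}^{(T)}),\xi_{\mathcal{A}_0;T}(z_{s-1}^{(T)}))\big|\le\delta_s$ on a good $\Prob^\xi$-event of probability $\ge 1-c_1e^{-1/\{c_1\bar{\Phi}_\ast(c_1x)\}}$, with $T^{-1}\sum_s\delta_s$ not exceeding $2\Prob\big(|\mathsf{Z}|>\{\bar{\Phi}_\ast^-(1\wedge c_2T^{-1/2})\}^{1/2}/c_2\big)$ plus a term absorbable into $\epsilon_T$. Substituting the exact identities above into the exact conditional probability that arm $a$ beats every other optimal arm recovers precisely the integrand of \eqref{def:p_a}, up to a `phantom-observation' bias of size $O(\mu_\ast/(\sqrt{T}\,r_{\cdot;T}))$ in the argument of each $\Phi$; I would control this via $\pnorm{\Phi'}{\infty}<\infty$ from (B1) where the $r_{\cdot;T}$'s stay bounded below, and via the saturation bound $|\Phi(y+\eta)-\Phi(y)|\le\bar{\Phi}_\ast(|y|-|\eta|)$ where some $r_{\cdot;T}$ is small so the argument is already extreme. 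The discrepancy between $\{A_s=a\}$ and the event that $a$ beats every optimal $b\neq a$ is at most $\sum_{b\in\mathcal{A}_+}\Prob^\xi(\text{arm }b\text{ beats every optimal arm}\mid\mathscr{F}_{s-1}^Z)$; on $E_\xi(x)$, feeding in the lower bound $\bar{n}_{b;s-1}\gtrsim[\bar{\Phi}^-(1/s)]^2$ for $s\ge\floor{\epsilon T}$ from Theorem~\ref{thm:suboptimal_arm_pull}, this is $\lesssim\bar{\Phi}(\tfrac12\bar{\Phi}^-(1/s))$, whose average over $\floor{\epsilon T}<s\le\floor{zT}$ is, by the regular-variation property in (B2), dominated by the displayed $\Prob(|\mathsf{Z}|>\cdot)$ term; that term additionally absorbs the remaining per-step tail events (where some competing arm's empirical count or mean is atypical), which survive summation only as their maximum over $s$. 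When $\Delta_b\equiv 0$ for all $b$ the suboptimal step is vacuous, so the estimate persists in that case; for a general $a\in\mathcal{A}_0$ one first relabels so that $a=\min\mathcal{A}_0$, matching the product in \eqref{def:p_a}. A union over the exceptional events then closes the proof.

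I expect the main obstacle to be controlling $\mathfrak{p}_{a;s}^\xi-p_a$ \emph{uniformly} over the possibly near-degenerate state: the phantom-observation bias resists a single Lipschitz estimate and forces a case split on the sizes of the $r_{\cdot;T}$'s via saturation of $\Phi$; discarding the suboptimal factors produces a somewhat lossy tail term that has to be matched against (B2); and all of this must be robust to summation over $s\in(\floor{\epsilon T},\floor{zT}]$ followed by division by $T$. By comparison the martingale concentration and the reparametrization arithmetic are standard.
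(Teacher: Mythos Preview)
Your decomposition into a martingale piece (handled by Freedman) and a drift piece (compare $\mathfrak{p}_{a;s}^\xi$ with $p_a$) is exactly the paper's route, and the execution is largely correct. Two points deserve correction.

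First, you invoke Theorem~\ref{thm:suboptimal_arm_pull} for the lower bound $\bar{n}_{b;s-1}\gtrsim[\bar{\Phi}^-(1/s)]^2$. That theorem only gives control with failure probability $\epsilon$ for any fixed $\epsilon>0$, which does not reproduce the specific form $c_1e^{-1/\{c_1\bar{\Phi}_\ast(c_1x)\}}$ in the statement. What is actually needed is a crude \emph{apriori} lower bound on $\min_{b\in[K]}n_{b;t}$ (of order $[\bar{\Phi}_\ast^-(t^{-1/2})]^2$) that holds on an event of exactly this probability; this is Lemma~\ref{lem:apriori_arm_pull_lower_bound} in the paper, proved by a direct union-bound counting argument independent of Theorem~\ref{thm:suboptimal_arm_pull}. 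With that lower bound in hand for \emph{all} arms, the drift comparison is much cleaner than you anticipate: the paper writes $\mathfrak{p}_{a;t}^\xi$ exactly as a product $\E_{\mathsf{Z}}\prod_{b\in[K]\setminus\{a\}}\Phi[\cdots]$ over all arms and simply bounds $|1-\prod_{b\in\mathcal{A}_+}\Phi[\cdots]|$ by splitting on $|\mathsf{Z}|\lessgtr\{\bar{\Phi}_\ast^-(\cdot)\}^{1/2}/c_2$, yielding the displayed $\Prob(|\mathsf{Z}|>\cdot)$ term directly --- no summation over $s$, no regular-variation argument, and no case split on the sizes of the $r_{\cdot;T}$.

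Second, your ``phantom-observation bias'' $-\mu_b/\bar n_{b;t-1}$ is a genuine wrinkle (the paper in fact silently drops it when asserting $\mathfrak{p}_{a;t}^\xi=p_{a;T}(r_{[K];T},\xi_{[K];T})$), but once the apriori lower bound is in place it is uniformly $o(1)$ inside each $\Phi$-argument and absorbed into the same tail term; the saturation-versus-Lipschitz case split you describe is unnecessary. Finally, your relabeling ``$a=\min\mathcal{A}_0$'' is a reaction to a typo: $\mathcal{A}_0\setminus[a]$ in \eqref{def:p_a} should be read as $\mathcal{A}_0\setminus\{a\}$.
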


Let $\big\{\mathscr{F}_{t}\equiv\sigma(\{\xi_{s_1},Z_{a;s_2}\}_{a \in [K], s_1 \in [t], s_2 \in [t+1]})\big\}_{t\geq 0}$ denote a nested filtration generated by the random variables $\{\xi_{[t]},Z_{[K];[t+1]}\}$.

\begin{lemma}\label{lem:xi_mtg}
	Suppose Assumption \ref{assump:noise} holds. With $\{\xi_{a;T}\}$ defined in (\ref{def:r_xi_T}) and $\bar{r}_{a;T}(z)\equiv T^{-1}\sum_{s=1}^{ \floor{z T}} \bm{1}_{A_s=a}$, the process $\{
	\xi_{a;T}^2(z)-\bar{r}_{a;T}(z): z \in [0,1]\}$ is a martingale with respect to the filtration $\{\mathscr{F}_{\floor{zT}}: z \in [0,1]\}$.
\end{lemma}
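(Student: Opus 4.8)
The plan is to reduce this to a one-step discrete-time martingale computation. First I would observe that for $z \in [s/T,(s+1)/T)$ with $s \in \{0,1,\dots,T\}$ one has $\floor{zT}=s$, so both the process and the filtration are piecewise constant there: $\xi_{a;T}(z)=T^{-1/2}S_s$ and $\bar r_{a;T}(z)=T^{-1}N_s$ with $S_s\equiv\sum_{j=1}^s\bm{1}_{A_j=a}\xi_j$, $N_s\equiv\sum_{j=1}^s\bm{1}_{A_j=a}$, and $\mathscr{F}_{\floor{zT}}=\mathscr{F}_s$. Hence it suffices to show that $X_s\equiv T^{-1}(S_s^2-N_s)$, $s=0,\dots,T$, is a martingale for $(\mathscr{F}_s)$; the continuous-index statement then follows immediately by iterating the one-step identity (tower property, using that $z\mapsto\mathscr{F}_{\floor{zT}}$ is increasing).

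Next I would settle measurability and integrability. By the construction in Algorithm \ref{alg:ts}, $A_s$ is a measurable function of $\bar\mu_{\cdot;s-1}$, $\bar n_{\cdot;s-1}$ and $\{Z_{b;s}\}_{b\in[K]}$, and $\bar\mu_{\cdot;s-1},\bar n_{\cdot;s-1}$ are explicit functions of $(\xi_1,\dots,\xi_{s-1})$ and $(A_1,\dots,A_{s-1})$; since $\mathscr{F}_{s-1}=\sigma(\xi_{[s-1]},Z_{[K];[s]})$, an induction on $s$ shows that $A_s$, and hence $\bm{1}_{A_s=a}$, $S_{s-1}$, $N_{s-1}$, is $\mathscr{F}_{s-1}$-measurable, so $X_s$ is $\mathscr{F}_s$-measurable. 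Integrability is immediate: Minkowski's inequality gives $\|S_s\|_2\le\sum_{j=1}^s\|\bm{1}_{A_j=a}\xi_j\|_2\le s$, and $0\le N_s\le s$, so $X_s\in L^1$.

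For the increment, using $\bm{1}_{A_s=a}^2=\bm{1}_{A_s=a}$ together with $S_s=S_{s-1}+\bm{1}_{A_s=a}\xi_s$ and $N_s=N_{s-1}+\bm{1}_{A_s=a}$,
\[
T\,(X_s-X_{s-1})=2S_{s-1}\bm{1}_{A_s=a}\xi_s+\bm{1}_{A_s=a}(\xi_s^2-1).
\]
Conditioning on $\mathscr{F}_{s-1}$: both $S_{s-1}$ and $\bm{1}_{A_s=a}$ are $\mathscr{F}_{s-1}$-measurable, while $\xi_s$ is independent of $\mathscr{F}_{s-1}$ (here one uses that the statistical-noise family $\{\xi_t\}$ is independent of the sampling-noise variables $\{Z_{b;t}\}$) with $\E\xi_s=0$ and $\E\xi_s^2=1$ by Assumption \ref{assump:noise}; hence $\E[T(X_s-X_{s-1})\mid\mathscr{F}_{s-1}]=0$, which is the claim.

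There is essentially no hard step here; the only point that deserves care is the adaptedness/independence bookkeeping — one must confirm that the action $A_s$ is committed \emph{before} the reward noise $\xi_s$ is revealed, i.e. that $\bm{1}_{A_s=a}$ is $\mathscr{F}_{s-1}$-measurable while $\xi_s\perp\mathscr{F}_{s-1}$. This is exactly the structural fact that kills both the cross term $S_{s-1}\bm{1}_{A_s=a}\xi_s$ and the ``variance'' term $\bm{1}_{A_s=a}(\xi_s^2-1)$ in conditional expectation, and it is what pins down $\bar r_{a;T}$ as the correct quadratic-variation-type compensator.
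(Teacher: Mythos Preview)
Your proof is correct and takes essentially the same approach as the paper. The paper phrases the argument more concisely by observing that $\xi_{a;T}$ is a martingale whose predictable quadratic variation is $\bar r_{a;T}$ and then appealing to the Doob decomposition, but the underlying one-step computation is exactly the increment identity you wrote down.
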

The proofs of the above two lemmas are given in Sections \ref{subsection:proof_r_lim} and \ref{subsection:proof_xi_mtg}, respectively.

Formally taking the limit $T\to\infty$ in Lemmas \ref{lem:r_lim} and \ref{lem:xi_mtg}, while ignoring technical details, suggests considering the following SDE associated with the natural parametrization \eqref{def:r_xi_T}:
\begin{align}\label{def:sde_singular}
\begin{cases}
\d r_a(z)= p_{a}\big(r_{\cdot}(z), \xi_{\cdot}(z)\big)\,\d{z},\\
\d \xi_a(z) = \sqrt{p_{a}\big(r_{\cdot}(z), \xi_{\cdot}(z)\big)}\,\d{B_a(z)},
\end{cases}
 \forall a \in \mathcal{A}_0,\ z \in [0,1].
\end{align}
This strategy has been adopted in \cite{kuang2024weak,fan2025diffusion} in settings more general than Algorithm \ref{alg:ts}, under the crucial assumption that $\{p_a\}$ are globally Lipschitz. Under this Lipschitz condition, the SDE (\ref{def:sde_singular}) admits a unique strong solution by classical theory; see, e.g., \cite{karatzas1991brownian,revuz1999continuous,oksendal2003stochastic}.

Unfortunately, this simple and natural approach breaks down in our setting because the initial condition $r_\cdot(0)=0$ induces an essential singularity in \eqref{def:sde_singular} at time $0$. Indeed, the functions $p_a(r_\cdot,\xi_\cdot)$ are not well defined when $r_\cdot(0)=0$. Equivalently, under the natural parametrization \eqref{def:r_xi_T}, the limiting SDE \eqref{def:sde_singular} involves a singular initial condition that is incompatible with its dynamics.

\subsection{Time change, renormalization, and compact convergence}

To circumvent the singularity arising from the natural parametrization \eqref{def:r_xi_T}, we instead consider a time-changed and renormalized version of $\big(r_{a;T}(z),\xi_{a;T}(z)\big)$. For $z\in(0,1)$, with $t=\log z\in(-\infty,0]$, define
\begin{align}\label{def:u_w}
\begin{cases}
u_{a;T}(t)\equiv  e^{-t}\cdot r_{a;T}(e^t),\\ 
w_{a;T}(t)\equiv e^{-t/2}\cdot \xi_{a;T}(e^t),
\end{cases}
\forall a \in \mathcal{A}_0,\, t \in (-\infty,0].
\end{align}
The key technical advantage of working with this somewhat unnatural reparametrization $(u_{\cdot;T},w_{\cdot;T})$ in \eqref{def:u_w} is that it converges on any compact set to a process whose marginal laws are stationary in time and, in fact, form an invariant distribution associated with the semigroup $(P_t)_{t\ge 0}$ of the SDE \eqref{def:sde_time_change_renor}.

\begin{proposition}\label{prop:compact_conv_sde}
	Suppose Assumptions \ref{assump:noise} and \ref{assump:sampling_dist} hold. For any $T_0>0$, the sequence $\{(u_{a;T},w_{a;T}):a \in \mathcal{A}_0\}_{T\geq 1}$ is tight in $(\ell^\infty[-T_0,0])^{\mathcal{A}_0\times \mathcal{A}_0}$, and the sequential limit $\{(u_{a},w_{a}):a \in \mathcal{A}_0\}$ satisfies the SDE (\ref{def:sde_time_change_renor}) on $[-T_0,0]$ with the same marginal law that constitutes an invariant probability measure of the semigroup $(P_t)_{t\geq 0}$. 
\end{proposition}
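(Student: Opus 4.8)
The plan is to combine a tightness argument with an identification-of-the-limit argument, and then to argue stationarity of the marginal laws by a pullback/shift-invariance consideration. First I would establish tightness of $\{(u_{a;T},w_{a;T}):a\in\mathcal A_0\}_{T\ge 1}$ in $(\ell^\infty[-T_0,0])^{\mathcal A_0\times\mathcal A_0}$. Recalling the time change $t=\log z$, one has $u_{a;T}(t)=e^{-t}r_{a;T}(e^t)$ and $w_{a;T}(t)=e^{-t/2}\xi_{a;T}(e^t)$, so on the compact interval $t\in[-T_0,0]$ (i.e.\ $z\in[e^{-T_0},1]$) the prefactors $e^{-t},e^{-t/2}$ are bounded above and below. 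From Lemma \ref{lem:r_lim}, with high probability $r_{a;T}$ satisfies, up to a vanishing error, the self-consistent integral relation $r_{a;T}(z)-r_{a;T}(\epsilon)\approx T^{-1}\sum p_a(\cdots)$; since $0\le p_a\le 1$, this forces an (approximate) Lipschitz-in-$z$ bound on $r_{a;T}$ on $[e^{-T_0},1]$, hence equicontinuity of $u_{a;T}(\cdot)$ on $[-T_0,0]$ after the change of variables. For the noise coordinate, Lemma \ref{lem:xi_mtg} gives that $\xi_{a;T}^2(z)-\bar r_{a;T}(z)$ is a martingale; together with $\bar r_{a;T}\le 1$ this yields an $L^2$ (indeed $L^p$ by Burkholder–Davis–Gundy) modulus-of-continuity estimate for $\xi_{a;T}$, which transfers to $w_{a;T}$ on $[-T_0,0]$. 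Combining, I would invoke a standard Arzelà–Ascoli/Kolmogorov-type tightness criterion in $\ell^\infty[-T_0,0]$ (e.g.\ via the Prokhorov/Billingsley tightness criteria for $C[-T_0,0]$), noting that $u_{\cdot;T}\in\Delta_{\mathcal A_0}^\circ$ keeps the trajectories in a region where $p_a$ is smooth and bounded, so that one does not encounter the singularity of the natural parametrization here.

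Second, I would identify any subsequential limit as a solution of the SDE \eqref{def:sde_time_change_renor} on $[-T_0,0]$. Passing $T\to\infty$ along the tight subsequence, Lemma \ref{lem:r_lim} after the time change turns the discrete sum $T^{-1}\sum p_a$ into the Riemann integral $\int p_a(u_\cdot(s),w_\cdot(s))\,ds$ plus the additional $-u_a(t)$ drift generated by differentiating the $e^{-t}$ prefactor (i.e.\ $\tfrac{d}{dt}(e^{-t}r_{a;T}(e^t)) = -e^{-t}r_{a;T}(e^t)+e^{-t}\cdot e^{t}\cdot r_{a;T}'(e^t)$ heuristically), which reproduces exactly the first line of \eqref{def:sde_time_change_renor}. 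For the $w$-coordinate, the martingale structure of Lemma \ref{lem:xi_mtg} together with the convergence of the predictable bracket $\bar r_{a;T}(e^t)\approx \int_{\cdots} p_a\,ds$ lets me apply a martingale functional central limit theorem (e.g.\ the martingale invariance principle of Ethier–Kurtz or Jacod–Shiryaev): $\xi_{a;T}$ converges to $\int \sqrt{p_a}\,dB_a$, and after the renormalization $w_{a;T}(t)=e^{-t/2}\xi_{a;T}(e^t)$ the Itô formula for $e^{-t/2}$ applied to the limiting semimartingale produces precisely the $-\tfrac12 w_a(t)\,dt+\sqrt{p_a}\,dB_a(t)$ dynamics in the second line of \eqref{def:sde_time_change_renor}. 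Since the functions $\{p_a\}$ are locally Lipschitz on $E_0$ and the limit stays in $\Delta_{\mathcal A_0}^\circ$, Proposition \ref{prop:sde_strong_sol} gives pathwise uniqueness, so the limit does not depend on the subsequence and the whole sequence converges.

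Third, I would argue that the marginal law of the limit $(u_\cdot(t),w_\cdot(t))$ is constant in $t$ and is an invariant probability measure of $(P_t)$. The crucial input is that for each fixed time $t_0$, the law of $(u_{a;T}(t_0),w_{a;T}(t_0))$ is (asymptotically) the same for all $t_0\in[-T_0,0]$: concretely, $\mathscr L(u_{\cdot;T}(t_0),w_{\cdot;T}(t_0))$ is, up to the reparametrization, $\mathscr L(e^{-t_0}r_{\cdot;T}(e^{t_0}),e^{-t_0/2}\xi_{\cdot;T}(e^{t_0}))$, and running the algorithm for horizon $e^{t_0}T$ in place of $T$ shows these laws coincide in the limit — i.e.\ the reparametrization was engineered so that the horizon $T$ only enters through the compact window and the limiting marginals are horizon-free. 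Hence the limit process has time-stationary one-dimensional marginals; combined with the Markov property of the SDE solution (Definition \ref{def:semigroup}) and the fact that if $\mathscr L(X(t))=\nu$ for all $t$ then $\nu=P_s^\ast\nu$ for all $s\ge 0$, this identifies the common marginal as an invariant probability measure of $(P_t)_{t\ge 0}$.

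The main obstacle I anticipate is the tightness/equicontinuity of the noise coordinate $w_{a;T}$ near the left endpoint $t=-T_0$ (equivalently $z=e^{-T_0}$), where the renormalization factor $e^{-t/2}$ is as large as $e^{T_0/2}$ and where $r_{a;T}(z)$, though bounded below on this window, is only guaranteed to be $\gtrsim$ some constant after invoking Lemma \ref{lem:r_lim} — one must ensure the lower bound on $u_{a;T}$ (keeping it inside $\Delta_{\mathcal A_0}^\circ$ uniformly in $T$) holds with high probability on all of $[-T_0,0]$, not just asymptotically; this is precisely where the "unnatural reparametrization" earns its keep, since in the natural parametrization the analogous bound degenerates at $z=0$. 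A secondary technical point is verifying the conditions of the martingale FCLT uniformly enough to pass to the limit jointly with the $r$-coordinate; I expect this to be routine given Lemmas \ref{lem:r_lim} and \ref{lem:xi_mtg}, but it requires care in matching the predictable quadratic variation of $\xi_{a;T}$ with the integral of $p_a$ along the (already converging) $r$-trajectory.
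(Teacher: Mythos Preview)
Your overall architecture matches the paper's: establish tightness, identify any subsequential limit as a solution of the SDE \eqref{def:sde_time_change_renor} via Lemma \ref{lem:r_lim} for the $u$-coordinate and the martingale structure of Lemma \ref{lem:xi_mtg} for the $w$-coordinate, and then argue stationarity of the marginals. Two points deserve comment.

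First, a genuine gap: your claim that ``Proposition \ref{prop:sde_strong_sol} gives pathwise uniqueness, so the limit does not depend on the subsequence and the whole sequence converges'' is incorrect, and in fact the Proposition does not assert this. Pathwise uniqueness for the SDE \eqref{def:sde_time_change_renor} says that, given a \emph{fixed} driving Brownian motion and a \emph{fixed} initial condition at $-T_0$, the solution is unique. But different subsequential limits of $(u_{\cdot;T},w_{\cdot;T})$ may a priori have different laws for the initial value at $-T_0$ (and different driving Brownian motions, each constructed from the corresponding subsequence of $\xi_{\cdot;T}$). Pathwise uniqueness says nothing about this. The correct statement, and what Proposition \ref{prop:compact_conv_sde} actually claims, is only that \emph{every} subsequential limit solves the SDE with time-stationary marginals, and that this common marginal is \emph{an} invariant measure of $(P_t)$. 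Full-sequence convergence is deduced only \emph{after} Proposition \ref{prop:sde_invariant_measure} establishes uniqueness of the invariant measure; see how the proof of Theorem \ref{thm:optimal_arm_pull} invokes both propositions.

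Second, a methodological difference worth noting. For identifying the $w$-limit you propose to invoke a martingale functional CLT. The paper takes a more direct route: it first proves tightness of $(r_{\cdot;T},\xi_{\cdot;T})$ on $[0,1]$, passes to an a.s.\ convergent subsequence via Skorokhod representation, and then argues that the limit $\xi_a$ is itself a continuous martingale with quadratic variation $r_a$. The Brownian motion is then \emph{constructed} as $W_a(z)=\int_0^z (r_a'(s))^{-1/2}\,d\xi_a(s)$ and identified via L\'evy's characterization, after which associativity of the stochastic integral gives $d\xi_a=\sqrt{p_a}\,dW_a$. Only then is the time change $t=\log z$ applied, with It\^o's formula producing the $-\tfrac12 w_a\,dt$ drift. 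Your FCLT route should also work, but the paper's approach avoids checking FCLT hypotheses and handles the joint convergence with the $r$-coordinate automatically through the a.s.\ coupling.
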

The proof of Proposition \ref{prop:compact_conv_sde} can be found in Section \ref{subsection:proof_compact_conv_sde}.

\subsection{Uniqueness of the invariant measure: Proof outline of Proposition \ref{prop:sde_invariant_measure}}

Given the compact convergence in Proposition \ref{prop:compact_conv_sde}, the main remaining task is to establish the uniqueness of the invariant distribution as stated in Proposition \ref{prop:sde_invariant_measure}.

To this end, we first define the notions of \emph{strong Feller} and \emph{irreducibility} properties associated with the semigroup $(P_t)$.
\begin{definition}\label{def:strong_feller_irreducible}
	\begin{enumerate}
		\item[(\texttt{P}1)] The semigroup $(P_t)$ is called \emph{strongly Feller} if $P_t(B(E_0))\subset C_b(E_0)$ for all $t\geq 0$.
		\item[(\texttt{P}2)] The semigroup $(P_t)$ is called \emph{irreducible} if, for any $x\in E_0$ and any nonempty open set $O\subset E_0$, there exists some $t\geq 0$ such that $P_t(x,O)>0$.
	\end{enumerate}
\end{definition}

The main tool for establishing uniqueness of the invariant distribution for the SDE \eqref{def:sde_time_change_renor} is the following version of Doob's theorem, tailored to our setting.

\begin{theorem}\label{thm:Doob}
	If the semigroup $(P_t)$ is strongly Feller \textup{(\texttt{P}1)} and irreducible \textup{(\texttt{P}2)}, then it has at most one invariant measure.
\end{theorem}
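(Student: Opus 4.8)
The plan is to follow the classical route to Doob's theorem (see \cite{daprato1996ergodicity}). The mechanism is that strong Feller-ness \textup{(\texttt{P}1)} upgrades ``$\mu$-almost every $x$'' assertions into ``every $x\in E_0$'' assertions, while irreducibility \textup{(\texttt{P}2)} forces every invariant probability measure to have full topological support; together these make it impossible for two distinct invariant probability measures to coexist. As a first reduction: if no invariant probability measure exists there is nothing to prove, and otherwise, by the ergodic decomposition theorem — applicable since $E_0$ is Polish and $(P_t)$ is a Markov semigroup — the set of \emph{ergodic} invariant probability measures of $(P_t)$ is nonempty and every invariant probability measure is a barycenter of a probability law supported on this set. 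Hence it suffices to show that $(P_t)$ admits at most one ergodic invariant probability measure.

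\emph{Step 1: every invariant probability measure has full support.} Let $\mu$ be invariant and suppose $U\subseteq E_0$ is nonempty open with $\mu(U)=0$. For each $s>0$, invariance gives $0=\mu(U)=\int_{E_0}P_s(x,U)\,\mu(\d x)$, so $P_s(x,U)=0$ for $\mu$-a.e.\ $x$. Since $s\mapsto P_s(x,U)=\E_x\bm1_U(X(s))$ is lower semicontinuous — as $U$ is open and $t\mapsto X(t)$ has continuous paths by Proposition \ref{prop:sde_strong_sol}, so $s\mapsto\bm1_U(X(s))$ is lower semicontinuous and Fatou applies — intersecting the $\mu$-null exceptional sets over rational $s>0$ yields a $\mu$-full set $G\subseteq E_0$ with $P_s(x,U)=0$ for all $x\in G$ and all $s>0$. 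As $\mu$ is a probability measure, $G$ is nonempty, and fixing $x\in G$ contradicts irreducibility \textup{(\texttt{P}2)}. Thus $\mu(U)>0$ for every nonempty open $U$, i.e.\ $\operatorname{supp}\mu=E_0$.

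\emph{Step 2: at most one ergodic invariant probability measure.} Suppose toward a contradiction that $m_1\neq m_2$ are ergodic invariant probability measures. Distinct ergodic invariant measures are mutually singular (a standard consequence of the ergodic theorem), so there is a Borel set $B\subseteq E_0$ with $m_1(B)=1$ and $m_2(B)=0$. Put $f\equiv P_1\bm1_B=P_1(\cdot,B)\in B(E_0)$; by strong Feller-ness \textup{(\texttt{P}1)}, $f\in C_b(E_0)$. Invariance of $m_1$ gives $1=m_1(B)=\int_{E_0}f\,\d m_1$ with $0\le f\le 1$, hence $f=1$ $m_1$-a.e.; the set $\{f\neq 1\}$ is open by continuity of $f$ and is $m_1$-null, so by Step 1 it is empty, i.e.\ $f\equiv 1$ on $E_0$. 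The same reasoning applied to $m_2$ and $m_2(B)=0$ shows that $\{f>0\}$ is open, $m_2$-null, hence empty, i.e.\ $f\equiv 0$ on $E_0$ — a contradiction. Therefore $(P_t)$ has at most one ergodic invariant probability measure, and by the reduction above, at most one invariant probability measure.

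I do not expect a genuine obstacle here: the statement is essentially classical Markov semigroup theory, and the only point calling for a little care is Step 1, where the interplay of invariance, lower semicontinuity of $s\mapsto P_s(x,U)$, and irreducibility is used to exclude $m$-null nonempty open sets. (One could equivalently route the proof through Khasminskii's regularity lemma — strong Feller-ness plus irreducibility imply mutual equivalence of the resolvent kernels $R_1(x,\cdot)$ — and conclude as in \cite{daprato1996ergodicity}; the argument above is marginally more self-contained.) The substantive work lies entirely upstream, in verifying \textup{(\texttt{P}1)} and \textup{(\texttt{P}2)} for the semigroup of the SDE \eqref{def:sde_time_change_renor} via the parabolic H\"ormander condition and the Stroock--Varadhan support theorem.
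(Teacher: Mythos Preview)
Your proof is correct and self-contained, but it takes a different route from the paper. The paper does not prove Doob's theorem from scratch; instead it invokes \cite[Corollary 7.8]{hairer2008ergodic}, which gives uniqueness under strong Feller-ness plus the existence of an \emph{accessible} point, and then verifies accessibility of any $x\in E_0$ by showing $R_\lambda(y,U)>0$ for every $y$, every open neighborhood $U$ of $x$, and every $\lambda>0$. That verification uses irreducibility to find some $t_0$ with $P_{t_0}(y,U)>0$ and then the path-continuity of $X(\cdot)$ to obtain $\inf_{t\in I}P_t(y,U)>0$ on a nondegenerate interval $I\ni t_0$, which makes the resolvent integral strictly positive. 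Your argument, by contrast, reduces to ergodic measures via ergodic decomposition, proves full support directly from invariance and irreducibility (your Step~1), and then uses strong Feller-ness to turn the mutual singularity of two ergodic measures into the contradiction $P_1(\cdot,B)\equiv 1$ and $P_1(\cdot,B)\equiv 0$ simultaneously. Both approaches ultimately lean on path-continuity of $X(\cdot)$ to pass from ``some $t$'' to ``all $t$'' (you use lower semicontinuity of $s\mapsto P_s(x,U)$; the paper uses continuity of the same map). Your version has the advantage of being self-contained; the paper's is shorter because it outsources the structural argument to \cite{hairer2008ergodic}.

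One small imprecision in Step~1: when you write ``fixing $x\in G$ contradicts irreducibility'', you should take $x\in G\setminus U$ (which has $\mu$-measure one, hence is nonempty), since for $x\in U$ one trivially has $P_0(x,U)=1$ and no contradiction arises at $t=0$; the definition of irreducibility in the paper allows $t\ge 0$. This is a one-word fix and does not affect the argument.
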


For the reader's convenience, we provide a complete proof of Theorem \ref{thm:Doob} in Section \ref{subsection:proof_Doob} by verifying the uniqueness criteria in \cite{hairer2008ergodic}.

\subsubsection{SDE in Stratonovich form}

To establish properties \textup{(\texttt{P}1)}-\textup{(\texttt{P}2)} required by Theorem \ref{thm:Doob}, we verify a suitable parabolic H\"ormander condition \cite{hormander1967hypoelliptic} and a topological irreducibility property via the Stroock-Varadhan support theorem \cite{stroock1972support} for the SDE \eqref{def:sde_time_change_renor}. 

Since both arguments require rewriting the SDE \eqref{def:sde_time_change_renor} in Stratonovich form, we consider the following standard Stratonovich SDE for $X(\cdot)\in \R^n$:
\begin{align}\label{def:hormander_sde}
\d X(t) = V_0(X(t))\,\d t+\sum_{i \in [m]} V_i(X(t))\circ \d B_i(t).
\end{align}
Here $V_i$'s are vector fields on $\R^n$ (identified as elements of $C^\infty(\R^n;\R^n)$), and $B_i$'s are independent one-dimensional Brownian motions.

\begin{lemma}\label{lem:Ito_Stratonovich}
	The Stratonovich form of the SDE in (\ref{def:sde_time_change_renor}) is given by
	\begin{align}\label{def:sde_time_change_renor_Stratonovich}
	\begin{cases}
	\d{u_a}(t) = \big(p_a(u_\cdot(t),w_\cdot(t))-u_a(t)\big)\,\d{t},\\
	\d{w_a}(t) = \big(-\frac{1}{2} w_a(t)-\frac{1}{4}\partial_{w_a} p_a(u_\cdot(t),w_\cdot(t)) \big)\,\d{t}\\
	\qquad \qquad\qquad +\sqrt{p_a(u_\cdot(t),w_\cdot(t))}\circ \d{B_a(t)},
	\end{cases}
	\forall a \in \mathcal{A}_0,\, t \in \R.
	\end{align}
\end{lemma}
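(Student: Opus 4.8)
The plan is to derive the Stratonovich form of the SDE \eqref{def:sde_time_change_renor} by applying the standard Itô--Stratonovich conversion formula. Recall that for a scalar SDE of the form $\d X = b(X)\,\d t + \sigma(X)\,\d B$ with a single driving Brownian motion, the Itô and Stratonovich drifts differ by the correction term $\frac{1}{2}\sigma(X)\,\sigma'(X)$; more precisely, $\sigma(X)\circ\d B = \sigma(X)\,\d B + \frac{1}{2}\sigma(X)\,\sigma'(X)\,\d t$, where $\sigma'$ denotes the derivative of the diffusion coefficient in the direction of the state variable whose equation carries the noise. The first equation in \eqref{def:sde_time_change_renor}, namely $\d u_a = (p_a(u_\cdot,w_\cdot) - u_a)\,\d t$, has no martingale part at all, so it is unchanged under the conversion. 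Only the $w_a$ equations need correction.

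First I would observe that in the SDE \eqref{def:sde_time_change_renor}, the noise in the $w_a$ equation is driven solely by $B_a$, and the Brownian motions $\{B_a : a \in \mathcal{A}_0\}$ are mutually independent. Hence the Stratonovich correction for the $w_a$ equation only involves the partial derivative of its own diffusion coefficient $g_a \equiv \sqrt{p_a(u_\cdot, w_\cdot)}$ with respect to the component $w_a$ (the only state component whose dynamics are driven by $B_a$); all cross terms vanish by independence, and $u_a$ contributes nothing since its equation has no $B_a$-term. Thus the correction term is $\frac{1}{2}\, g_a \cdot \partial_{w_a} g_a$. Computing $\partial_{w_a} g_a = \partial_{w_a}\sqrt{p_a} = \frac{\partial_{w_a} p_a}{2\sqrt{p_a}}$, so
\begin{align*}
\tfrac{1}{2}\, g_a \,\partial_{w_a} g_a = \tfrac{1}{2}\sqrt{p_a}\cdot \frac{\partial_{w_a} p_a}{2\sqrt{p_a}} = \tfrac{1}{4}\,\partial_{w_a} p_a(u_\cdot, w_\cdot).
\end{align*}
Therefore $\sqrt{p_a}\,\d B_a = \sqrt{p_a}\circ \d B_a - \tfrac{1}{4}\partial_{w_a} p_a\,\d t$, and substituting into the Itô equation $\d w_a = -\tfrac{1}{2} w_a\,\d t + \sqrt{p_a}\,\d B_a$ yields exactly \eqref{def:sde_time_change_renor_Stratonovich}.

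A few technical points warrant checking. One should confirm that $g_a = \sqrt{p_a}$ is $C^1$ in $w_a$ on the relevant domain $E_0$, which requires $p_a > 0$ there; this follows from condition (B1) in Assumption \ref{assump:sampling_dist}, since $\Phi \in (0,1)$ pointwise forces the product inside the expectation defining $p_a$ in \eqref{def:p_a} to be strictly positive, hence $p_a > 0$ on $E_0$ (and $p_a \equiv 1$ when $\abs{\mathcal{A}_0}=1$, trivially positive). Smoothness of $p_a$ in its arguments follows from differentiating under the expectation sign, justified by the boundedness of $\Phi$ and $\Phi'$ from (B1). One should also note that the Itô--Stratonovich conversion is applied componentwise and the independence of $\{B_a\}$ ensures no coupling corrections appear; this is the only place where the structure of the noise matters. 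I do not anticipate a genuine obstacle here — the lemma is a routine computation — but the cleanest exposition is to state the scalar conversion rule, remark on the independence/decoupling of the Brownian motions, and then carry out the one-line derivative computation above. The mildly delicate point, if any, is simply ensuring $p_a$ is strictly positive and smooth so that $\sqrt{p_a}$ is a legitimate $C^1$ (indeed $C^\infty$) diffusion coefficient, which is exactly what (B1) guarantees.
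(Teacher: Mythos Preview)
Your proposal is correct and follows essentially the same approach as the paper. The paper applies It\^o's formula to $H_a(t)=\sqrt{p_a(u_\cdot(t),w_\cdot(t))}$ and then reads off $\d\langle H_a,B_a\rangle_t=\tfrac{1}{2}\partial_{w_a}p_a\,\d t$ from the relation $H\circ\d B=H\,\d B+\tfrac{1}{2}\d\langle H,B\rangle$; your argument is just a compressed version of this, noting directly that only the $\partial_{w_a}$ derivative survives because $w_a$ is the unique state coordinate driven by $B_a$ and the $u$-coordinates carry no Brownian part.
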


The proof of the above lemma can be found in Section \ref{subsection:proof_Ito_Stratonovich}.

\subsubsection{Strong Feller via parabolic H\"ormander condition}
This subsection requires some working knowledge of Lie brackets. For readers unfamiliar with Lie brackets, we summarize some basic notions in Appendix \ref{section:Lie_bracket}.

The following definition of the parabolic H\"ormander condition is taken from \cite[Definition 1.2]{hairer2011malliavin}.

\begin{definition}\label{def:hormander_cond}
	Consider the SDE for $X(\cdot) \in \R^n$ in Stratonovich form (\ref{def:hormander_sde}). Let a nested collection of vector fields $\mathscr{V}_0\subset\mathscr{V}_1\subset \cdots$ be defined recursively as follows. Let $\mathscr{V}_0\equiv \{V_1,\ldots,V_m\}$, and for $k\geq 1$, let
	\begin{align*}
	\mathscr{V}_k \equiv \mathscr{V}_{k-1} \cup \big\{[U,V_j]: U \in \mathscr{V}_{k-1}, j=0,1,\ldots,m\big\}.
	\end{align*} 
	For any $x \in \R^n$, define the vector space $\mathscr{V}_k(x)\equiv \mathrm{span}\{V(x): V \in \mathscr{V}_k\}$. We say (\ref{def:hormander_sde}) satisfies the \emph{parabolic H\"ormander condition}, if
	\begin{align}\label{cond:hormander}
	\cup_{k\geq 1} \mathscr{V}_k(x) = \R^n,\quad \hbox{for all $x \in \R^n$}.
	\end{align}
\end{definition}

H\"ormander's theorem \cite{hormander1967hypoelliptic} was originally formulated in terms of second-order differential operators in partial differential equations; see also \cite{bakry2014analysis}. A probabilistic proof in the context of stochastic differential equations became possible with the development of Malliavin calculus \cite{malliavin1978stochastic}, and was subsequently further simplified in \cite{kusuoka1984applications,kusuoka1985applications,kusuoka1987applications}. Below we state a version of H\"ormander's theorem, taken from \cite[Theorem 1.3]{hairer2011malliavin}, which guarantees the strong Feller property for the semigroup associated with the SDE \eqref{def:hormander_sde}.

\begin{theorem}\label{thm:hormander}
	Consider the Stratonovich SDE (\ref{def:hormander_sde}) and assume that all vector fields $\{V_0,V_1,\ldots,V_m\}$ have bounded derivatives of all orders. If it satisfies the parabolic H\"ormander condition (\ref{cond:hormander}), then its solutions admit a smooth density with respect to Lebesgue measure, and the corresponding Markov semigroup maps bounded functions into smooth functions at any time.
\end{theorem}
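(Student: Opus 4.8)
The plan is to prove this by Malliavin's probabilistic route to hypoellipticity, in the streamlined form of Hairer. The argument separates into a soft part and a hard part. The soft part is the principle that if $X(t)$ is smooth in the Malliavin sense and its Malliavin covariance matrix $\mathcal{M}_t$ is almost surely invertible with $\mathbb{E}\,\|\mathcal{M}_t^{-1}\|^p<\infty$ for every $p\ge 1$, then iterated Malliavin integration by parts produces, for each multi-index $\alpha$, a random variable $H_\alpha(t,x)\in\bigcap_{p}L^p$ satisfying $\partial_x^\alpha\,\mathbb{E}[g(X^x(t))]=\mathbb{E}[g(X^x(t))\,H_\alpha(t,x)]$ for all bounded measurable $g$; applied to $g=\mathbf{1}_A$ together with Sobolev embedding this gives a $C^\infty$ density for the law of $X(t)$, and applied directly it shows $x\mapsto P_tg(x)$ is $C^\infty$ with $\|\partial^\alpha P_tg\|_\infty\le\|g\|_\infty\,\mathbb{E}|H_\alpha(t,x)|$, i.e. $P_t$ maps $B(\mathbb{R}^n)$ into $C^\infty_b(\mathbb{R}^n)$. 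The hard part is verifying the hypotheses, in particular the negative-moment bound on $\mathcal{M}_t$, which is exactly where the parabolic H\"ormander condition enters.

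First I would set up Malliavin calculus on Wiener space and, using that all $V_i$ have bounded derivatives of every order, establish $X(t)\in\mathbb{D}^\infty$ with the representation $\mathcal{D}^j_sX(t)=J_{s\to t}V_j(X(s))$ for $s\le t$, where $J_{s\to t}=J_{0\to t}J_{0\to s}^{-1}$ is the Jacobian of the stochastic flow; the variational equation and its inverse have coefficients built from $DV_i(X)$, so $J_{0\to t}$ and $J_{0\to t}^{-1}$ have finite moments of all orders. Hence $\mathcal{M}_t=J_{0\to t}\,C_t\,J_{0\to t}^\top$ with reduced matrix $C_t=\sum_{j}\int_0^t\big(J_{0\to s}^{-1}V_j(X(s))\big)\big(J_{0\to s}^{-1}V_j(X(s))\big)^\top\,\d s$, and the desired bound on $\mathcal{M}_t^{-1}$ reduces to showing that $m_t\equiv\inf_{|\phi|=1}\langle\phi,C_t\phi\rangle$ has negative moments of all orders, equivalently $\mathbb{P}(m_t<\epsilon)=O(\epsilon^p)$ as $\epsilon\downarrow 0$ for every $p$.

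This probability estimate is the crux, and I expect it to be the \textbf{main obstacle}. By a covering/chaining argument over the unit sphere it suffices to control $\mathbb{P}(\langle\phi,C_t\phi\rangle<\epsilon)$ for fixed $\phi$. Smallness of $\langle\phi,C_t\phi\rangle$ means each process $s\mapsto\langle\phi,J_{0\to s}^{-1}V_j(X(s))\rangle$ is small in $L^2([0,t])$; writing its semimartingale dynamics (whose drift and diffusion coefficients are built from $\langle\phi,J_{0\to s}^{-1}[V_l,V_j](X(s))\rangle$, $l=0,1,\dots,m$) and invoking a Norris-type lemma — the quantitative statement that a semimartingale with well-behaved coefficients that is uniformly small on $[0,t]$ must, outside an exceptional event of probability $O(\epsilon^p)$, have both its drift and its diffusion coefficients small in $L^2$ — one propagates smallness from $\mathscr{V}_0$ to $\mathscr{V}_1$, then $\mathscr{V}_2$, and so on. Localizing to $s$ near $t$, where $X(s)$ stays close to $X(t)$, and using that the span dimension of $\mathscr{V}_k(\cdot)$ is lower semicontinuous so that a finite bracket level already spans $\mathbb{R}^n$ in a neighborhood of any point, one deduces that $\langle\phi,J_{0\to t}^{-1}V\rangle$ is small for a spanning family of vectors $V$, forcing $|\phi|$ to be small and contradicting $|\phi|=1$; this yields the claimed probability bound. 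Tracking the finitely many applications of the Norris lemma and the attendant degradation of the $\epsilon$-power, while keeping every coefficient process in the right moment class, is the genuinely delicate bookkeeping; the remainder assembles routinely from the standard Malliavin integration-by-parts machinery.
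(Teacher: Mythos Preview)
Your outline is correct and is precisely the Malliavin-calculus route of Hairer that the paper cites. However, the paper does not prove this theorem at all: it is stated as a black-box quotation of \cite[Theorem~1.3]{hairer2011malliavin}, with the preceding sentence explicitly attributing the probabilistic proof to Malliavin and Kusuoka--Stroock. So there is nothing to compare against in the paper's own text; your proposal simply reproduces the argument behind the cited reference, which is more than the paper itself undertakes.
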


The above version of H\"ormander's theorem will be the basis for proving the following proposition.
\begin{proposition}\label{prop:sde_strong_feller}
	Suppose (B1) in Assumption \ref{assump:sampling_dist} holds and $\abs{\mathcal{A}_0}\geq 2$. Then the semigroup $(P_t)$ in Definition \ref{def:semigroup} is strongly Feller, i.e., for any $t\geq 0$ and $f \in B(E_0)$, the map $x\mapsto P_{t} f(x)$ is continuous on $E_0$.
\end{proposition}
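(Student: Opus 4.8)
The plan is to verify that the Stratonovich form~\eqref{def:sde_time_change_renor_Stratonovich} of the SDE satisfies a (localized) parabolic H\"ormander condition and then invoke H\"ormander's theorem (Theorem~\ref{thm:hormander}), which yields smooth transition densities and hence the strong Feller property. Two preliminary reductions set this up. First, eliminate the simplex constraint: fix a reference index $a_0\in\mathcal{A}_0$ and use $\big((u_a)_{a\in\mathcal{A}_0\setminus\{a_0\}},(w_a)_{a\in\mathcal{A}_0}\big)$ as coordinates on $E_0$, with $u_{a_0}=1-\sum_{a\neq a_0}u_a$; this realizes the dynamics as an $\R^{2\abs{\mathcal{A}_0}-1}$-valued Stratonovich SDE of the form~\eqref{def:hormander_sde} on an open set $\mathcal{U}\subset\R^{2\abs{\mathcal{A}_0}-1}$, with drift vector field $V_0$ and diffusion vector fields $V_a=\sqrt{p_a}\,\partial_{w_a}$, $a\in\mathcal{A}_0$. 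Second, since the coefficients $\{p_a\}$ are not globally bounded (and, under (B1) alone, not obviously $C^\infty$), localize: exhaust $E_0$ by relatively compact open sets $\mathcal{O}_j\uparrow E_0$ and, for each $j$, replace the coefficients outside $\mathcal{O}_j$ by globally bounded, smooth ones that agree with the originals on $\mathcal{O}_j$ and still satisfy the H\"ormander condition there, obtaining a semigroup $P_t^{(j)}$ that is strong Feller by Theorem~\ref{thm:hormander}.

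To transfer strong Feller from $P_t^{(j)}$ to $P_t$, recall from Proposition~\ref{prop:sde_strong_sol} --- and, more quantitatively, from the a priori localization estimates in its proof --- that the solution stays in $\Delta_{\mathcal{A}_0}^\circ$ and that the exit time $\tau_j\equiv\inf\{s\ge0:X(s)\notin\mathcal{O}_j\}$ satisfies $\sup_{x\in L}\Prob_x(\tau_j\le t)\to0$ as $j\to\infty$, for every compact $L\subset E_0$ and every $t\ge0$. Since $X$ and the modified process started at the same point coincide on $[0,\tau_j)$, for any bounded measurable $f$ we obtain $\abs{P_tf(x)-P_t^{(j)}f(x)}\le 2\pnorm{f}{\infty}\,\Prob_x(\tau_j\le t)$, so $P_tf$ is a locally uniform limit of continuous functions and hence continuous on $E_0$, which is the assertion. (If one wishes to rely on (B1) alone, the same scheme is run with a smooth approximating family $\Phi_\varepsilon\to\Phi$, or with a finite-smoothness/localized version of H\"ormander's theorem; the hypoellipticity it invokes is in any case a local statement.)

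The substance of the proof is the verification of the parabolic H\"ormander condition on $E_0$. Because $p_a>0$ everywhere on $E_0$ under (B1) (each factor $\Phi[\cdot]\in(0,1)$), the family $\mathscr{V}_0=\{V_a\}_{a\in\mathcal{A}_0}$ spans, at every point, the $w$-subspace $\{0\}\times\R^{\mathcal{A}_0}$. The missing $u$-directions are produced by $\mathscr{V}_1$: since the $u$-component of $V_0$ is $(p_b-u_b)_b$ and is free of $w$ (the Stratonovich correction in~\eqref{def:sde_time_change_renor_Stratonovich} affects only the $w$-component, which is modded out here), the $u$-part of $[V_a,V_0]$ is exactly $\big(\sqrt{p_a}\,\partial_{w_a}p_b\big)_b$. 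Hence $\mathscr{V}_1(x)=\R^{2\abs{\mathcal{A}_0}-1}$ for every $x\in E_0$ precisely when the vectors $\{(\partial_{w_a}p_b(x))_{b\in\mathcal{A}_0}:a\in\mathcal{A}_0\}$ span the tangent space of $\Delta_{\mathcal{A}_0}^\circ$, i.e.\ when the Jacobian $M(x)\equiv(\partial_{w_a}p_b(x))_{a,b\in\mathcal{A}_0}$ has rank $\abs{\mathcal{A}_0}-1$ --- the maximum possible, since $\sum_b\partial_{w_a}p_b=0$ (differentiate the identity $\sum_bp_b\equiv1$, which must hold on $E_0$ for~\eqref{def:sde_time_change_renor} to preserve $\Delta_{\mathcal{A}_0}^\circ$).

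Establishing this rank/non-degeneracy of $M(x)$ is the \emph{main obstacle}. It cannot be obtained by soft means: it requires differentiating the explicit representation~\eqref{def:p_a} under the expectation and controlling the resulting integrals of products of $\Phi$ and $\Phi'$, in order to show that the off-diagonal entries $\partial_{w_a}p_b$ ($a\neq b$) carry a strict sign --- raising $w_a$ stochastically shifts arm $a$'s resampled value upward, strictly depressing $p_b$ --- so that, together with the zero row sums, $M(x)$ inherits the sign pattern of a weighted graph Laplacian of a connected (indeed complete) graph on $\mathcal{A}_0$, whence $\ker M(x)=\mathrm{span}\{\mathbf 1\}$ and $\mathrm{rank}\,M(x)=\abs{\mathcal{A}_0}-1$. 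The strictness is the delicate point and is where the finer properties of $\Phi$ under (B1) enter; should $M(x)$ nonetheless be deficient at some isolated configurations, one passes to $\mathscr{V}_2$ and brackets with $V_0$ once more, but I expect the first bracket level to suffice throughout $E_0$. The remaining ingredients --- the Lie-bracket bookkeeping, the boundedness of the truncated coefficients and their derivatives, and the exit-time estimates --- are routine given the above.
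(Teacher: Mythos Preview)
Your proposal is correct and follows essentially the same route as the paper: reduce to an $\R^{2|\mathcal{A}_0|-1}$-valued Stratonovich SDE on the open simplex, localize to get globally bounded smooth coefficients, verify the parabolic H\"ormander condition at the level of $\mathscr{V}_1$ by computing the $u$-part of $[V_a,V_0]$ as $\sqrt{p_a}\,(\partial_{w_a}p_b)_b$, prove the Jacobian has rank $|\mathcal{A}_0|-1$ via its Laplacian sign pattern, and then pass from $P_t^{(j)}$ to $P_t$ using the exit-time bound from the proof of Proposition~\ref{prop:sde_strong_sol}.

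The one noteworthy difference is in how the zero-row-sum property is obtained. You differentiate the identity $\sum_b p_b\equiv 1$ to get $\sum_b\partial_{w_a}p_b=0$ directly in $w$-coordinates; the paper instead passes to the coordinates $z_\cdot=w_\cdot/u_\cdot$ and uses translation invariance $\bar p_j(u_\cdot,z_\cdot+c\mathbf 1)=\bar p_j(u_\cdot,z_\cdot)$ to get $\sum_k\partial_{z_k}\bar p_j=0$. Both lead to the same Laplacian lemma (Lemma~\ref{lem:M_ker}) and the same rank conclusion; your derivation is arguably cleaner since it avoids the coordinate change, while the paper's change of variables makes the localization bookkeeping (the $\chi^{(n)}$ cut-offs) slightly more transparent. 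Your caveat about $C^\infty$-regularity under (B1) alone is well taken; the paper applies Theorem~\ref{thm:hormander} to the localized fields without further comment on this point.
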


The proof of the above proposition is technically involved. At a high level, we show that the parabolic H\"ormander condition in \eqref{cond:hormander} can be verified for the Stratonovich form of the SDE \eqref{def:sde_time_change_renor_Stratonovich} by restricting attention to the vector fields $\mathscr{V}_0$ and $\mathscr{V}_1$, without the need to consider higher-order iterated Lie brackets $\{\mathscr{V}_k\}_{k\ge 2}$. A key insight in our calculations is that an appropriate version of the matrix $\big(\partial_{w_a}p_b(u_\cdot,w_\cdot)\big)_{a,b\in\mathcal{A}_0}$ is rank-deficient, with rank exactly $\abs{\mathcal{A}_0}-1$. The proof is further complicated by the fact that $p_\cdot(u_\cdot,w_\cdot)$ becomes singular for small values of $u_\cdot$, so localization techniques are required.

Full details of the proof of Proposition \ref{prop:sde_strong_feller} are provided in Section \ref{subsection:proof_sde_strong_feller}.

\subsubsection{Irreducibility via support theorem}
The basic tool for proving irreducibility of the semigroup $(P_t)$ is a version of the Stroock-Varadhan support theorem \cite{stroock1972support}. To state the result, for $\alpha>0$ and $T>0$, we recall the space $C^{\alpha}([0,T]\to \R^n)$ of $\alpha$-H\"older continuous functions with finite $\alpha$-H\"older norm
\begin{align*}
\pnorm{f}{\alpha}\equiv \sup_{t \in [0,T]} \pnorm{f(t)}{}+\sup_{t\neq s \in [0,T]} \frac{\pnorm{f(t)-f(s)}{}}{\abs{t-s}^\alpha}<\infty.
\end{align*}
We usually omit the dependence on $T$ in the notation.

Given a probability measure $\mu$ on $(C^\alpha,\pnorm{\cdot}{\alpha})$, its support is defined by
\begin{align*}
\mathrm{supp}_\alpha(\mu)\equiv \bigcap\Big\{F\subset C^\alpha:\ F \text{ is closed under }\pnorm{\cdot}{\alpha}\ \text{and}\ \mu(F)=1\Big\}.
\end{align*}
Equivalently, $
\mathrm{supp}_\alpha(\mu)=\big\{f \in C^\alpha:\ \mu\big(B_\alpha(f,r)\big)>0\ \text{for all } r>0\big\}$, 
where the norm ball is $B_\alpha(f,r)\equiv \{g \in C^\alpha:\ \pnorm{g-f}{\alpha}<r\}$.

For smooth vector fields $\{V_0,V_1,\ldots,V_m\}$ with bounded derivatives of all orders, the solution $X(\cdot)$ to the Stratonovich SDE \eqref{def:hormander_sde} can be viewed as a random element of $C^\alpha([0,T]\to \R^n)$ for any $T$, and hence $\Prob_x\circ X^{-1}$ defines a probability measure on $(C^\alpha,\pnorm{\cdot}{\alpha})$.

The following support theorem, which characterizes $\mathrm{supp}_\alpha(\Prob_x\circ X^{-1})$, is taken from \cite[Theorem 4]{benarous1994holder}; see also \cite[Theorem 3.5]{millet1994simple}.

\begin{theorem}\label{thm:support}
	Consider the Stratonovich SDE (\ref{def:hormander_sde}) and assume that all vector fields $\{V_0,V_1,\ldots,V_m\}$ have bounded derivatives of all orders. Fix $x \in \R^n$ and $T>0$. Let $\mathscr{S}_x$ be the map which associates to $h \in L^2\equiv L^2([0,T]\to \R^n)$ the solution $Y(\cdot)\in \R^n$ to the following ordinary differential equation (ODE):
	\begin{align}\label{def:skeleton_ODE}
	\d Y(t)=V_0(Y(t))\,\d{t}+\sum_{i \in [m]} V_i(Y(t)) h_i(t)\,\d{t},\quad Y(0)=x.
	\end{align}
	Then for any $\alpha \in [0,1/2)$,
	\begin{align*}
	\mathrm{supp}_\alpha(\Prob_x\circ X^{-1}) =\hbox{closure of }\mathscr{S}_x(L_2)\hbox{ in } (C^\alpha,\pnorm{\cdot}{\alpha}). 
	\end{align*}
\end{theorem}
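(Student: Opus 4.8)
The plan is to prove the two set inclusions in the statement separately, in each case reducing to a classical fact about Brownian paths in H\"older topology together with a stability property of the Stratonovich solution map. Throughout, the restriction $\alpha\in[0,1/2)$ is used in an essential way: it is exactly the subcritical range in which piecewise-linear (and rough-path) approximations of Brownian motion converge in $C^\alpha$, and in which $0$ lies in the $C^\alpha$-support of Wiener measure.

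\emph{The inclusion $\mathrm{supp}_\alpha(\Prob_x\circ X^{-1})\subseteq\overline{\mathscr{S}_x(L^2)}$} (closure in $C^\alpha$). I would realize $X$ as an almost-sure $C^\alpha$-limit of skeleton solutions. Let $B^{(n)}_i$ denote the piecewise-linear interpolation of $B_i$ over the dyadic mesh of step $2^{-n}$, and let $X^{(n)}$ solve the ODE obtained from \eqref{def:hormander_sde} by replacing each $\circ\,\d B_i$ with $\dot B^{(n)}_i\,\d t$. Each $B^{(n)}_i$ is Lipschitz, so $\dot B^{(n)}_i\in L^\infty\subset L^2$ and therefore $X^{(n)}=\mathscr{S}_x(\dot B^{(n)}_\cdot)\in\mathscr{S}_x(L^2)$. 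The Wong--Zakai theorem in $\alpha$-H\"older norm then gives $X^{(n)}\to X$ in $C^\alpha$ in probability, with the Stratonovich correction appearing precisely as the $C^\alpha$-limit of the quadratic cross terms. Extracting an almost surely convergent subsequence shows $X\in\overline{\mathscr{S}_x(L^2)}$ almost surely; since this set is closed and carries full mass, it contains the support by minimality of the latter.

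\emph{The inclusion $\overline{\mathscr{S}_x(L^2)}\subseteq\mathrm{supp}_\alpha(\Prob_x\circ X^{-1})$.} As the support is closed, it suffices to show $\mathscr{S}_x(h)\in\mathrm{supp}_\alpha(\Prob_x\circ X^{-1})$ for every $h\in L^2$, i.e.\ $\Prob\big(\pnorm{X-\mathscr{S}_x(h)}{\alpha}<r\big)>0$ for all $r>0$. First apply Girsanov's theorem with the deterministic Cameron--Martin shift $t\mapsto\int_0^t h_i(s)\,\d s$ (whose exponential moment condition is automatic): the law of $X$ is then equivalent to the law of the solution $\widehat X$ of \eqref{def:hormander_sde} with drift $V_0+\sum_{i\in[m]}V_i\,h_i$ and the same diffusion fields, driven by a standard Brownian motion. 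Since $\mathscr{S}_x(h)$ is exactly the solution of the ODE \eqref{def:skeleton_ODE}, which carries that same drift and no noise, the claim reduces to $\Prob\big(\pnorm{\widehat X-\mathscr{S}_x(h)}{\alpha}<r\big)>0$, i.e.\ that the diffusion stays $\alpha$-H\"older-close to its noiseless skeleton with positive probability.

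This last step is the main obstacle: it cannot be obtained from naive continuity of the solution map, which fails in H\"older topology because of the Stratonovich correction. I would combine two ingredients. First, the small-ball fact that $0$ belongs to the $C^\alpha$-support of Wiener measure (valid for $\alpha<1/2$), so $\Prob\big(\pnorm{B}{\alpha}<\delta\big)>0$ for every $\delta>0$. Second, a quantitative stability estimate $\pnorm{\widehat X-\mathscr{S}_x(h)}{\alpha}\le\omega(\delta)$ on the event $\{\pnorm{B}{\alpha}<\delta\}$ with $\omega(\delta)\to0$; this is obtained either by the classical Stroock--Varadhan device of a random change of variables (writing $\widehat X_t$ in terms of the stochastic flow generated by $\sum_i V_i\circ\d B_i$ and an auxiliary ODE driven by the smooth remainder, as in the Millet--Sanz-Sol\'e argument and its extension to H\"older norm by Ben Arous--Gradinaru--Ledoux), which turns the dependence on $B$ into a genuinely continuous one, or, in rough-path language, by uniform continuity of the Lyons solution map on bounded sets together with the positive probability that the Brownian rough path lift is $\delta$-small. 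Feeding the first ingredient into the second gives $\Prob\big(\pnorm{\widehat X-\mathscr{S}_x(h)}{\alpha}<r\big)>0$, which completes the second inclusion and hence the theorem.
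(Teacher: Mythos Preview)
The paper does not prove this theorem; it is quoted as a known result from \cite{benarous1994holder} (and \cite{millet1994simple}), so there is no ``paper's own proof'' to compare against. Your sketch follows the classical two-inclusion strategy of those references: Wong--Zakai in $C^\alpha$ for the easy inclusion, and Girsanov plus a stability estimate on the small-ball event $\{\pnorm{B}{\alpha}<\delta\}$ for the hard one, with the flow-decomposition/rough-path device to get around the discontinuity of the It\^o map. This is exactly the approach of Ben~Arous--Gradinaru--Ledoux and Millet--Sanz-Sol\'e, so your plan is aligned with the cited proofs; the only caveat is that the ``quantitative stability estimate $\pnorm{\widehat X-\mathscr{S}_x(h)}{\alpha}\le\omega(\delta)$'' is precisely the hard technical core of those papers and would need substantial work to make rigorous beyond what you have written.
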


We note that \cite[Theorem 4]{benarous1994holder} does not specify precise regularity conditions on $\{V_i\}$. Here we adopt a condition that is stronger than necessary (and stronger than that in \cite[Theorem 3.5]{millet1994simple}) to keep the presentation simple and consistent with Theorem \ref{thm:hormander}.

The version of the support theorem stated in Theorem \ref{thm:support} will be used to prove the following proposition.

\begin{proposition}\label{prop:sde_irreducible}
	Suppose (B1) in Assumption \ref{assump:sampling_dist} holds and $\abs{\mathcal{A}_0}\geq 2$. Then the semigroup $(P_t)$ in Definition \ref{def:semigroup} is irreducible, i.e., with $X(t)$ denoting the unique strong solution to the SDE (\ref{def:sde_time_change_renor}), for any $x \in E_0$ and any nonempty open set $O\subset E_0$, there exists some $t>0$ such that $\Prob_x(X(t) \in O)>0$.
\end{proposition}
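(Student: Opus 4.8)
The strategy is to apply the support theorem (Theorem \ref{thm:support}) to the Stratonovich form \eqref{def:sde_time_change_renor_Stratonovich} of the SDE and show that, starting from any $x\in E_0$, the reachable set of the associated control ODE \eqref{def:skeleton_ODE} is dense in $E_0$ at some time $t>0$; irreducibility then follows since the support of $\Prob_x\circ X^{-1}$ in $(C^\alpha,\pnorm{\cdot}{\alpha})$ contains the closure of this reachable set, and convergence in $C^\alpha$ implies pointwise convergence of the time-$t$ marginal. Concretely, I would fix $x=(u^0_\cdot,w^0_\cdot)\in E_0$ and an arbitrary target $y=(u^1_\cdot,w^1_\cdot)\in E_0$, and exhibit a time horizon $t>0$ and a control $h\in L^2([0,t]\to\R^{\mathcal{A}_0})$ (acting on the $w$-coordinates, since only those carry Brownian terms) steering the ODE solution $Y(\cdot)$ from $x$ into an arbitrarily small neighborhood of $y$ in $E_0$.

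\textbf{Key steps.} First, I would observe that the $u$-dynamics are autonomous of the control: $\dot u_a = p_a(u_\cdot,w_\cdot)-u_a$, driven only through its coupling to $w_\cdot$. Since $(u_a)$ stays in the open simplex $\Delta^\circ_{\mathcal{A}_0}$ (Proposition \ref{prop:sde_strong_sol}) and $p_a$ is continuous and bounded on compact subsets of $E_0$, the map $w_\cdot(\cdot)\mapsto u_\cdot(\cdot)$ is continuous in sup-norm; hence it suffices to drive $w_\cdot$ along a prescribed path and check that the induced $u_\cdot$ lands near $u^1_\cdot$. Second, for the $w$-coordinates, I would choose \emph{any} smooth path $t\mapsto \tilde w_\cdot(t)$ with $\tilde w_\cdot(0)=w^0_\cdot$ and $\tilde w_\cdot(t^\ast)=w^1_\cdot$ that, together with the resulting $\tilde u_\cdot(\cdot)$, stays inside $E_0$; then solve algebraically for the control $h_a(t)$ from the $w_a$-equation:
\begin{align*}
h_a(t) = \frac{\dot{\tilde w}_a(t) + \tfrac12 \tilde w_a(t) + \tfrac14 \partial_{w_a} p_a\big(\tilde u_\cdot(t),\tilde w_\cdot(t)\big)}{\sqrt{p_a\big(\tilde u_\cdot(t),\tilde w_\cdot(t)\big)}},
\end{align*}
which is well-defined and in $L^2$ as long as $p_a$ is bounded away from zero along the path — guaranteed by staying in a compact subset of $E_0$. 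Third, I must arrange that the autonomously-determined $\tilde u_\cdot(t^\ast)$ actually equals (or is close to) $u^1_\cdot$; for this I would use the relaxation structure of the $u$-equation: if $w_\cdot$ is held near a configuration making $p_a(u_\cdot,w_\cdot)\approx u^1_a$ for long enough, then $u_\cdot(t)\to u^1_\cdot$ exponentially. So the control consists of two phases — a fast transfer of $w_\cdot$ to a value $w^\dagger_\cdot$ chosen so that $p_a(u^1_\cdot,w^\dagger_\cdot)=u^1_a$ for all $a$ (solvability of this fixed-point relation for $w^\dagger_\cdot$ given $u^1_\cdot$ needs to be checked, using monotonicity/surjectivity of $p_a$ in the $w$-variables), holding it there until $u_\cdot$ is within $\epsilon$ of $u^1_\cdot$, and then a final short transfer of $w_\cdot$ from $w^\dagger_\cdot$ to $w^1_\cdot$ (short enough that $u_\cdot$ barely moves).

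\textbf{Main obstacle.} The crux is the controllability of the $u$-coordinates through the indirect, bounded coupling $p_\cdot$: I need that the reachable set of $p_\cdot(u_\cdot,w_\cdot)$ over admissible $w_\cdot$ is rich enough — ideally all of $\Delta^\circ_{\mathcal{A}_0}$ — so that the fixed-point equation $p_a(u^1_\cdot,w^\dagger_\cdot)=u^1_a$ admits a solution $w^\dagger_\cdot$ keeping the trajectory in $E_0$; this is where the specific structure of $p_a$ in \eqref{def:p_a} (a product of Gaussian-type CDFs whose arguments are affine in $w_b/\sqrt{r_b}$ and $\mathsf{Z}$) must be exploited, and where the rank-$(\abs{\mathcal{A}_0}-1)$ degeneracy noted in the strong Feller argument reappears — the constraint $\sum_a p_a = 1$ means one needs to be careful that the simplex constraint on $u^1_\cdot$ is consistently matched. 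A secondary technical point is ensuring the control path stays in a compact subset of $E_0$ so that $p_a$ stays bounded below and the $L^2$-norm of $h$ is finite; this requires a uniform lower bound on $u_\cdot(t)$ along the constructed trajectory, which should follow from the relaxation estimate once the initial and target points are in $E_0$. I expect the bulk of the work to be a careful, explicit construction of $w^\dagger_\cdot$ and verification of the trajectory staying in $E_0$, rather than any deep new idea.
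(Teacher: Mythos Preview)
Your high-level strategy matches the paper's: apply the Stroock--Varadhan support theorem to the Stratonovich form, construct a control $h$ steering the ODE solution from $x$ to a neighborhood of the target, and read off $h_a$ algebraically from the $w_a$-equation. The two-phase idea (first move $u$, then move $w$ quickly so $u$ barely shifts) is also the paper's structure.

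However, your Phase~I contains a genuine gap. You propose to find $w^\dagger$ solving $p_a(u^1,w^\dagger)=u^1_a$ and then ``hold $w$ near $w^\dagger$ so that $u$ relaxes exponentially to $u^1$.'' This would be valid if $p_a(\cdot,w^\dagger)$ were \emph{constant} in its first argument, giving the linear relaxation $\dot u_a=u^1_a-u_a$. But $p_a(u,w^\dagger)$ genuinely depends on $u$ (through the factors $\sqrt{u_b}$ and $1/\sqrt{u_a}$ in \eqref{def:p_a}), so the ODE $\dot u_a=p_a(u,w^\dagger)-u_a$ is nonlinear. Even granting existence of a fixed point at $u^1$, you would still need to show it is the \emph{unique} equilibrium on the simplex and that it attracts the trajectory from $u^0$; neither is obvious, and this is not what you flagged as the main obstacle.

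The paper sidesteps the fixed-point issue entirely by a ``bang-bang'' construction: for each $k\in\mathcal{A}_0$ one can choose $w$ with large $|w_k|$ of appropriate signs so that $p_k(u,w)\geq 1-\epsilon$ and $p_j(u,w)\leq\epsilon$ \emph{uniformly over $u$ bounded away from the boundary}. Then the $u$-ODE becomes approximately $\dot u=e_k-u$, and by switching which vertex $e_k$ is targeted over a suitable partition $0=t_0<\cdots<t_m=t_\ast$, the (explicitly solvable) linear ODE $\dot u^\ast=q^\ast(t)-u^\ast$ with $q^\ast$ piecewise constant in $\{e_1,\ldots,e_m\}$ can be made to land exactly at $u^1$. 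This replaces your fixed-point equation by elementary linear algebra on the simplex.

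A secondary technicality you underplay: Theorem~\ref{thm:support} requires the vector fields to have bounded derivatives of all orders, which fails for the raw coefficients near $\partial\Delta^\circ_{\mathcal{A}_0}$. The paper handles this by passing to smoothed, localized coefficients on $E_0^{(n)}$, applying the support theorem to the localized diffusion $X^{(n)}$, and then arguing that the constructed control keeps the trajectory inside $E_0^{(n)}$ so that $X=X^{(n)}$ along it. Your remark about ``staying in a compact subset'' gestures at this but does not supply the localization argument.
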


The proof of the above proposition is based on an explicit construction of a square integrable function $h$ such that the ODE solution $\mathscr{S}_x(h)$ to \eqref{def:skeleton_ODE} approximately reaches a prescribed interior point of $O$ at some time $t$, thereby enabling an application of the support theorem in Theorem \ref{thm:support}. Given an initial point $x=(\bar{u}_x,\bar{w}_x)\in E_0$ and a target point $x_0=(\bar{u}_0,\bar{w}_0)\in O\subset E_0$, we construct $h(\cdot)$ so that the corresponding ODE solution $\mathscr{S}_x(h)(\cdot)=(\bar{u}(\cdot),\bar{w}(\cdot))$ proceeds in two stages:
\begin{itemize}
	\item (\emph{Stage I}). We first show that there exist a time $t_0$ and a choice of $h$ on $[0,t_0]$ such that $\bar{u}(\cdot)$ moves from its initial value $\bar{u}_x$ to a neighborhood of $\bar{u}_0$.
	\item (\emph{Stage II}). We then show that $\bar{w}(t_0)$ can be moved to the target value $\bar{w}_0$ by concatenating $h$ with an additional control over a very \emph{short} time interval, so that $\bar{u}(\cdot)$ remains close to the target value $\bar{u}_0$.
\end{itemize}
Details of the construction of $h$ and the proof of Proposition \ref{prop:sde_irreducible} are given in Section \ref{subsection:proof_sde_irreducible}.

\subsubsection{Proof of Proposition \ref{prop:sde_invariant_measure}}

For $\abs{\mathcal{A}_0}\geq 2$, the existence of an invariant probability measure is part of the claim in Proposition \ref{prop:compact_conv_sde}. The uniqueness claim in Proposition \ref{prop:sde_invariant_measure} then follows from Theorem \ref{thm:Doob} with the help of Propositions \ref{prop:sde_strong_feller} and \ref{prop:sde_irreducible}. 

For $\mathcal{A}_0=1$, as $u(\cdot)$ remains at $1$, we only need to consider the invariant measure corresponding to the SDE $\d{w(t)} = -\frac{1}{2}w(t)+\d{B(t)}$, whose law is well known to be uniquely determined as $\mathcal{N}(0,1)$; see, e.g., Lemma \ref{lem:sde_1d_inv}.\qed

\subsection{Proof of Theorem \ref{thm:optimal_arm_pull}}

The claim follows by combining Proposition \ref{prop:compact_conv_sde} with $t=0$ therein and Proposition \ref{prop:sde_invariant_measure}. \qed

\section{Proofs for Section \ref{section:proof_outline_suboptimal}}\label{section:proof_suboptimal}

We need some further notation and some basic facts related to Algorithm \ref{alg:ts}:
\begin{itemize}
	\item With $\mathsf{Z}'$ denoting an independent copy of the sampling variable $\mathsf{Z}$, let
	\begin{align}\label{def:Phi_Psi_ast}
	\begin{alignedat}{2}
	\Phi_\ast(z) &\equiv \sup_{(a,a')\in\partial B_2(1)} \Prob(a\mathsf{Z}+a'\mathsf{Z}'\le z),
	&\qquad \bar{\Phi}_\ast(z) &\equiv 1-\Phi_\ast(z);\\
	\Psi_\ast(z) &\equiv \inf_{(a,a')\in\partial B_2(1)} \Prob(a\mathsf{Z}+a'\mathsf{Z}'\le z),
	&\qquad \bar{\Psi}_\ast(z) &\equiv 1-\Psi_\ast(z).
	\end{alignedat}
	\end{align}
	\item $\bar{n}_{a;t},\bar{\mu}_{a;t}$ can be rewritten alternatively as 
	\begin{align*}
	\bar{n}_{a;t}= n_{a;t}+1,\quad \bar{\mu}_{a;t}= \frac{1}{\bar{n}_{a;t}}\sum_{s \in [t]} \bm{1}_{A_s=a} R_s = \frac{n_{a;t}}{\bar{n}_{a;t} }\cdot \mu_a+ \frac{1}{\bar{n}_{a;t}}\sum_{s \in [t]} \bm{1}_{A_s=a}\xi_s.
	\end{align*}
	\item The arm selection can be written as $A_t \in \argmax_{a \in [K]} \bar{\theta}_{a;t}$ with
	\begin{align}\label{def:theta}
	\bar{\theta}_{a;t}\equiv \bar{\mu}_{a;t-1}+ \frac{\sigma }{  \sqrt{\bar{n}_{a;t-1}}} Z_{a;t},\quad \sigma\equiv 1.
	\end{align}
\end{itemize}
Some other notation will be needed:
\begin{itemize}
\item Recall the filtration $\big\{\mathscr{F}_{t}^Z=\sigma(\{Z_{a;s}\}_{a \in [K],\, s \in [t]})\big\}_{t\geq 0}$.
\item Let $\Delta_{\min}\equiv \min_{a \in \mathcal{A}_+}\Delta_a$ and $\Delta_{\max} \equiv \max_{a \in \mathcal{A}_+}\Delta_a$.
\end{itemize}

\subsection{Technical lemmas}

\begin{lemma}\label{lem:tau_property}
	Suppose that $\Phi_\ast(z)<1$ holds for any $z \in \R$. Then for any $a \in [K]$, $n\geq 0$, and any $\{\xi_{a;i}\}\in E_\xi(\infty)$, we have $
	\Prob^\xi\big(\tau_{a;n}<\infty\big)= \Prob^\xi\big(n_{a;\tau_{a;n}} = n\big)=1$. 
\end{lemma}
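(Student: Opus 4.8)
The plan is to decouple the two identities. The equality $n_{a;\tau_{a;n}}=n$ on the event $\{\tau_{a;n}<\infty\}$ is purely deterministic: the integer-valued process $t\mapsto n_{a;t}$ starts at $n_{a;0}=0$ and satisfies $n_{a;t}-n_{a;t-1}=\bm{1}_{A_t=a}\in\{0,1\}$, so the first time it reaches the level $n$ it must equal $n$ exactly (for $n=0$ this is the convention $\tau_{a;0}=0$), while $\{n_{a;\tau_{a;n}}=n\}\subseteq\{\tau_{a;n}<\infty\}$ is trivial. Hence it suffices to prove $\Prob^\xi(\tau_{a;n}<\infty)=1$ for every $n$, and for that it is enough to show that, $\Prob^\xi$-almost surely, arm $a$ is pulled infinitely often. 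So fix the noise realization $\{\xi_{a;i}\}\in E_\xi(\infty)$, let $\Omega_a$ be the event that $\{t\geq1:A_t=a\}$ is finite, and aim to prove $\Prob^\xi(\Omega_a)=0$.

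I would first record two preliminary facts. (i) Since $\{\xi_{a;i}\}\in E_\xi(\infty)$, the partial sums $\sum_{i\in[k]}\xi_{b;i}$ grow at most like $\sqrt{k}\log k$, and a routine computation then produces a finite constant $M=M(\{\xi_{a;i}\},\{\mu_b\},K)$ with $\sup_{b\in[K]}\sup_{t\geq1}\abs{\bar{\mu}_{b;t-1}}\leq M$; as $\bar{n}_{b;t-1}\geq1$, this gives the crude pointwise bound $\bar{\theta}_{b;t}=\bar{\mu}_{b;t-1}+\bar{n}_{b;t-1}^{-1/2}Z_{b;t}\leq M+\abs{Z_{b;t}}$ for all $b,t$. (ii) On $\Omega_a$ the integer limit $\bar{n}_{a;\infty}\equiv\lim_t\bar{n}_{a;t}$ exists, and on $\{\bar{n}_{a;\infty}=m+1\}$ the value $\bar{\mu}_{a;t-1}$ is eventually frozen at the \emph{deterministic} quantity $v_m\equiv(m\mu_a+\sum_{i\in[m]}\xi_{a;i})/(m+1)$, which depends only on $m$ and the fixed noise realization.

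The core step is then a Borel--Cantelli argument on the decomposition $\Omega_a=\bigcup_{m\geq0}(\Omega_a\cap\{\bar{n}_{a;\infty}=m+1\})$. Fix $m$; pick any $\Lambda>0$ with $\Prob(\abs{\mathsf{Z}}\leq\Lambda)>0$ (possible as $\mathsf{Z}$ is real-valued); set $c_m\equiv\sqrt{m+1}\,(M-v_m)+\sqrt{m+1}\,\Lambda<\infty$; and define $H_t\equiv\{Z_{a;t}>c_m\}\cap\{\max_{b\neq a}\abs{Z_{b;t}}\leq\Lambda\}$ for $t\geq1$. The $H_t$ are i.i.d.\ in $t$ (each is a fixed function of the vector $(Z_{c;t})_{c\in[K]}$) with common probability $\Prob^\xi(H_t)=\bar{\Phi}(c_m)\,\Prob(\abs{\mathsf{Z}}\leq\Lambda)^{K-1}>0$; positivity of $\bar{\Phi}(c_m)$ is exactly where the hypothesis $\Phi_\ast(\cdot)<1$ enters, via $\bar{\Phi}(c_m)\geq\bar{\Phi}_\ast(c_m)=1-\Phi_\ast(c_m)>0$. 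By the second Borel--Cantelli lemma, $H_t$ occurs infinitely often $\Prob^\xi$-a.s. On the other hand, on $\Omega_a\cap\{\bar{n}_{a;\infty}=m+1\}$ there is a random time $T_0$ beyond which $A_t\neq a$, $\bar{n}_{a;t-1}=m+1$, and $\bar{\mu}_{a;t-1}=v_m$; for such $t$, $A_t\neq a$ forces $\bar{\theta}_{a;t}\leq\max_{b\neq a}\bar{\theta}_{b;t}$ (irrespective of how ties in the $\argmax$ are resolved, since any selected arm $c\neq a$ has $\bar{\theta}_{c;t}\geq\bar{\theta}_{a;t}$), hence $\bar{\theta}_{a;t}\leq M+\max_{b\neq a}\abs{Z_{b;t}}$, which rearranges to $Z_{a;t}\leq\sqrt{m+1}(M-v_m)+\sqrt{m+1}\max_{b\neq a}\abs{Z_{b;t}}$ and is therefore incompatible with $H_t$. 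Thus $\Omega_a\cap\{\bar{n}_{a;\infty}=m+1\}$ lies in the event that $H_t$ occurs only finitely often, which is $\Prob^\xi$-null; summing over $m\geq0$ yields $\Prob^\xi(\Omega_a)=0$.

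I expect the only genuinely non-routine point to be the organization just described: one cannot apply Borel--Cantelli directly to $\{A_t=a\}$, since the conditional probability of reselecting an arm already pulled many times need not be bounded below. The argument must instead be run on the complementary ``bad'' event, where the arm-$a$ statistics $(\bar{n}_{a;t-1},\bar{\mu}_{a;t-1})$ have frozen at \emph{deterministic} values, after which the single event ``$Z_{a;t}$ large while $\max_{b\neq a}\abs{Z_{b;t}}$ stays moderate'' --- which recurs infinitely often by independence --- already contradicts the non-pull. The two preliminary facts (the uniform bound on $\bar{\mu}_{b;t-1}$, the only use of $E_\xi(\infty)$; and the positivity $\bar{\Phi}(c_m)>0$, the only use of $\Phi_\ast(\cdot)<1$) are then immediate.
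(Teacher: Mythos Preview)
Your proof is correct and takes a genuinely different route from the paper's. The paper bounds $\Prob^\xi(\tau_{a;n}>t)=\Prob^\xi(n_{a;t}<n)$ directly by a combinatorial union bound: if arm $a$ is pulled fewer than $n$ times in $[t]$, then on at least $t-n$ rounds $s$ one has $\bar\theta_{a;s}<\max_{b\ne a}\bar\theta_{b;s}$; using $\bar n_{a;s-1}\le n$ and $\{\xi_{a;i}\}\in E_\xi(x)$, this event is contained in $\{Z_{a;s}\le n^{1/2}(2x+\Delta_{\max})+(\bar n_{a;s-1}/\bar n_{b;s-1})^{1/2}Z_{b;s}\}$ for some $b$, giving $\Prob^\xi(\tau_{a;n}>t)\le K\binom{t}{n-1}\Phi_\ast^{t-n}(n^{1/2}(2x+\Delta_{\max}))\to 0$ since $\Phi_\ast(\cdot)<1$. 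Your argument instead shows $n_{a;t}\to\infty$ a.s.\ by decomposing the complementary event over the frozen arm-$a$ state and applying second Borel--Cantelli to the i.i.d.\ events $H_t$ that would force a reselection. The paper's approach yields a quantitative (indeed exponential) rate for $\Prob^\xi(\tau_{a;n}>t)$, which is not needed here but is in the spirit of the surrounding propositions; your approach is purely qualitative but more transparent, cleanly isolating why freezing the arm-$a$ state is untenable and using the hypotheses $E_\xi(\infty)$ and $\Phi_\ast(\cdot)<1$ exactly once each.
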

\begin{proof}
	It suffices to prove that $\tau_{a;n}<\infty$ holds $\Prob^\xi$-almost surely. To this end, note that for any fixed integer $t\geq n$ and $x>1$, on the event $E_\xi(x)$,
	\begin{align*}
	&\Prob^\xi\big(\tau_{a;n}> t\big)= \Prob^\xi\big(n_{a;t}<n\big)\\
	&\leq \sum_{\mathcal{T}\subset [t]: \abs{[t]\setminus \mathcal{T}}< n  } \Prob^\xi\Big(\bar{\theta}_{a;s}<\max_{b\neq a} \bar{\theta}_{b;s}\, \hbox{ for all }s \in \mathcal{T}\Big)\\
	&\leq \sum_{b\neq a}\sum_{\mathcal{T}\subset [t]: \abs{[t]\setminus \mathcal{T}}< n  }  \Prob^\xi\bigg(Z_{a;s}\leq  n^{1/2}(2x+\Delta_{\max})+\bigg(\frac{\bar{n}_{a;s-1}}{\bar{n}_{b;s-1}}\bigg)^{1/2}Z_{b;s}\,\, \hbox{for all $s\in \mathcal{T}$}\bigg)\\
	&\leq K\cdot \binom{t}{n-1}\cdot \big[\Phi_\ast( n^{1/2}(2x+\Delta_{\max}) )\big]^{t-n}.
	\end{align*}
	Consequently, for $t\geq n$ and $x> 1$, we have for some $c_1=c_1(n,K,\Delta)>0$, $
	\Prob^\xi\big(\tau_{a;n}> t\big)\leq c_1 \cdot t^n\cdot \Phi_\ast^t(c_1 x)$. 
	Sending $t\to \infty$ proves that for any $\{\xi_{a;i}\} \in E_\xi(x)$, $\lim_{t\to \infty} \Prob^\xi\big(\tau_{a;n}> t\big) = 0$. Therefore, we have
	\begin{align*}
	\Prob^\xi(\tau_{a;n}<\infty)=\Prob^\xi(\cup_{t\geq 1} \{\tau_{a;n}\leq t\})=\uparrow\lim_{t \to \infty} \Prob^\xi(\tau_{a;n}\leq t)=1.
	\end{align*}
	As the above display does not depend on $x$, it holds for any $\{\xi_{a;i}\}$ with a finite $x$.
\end{proof}

\begin{lemma}\label{lem:var_bound_n_aT}
	Let $\tau_\pm$ be two finite stopping times with respect to the filtration $\{\mathscr{F}_t^Z\}$ (for every $\{\xi_{a;i}\}$) such that $\tau_-\leq \tau_+$. Then with $\{\mathfrak{p}_{a;t}^\xi\}$ defined in (\ref{def:cond_prob}),
	\begin{align}\label{ineq:var_bound_n_aT}
	\E^\xi\bigg(n_{a;\tau_+}-n_{a;\tau_-}-\sum_{t \in (\tau_-:\tau_+]}\mathfrak{p}_{a;t}^\xi \bigg)^2 \leq \E^\xi (n_{a;\tau_+}-n_{a;\tau_-}).
	\end{align}
\end{lemma}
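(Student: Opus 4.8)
Throughout, fix the realization of $\{\xi_{a;i}\}$, so that all probabilities and expectations are taken under $\Prob^\xi$. The plan is to recognize $n_{a;\tau_+}-n_{a;\tau_-}-\sum_{t\in(\tau_-:\tau_+]}\mathfrak{p}_{a;t}^\xi$ as the increment of a mean-zero martingale between two stopping times, and then to control its $L^2$ norm by the predictable quadratic variation, whose conditional increments are exactly the Bernoulli variances $\mathfrak{p}_{a;t}^\xi(1-\mathfrak{p}_{a;t}^\xi)\le\mathfrak{p}_{a;t}^\xi$. This makes the right-hand side of \eqref{ineq:var_bound_n_aT} appear naturally after an optional-stopping identity.

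Concretely, set $M_t\equiv\sum_{s\in[t]}\big(\bm{1}_{A_s=a}-\mathfrak{p}_{a;s}^\xi\big)$. Since $\bm{1}_{A_s=a}$ is $\mathscr{F}_s^Z$-measurable (the noises being frozen) and $\mathfrak{p}_{a;s}^\xi=\Prob^\xi(A_s=a\mid\mathscr{F}_{s-1}^Z)$ is $\mathscr{F}_{s-1}^Z$-measurable with zero conditional mean, $\{M_t\}$ is a mean-zero $\{\mathscr{F}_t^Z\}$-martingale, and $n_{a;\tau_+}-n_{a;\tau_-}-\sum_{t\in(\tau_-:\tau_+]}\mathfrak{p}_{a;t}^\xi=M_{\tau_+}-M_{\tau_-}$. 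We may assume $\E^\xi(n_{a;\tau_+}-n_{a;\tau_-})<\infty$, since otherwise \eqref{ineq:var_bound_n_aT} is vacuous.

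Next I would pass to the stopped martingale $N_t\equiv M_{t\wedge\tau_+}-M_{t\wedge\tau_-}$, which is again an $\{\mathscr{F}_t^Z\}$-martingale (as $\tau_\pm$ are $\{\mathscr{F}_t^Z\}$-stopping times) with $N_0=0$ and increments $N_t-N_{t-1}=(\bm{1}_{A_t=a}-\mathfrak{p}_{a;t}^\xi)\bm{1}_{\{\tau_-<t\le\tau_+\}}$, the event $\{\tau_-<t\le\tau_+\}$ being $\mathscr{F}_{t-1}^Z$-measurable. Orthogonality of martingale increments together with $\E^\xi[(\bm{1}_{A_t=a}-\mathfrak{p}_{a;t}^\xi)^2\mid\mathscr{F}_{t-1}^Z]=\mathfrak{p}_{a;t}^\xi(1-\mathfrak{p}_{a;t}^\xi)$ give $\E^\xi N_t^2=\E^\xi\sum_{s\in(\tau_-\wedge t:\tau_+\wedge t]}\mathfrak{p}_{a;s}^\xi(1-\mathfrak{p}_{a;s}^\xi)\le\E^\xi\sum_{s\in(\tau_-\wedge t:\tau_+\wedge t]}\mathfrak{p}_{a;s}^\xi$. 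Applying the optional stopping theorem at the bounded times $\tau_-\wedge t\le\tau_+\wedge t$ to the martingale $u\mapsto n_{a;u}-\sum_{s\in[u]}\mathfrak{p}_{a;s}^\xi$ identifies the last expectation with $\E^\xi(n_{a;\tau_+\wedge t}-n_{a;\tau_-\wedge t})$, and monotonicity of $u\mapsto n_{a;u}$ bounds this by $\E^\xi(n_{a;\tau_+}-n_{a;\tau_-})$. Hence $\sup_t\E^\xi N_t^2\le\E^\xi(n_{a;\tau_+}-n_{a;\tau_-})<\infty$, so $\{N_t\}$ is $L^2$-bounded, converges in $L^2(\Prob^\xi)$, and — since $\tau_+<\infty$ almost surely — its limit is $M_{\tau_+}-M_{\tau_-}$; therefore $\E^\xi(M_{\tau_+}-M_{\tau_-})^2=\lim_t\E^\xi N_t^2\le\E^\xi(n_{a;\tau_+}-n_{a;\tau_-})$, which is \eqref{ineq:var_bound_n_aT}.

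\textbf{Main obstacle.} Everything except the final step is routine predictable-quadratic-variation bookkeeping; the only point requiring care is the passage from the quantities stopped at the deterministic time $t$ to those stopped at $\tau_\pm$, i.e. justifying the interchange of limit and expectation in $\E^\xi N_t^2\to\E^\xi(M_{\tau_+}-M_{\tau_-})^2$. This is exactly what the a priori $L^2$-boundedness derived above supplies, together with the almost-sure finiteness of $\tau_\pm$. An alternative is to expand $(M_{\tau_+}-M_{\tau_-})^2$ directly and apply monotone/dominated convergence over the (random, growing) index set $(\tau_-\wedge t:\tau_+\wedge t]$, but routing through the stopped martingale makes all these interchanges automatic.
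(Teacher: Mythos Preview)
Your proof is correct and rests on the same core idea as the paper's: the quantity inside the square is a sum of martingale differences $\bm{1}_{A_t=a}-\mathfrak{p}_{a;t}^\xi$ over the random interval $(\tau_-,\tau_+]$, orthogonality kills the cross terms, and the diagonal terms are bounded via $\E^\xi[(\bm{1}_{A_t=a}-\mathfrak{p}_{a;t}^\xi)^2\mid\mathscr{F}_{t-1}^Z]=\mathfrak{p}_{a;t}^\xi(1-\mathfrak{p}_{a;t}^\xi)\le \mathfrak{p}_{a;t}^\xi$, whose sum has expectation $\E^\xi(n_{a;\tau_+}-n_{a;\tau_-})$. The only packaging difference is that the paper expands the square directly by summing over the values $(T_-,T_+)$ of $(\tau_-,\tau_+)$ and then interchanges sums to isolate the diagonal, whereas you route through the stopped martingale $N_t=M_{t\wedge\tau_+}-M_{t\wedge\tau_-}$ and use $L^2$-boundedness plus martingale convergence to pass to the limit; your treatment of the $t\to\infty$ limit is slightly more explicit, but the arguments are equivalent.
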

\begin{proof}
	With $\mathbb{Z}_{\leq}^2\equiv \{(x,y)\in \mathbb{Z}_{\geq 0}^2: x\leq y\}$ and using $n_{a;t}=\sum_{s \in [t]}\bm{1}_{A_s=a}$, 
	\begin{align}\label{ineq:var_bound_n_aT_1}
	& \hbox{LHS of (\ref{ineq:var_bound_n_aT})}= \sum_{(T_-,T_+)\in \mathbb{Z}_{\leq}^2} \E^\xi\bigg(n_{a;T_+}-n_{a;T_-}-\sum_{t \in (T_-:T_+]}\mathfrak{p}_{a;t}^\xi \bigg)^2 \bm{1}_{\tau_\pm=T_\pm} \nonumber\\
	& = \sum_{(T_-,T_+)\in \mathbb{Z}_{\leq}^2}\sum_{T_-< s,t \leq T_+} \E^\xi\big(\bm{1}_{A_t=a}- \mathfrak{p}_{a;t}^\xi\big)\cdot\big(\bm{1}_{A_s=a}- \mathfrak{p}_{a;s}^\xi\big)\cdot \bm{1}_{\tau_\pm=T_\pm}\nonumber\\
	& = \sum_{ s,t \geq 1} \E^\xi \big(\bm{1}_{A_{t}=a}- \mathfrak{p}_{a;t}^\xi\big)\cdot \big(\bm{1}_{A_{s}=a}- \mathfrak{p}_{a;s}^\xi\big)\cdot \sum_{\substack{T_+\geq t\vee s, T_-<t\wedge s} } \bm{1}_{\tau_\pm=T_\pm}\nonumber\\
	& = \sum_{ s,t \geq 1} \E^\xi \big(\bm{1}_{A_{t}=a}- \mathfrak{p}_{a;t}^\xi\big)\cdot \big(\bm{1}_{A_{s}=a}- \mathfrak{p}_{a;s}^\xi\big)\cdot \bm{1}_{\tau_+\geq t\vee s} \bm{1}_{\tau_-<t\wedge s}.
	\end{align}
	On the other hand, for $s<t$, all the events $\{A_s=a\} \in \mathscr{F}_s^Z\subset \mathscr{F}_{t-1}^Z$, $\{\tau_+\geq t\}=\{\tau_+\leq t-1\}^c\in \mathscr{F}_{t-1}^Z$ and $\{\tau_-<s\} \in \mathscr{F}_{s-1}^Z\subset \mathscr{F}_{t-1}^Z$, as well as $\mathfrak{p}_{a;s}^\xi  \in \mathscr{F}_{s-1}^Z\subset \mathscr{F}_{t-1}^Z$ by definition. This means we may compute
	\begin{align}\label{ineq:var_bound_n_aT_2}
	&\E^\xi \big(\bm{1}_{A_{t}=a}- \mathfrak{p}_{a;t}^\xi\big)\cdot \big(\bm{1}_{A_{s}=a}- \mathfrak{p}_{a;s}^\xi\big)\cdot \bm{1}_{\tau_+\geq t\vee s} \bm{1}_{\tau_-< t\wedge s}\nonumber\\
	& = \E^\xi \Big\{\E^\xi \big[\big(\bm{1}_{A_{t}=a}- \mathfrak{p}_{a;t}^\xi\big)|\mathscr{F}_{t-1}\big]\cdot \big(\bm{1}_{A_{s}=a}- \mathfrak{p}_{a;s}^\xi\big)\cdot \bm{1}_{\tau_+\geq t} \bm{1}_{\tau_-< s}\Big\}=0.
	\end{align}
	A similar argument holds for $s>t$. Combining (\ref{ineq:var_bound_n_aT_1}) and (\ref{ineq:var_bound_n_aT_2}), we now have 
	\begin{align*}
	\hbox{RHS of }(\ref{ineq:var_bound_n_aT_1})  & = \sum_{t \geq 1} \E^\xi \big(\bm{1}_{A_{t}=a}- \mathfrak{p}_{a;t}^\xi\big)^2\cdot \bm{1}_{\tau_+\geq t} \bm{1}_{\tau_-<t}\\
	&\leq  \sum_{t \geq 1} \E^\xi \bm{1}_{A_{t}=a}\cdot \bm{1}_{t \in (\tau_-:\tau_+] }=\E^\xi (n_{a;\tau_+}-n_{a;\tau_-}).
	\end{align*}
	The claim follows. 
\end{proof}

\subsection{An apriori estimate}

\begin{lemma}\label{lem:apriori_arm_pull_lower_bound}
     Suppose Assumption \ref{assump:sampling_dist} holds. Fix $x\geq 1$. There exists a constant $c_1=c_1( K,\Delta,\mathscr{L}(\mathsf{Z}))>1$ such that for any $\{\xi_{a;i}\}\in E_\xi(x)$ and $1/[\bar{\Phi}_\ast(c_1 x^3)]^2\leq t_0\leq t/2$, 
	\begin{align*}
	&\Prob^\xi\Big(\min_{a \in [K]} n_{a;t}\geq \big[\bar{\Phi}_\ast^{-}(t^{-1/2})/(c_1 x)\big]^2 \hbox{ and }n_{\mathcal{A}_0;t}\geq t/(\log t)^2\\
	&\qquad\qquad \hbox{ for all }t \geq c_1 t_0\Big)\geq 1- c_1 \exp(-t_0^{1/2}/c_1).
	\end{align*} 
	Here $n_{\mathcal{A}_0;t}\equiv \sum_{o \in \mathcal{A}_0} n_{o;t}$, and the above estimate also holds when $\Delta_\cdot \equiv 0$.
\end{lemma}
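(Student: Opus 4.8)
I would prove the two claimed bounds separately, always working on the event $E_\xi(x)$ (so that all remaining randomness is in the $Z$'s): (i) $n_{a;t}\ge[\bar\Phi_\ast^-(t^{-1/2})/(c_1x)]^2$ for every $a\in[K]$ and all $t\ge c_1t_0$; and (ii) $n_{\mathcal A_0;t}\ge t/(\log t)^2$ for all $t\ge c_1t_0$. The single structural fact driving everything is that on $E_\xi(x)$ the empirical means are uniformly (if crudely) accurate: from $\bar\mu_{b;t-1}=\frac{n_{b;t-1}}{\bar n_{b;t-1}}\mu_b+\bar n_{b;t-1}^{-1}\sum_{i\le n_{b;t-1}}\xi_{b;i}$ and the definition of $E_\xi(x)$, one gets $|\bar\mu_{b;t-1}-\tfrac{n_{b;t-1}}{\bar n_{b;t-1}}\mu_b|\le x\,\bar n_{b;t-1}^{-1/2}\log(\bar n_{b;t-1}\vee e)\le x$, hence (taking $\mu_\ast=0$ without loss) $\bar\mu_{b;t-1}\in(-\Delta_{\max}-x,\ x)$ for \emph{every} $b$ and every $t$, so that $\bar\mu_{b;t-1}-\bar\mu_{a;t-1}\le C_\Delta\, x$ for all $a,b$ with $C_\Delta=C_\Delta(\Delta)$.

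\textbf{Engine for (i): conditional pull probability of the least-pulled arm.} Suppose $\bar n_{a;t-1}=\min_b\bar n_{b;t-1}=:m_{t-1}$. Then, using $\bar n_{b;t-1}\ge m_{t-1}$ to dominate each competitor's perturbation and the bound above on $\bar\mu$-differences, the event $\{Z_{a;t}>\max_{b\ne a}(Z_{b;t})_{+}+C_\Delta x\sqrt{m_{t-1}}\}$ forces $\bar\theta_{a;t}$ to be the strict maximum, hence $A_t=a$. Since $(Z_{\cdot;t})$ is i.i.d.\ $\mathsf Z$ and independent of $\mathscr F_{t-1}^Z$, restricting to $\{Z_{a;t}>2(C_\Delta x\sqrt{m_{t-1}}\vee 1)\}\cap\bigcap_{b\ne a}\{(Z_{b;t})_+<1\}$ gives
\[
\mathfrak p_{a;t}^\xi\ \ge\ \Phi(1)^{K-1}\,\bar\Phi\big((2C_\Delta+2)\,x\sqrt{m_{t-1}}\big)\ \ge\ c_\ast\,\bar\Phi_\ast\big(c_1x\sqrt{m_{t-1}}\big)\ =:\ q_{m_{t-1}},
\]
valid for $c_1$ large, with $c_\ast=c_\ast(K,\mathscr L(\mathsf Z))>0$; here I used $\bar\Phi\ge\bar\Phi_\ast$. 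This is the only place the accuracy of $\bar\mu$ is needed, and crucially it holds \emph{regardless of how few times the arms have been pulled}.

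\textbf{The hitting-time ladder.} Track the nondecreasing process $m_t=\min_bn_{b;t}+1$ and its ladder hitting times $\sigma_\ell\equiv\inf\{t:m_t\ge\ell\}=\max_a\tau_{a;\ell-1}$, which are finite a.s.\ by Lemma~\ref{lem:tau_property}. On $(\sigma_\ell,\sigma_{\ell+1}]$ one has $m_{t-1}=\ell$; the set of arms attaining the minimum is nonempty of size $\le K$, shrinks by one each time one of its members is pulled, and becomes empty exactly at $\sigma_{\ell+1}$; since each such arm has $\mathfrak p^\xi\ge q_\ell$, the conditional probability of "removing a min-arm" at each round is $\ge q_\ell$, so a coupling gives $\sigma_{\ell+1}-\sigma_\ell$ stochastically dominated by independent $T_\ell\sim\mathrm{NegBin}(K,q_\ell)$, whence $\E^\xi\sigma_\ell\le K(c_\ast c_1)^{-1}\sum_{j<\ell}\bar\Phi_\ast(c_1x\sqrt j)^{-1}\le K(c_\ast c_1)^{-1}(\ell-1)\bar\Phi_\ast(c_1x\sqrt{\ell-1})^{-1}$. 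Writing $\rho_\ell$ for the round at which the target first reaches level $\ell$ (so $\rho_\ell\ge\bar\Phi_\ast(c_1x\sqrt{\ell-1})^{-2}$), Assumption~B2 supplies $\sup_{z\ge1}z^2\bar\Phi_\ast(z)\le\sup_{z\ge1}z^2\bar\Phi(z)=:C_0<\infty$, which after choosing $c_1$ large relative to $K,C_0,C_\Delta$ yields $\E^\xi\sigma_\ell\le\tfrac12\rho_\ell$ and, via the renewal tail bound $\Prob(\mathrm{NegBin}(n,Q)>m)\le e^{-mQ/8}$ for $m\ge 2n/Q$, the estimate $\Prob^\xi(\sigma_\ell>\rho_\ell)\le e^{-c\,\rho_\ell q_{\ell-1}}\le e^{-c'/\bar\Phi_\ast(c_1x\sqrt{\ell-1})}$. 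Since $\{\exists\,t\ge c_1t_0:\ m_t<[\bar\Phi_\ast^-(t^{-1/2})/(c_1x)]^2\}\subseteq\{\sigma_{\ell^\ast}>c_1t_0\}\cup\bigcup_{\ell>\ell^\ast}\{\sigma_\ell>\rho_\ell\}$ with $\ell^\ast$ the target level at time $c_1t_0$, and since the hypothesis $t_0\ge[\bar\Phi_\ast(c_1x^3)]^{-2}$ forces $\bar\Phi_\ast(c_1x\sqrt{\ell^\ast-1})^{-1}\gtrsim(c_1t_0)^{1/2}$ while the exponents $c'/\bar\Phi_\ast(c_1x\sqrt{\ell-1})$ grow fast enough in $\ell$ (again via $z^2\bar\Phi_\ast(z)\le C_0$) to make the series geometric, the union probability is $\lesssim(c_1t_0)^{1/2}e^{-c'\sqrt{c_1t_0}}\le c_1e^{-\sqrt{t_0}/c_1}$. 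This proves (i), and along the way shows $[\bar\Phi_\ast^-(t^{-1/2})/(c_1x)]^2\gtrsim x^4$ for $t\ge c_1t_0$.

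\textbf{Bound (ii) and the main obstacle.} For (ii), a complementary upper bound — on $E_\xi(x)$, $\mathfrak p_{a;t}^\xi\le\bar\Phi(\sqrt{\bar n_{a;t-1}}\,\Delta_a/c)$ plus a term that is a product over $o\in\mathcal A_0$ of left-tail probabilities, both negligible once the relevant pull counts are polylogarithmic in $t$ (which, by (i), holds for $t\ge c_1t_0$, the $\gtrsim x^4$ lower bound being what absorbs the $x$-dependence) — combined with the same martingale/variance bookkeeping as above (Lemma~\ref{lem:var_bound_n_aT}) gives $\sum_{a\in\mathcal A_+}n_{a;t}=o(t)$ uniformly in $t\ge c_1t_0$ with probability $\ge 1-c_1e^{-\sqrt{t_0}/c_1}$, so $n_{\mathcal A_0;t}=t-o(t)\ge t/(\log t)^2$; when $\Delta_\cdot\equiv0$ this is trivial. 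I expect the main obstacle to be the quantitative bookkeeping in part (i) for a general sampling law $\mathsf Z$ under Assumption~B2 rather than just the Gaussian case: controlling the generalized inverse $\bar\Phi_\ast^-$ (including possible discontinuities of $\Phi_\ast$), verifying $\E^\xi\sigma_\ell\le\tfrac12\rho_\ell$ and the summability of $\sum_\ell\Prob^\xi(\sigma_\ell>\rho_\ell)$ with the correct rate, and tracking how the parameter $x$ enters through the threshold $t_0\ge[\bar\Phi_\ast(c_1x^3)]^{-2}$ so as to deliver precisely the probability $1-c_1e^{-\sqrt{t_0}/c_1}$.
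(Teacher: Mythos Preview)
Your approach is correct in outline but follows a genuinely different route from the paper's. For part (i), the paper does not track hitting times at all: it bounds $\Prob^\xi(n_{a;t}\le w_t)$ directly by a union over all subsets $\mathcal{T}\subset[t]$ of rounds on which $a$ was not pulled (so $|[t]\setminus\mathcal{T}|\le w_t$), and for each such subset uses the independence of $\{Z_{\cdot;s}\}_{s\in\mathcal{T}}$ together with the crude bound $\bar n_{a;s-1}\le w_t+1$ to obtain
\[
\Prob^\xi(n_{a;t}\le w_t)\le K\,(et/w_t)^{w_t}\big[\Phi_\ast\big((w_t+1)^{1/2}(2x+\Delta_{\max})\big)\big]^{t-w_t},
\]
then balances the entropy term $w_t\log(et)$ against the exponent $(t-w_t)\bar\Phi_\ast(c_1w_t^{1/2}x)$ with the choice $w_t=[\bar\Phi_\ast^-(t^{-1/2})/(c_\ast x)]^2$. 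For part (ii) the paper repeats the same subset-counting scheme, now for the event $\{\max_{o\in\mathcal{A}_0}\bar\theta_{o;s}<\max_{b\in\mathcal{A}_+}\bar\theta_{b;s}\}$, and uses the lower bound from part (i) to exploit the gap $\Delta_{\min}$ so that each factor is $\le\bar\Psi_\ast[\bar\Phi_\ast^-(s^{-1/2})/(c_1x)]$.

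Your ladder argument for (i), coupling $\sigma_{\ell+1}-\sigma_\ell$ with $\mathrm{NegBin}(K,q_\ell)$, is a legitimate alternative that trades the paper's single combinatorial estimate for a sequence of concentration bounds and a union over levels $\ell$. It has the virtue of making the mechanism (``each arm gets a chance proportional to $\bar\Phi_\ast$ at its current count'') explicit, at the cost of more bookkeeping: you must still carefully verify the inclusion $\{\exists\,t\ge c_1t_0:\,m_t<\text{target}(t)\}\subset\{\sigma_{\ell^\ast}>c_1t_0\}\cup\bigcup_{\ell>\ell^\ast}\{\sigma_\ell>\rho_\ell\}$ when $\bar\Phi_\ast^-$ is merely the generalized inverse of a possibly non-strictly-decreasing function, and you must confirm the geometric summability of the tail uniformly in $x$ via $z^2\bar\Phi_\ast(z)\le C_0$. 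For (ii), your martingale/Freedman route should also go through (the upper bound on $\mathfrak p_{a;t}^\xi$ you need follows from part (i) alone, so there is no circularity with Proposition~\ref{prop:cond_prop_estimate}), but the paper's direct subset argument gets there in one stroke without needing to separately control $\sum_s\mathfrak p_{a;s}^\xi$ on the good event. In short: the paper's proof is shorter and more self-contained; yours is more modular but demands more care in the union-bound step you correctly flagged as the main obstacle.
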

\begin{proof}	
	\noindent (1). Let $w_t>0$ be determined later. Note that for $(a,t) \in [K]\times \N$, 
	\begin{align*}
	&\Prob^\xi\big(n_{a;t}\leq  w_t\big) \leq  \sum_{ \substack{\mathcal{T}\subset [t],\, \abs{[t]\setminus\mathcal{T}}\leq w_t} }\Prob^\xi\Big(\bar{\theta}_{a;s}<\max_{b \neq a} \bar{\theta}_{b;s},\,\forall s\in \mathcal{T}\Big)\\
	&\leq \sum_{b \neq a} \sum_{ \substack{\mathcal{T}\subset [t],\, \abs{[t]\setminus\mathcal{T}}\leq w_t} } \Prob^\xi\bigg(Z_{a;s}\leq  (w_t+1)^{1/2}(2x+\Delta_{\max})+\bigg(\frac{\bar{n}_{a;s-1}}{\bar{n}_{b;s-1}}\bigg)^{1/2}Z_{b;s},\,\forall s\in \mathcal{T}\bigg)\\
	&\leq K\cdot (e t/w_t)^{w_t}\cdot \big[\Phi_\ast\big((w_t+1)^{1/2}(2x+\Delta_{\max})\big)\big]^{t-w_t}.
	\end{align*}
	Then with $w_t\equiv [\bar{\Phi}_\ast^{-}(t^{-1/2})/(c_\ast x)]^2$ for a large enough $c_\ast>0$, for $t\geq t_0\equiv 1/[\bar{\Phi}_\ast(c_\ast x)]^2$, we have $w_t\geq 1$, and therefore
	\begin{align*}
	&\Prob^\xi\big(n_{a;t}\leq w_t\big)\leq \exp\big[c_1+w_t\log (et)+(t-w_t) \log \big(1-\bar{\Phi}_\ast(c_1 w_t^{1/2} x)\big)\big]\\
	&\leq c_1 \exp\big[w_t \log(et)-(t-w_t)\cdot \bar{\Phi}_\ast(c_1 w_t^{1/2} x)\big]\leq c_1 \exp\big(-t^{1/2}/c_1\big). 
	\end{align*}
	Here the last inequality follows by noting that $w_t\leq c_1 t^{0.01}$ by the equivalent form of the first condition in (B2) with $\lim_{z\uparrow \infty} \log \bar{\Phi}^-(1/z)/\log z= 0$, and the fact that $\bar{\Phi}_\ast^-\leq \bar{\Phi}^-$ [which can be seen by noting $\bar{\Phi}_\ast\leq \bar{\Phi}$ and the definition of generalized inverse]. The claim now follows by a union bound  across $t \geq t_0$.
	
	\noindent (2). Let $E_{1;t_0}$ be the event on which $\min_{a \in [K]} n_{a;t}\geq \big[\bar{\Phi}_\ast^{-}(t^{-1/2})/(c_1 x)\big]^2$ holds for all $t \geq t_0$, where we further require  $x^3/\bar{\Phi}_\ast^{-}(t_0^{-1/2})\leq 1/c_\ast$ for a large enough constant $c_\ast>0$. Then with the notation in (\ref{def:Phi_Psi_ast}) and $v_t$ to be determined later, 
	\begin{align*}
	&\Prob^\xi\big(n_{\mathcal{A}_0;t}\leq v_t; E_{1;t_0}\big) \leq  \sum_{ \substack{\mathcal{T}\subset [t],\, \abs{[t]\setminus\mathcal{T}}\leq v_t} }\Prob^\xi\Big(\max_{o \in \mathcal{A}_0}\bar{\theta}_{o;s}<\max_{b \in \mathcal{A}_+} \bar{\theta}_{b;s},\,\forall s\in \mathcal{T};E_{1;t_0}\Big)\nonumber\\
	& \leq \sum_{\substack{b \in \mathcal{A}_+\\\mathcal{T}\subset [t],\, \abs{[t]\setminus\mathcal{T}}\leq v_t}}  \inf_{o \in \mathcal{A}_0} \Prob^\xi \bigg( \frac{Z_{b;s}}{\sqrt{\bar{n}_{b;s-1}} }-\frac{Z_{o;s}}{\sqrt{\bar{n}_{o;s-1}} } > \Delta_{\min}-  \frac{c_1x^{3/2}}{\{\bar{\Phi}_\ast^{-}(t_0^{-1/2})\}^{1/2}},\,\forall s\in \mathcal{T}, s\geq t_0\bigg) \nonumber\\
	&\leq K \cdot \sum_{ \substack{\mathcal{T}\subset [t],\, \abs{[t]\setminus\mathcal{T}}\leq v_t} } \prod_{s \in \mathcal{T},s\geq t_0} \bar{\Psi}_\ast\, \big[ \bar{\Phi}_\ast^{-}(s^{-1/2})/c_1 x\big].
	\end{align*}
	With $d_{x,t_0}(c)\equiv -\log\bar{\Psi}_\ast\, \big[ \bar{\Phi}_\ast^{-}(t_0^{-1/2})/c x\big]$ and $v_t\equiv t/(\log t)^2$, using Lemma \ref{lem:Phi_Psi_1}, the right hand side of the above display is further bounded by 
	\begin{align*}
	&c_1 \sum_{ \substack{\mathcal{T}\subset [t],\, \abs{[t]\setminus\mathcal{T}}\leq v_t} }  \exp\big[-(\abs{\mathcal{T}}-t_0)_+ d_{x,t_0}(c_1)\big]\\
	&\leq c_1 \exp\big[v_t \log(et)-(t-v_t-t_0)_+ d_{x,t_0}(c_1)\big]\leq c_1 \exp(-t/c_1).
	\end{align*}
	The claim follows by a union bound across $t \geq c_1 t_0$.
\end{proof}

\subsection{Proof of Proposition \ref{prop:cond_prop_estimate}}\label{subsection:proof_cond_prop_estimate}

	Recall $\bar{\theta}_{a;t}$ defined in (\ref{def:theta}). Let  $E_{a;t}\equiv \big\{\bar{\theta}_{a;t}>\max_{o \in \mathcal{A}_0}\bar{\theta}_{o;t}\big\}$. Then we have
	\begin{align}\label{ineq:cond_prob_gauss_0}
	\{A_t=a\}=E_{a;t}\cap  \Big\{\bar{\theta}_{a;t}> \max_{b \in \mathcal{A}_+\setminus \{a\}} \bar{\theta}_{b;t}\Big\}.
	\end{align}
	Let $\mathcal{E}_{x}$ be the event specified in Lemma \ref{lem:apriori_arm_pull_lower_bound} with $t_0(x)=1/[\bar{\Phi}_\ast(c_1x^4)]^2$. In other words, on $\mathcal{E}_{x}$ with $\Prob^\xi(\mathcal{E}_{x}^c)\leq c_1 \exp\big(-1/\{c_1\bar{\Phi}_\ast(c_1x^4)\}\big)$,  we have $\min_{a \in [K]}n_{a;t}\geq \big[\bar{\Phi}_\ast^{-}(t^{-1/2})/(c_1 x)\big]^2\geq x^6/c_1$ and $n_{\mathcal{A}_0;t}\geq t/(\log t)^2\geq 1/\{c_1\bar{\Phi}_\ast(c_1x^4)\}$ for all $t\geq t_0(x)$. For notational simplicity, we let $v_t\equiv t/(\log t)^2$, and define $\bar{\xi}_{a;t}\equiv  \bar{n}_{a;t}^{-1}\sum_{s \in [t]} \bm{1}_{A_s=a} \xi_s$. On the event $\mathcal{E}_{x}$, we have $\max_{a \in [K]} \abs{ \bar{\xi}_{a;t}}\leq c_1/x$.

	\noindent (\emph{Upper bound}). First, by dropping the second event in (\ref{ineq:cond_prob_gauss_0}), we have
	\begin{align}\label{ineq:cond_prob_gauss_ub1}
	\mathfrak{p}_{a;t}^\xi\leq \Prob^\xi \Big(\bar{\theta}_{a;t}>\max_{o \in \mathcal{A}_0}\bar{\theta}_{o;t} \big|\mathscr{F}_{t-1}^Z\Big).
	\end{align}
	On the event $\mathcal{E}_{x}$, with $o_{t-1}\in \argmax_{o \in \mathcal{A}_0} \bar{n}_{o;t-1}$,
	\begin{align*}
	&\hbox{RHS of (\ref{ineq:cond_prob_gauss_ub1})}\\
	&\leq\Prob_Z\Big(Z_{a;t}\geq \bar{n}_{a;t-1}^{1/2}\big[\Delta_a -  \bar{\xi}_{a;t-1}+  (\bar{\xi}_{o_{t-1};t-1}+\bar{n}_{o_{t-1};t-1}^{-1/2}Z_{o_{t-1};t})\big]\Big)\\
	&\leq \Prob_Z \Big(Z_{a;t}\geq \bar{n}_{a;t-1}^{1/2}\big[\Delta_a-c_1/x-c_1(Z_{o_{t-1};t})_-/v_{t-1}^{1/2}\big]\Big)\\
	&\leq \Prob_Z \big(Z_{a;t}\geq \bar{n}_{a;t-1}^{1/2} [\Delta_a-c_1'/x] \big)+\Prob_Z\big((Z_{o_{t-1};t})_->c_1 v_{t-1}^{1/2}/x \big).
	\end{align*}
	Combined with (\ref{ineq:cond_prob_gauss_ub1}), we have
	\begin{align}\label{ineq:cond_prob_gauss_ub}
	\mathfrak{p}_{a;t}\leq \bar{\Phi}\big(\bar{n}_{a;t-1}^{1/2} [\Delta_a-c_1''/x] \big)+2\Phi(-t^{1/2}/(c_1''x\log t)).
	\end{align}
	\noindent (\emph{Lower bound}). Next we consider the lower bound. Using that for any $\epsilon>0$,
	\begin{align*}
	\big\{\bar{\theta}_{a;t}>\mu_\ast+\epsilon\big\}\cap  \Big\{\max_{b\neq a}\bar{\theta}_{b;t}<\mu_\ast+\epsilon\Big\}\subset \{A_t=a\},
	\end{align*}
	and noting that two events on the left hand side of the above display are independent conditional on $\mathscr{F}_{t-1}^Z$, it follows that 
	\begin{align}\label{ineq:cond_prob_gauss_lb1}
	\mathfrak{p}_{a;t}^\xi&\geq \Prob^\xi\big(\bar{\theta}_{a;t}>\mu_\ast+\epsilon|\mathscr{F}_{t-1}^Z\big)\cdot \prod\nolimits_{b\neq a} \Prob^\xi\big(\bar{\theta}_{b;t}<\mu_\ast+\epsilon|\mathscr{F}_{t-1}^Z\big).
	\end{align}
	Note that on the event $\mathcal{E}_{x}$,
	\begin{align*}
	&\Prob^\xi\big(\bar{\theta}_{a;t}>\mu_\ast+\epsilon|\mathscr{F}_{t-1}^Z\big)=\Prob_Z\Big(Z_{a;t}\geq \bar{n}_{a;t-1}^{1/2}\big[\Delta_a+\epsilon -  \bar{\xi}_{a;t-1}\big]\Big)\\
	&\geq \Prob_Z\big(Z_{a;t}\geq \bar{n}_{a;t-1}^{1/2}[\Delta_a+\epsilon+c_2/x]\big)\geq \bar{\Phi}\big(\bar{n}_{a;t-1}^{1/2}[\Delta_a+\epsilon+c_2/x]\big),
	\end{align*}
	and for $b\neq a$, $t\geq t_0+1$, 
	\begin{align*}
	\Prob^\xi\big(\bar{\theta}_{b;t}<\mu_\ast+\epsilon|\mathscr{F}_{t-1}^Z\big)&\geq \Prob_Z\Big(Z_{a;t}< \bar{n}_{b;t-1}^{1/2}\big[\epsilon -  \bar{\xi}_{b;t-1}\big]\Big)\\
	&\geq \Phi\big(\bar{\Phi}_\ast^{-}(2t^{-1/2})/(c_2 x)(\epsilon-c_2/x)\big).
	\end{align*}
	Combining the above estimates with (\ref{ineq:cond_prob_gauss_lb1}), by choosing $\epsilon=c_2'/x$ for sufficiently large $c_2'$, we arrive at 
	\begin{align}\label{ineq:cond_prob_gauss_lb}
	\mathfrak{p}_{a;t}^\xi\geq \bar{\Phi}\big(\bar{n}_{a;t-1}^{1/2}[\Delta_a+c_2''/x]\big)\cdot \big[1-\bar{\Phi}\big(\bar{\Phi}_\ast^{-}(2t^{-1/2})/(c_2'' x^2)\big)\big]^K.
	\end{align}
	The claimed estimate follows from (\ref{ineq:cond_prob_gauss_ub}) and (\ref{ineq:cond_prob_gauss_lb}). \qed

\subsection{Proof of Proposition \ref{prop:tau_n_char_eqn}}\label{subsection:proof_tau_n_char_eqn}

	Fix $0\leq m<n$. Consider the stopping times $\tau_{a;m}\leq \tau_{a;n}$. Using Lemmas \ref{lem:tau_property} and \ref{lem:var_bound_n_aT}, for some random variable $Y_{n,m}$ with uniform second moment estimate $\sup_{\{\xi_{a;i}\}} \E^\xi Y_{n,m}^2\leq 1$, 
	\begin{align}\label{ineq:tau_n_char_eqn_1}
	n-m= n_{a;\tau_{a;n}}-n_{a;\tau_{a;m}} =\sum_{t \in (\tau_{a;m}:\tau_{a;n}]} \mathfrak{p}_{a;t}^\xi +\sqrt{n-m}\cdot Y_{n,m}.
	\end{align}
	Fix $\epsilon \in (0,1/2)$. For notational simplicity, we write $\Delta_{a;\epsilon}^\pm\equiv (1\pm \epsilon)\Delta_a$.
	
	Using the upper estimate in Proposition \ref{prop:cond_prop_estimate}, by choosing $x\geq x_\ast(\epsilon)$ large enough, for all $\{\xi_{a;i}\} \in E_\xi(x_\ast(\epsilon))$, on an event $\mathcal{E}_{x_\ast(\epsilon)}\equiv \mathcal{E}_{x_\ast(\epsilon);m,n}(\{\xi_{a;i}\}) $ with $\Prob^\xi\big(\mathcal{E}_{x_\ast(\epsilon)}\big)\geq 1-\epsilon$,  there exists some $c_1=c_1(K,\Delta,\epsilon,\mathscr{L}(\mathsf{Z}))>0$ such that
	\begin{align*}
	n-m &\leq \sum_{t \in (\tau_{a;m}:\tau_{a;n}]} \bar{\Phi}\big(\bar{n}_{a;t-1}^{1/2} \Delta_{a;\epsilon}^-\big)+2\sum_{t \in (\tau_{a;m}:\tau_{a;n}]} \Phi\bigg(-\frac{t^{1/2}}{c_1\log t}\bigg)+ c_1\big(1+\sqrt{n-m} \cdot \abs{Y_{n,m}}\big).
	\end{align*}
	Using that
	\begin{enumerate}
		\item $\sum_{t \in (\tau_{a;m}:\tau_{a;n}]} \bar{\Phi}\big(\bar{n}_{a;t-1}^{1/2} \Delta_{a;\epsilon}^-\big)\leq \sum_{k \in (m:n]} (\tau_{a;k}-\tau_{a;k-1})\bar{\Phi}\big(k^{1/2}\Delta_{a;\epsilon}^-\big)$,
		\item $
		\sum_{t \in (\tau_{a;m}:\tau_{a;n}]} \Phi\big(-t^{1/2}/(c_1\log t)\big)\leq  \int_1^\infty \Phi(-t^{1/4}/c_1)\,\d{t}\leq c_1$ by (B2),
	\end{enumerate}
	we have 
	\begin{align}\label{ineq:tau_n_char_eqn_ub}
	n-m\leq \sum_{k \in (m:n]} (\tau_{a;k}-\tau_{a;k-1})\bar{\Phi}\big(k^{1/2}\Delta_{a;\epsilon}^-\big)+c_1\big(1+\sqrt{n-m}\cdot \abs{Y_{n,m}}\big). 
	\end{align}
	On the other hand, using (\ref{ineq:tau_n_char_eqn_1}) and the lower estimate in Proposition \ref{prop:cond_prop_estimate}, for the prescribed choice of $x_\ast(\epsilon)$, for all $\{\xi_{a;i}\} \in E_\xi(x_\ast(\epsilon))$, on the event $\mathcal{E}_{x_\ast(\epsilon)}$, for any $n_0 \in [n]$,
	\begin{align}\label{ineq:tau_n_char_eqn_lb_1}
	&n-m \geq -\sqrt{n_0} + \sum_{t \in (\sqrt{n_0}\vee \tau_{a;m}:\tau_{a;n}]} \mathfrak{p}_{a;t}^\xi + \sqrt{n-m}\cdot Y_{n,m}\\
	& \geq \bigg[1-\bar{\Phi}\bigg(\frac{\bar{\Phi}_\ast^{-}(2n_0^{-1/2})}{c_1} \bigg)\bigg]^K \sum_{t \in (\sqrt{n_0}\vee \tau_{a;m}:\tau_{a;n}]} \bar{\Phi}\big(\bar{n}_{a;t-1}^{1/2} \Delta_{a;\epsilon}^+\big) -  (\sqrt{n_0}+\sqrt{n-m}\cdot\abs{Y_{n,m}})\nonumber\\
	&\geq \bigg[1-c_1 \bar{\Phi}\bigg(\frac{\bar{\Phi}_\ast^{-}(2n_0^{-1/2})}{c_1} \bigg)\bigg]_+ \sum_{t \in (\tau_{a;m}:\tau_{a;n}]} \bar{\Phi}\big(\bar{n}_{a;t-1}^{1/2} \Delta_{a;\epsilon}^+\big) -  2(\sqrt{n_0}+\sqrt{n-m}\cdot\abs{Y_{n,m}}).\nonumber
	\end{align}
	Rearranging terms using Lemma \ref{lem:Phi_Psi_1}, we have for $n_0\geq c_1$, 
	\begin{align}\label{ineq:tau_n_char_eqn_lb_2}
	\sum_{t \in (\tau_{a;m}:\tau_{a;n}]} \bar{\Phi}\big(\bar{n}_{a;t-1}^{1/2} \Delta_{a;\epsilon}^+\big)\leq c_1\cdot (n-m+\sqrt{n_0}+\sqrt{n-m}\cdot \abs{Y_{n,m}}). 
	\end{align}
	Now combining (\ref{ineq:tau_n_char_eqn_lb_1}) and (\ref{ineq:tau_n_char_eqn_lb_2}), and using 
	\begin{align*}
	\sum_{t \in (\tau_{a;m}:\tau_{a;n}]} \bar{\Phi}\big(\bar{n}_{a;t-1}^{1/2} \Delta_{a;\epsilon}^+\big)\geq  \sum_{k \in (m:n]} (\tau_{a;k}-\tau_{a;k-1})\bar{\Phi}\big((k+1)^{1/2}\Delta_{a;\epsilon}^+\big),
	\end{align*}
	we have
	\begin{align}\label{ineq:tau_n_char_eqn_lb}
	n-m &\geq \sum_{k \in (m:n]} (\tau_{a;k}-\tau_{a;k-1})\bar{\Phi}\big((k+1)^{1/2}\Delta_{a;\epsilon}^+\big)\nonumber\\
	&\qquad - c_1 \cdot \big[(n-m)\bar{\Phi}\big(\bar{\Phi}_\ast^{-}(2n_0^{-1/2})/c_1\big)+\sqrt{n_0}+\sqrt{n-m}\cdot\abs{Y_{m,n}}\big].
	\end{align}
	The claim now follows from (\ref{ineq:tau_n_char_eqn_ub}) and (\ref{ineq:tau_n_char_eqn_lb}) by taking $n_0=n-m$. \qed

\subsection{Proof of Proposition \ref{prop:tau_n_est}}\label{subsection:proof_tau_n_est}

	Fix $\epsilon \in (0,1/4)$. Let the constants $c_0=c_0(K,\Delta,\epsilon,\mathscr{L}(\mathsf{Z}))>0$, $x_\ast(\epsilon)$, and the events $E_\xi(x_\ast(\epsilon))$, $\big\{\mathcal{E}_{x_\ast(\epsilon)}\equiv \mathcal{E}_{x_\ast(\epsilon);m,n}(\{\xi_{a;i}\})\big\}$ be as in the statement and the proof of Proposition \ref{prop:tau_n_char_eqn}. We further write
	\begin{align*}
	S_{a;n,m}^+ \equiv \sum_{k \in (m:n]} \beta_{a;k}\bar{\Phi}\big((k+1)^{1/2}\Delta_{a;\epsilon}^+\big),\quad S_{a;n,m}^- \equiv \sum_{k \in (m:n]} \beta_{a;k}\bar{\Phi}\big(k^{1/2}\Delta_{a;\epsilon}^-\big),
	\end{align*}
	with $\beta_{a;k}\equiv \tau_{a;k}-\tau_{a;k-1}$.

	\noindent (\emph{Upper bound}). Applying the negative control part of Proposition \ref{prop:tau_n_char_eqn} with $m=0$, we have on $\mathcal{E}_{x_\ast(\epsilon)}$, 
	\begin{align*}
	&\tau_{a;n}\cdot \bar{\Phi}\big((n+1)^{1/2}\Delta_{a;\epsilon}^+\big) = \bigg(\sum_{k \in [n]} \beta_{a;k}\bigg)\cdot \bar{\Phi}\big((n+1)^{1/2}\Delta_{a;\epsilon}^+\big)\\
	&\leq \sum_{k \in [n]} \beta_{a;k}\bar{\Phi}\big((k+1)^{1/2}\Delta_{a;\epsilon}^+\big)\leq  S_{a;n,0}^+\leq n+ c_1\cdot \big(\alpha_{n}(c_1)+\sqrt{n}\cdot \abs{Y_{n,0}}\big),
	\end{align*}
	which implies, for $n\geq c_1$,
	\begin{align*}
	\tau_{a;n}\leq c_1 \big(1\vee \abs{Y_{n,0}}\big)\cdot n/\bar{\Phi}\big((n+1)^{1/2}\Delta_{a;\epsilon}^+\big).
	\end{align*} 
	The claimed upper bound follows by a trivial large deviation estimate for $\abs{Y_{n,0}}$ via the uniform second moment control, and then bound $c_1 \cdot n /\bar{\Phi}\big((n+1)^{1/2}\Delta_{a;\epsilon}^+\big)$ by $1/\bar{\Phi}\big(n^{1/2}\Delta_{a;2\epsilon}^+\big)$ for $n\geq c_1$ using assumption (B2).
	
	\noindent (\emph{Lower bound}). Let $n_0=n_0(K,\Delta,\epsilon,\mathscr{L}(\mathsf{Z}))$ be a sufficiently large integer such that the event $\bar{E}_{n_0}\equiv \{n_0\geq c_0^2(1\vee \abs{Y_{n,n-n_0}})^2/4\}$ satisfies $\sup_{\{\xi_{a;i}\}} \Prob^\xi (\bar{E}_{n_0}^c)\leq \epsilon$. Applying the positive control part of Proposition \ref{prop:tau_n_char_eqn} with $m=n-n_0$, on the event  $\mathcal{E}_{x_\ast(\epsilon)}\cap \bar{E}_{n_0}$, 
	\begin{align*}
	\sum_{k \in (n-n_0:n]} \beta_{a;k}\bar{\Phi}\big(k^{1/2}\Delta_{a;\epsilon}^-\big)=S_{a;n,n-n_0}^-&\geq  n_0- c_0 \sqrt{n_0}\cdot (1\vee \abs{Y_{n,n-n_0}})\geq n_0/2.
	\end{align*}
	Consequently, there exists some $k_0 \in (n-n_0:n]$ such that
	\begin{align*}
	\beta_{a;k_0}\bar{\Phi}\big(k_0^{1/2}\Delta_{a;\epsilon}^-\big)\geq 1/2\,\Rightarrow\, \beta_{a;k_0}\geq 1/\big\{2 \bar{\Phi}\big(k_0^{1/2}\Delta_{a;\epsilon}^-\big)\big\}.
	\end{align*}
	Therefore for $n\geq c_1\vee n_0$, we have 
	\begin{align*}
		\tau_{a;n}\geq \beta_{a;k_0}\geq 1/\big\{2 \bar{\Phi}\big((n-n_0)_+^{1/2}\Delta_{a;\epsilon}^-\big)\big\}.
	\end{align*}
	The lower bound follows by (B2). \qed

\section{Proof of Proposition \ref{prop:sde_strong_sol}}\label{subsection:proof_sde_strong_sol}

\subsection{Standard It\^o  form of the SDE (\ref{def:sde_time_change_renor})}
The SDE (\ref{def:sde_time_change_renor}) may be rewritten in the standard It\^o form as follows. Let $X(t)\equiv (u_{\cdot}(t),w_{\cdot}(t))\in E_0$, and let the mappings $b: E_0 \to \R^{\mathcal{A}_0}\times \R^{\mathcal{A}_0}$, $\sigma:E_0\to \big(\R^{\mathcal{A}_0}\times \R^{\mathcal{A}_0}\big)^2$ be defined as
\begin{align}\label{def:sde_b_sigma}
\begin{cases}
b(u_\cdot,w_\cdot)\equiv \big( (p_a(u_\cdot,w_\cdot)-u_\cdot)_{a \in \mathcal{A}_0},(-w_a/2)_{a \in \mathcal{A}_0}\big),\\
\sigma(u_\cdot,w_\cdot)\equiv \mathrm{diag}\big(0_{\mathcal{A}_0}, (\sqrt{p_a(u_\cdot,w_\cdot)})_{a \in \mathcal{A}_0}\big).
\end{cases}
\end{align}
Then the SDE (\ref{def:sde_time_change_renor}) may be written in the standard It\^o diffusion in $E_0$:
\begin{align}\label{def:sde_standard_Ito_form}
\d{X(t)} = b(X(t))\,\d{t}+\sigma(X(t))\,\d{B(t)},\quad \forall t\in \R.
\end{align}

\subsection{Proof of Proposition \ref{prop:sde_strong_sol}}

We only need to prove the claim on the interval $[0,T]$ for any $T>0$. 

\noindent (\textbf{Step 1}). We first prove that any solution $(u_{\cdot}(t),w_{\cdot}(t))$ of the SDE (\ref{def:sde_time_change_renor}) on $[0,T]$ from an initial condition $(u_{\cdot}(0),w_{\cdot}(0))\in E_0$ must satisfy $(u_{\cdot}(t),w_{\cdot}(t)) \in E_0$ on $[0,T]$.

To see this, from the first equation of (\ref{def:sde_time_change_renor}), we have $u_a'(t)\geq - u_a(t)$, and therefore $u_a(t)\geq u_a(0) e^{-t}>0$ for all $t\geq 0$ and $a \in \mathcal{A}_0$. In particular, we have 
\begin{align}\label{ineq:sde_strong_sol_u_lb}
\min_{a \in \mathcal{A}_0}\inf_{t \in [0,T]}u_a(t)\geq \min_{a \in \mathcal{A}_0} u_a(0) e^{-T}\equiv \delta_0 \in (0,1).
\end{align}
Moreover, with $u\equiv \sum_{a \in \mathcal{A}_0} u_a$, using $\sum_{a \in \mathcal{A}_0} p_a=1$ and the first equation of (\ref{def:sde_time_change_renor}) yield that $u'(t)=1-u(t)$. Solving this ordinary differential equations leads to $u(t)=1+(u(0)-1) e^{-t}$. Since $(u_{\cdot}(0)) \in \Delta_{\mathcal{A}_0}^\circ$, we have  $u(0)=1$ and therefore $u(t)=1$ for all $t\geq 0$. This means $(u_{\cdot}(t)) \in \Delta_{\mathcal{A}_0}^\circ$ for all $t\geq 0$, proving the claim.

\noindent (\textbf{Step 2}). In this step, we construct a sequence of `localized' solutions $\{X^{(n)}(t): t \in [0,T]\}$ in $E_0$ to the SDE in its standard form (\ref{def:sde_standard_Ito_form}). 

To this end, we consider the localized space
\begin{align}\label{ineq:sde_strong_sol_En}
E_0^{(n)}\equiv \Big\{(u_a)_{a \in \mathcal{A}_0} \in \Delta_{\mathcal{A}_0}^\circ: \inf_{a \in \mathcal{A}_0}u_a\geq \delta_0\Big\}\times [-n,n]^{ \mathcal{A}_0},\quad \forall n=1,2,\ldots,
\end{align}
and let $\Pi^{(n)}: E_0 \to E_0^{(n)}$ be the natural `projection' in the sense that
\begin{align*}
\Pi^{(n)}(u_\cdot,w_\cdot)\equiv \Big((u_a\vee \delta_0)_{a \in \mathcal{A}_0}, \big((w_a\wedge n)\vee (-n)\big)_{a \in \mathcal{A}_0}\Big).
\end{align*}
Clearly $\Pi^{(n)}$ is $1$-Lipschitz. We further let
\begin{align*}
b^{(n)}(u_\cdot,w_\cdot)\equiv b\big(\Pi^{(n)}(u_\cdot,w_\cdot)\big),\quad \sigma^{(n)}(u_\cdot,w_\cdot)\equiv \sigma\big(\Pi^{(n)}(u_\cdot,w_\cdot)\big).
\end{align*}
Then $b^{(n)},\sigma^{(n)}$ are globally Lipschitz on $E_0$. Consequently, by the proven claim in Step 1 (with a trivial adaptation to $X^{(n)}$) and the classical SDE theory, cf. \cite[Chapter IX, Theorem 2.1, pp. 375]{revuz1999continuous} or \cite[Theorem 5.2.1, pp. 68]{oksendal2003stochastic}, with initial condition $(u_{\cdot}(0),w_{\cdot}(0))\in E_0$, the SDE
\begin{align*}
\d{X^{(n)}(t)} = b^{(n)}(X^{(n)}(t))\,\d{t}+\sigma^{(n)}(X^{(n)}(t))\,\d{B(t)},\quad \forall t \in \R
\end{align*}
admits a unique strong solution $X^{(n)}$ on $[0,T]$ in the space $E_0$. 

\noindent (\textbf{Step 3}). In this step, we shall use $\{X^{(n)}(t): t \in [0,2T]\}$ to define a global solution $\{X(t): t \in [0,T]\}$. Recall $E_0^{(n)}$ in (\ref{ineq:sde_strong_sol_En}). We define the stopping time
\begin{align*}
\tau_n^{(m)}\equiv \inf\big\{t \in [0,2T]: X^{(m)}(t) \notin E_0^{(n)}\big\},\quad \forall m,n\geq 1,
\end{align*}
with the convention that $\inf \emptyset = 2T$, so  $\tau_n^{(m)} \in [0,2T]$. Note that $\tau_n^{(m)}$ is well-defined (for $2T$) due to the continuity of $t\mapsto X^{(m)}(t)$ and the closedness of $E_0^{(n)}$. For $m\geq n$, as $b^{(m)}=b^{(n)}$ and $\sigma^{(m)}=\sigma^{(n)}$ on $E_0^{(n)}$, by the (pathwise) uniqueness of strong solutions, w.p.1., we have $
\tau^{(n)}\equiv \tau^{(n)}_n = \tau_n^{(m)}$ for all $m\geq n\geq 1$, and 
\begin{align}\label{ineq:sde_strong_sol_X_n}
X^{(m)}(t)=X^{(n)}(t),\quad \forall t \in [0, \tau^{(n)}],\, m\geq n.
\end{align}
Therefore w.p.1, for all $n\geq 1$, we have
\begin{align*}
\tau^{(n)}=\tau_n^{(n+1)}&= \inf\big\{t \in [0,2T]: X^{(n+1)}(t) \notin E_0^{(n)}\big\}\\
&\stackrel{(\ast)}{\leq} \inf\big\{t \in [0,2T]: X^{(n+1)}(t) \notin E_0^{(n+1)}\big\} = \tau^{(n+1)}.
\end{align*}
Here $(\ast)$ follows due to the nested property of $\{E_0^{(n)}\}$ in $n$. Consequently, 
\begin{align*}
\tau^{(\infty)}\equiv \uparrow\lim_{n \to \infty} \tau^{(n)} \in [0,2T]
\end{align*}
is a well-defined stopping time w.p.1. On $[0,\tau^{(\infty)})$, we define $X$ as follows: for any $t \in [0,\tau^{(\infty)})$, there exists some $n$ such that $t\leq \tau^{(n)}$, and we define $X(t)\equiv X^{(n)}(t)$. This is well-defined w.p.1, as for any $m\geq n$, as $t\leq \tau^{(n)}$, we have $X^{(n)}(t)=X^{(m)}(t)$ in view of (\ref{ineq:sde_strong_sol_X_n}).

It therefore remains to prove that $\tau^{(\infty)}>T$ w.p.1. To this end, note that $\{\tau^{(\infty)}\leq T\}=\cap_n \{\tau^{(n)}\leq T\}=\cap_n \{\max_{a \in \mathcal{A}_0}\sup_{t \in [0,T]}\abs{w_a^{(n)}(t)}\geq n\}$, with $W^{(n)}(t)\equiv \big\{\sum_{a \in \mathcal{A}_0} \big(w_a^{(n)}(t)\big)^2\big\}^{1/2}$, we have 
\begin{align}\label{ineq:sde_strong_sol_tau_infty}
\Prob(\tau^{(\infty)}\leq T)&\leq \abs{\mathcal{A}_0}\cdot\max_{a \in \mathcal{A}_0}  \liminf_{n \to \infty} \Prob \bigg(\sup_{t \in [0,T]}\abs{w_a^{(n)}(t)}\geq n\bigg)\nonumber\\
&\leq \abs{\mathcal{A}_0}\cdot \liminf_{n \to \infty} \Prob \bigg(\sup_{t \in [0,T]}W^{(n)}(t)\geq n\bigg).
\end{align}
On the other hand, by It\^o's formula (cf. \cite[Chapter IV, Theorem 3.3, pp. 147]{revuz1999continuous}) applied to $\big(w_a^{(n)}(t)\big)_{a \in \mathcal{A}_0}$ and $F(w_\cdot)=\sum_{a \in \mathcal{A}_0} w_a^2$, as the partial derivatives are $\partial_a F(w_\cdot)=2w_a$, $\partial_{ab} F(w_\cdot)=2\delta_{ab}$, and the quadratic variation process is $\iprod{w^{(n)}_a}{w^{(n)}_b}_\cdot = \delta_{ab} \int_0^\cdot \prod_{\# \in \{a,b\}}p_{\#}^{(n),1/2}(u_\cdot^{(n)}(t),w_\cdot^{(n)}(t))\,\d{t}$ where $p_\cdot^{(n)}\equiv p_\cdot \circ \Pi^{(n)}$, we may compute: 
\begin{align*}
\d{F(w^{(n)}_\cdot(t))}&=\sum_{a \in \mathcal{A}_0} \partial_a F (w^{(n)}_\cdot(t))\,\d{w^{(n)}_a(t)}+ \frac{1}{2}\sum_{a,b \in \mathcal{A}_0} \partial_{ab} F (w^{(n)}_\cdot(t))\,\d{\iprod{w^{(n)}_a}{w^{(n)}_b} }_t\\
& =  \sum_{a \in \mathcal{A}_0} \big( 2w^{(n)}_a(t)\,\d{w^{(n)}_a(t)}+ p_a^{(n)}(u_\cdot^{(n)}(t),w_\cdot^{(n)}(t))\,\d{t} \big)\\
& = \sum_{a \in \mathcal{A}_0} \big(-(w^{(n)}_a(t))^2+ p_a^{(n)}(u_\cdot^{(n)}(t),w_\cdot^{(n)}(t))\big)\,\d{t}\\
&\qquad + 2\sum_{a \in \mathcal{A}_0}  w^{(n)}_a(t)p_a^{(n),1/2}(u_\cdot^{(n)}(t),w_\cdot^{(n)}(t))\,\d{B_a(t)}. 
\end{align*}
So with $M^{(n)}(t)\equiv 2\sum_{a \in \mathcal{A}_0}  \int_0^t w^{(n)}_a(s)p_a^{(n),1/2}(u_\cdot^{(n)}(s),w_\cdot^{(n)}(s))\,\d{B_a(s)}$, using that $\sum_{a \in \mathcal{A}_0} p_a^{(n)}=1$, we arrive at 
\begin{align}\label{ineq:sde_strong_sol_tau_W_int}
(W^{(n)}(t))^2& = (W^{(n)}(0))^2+ \int_0^t \big[-(W^{(n)}(s))^2+1\big]\,\d{s}+M^{(n)}(t).
\end{align}
Consequently, 
\begin{align}\label{ineq:sde_strong_sol_sup_1}
\E\sup_{t \in [0,T]} (W^{(n)}(t))^2&\leq (W^{(n)}(0))^2+T+ \E\sup_{t \in [0,T]} (M^{(n)}(t))^2.
\end{align}
Using the Burkholder-Davis-Gundy inequality for the continuous martingale $\{M^{(n)}(t)\}$ that vanishes at $0$, cf. \cite[Chapter IV, Theorem 4.1 and Corollary 4.2, pp. 160]{revuz1999continuous}, we have for some universal constant $c>0$ whose numeric value may change from line to line below,
\begin{align*}
&\E\sup_{t \in [0,T]} (M^{(n)}(t))^2\leq c\cdot \E \langle M^{(n)} \rangle_{T} \\
&= 4c \sum_{a \in \mathcal{A}_0} \E \int_0^{T} (w^{(n)}_a(s))^2 p_a^{(n)}(u_\cdot^{(n)}(s),w_\cdot^{(n)}(s))\,\d{s}\leq 4c\cdot \int_0^{T} \E (W^{(n)}(s))^2\,\d{s}.
\end{align*}
On the other hand, in view of (\ref{ineq:sde_strong_sol_tau_W_int}), we have the simple estimate
\begin{align*}
\E (W^{(n)}(t))^2 = (W^{(n)}(0))^2+ \int_0^t \E\big[-(W^{(n)}(s))^2+1\big]\,\d{s}\leq (W^{(n)}(0))^2+t.
\end{align*}
Combining the above two displays, we have 
\begin{align}\label{ineq:sde_strong_sol_sup_2}
\E\sup_{t \in [0,T]} (M^{(n)}(t))^2\leq c T\cdot \big((W^{(n)}(0))^2+T\big).
\end{align}
Combining (\ref{ineq:sde_strong_sol_sup_1}) and (\ref{ineq:sde_strong_sol_sup_2}), and using $(W^{(n)}(0))^2=\pnorm{w_\cdot(0)}{}^2$,  it follows that
\begin{align*}
\E\sup_{t \in [0,T]} (W^{(n)}(t))^2\leq c\cdot (1+T)\cdot \big(\pnorm{w_\cdot(0)}{}^2+T\big). 
\end{align*}
Now combining the above estimate with (\ref{ineq:sde_strong_sol_tau_infty}), using Chebyshev's inequality we conclude that
\begin{align*}
\Prob(\tau^{(\infty)}\leq T)&\leq c\abs{\mathcal{A}_0}\cdot \lim_{n \to \infty}\big\{ n^{-2}\cdot (1+T)\cdot \big(\pnorm{w_\cdot(0)}{}^2+T\big)\big\}=0.
\end{align*}
This shows that w.p.1, $X$ is well-defined on $[0,T]$. In fact, the above argument shows that w.p.1, $\tau^{(\infty)}>2T-\epsilon$ and $X$ is well-defined on $[0,2T-\epsilon]$ for any $\epsilon>0$, and therefore actually w.p.1, $\tau^{(\infty)}=2T$ and $X$ is well-defined on $[0,2T)$.

\noindent (\textbf{Step 4}). Finally, we shall prove the (pathwise) uniqueness of $X$. Let $X'$ be another solution of the SDE (\ref{def:sde_standard_Ito_form}) with the same Brownian motion and initial condition $(u_{\cdot}(0),w_{\cdot}(0))\in E_0$. Recall again $E_0^{(n)}$ in (\ref{ineq:sde_strong_sol_En}). Let
\begin{align*}
\hat{\tau}^{(n)}\equiv \inf \big\{t \in [0,T]: X(t) \notin E_0^{(n)}\hbox{ or } X'(t) \notin E_0^{(n)}\big\},
\end{align*}
with the convention that $\inf\emptyset \equiv T$. As $\{E_0^{(n)}\}$ is nested in $n$, $\{\hat{\tau}^{(n)}\}$ is a non-decreasing sequence, and therefore $\hat{\tau}^{(\infty)}\equiv \uparrow \lim_{n \to \infty} \hat{\tau}^{(n)}$ is well-defined. As $X,X'$ are both continuous, for any $\omega \in \Omega_0$ with $\Prob(\Omega_0)=1$, there exists some $n_0=n_0(\omega)$ such that $X(t),X'(t) \in E_0^{(n)}$ for all $t \in [0,T]$ and $n\geq n_0$. This means that $\hat{\tau}^{(n)}=T$ for $n\geq n_0$. In other words, we must have 
\begin{align}\label{ineq:sde_strong_sol_tau_hat}
\uparrow \lim_{n \to \infty} \hat{\tau}^{(n)} = T,\quad \hbox{ a.s.}
\end{align}
Now with $\Delta_t^{(n)}\equiv X_{t\wedge \hat{\tau}^{(n)}}-X'_{t\wedge \hat{\tau}^{(n)}}$ and $\hat{M}^{(n)}(t)\equiv \int_0^{t\wedge \hat{\tau}^{(n)}  }  \big(\sigma(X_s)-\sigma(X_s')\big)\,\d{B(s)} $,  for any $T_0\in [0,T]$,
\begin{align}\label{ineq:sde_strong_sol_delta_1}
\E \sup_{t \in [0,T_0]}\pnorm{\Delta_t^{(n)}}{}^2& \leq 2 \E \sup_{t \in [0,T_0]}\biggpnorm{\int_0^{t\wedge \hat{\tau}^{(n)}  }  \big(b(X_s)-b(X_s')\big)\,\d{s} }{}^2+ 2\E \sup_{t \in [0,T_0]} \pnorm{\hat{M}^{(n)}(t)}{}^2\nonumber\\
&\equiv I_1(T_0)+I_2(T_0).
\end{align}
For $I_1(T_0)$, As $b$ is Lipschitz on $E_0^{(n)}$ (with Lipschitz constant depending on $n$), for some constant $c_1=c_1(\abs{\mathcal{A}_0},n,T)>0$,
\begin{align}\label{ineq:sde_strong_sol_delta_2}
I_1(T_0)&\leq c_1 \int_0^{T_0} \E\sup_{r \in [0,s]}\pnorm{\Delta_r^{(n)}}{}^2\,\d{s},\quad \forall T_0\in [0,T].
\end{align}
For $I_2(T_0)$, by using the Burkholder-Davis-Gundy inequality (cf. \cite[Chapter IV, Theorem 4.1 and Corollary 4.2, pp. 160]{revuz1999continuous}) component-wise for the vector-valued, continuous martingale $\{\hat{M}^{(n)}(t)\}$ that vanishes at $0$, and the fact that $\sigma$ is Lipschitz on $E_0^{(n)}$, 
\begin{align}\label{ineq:sde_strong_sol_delta_3}
I_2(T_0)&\leq c_1\cdot \E \pnorm{\langle \hat{M}^{(n)} \rangle_{T_0}}{}\leq c_1 \int_0^{T_0} \E\sup_{r \in [0,s]}\pnorm{\Delta_r^{(n)}}{}^2\,\d{s},\quad \forall T_0\in [0,T].
\end{align}
Combining (\ref{ineq:sde_strong_sol_delta_1})-(\ref{ineq:sde_strong_sol_delta_3}),  with $H^{(n)}(s)\equiv \E \sup_{t \in [0,s]}\pnorm{\Delta_t^{(n)}}{}^2$, we have 
\begin{align*}
H^{(n)}(T_0)\leq c_1 \int_0^{T_0} H^{(n)}(s)\,\d{s},\quad \forall T_0 \in [0,T].
\end{align*}
Using Gr\"onwall's inequality (cf. Lemma \ref{lem:gronwall_ineq}), we then conclude $H^{(n)}(T_0)=0$ for all $T_0 \in [0,T]$. In particular, 
\begin{align*}
\sup_{t \in [0,T]} \abs{X_{t\wedge \hat{\tau}^{(n)}}-X'_{t\wedge \hat{\tau}^{(n)}}}=0,\quad \hbox{a.s.}
\end{align*}
The almost sure pathwise uniqueness on $[0,T]$ now follows by letting $n\to \infty$ in  the above display with the help of (\ref{ineq:sde_strong_sol_tau_hat}).  \qed

\section{Proofs for Section \ref{section:proof_outline_optimal}}\label{section:proof_optimal}

\subsection{Proof of Lemma \ref{lem:r_lim}}\label{subsection:proof_r_lim}

	For an optimal arm $a \in \mathcal{A}_0$, let $p_{a;T}:\R_{>0}^K\times \R^K\to \R_{\geq 0}$ be defined as follows: for $\big(r_{[K]},\xi_{[K]}\big)\in \R_{>0}^K\times \R^K$ and $\{\mathsf{Z}_{[K]}\} \stackrel{\mathrm{i.i.d.}}{\sim} \mathsf{Z}$,
	\begin{align*}
	&p_{a;T}\big(r_{[K]},\xi_{[K]}\big)\\
	&\equiv \Prob_{\mathsf{Z}}\bigg(-\sqrt{T}\Delta_b+\frac{\xi_b}{r_b}+ \frac{\mathsf{Z}_b}{\sqrt{r_b}} \leq -\sqrt{T}\Delta_a+ \frac{\xi_a}{r_a}+ \frac{\mathsf{Z}_a}{\sqrt{r_a}},\forall b\in [K]\setminus \{a\}\bigg)\\
	& = \E_{\mathsf{Z}} \prod_{b \in [K]\setminus \{a\}} \Phi\,\bigg[\sqrt{r_b}\cdot \bigg(\frac{\xi_a}{r_a}-\frac{\xi_b}{r_b}+\frac{\mathsf{Z}}{\sqrt{r_a}}\bigg)+ \sqrt{r_b}\cdot \sqrt{T}(\Delta_b-\Delta_a)\bigg].
	\end{align*}
	With the definitions of  $(r_{\cdot;T},\xi_{\cdot;T})$ in (\ref{def:r_xi_T}) and of the conditional probability $\mathfrak{p}_{a;t}^\xi$ in (\ref{def:cond_prob}), we may write
	\begin{align*}
	\mathfrak{p}_{a;t}^\xi=\Prob^\xi(A_t=1|\mathscr{F}_{t-1}^Z)& = p_{a;T}\big(r_{[K];T}(z_{t-1}^{(T)}), \xi_{[K];T}(z_{t-1}^{(T)})\big),\quad t \in [T].
	\end{align*}
	In the arguments below, we shall fix $\epsilon \in (0,1/2]$.
	
	\noindent (\textbf{Step 1}). In this step, we show that for any $x\geq 1$ and $\{\xi_{a;i}\} \in E_\xi(x)$, there exist some constants $c_1=c_1>(K,\Delta,\mathscr{L}(\mathsf{Z}))>0$ and  $c_2=c_2(\epsilon,x,K,\Delta,\mathscr{L}(\mathsf{Z}))>0$ such that if $T\geq c_2$,
	\begin{align}\label{ineq:r_conv_step1}
	&\Prob^\xi\bigg(\max_{t\in [\epsilon T:T]}\bigabs{\mathfrak{p}_{a;t}^\xi-p_{a}\big(r_{\mathcal{A}_0;T}(z_{t-1}^{(T)}), \xi_{\mathcal{A}_0;T}(z_{t-1}^{(T)})\big) }\nonumber\\
	&\qquad > 2\Prob\big(\abs{\mathsf{Z}}>\{\bar{\Phi}_\ast^{-}(1\wedge c_2T^{-1/2})\}^{1/2}/c_2 \big)\bigg)\leq c_1 \exp\big(-1/\{c_1\bar{\Phi}_\ast(c_1x)\}\big).
	\end{align}
	To prove (\ref{ineq:r_conv_step1}), note that by Lemma \ref{lem:apriori_arm_pull_lower_bound}, for all $\{\xi_{a;i}\} \in E_\xi(x)$, there exists some event $\mathcal{E}_x$ with $\Prob^\xi(\mathcal{E}_x^c)\leq c_1 \exp(-1/\{c_1\bar{\Phi}_\ast(c_1x)\})$ on which $\min_{a \in [K]} n_{a;t}\geq \big[\bar{\Phi}_\ast^{-}(t^{-1/2})/(c_1 x)\big]^2 $ for all $t\geq 2/[\bar{\Phi}_\ast(c_1x)]^2$. So on the event $\mathcal{E}_x$, for some $c_2=c_2(\epsilon,x,K,\Delta,\mathscr{L}(\mathsf{Z}))>0$, if $t\geq c_2$,
	\begin{align*}
	&\bigabs{\mathfrak{p}_{a;t}^\xi-p_{a}\big(r_{\mathcal{A}_0;T}(z_{t-1}^{(T)}), \xi_{\mathcal{A}_0;T}(z_{t-1}^{(T)})\big)}\\
	&\leq \E_{\mathsf{Z}}\biggabs{1- \prod_{b \in \mathcal{A}_+} \Phi\bigg[{r_{b;T}^{1/2}(z_{t-1}^{(T)}) }\cdot \bigg(\frac{\xi_{a;T}(z_{t-1}^{(T)}) }{r_{a;T}(z_{t-1}^{(T)}) }-\frac{\xi_{b;T}(z_{t-1}^{(T)}) }{r_{b;T}(z_{t-1}^{(T)}) }+\frac{\mathsf{Z}}{{r_{a;T}^{1/2}(z_{t-1}^{(T)})}}+\sqrt{T}\Delta_b\bigg)\bigg] }\\
	&\leq \E_{\mathsf{Z}}\biggabs{1-   \Phi^{\abs{\mathcal{A}_+}} \bigg[ c_2^{-1} \bar{\Phi}_\ast^{-}(2t^{-1/2})\cdot \bigg(\Delta_b-\frac{c_2(1\vee \abs{\mathsf{Z}}) }{\{\bar{\Phi}_\ast^{-}(2t^{-1/2})\}^{1/2} }\bigg) \bigg]}.
	\end{align*}
	Consequently, for $t\geq c_2$, by considering the regime $\abs{\mathsf{Z}}\leq \{\bar{\Phi}_\ast^{-}(2t^{-1/2})\}^{1/2}/c_2$ and $\abs{\mathsf{Z}}> \{\bar{\Phi}_\ast^{-}(2t^{-1/2})\}^{1/2}/c_2$, the RHS of the above display can be further bounded by
	\begin{align*}
	\abs{1-\Phi\big(\bar{\Phi}_\ast^{-}(2t^{-1/2})/c_2\big)}+\Prob\big(\abs{\mathsf{Z}}>\{\bar{\Phi}_\ast^{-}(2t^{-1/2})\}^{1/2}/c_2 \big)\leq 2\Prob\big(\abs{\mathsf{Z}}>\{\bar{\Phi}_\ast^{-}(2t^{-1/2})\}^{1/2}/c_2 \big),
	\end{align*}
	proving the claimed estimate in (\ref{ineq:r_conv_step1}).

	\noindent (\textbf{Step 2}).
	For $z \in [\epsilon,1]$, note that
	\begin{align*}
	r_{a;T}(z)-r_{a;T}(\epsilon)& = \frac{1}{T}\sum_{s=\floor{\epsilon T}+1}^{ \floor{z T}} \mathfrak{p}_{a;s}^\xi+\frac{1}{T}\sum_{s=\floor{\epsilon T}+1}^{ \floor{z T}} \big(\bm{1}_{A_s=a}-\mathfrak{p}_{a;s}^\xi\big)\equiv I_1(\epsilon,z)+I_2(\epsilon,z).
	\end{align*}
	By the martingale Bernstein's inequality (cf. Lemma \ref{ineq:mtg_bern}),
	\begin{align*}
	\Prob^\xi\bigg(\sup_{z \in [\epsilon,1]}\abs{I_2(\epsilon,z)}\geq c_1\epsilon_T \bigg)\leq \epsilon_T^{100}.
	\end{align*}
	On the other hand, (\ref{ineq:r_conv_step1}) in Step 1 yields that for any $\{\xi_{a;i}\}  \in E_\xi(x)$, if $T\geq c_2$,
	\begin{align*}
	&\Prob^\xi\bigg( \sup_{z \in [\epsilon,1]}\biggabs{I_1(\epsilon,z)-\frac{1}{T}\sum_{s=\floor{\epsilon T}+1}^{ \floor{z T}} p_{a}\big(r_{\mathcal{A}_0;T}(z_{s-1}^{(T)}), \xi_{\mathcal{A}_0;T}(z_{s-1}^{(T)})\big) }\\
	&\qquad \qquad  >2\Prob\big(\abs{\mathsf{Z}}>\{\bar{\Phi}_\ast^{-}(1\wedge c_2T^{-1/2})\}^{1/2}/c_2 \big)\bigg)\leq c_1 \exp\big(-1/\{c_1\bar{\Phi}_\ast(c_1x)\}\big).
	\end{align*}
	Combining the above estimates concludes the desired estimate. The claim for the case $\Delta_a\equiv 0$ for all $a \in [K]$ is much simpler without the need of Step 1.\qed

\subsection{Proof of Lemma \ref{lem:xi_mtg}}\label{subsection:proof_xi_mtg}

	We note that $\{\xi_{a;T}(\cdot)\}$ is a martingale with respect to $\{\mathscr{F}_{\floor{zT}}: z \in [0,1]\}$, and its the predictable quadratic variation of $\{\xi_{a;T}(\cdot)\}$ is 
	\begin{align*}
	\langle\xi_{a;T}\rangle_z \equiv \sum_{s=1}^{\floor{zT}} \E \big[\big(\xi_{a;T}(s/T)-\xi_{a;T}((s-1)/T)\big)^2|\mathscr{F}_{s-1}\big] = \frac{1}{T} \sum_{s=1}^{\floor{zT}} \bm{1}_{A_s=a}=\bar{r}_{a;T}(z).
	\end{align*}
	The claim follows by uniqueness in Doob decomposition theorem.
	\qed

\subsection{Proof of Proposition \ref{prop:compact_conv_sde}}\label{subsection:proof_compact_conv_sde}
	
	Note that the sequence $\{(r_{a;T},\xi_{a;T}):a \in \mathcal{A}_0\}$ is tight in $(\ell^\infty[0,1])^{\mathcal{A}_0\times \mathcal{A}_0}$. In particular, tightness of $\{r_{a;T}\}$ is trivial due to the boundedness $r_{a;T} \in [0,1]$, and tightness of $\{\xi_{a;T}\}$ follows by noting that 
	\begin{align*}
	\sup_{z \in [0,1]}\abs{\xi_{a;T}(z)}\leq T^{-1/2}\max_{t \in [T]} \biggabs{\sum_{s \in [t]}\xi_s}=\bigop(1).
	\end{align*}
	Here the stochastic boundedness follows from, e.g., L\'evy's maximal inequality (cf. \cite[Theorem 1.1.1]{de2012decoupling}). By Prokhorov's theorem, let $\{(r_a,\xi_a):a \in \mathcal{A}_0\}$ be a sequential limit of $\{(r_{a;T},\xi_{a;T}):a \in \mathcal{A}_0\}$ in $(\ell^\infty[0,1])^{\mathcal{A}_0\times \mathcal{A}_0}$. By Skorokhod’s representation theorem (cf. \cite[Theorem 3.7.25]{gine2015mathematical}), we assume without loss of generality that on an event $E_0$ with probability $1$,
	\begin{align*}
	\sup_{z \in [0,1]} \abs{r_{a;T}(z)-r_a(z)} + \sup_{z \in [0,1]} \abs{\xi_{a;T}(z)-\xi_a(z)} \to 0.
	\end{align*}
	\noindent (\textbf{Step 1}). In this step, we shall show that for any $\epsilon>0$, $\{(r_{a}(z),\xi_{a}(z)):a \in \mathcal{A}_0, z \in [\epsilon,1]\}$ satisfies the stochastic differential equation (\ref{def:sde_singular}). 
	
	To this end, first note that by Lemma \ref{lem:r_lim}, on the event $E_0$, for any $\epsilon>0$, as $r_a(\epsilon)>0$ w.p.1, we have almost surely
	\begin{align}\label{ineq:compact_conv_sde_step1_1}
	r_a(z)-r_a(\epsilon)=\int_\epsilon^z p_a\big(r_{\cdot}(x),\xi_{\cdot}(x)\big)\,\d{x},\quad \forall a \in \mathcal{A}_0,\, z \in [\epsilon,1].
	\end{align}
	This gives the first equation of (\ref{def:sde_singular}).

	Next, by Lemma \ref{lem:xi_mtg} and the tightness of $\{\xi_{a;T}\}$, taking limit as $T \to \infty$ proves that both $\{\xi_a(z):z \in [0,1]\}$ and $\{\xi_a^{2}(z)-r_a(z):z \in [0,1]\}$ are continuous martingale with respect to the natural filtration. Note that $z\mapsto \bar{r}_{a;T}(z)$ is $1$-Lipschitz, so its limit $r_a(\cdot)$ in $\ell^\infty[0,1]$ must also be $1$-Lipschitz (in particular, absolutely continuous). Further note that in view of the proven (\ref{ineq:compact_conv_sde_step1_1}), $r_a'(z)>0$ w.p.1 for all $z \in (0,1]\cap \mathbb{Q}$, and therefore by continuity, $r_a'(z)>0$ w.p.1 for all $z \in (0,1]$. Therefore 
	\begin{align*}
		W_a(z)\equiv \int_0^z \frac{1}{\sqrt{r_a'(s)}}\,\d{\xi_a(s)}
	\end{align*}
	is well-defined w.p.1. Moreover, for any $z \in [0,1]$ and $a,b \in \mathcal{A}_0$, as $\langle \xi_a, \xi_{b} \rangle \equiv \delta_{ab} r_a$, a simple calculation shows that 
	\begin{align*}
	\langle W_a, W_{b} \rangle_z = \int_0^z \frac{ 1 }{ \{r_a'(s)r_b'(s)\}^{1/2} }  \,\d{\langle \xi_a, \xi_{b} \rangle_s}= \delta_{ab}\cdot \int_0^z \frac{ 1 }{ r_a'(s) } \,\d{r_a(s)} = \delta_{ab}\cdot z. 
	\end{align*}
	By L\'evy's characterization for Brownian motion, $\{W_a: a \in \mathcal{A}_0\}$ is a standard $\abs{\mathcal{A}_0}$-dimensional Brownian motion starting from $0$. 
	
	Finally, for fixed $a \in \mathcal{A}_0$, by the associativity of stochastic integral, cf., \cite[Chapter IV, Proposition 2.4, pp. 139]{revuz1999continuous}, with $h_s\equiv 1/\sqrt{r_a'(s)}$, $g_s\equiv \sqrt{r_a'(s)}$, and simplified notation $\xi_s\equiv \xi_a(s)$ and $W_s\equiv W_a(s)$, using $\cdot$ to denote stochastic integral with respect to a continuous martingale,
	\begin{align*}
	g\cdot W = g \cdot (h\cdot \xi) = (gh)\cdot \xi = 1\cdot \xi,\quad\hbox{a.s.} 
	\end{align*}
	Combined with (\ref{ineq:compact_conv_sde_step1_1}), we obtain the second equation of (\ref{def:sde_singular}).
	
	\noindent (\textbf{Step 2}). In this step, we shall perform the time change and renormalization. Recall $\{(u_{a;T},w_{a;T}):a \in \mathcal{A}_0\}$ defined in (\ref{def:u_w}), and let 
	\begin{align*}
	\begin{cases}
	u_{a}(t)\equiv e^{-t}\cdot r_{a}(e^t),\\
	w_{a}(t)\equiv e^{-t/2}\cdot \xi_{a}(e^t), 
	\end{cases}
	\forall a \in \mathcal{A}_0,\, t \in (-\infty,0].
	\end{align*}
	Then by the first line of (\ref{def:sde_singular}) and the definition of $p_a$ in (\ref{def:p_a}), we obtain the differential equation for $u_a$:
	\begin{align*}
	\d u_{a}(t) & = \d \big(e^{-t} r_{a}(e^t) \big) = e^{-t} \d {r_{a}(e^t) }-e^{-t} r_{a}(e^t)\,\d{t} \\
	& = \big(p_a(e^t u_\cdot(t), e^{t/2} w_\cdot(t) )-u_a(t)\big)\,\d{t} = \big(p_a(u_\cdot(t), w_\cdot(t) )-u_a(t)\big)\,\d{t}.
	\end{align*}
	Next we shall find the stochastic differential equation for $w_a$. For notational simplicity, we write $z\equiv e^t$. By It\^o's formula (cf. \cite[Chapter IV, Theorem 3.3, pp. 147]{revuz1999continuous}) applied to $X\equiv (\xi_a(z),z)$ and $F(x_1,x_2)=x_1/\sqrt{x_2}$, as for any $\epsilon>0$ and $z \in [\epsilon,1]$, 
	\begin{itemize}
		\item $F(\xi_a(z),z)- F(\xi_a(\epsilon),\epsilon)= \xi_a(z)/\sqrt{z}-\xi_a(\epsilon)/\sqrt{\epsilon}$,
		\item $\int_\epsilon^z \partial_1 F(\xi_a(u),u)\,\d{\xi_a(u)}= \int_\epsilon^z  u^{-1/2}\,\d{\xi_a(u)}=\int_\epsilon^z \sqrt{p_a(r_\cdot(u),\xi_\cdot(u))/u}\,\d{W_a(u)}$,
		\item $\int_\epsilon^z \partial_2 F(\xi_a(u),u)\,\d{u}= - \frac{1}{2}\int_\epsilon^z \xi_a(u)/u^{3/2}\,\d{u}$,
		\item $\int_\epsilon^z \partial_{11} F(\xi_a(u),u)\,\d{\langle\xi_a\rangle_u}=0$,
	\end{itemize}
	we have with $\epsilon=e^{t_\epsilon}$,
	\begin{align*}
	w_a(t)-w_a(t_\epsilon) &= \int_{t_\epsilon}^t {p_a^{1/2}(r_\cdot(e^s),\xi_\cdot(e^s))}\cdot e^{-s/2}\,\d{W_a(e^s)}-\frac{1}{2}\int_{t_\epsilon}^t \frac{\xi_a(e^s)}{e^{3s/2}}\,\d{e^s}\\
	& = \int_{t_\epsilon}^t {p_a^{1/2}(u_\cdot(s),w_\cdot(s))}\,\d{W_a'(s)}- \frac{1}{2}\int_{t_\epsilon}^t w_a(s)\,\d{s}.
	\end{align*}
	Here in the second identity, we have defined $W_a'(t)\equiv \int_0^t e^{-s/2}\,\d{W_a(e^s)}$ which is another standard, two-sided Brownian motion starting from 0, as $W_a'(0)=0$ and $\langle W_a'\rangle_t = \int_0^t e^{-s}\,\d{\langle W_a(e^\cdot)\rangle_s}=\int_0^t e^{-s}\,\d{e^s} = t$. This gives the desired stochastic differential equation for $w_a$. 
	
	\noindent (\textbf{Step 3}). Fix $T_0>0$. For any $t \in [0,T_0]$, let $(X(t))\equiv (u_a(-T_0+t),w_a(-T_0+t))_{a \in \mathcal{A}_0}$. Then by the definition of $(u_{a;T},w_{a;T})$ in (\ref{def:u_w}), any of its sequential limit must have the same marginal law, i.e., $\mathscr{L}(X(t_1))=\mathscr{L}(X(t_2))$ for any $t_1,t_2\in [0,T_0]$. We denote this law on $E_0$ as $\mu$, and we prove that $\mu$ is actually an invariant measure of the SDE (\ref{def:sde_time_change_renor}). Specifically, for $t \in [0,T_0]$, note that
	\begin{align*}
	(P_t^\ast \mu)(A) & = \int_{E_0} P_t(x,A)\,\mu(\d x) = \int_{E_0} \Prob\big(X(t) \in A|X(0)=x\big)\,\mu(\d x)\\
	& = \E \Prob\big(X(t) \in A|X(0)\big) = \Prob(X(t) \in A) \stackrel{(\ast)}{=} \mu(A). 
	\end{align*}
	Here in $(\ast)$ we used the fact that marginal laws remain $\mu$. As $T_0$ can be taken as arbitrary, it follows $P_t^\ast \mu = \mu$ for all $t\geq 0$. \qed

\subsection{Proof of Theorem \ref{thm:Doob}}\label{subsection:proof_Doob}

	We shall verify the uniqueness criteria presented in \cite[Corollary 7.8]{hairer2008ergodic}. In particular, due to the already assumed strong Feller property (\texttt{P}1), we only need to verify the existence of an \emph{accessible point} $x \in E_0$ in the sense of \cite[Definition 7.2]{hairer2008ergodic}: for any $y \in E_0$, any open neighborhood $U$ of $x$ and every $\lambda>0$,
	\begin{align}\label{ineq:doob_1}
	R_\lambda(y,U)\equiv \lambda \int_0^\infty e^{-\lambda t} P_t(y,U)\,\d{t}>0.
	\end{align}
	By the irreducibility (\texttt{P}2), there exists some $t_0\geq 0$ such that $P_{t_0}(y,U)>0$. By the continuity of $t \mapsto P_t(y,U)$ due to the continuity of $t \mapsto X(t)$, there exists a compact interval $t_0 \in I\subset [0,\infty)$ with $\mathrm{int}(I) \neq \emptyset$ such that $\inf_{t \in I} P_t(y,U)\equiv \delta_0>0$. This means that we may restrict the integral (\ref{ineq:doob_1}) on $I$ with $R_\lambda(y,U)\geq \lambda e^{-\lambda I_{\max}} \abs{I} \delta_0>0$, where $I_{\max}\equiv \sup_{z \in I} z$. This verifies (\ref{ineq:doob_1}), and therefore establishes the accessibility for (any) $x \in E_0$.  \qed

\subsection{Proof of Lemma \ref{lem:Ito_Stratonovich}}\label{subsection:proof_Ito_Stratonovich}

	Let $X(t)\equiv(u_a(t),w_a(t))_{a \in \mathcal{A}_0}$, $H_a(t)\equiv \sqrt{p_a(u_\cdot(t),w_\cdot(t))}$, and $F_a(u_\cdot,w_\cdot)\equiv \sqrt{p_a(u_\cdot,w_\cdot)}$. Then $H_a(t) = F_a(X(t))$. By It\^o's formula (cf. \cite[Chapter IV, Theorem 3.3, pp. 147]{revuz1999continuous}), 
	\begin{align*}
	\d{ F_a(X(t))} & = \sum_{b \in \mathcal{A}_0} \Big(\partial_{u_b} F_a(X(t))\,\d{u_b(t)}+ \partial_{w_b} F_a(X(t))\,\d{w_b(t)}\Big)\\
	&\qquad + \frac{1}{2} \sum_{b,b' \in \mathcal{A}_0} \sum_{\#,\& \in \{u,w\} }\partial_{\#_b \&_{b'}} F_a(X(t))\,\d{\iprod{\#_b}{\&_{b'}}_t}\equiv I_1+I_2.
	\end{align*}
	For $I_1$, as $\partial_{u_b} F_a =\partial_{u_b} p_a/(2\sqrt{p_a})$ and $\partial_{w_b} F_a =\partial_{w_b} p_a/(2\sqrt{p_a})$, 
	\begin{align*}
	I_1& = \big(\cdots\big)\,\d{t}+ \frac{1}{2}\sum_{b \in \mathcal{A}_0} \partial_{w_b}  p_a (u_\cdot(t),w_\cdot(t))\,\d{B_b(t)}.
	\end{align*}
	For $I_2$, note that $u_b$ has finite variation, and $\iprod{w_b}{w_{b'}}_\cdot=\delta_{bb'}\int_0^\cdot p_a (u_\cdot(t),w_\cdot(t))\,\d{t}$, 
	\begin{align*}
	I_2& = \frac{1}{2}\sum_{b \in \mathcal{A}_0} \partial_{w_b}^2 F_a(X(t)) \cdot  p_a (u_\cdot(t),w_\cdot(t))\,\d{t}=\big(\cdots\big)\,\d{t}.
	\end{align*}
	Combining the above displays, we arrive at
	\begin{align*}
	\d{H_a(t)} = \big(\cdots\big)\,\d{t}+ \frac{1}{2}\sum_{b \in \mathcal{A}_0} \partial_{w_b}  p_a (u_\cdot(t),w_\cdot(t))\,\d{B_b(t)},
	\end{align*}
	which therefore entails $
	\d \iprod{H_a}{B_a}_t  =\frac{1}{2}\partial_{w_a}  p_a (u_\cdot(t),w_\cdot(t))\,\d{t}$. The claim now follows by using the standard relation $
	H_a(t)\circ \d{B_a(t)} =  H_a(t)\, \d{B_a(t)}+ \frac{1}{2} \d \iprod{H_a}{B_a}_t$.\qed

\subsection{Proof of Proposition \ref{prop:sde_strong_feller}}\label{subsection:proof_sde_strong_feller}

For ease of notation in this proof, we write $\mathcal{A}_0\equiv \{1,\ldots,m\}$. Note that the open simplex $\Delta_{m}^\circ\equiv \Delta_{\mathcal{A}_0}^\circ$ (defined in (\ref{def:Delta_circ})) is diffeomorphic to $\blacktriangle_{m-1}^\circ\equiv \{u\in (0,1)^{m-1}: \pnorm{u}{1}<1\}$ by simply dropping the last coordinate, we may identify the state space $E_0$ as $E_0=\blacktriangle_{m-1}^\circ\times \R^m$, and $\{p_a\}_{a \in [m-1]}$ as functions defined on $\blacktriangle_{m-1}^\circ\times \R^m$ in that $p_a\big(u_{[m-1]},1-\sum_{a \in [m-1]} u_a, w_\cdot\big)$, and is $p_m$ identified as $1-\sum_{a \in [m-1]} p_a$. The semigroup $(P_t)$ is also identified accordingly.

As $\{p_a\}_{a \in [m-1]}$ may have blowing up derivatives near the boundary of $E_0$, we consider the localized space $E_0^{(n)}\subset E_0$ defined by 
\begin{align}\label{ineq:strong_feller_localized_space}
E_0^{(n)}\equiv \big\{u_\cdot \in [1/n,1]^{m-1}, \pnorm{u}{1}\leq 1-1/n\big\}\times [-n,n]^{m},\quad n=1,2,\ldots.
\end{align}
Let $\blacktriangle_{m-1}^{\circ,(n)}\equiv \big\{u_\cdot \in [1/n,1]^{m-1}, \pnorm{u}{1}\leq 1-1/n\big\}\subset \blacktriangle_{m-1}^\circ$. Let $\chi_{[u]}^{(n)}: \R^{m-1}\to \R^{m-1}$ and $\chi_{[w]}^{(n)}: \R\to \R$ be smooth maps such that (i) $\chi_{[u]}^{(n)}(\R^{m-1})\subset \blacktriangle_{m-1}^{\circ,(2n)}$, $\chi_{[u]}^{(n)}|_{\blacktriangle_{m-1}^{\circ,(n)}}=\mathrm{id}$, and (ii) $\chi_{[w]}^{(n)}(\R)\subset [-2n,2n]$, $\chi_{[w]}^{(n)}|_{[-n,n]}=\mathrm{id}$, and its first derivative is globally strictly positive on $\R$. Then let $\chi^{(n)}: \R^{2m-1}\to E_0$ be defined via
\begin{align*}
\chi^{(n)}(u_\cdot,w_\cdot)= \big(\chi^{(n)}_{[u]}(u_\cdot),\chi^{(n)}_{[w]}(w_1),\ldots,\chi^{(n)}_{[w]}(w_m)\big).
\end{align*}
Clearly, the image of $\chi^{(n)}$ satisfies $\chi^{(n)}(\R^{2m-1})\subset E_0^{(2n)}$ by definition.

Recall $b,\sigma$ defined in (\ref{def:sde_b_sigma}) and hereafter identified as functions on $E_0$. We then define their smoothed versions as
\begin{align}\label{ineq:strong_feller_b_sigma}
\begin{cases}
b^{(n)}(u_\cdot,w_\cdot)\equiv \big( (p_a(\chi^{(n)}(u_\cdot,w_\cdot))-u_a)_{a \in [m-1]},(-w_a/2)_{a \in [m]}\big),\\
\sigma^{(n)}(u_\cdot,w_\cdot)\equiv \mathrm{diag}\big(0_{m-1}, ({p_a^{1/2}(\chi^{(n)}(u_\cdot,w_\cdot))})_{a \in [m]}\big).
\end{cases}
\end{align}
As the derivatives of, $b^{(n)},\sigma^{(n)}$ have bounded derivatives of all orders (the constants of which may depend on $n$), and therefore the SDE in $\R^{2m-1}$
\begin{align}\label{ineq:strong_feller_sde}
\d{X^{(n)}(t)} = b^{(n)}(X^{(n)}(t))\,\d{t}+\sigma^{(n)}(X^{(n)}(t))\,\d{B(t)},\quad \forall t\in \R
\end{align}
admits a unique strong solution $X^{(n)}(t)=\big(u_\cdot^{(n)}(t), w_\cdot^{(n)}(t)\big)\in E_0$ on $[0,\infty)$ with initial condition $X^{(n)}(0)=(u_\cdot(0),w_\cdot(0)) \in E_0$. We use $X(t)=(u_\cdot(t),w_\cdot(t))$ to denote the unique strong solution to the SDE (\ref{def:sde_time_change_renor}) (identified as diffusion in $\R^{2m-1}$ as above) with the same initial condition and the same Brownian motion $B$.

\noindent (\textbf{Step 1}). Let $(P_t^{(n)})$ be the semigroup associated with $(X^{(n)}(t))$ in (\ref{ineq:strong_feller_sde}). In this step, we shall establish the strong Feller property of the semigroup $(P_t^{(n)})$ for every localization level $n$ and at any time $t$.

The Stratonovich form of the SDE (\ref{ineq:strong_feller_sde}) is given by 
\begin{align}\label{ineq:strong_feller_sde_Stratonovich}
\begin{cases}
\d{u_j^{(n)}} = \big\{p_j^{(n)}(u^{(n)}_\cdot,w^{(n)}_\cdot)- u_j^{(n)}\big\}\,\d{t},\\
\d{w_k}^{(n)} = \big\{-\frac{1}{2} w_k^{(n)} -\frac{1}{4}\partial_{w_k} p_k^{(n)}\big(u^{(n)}_\cdot,w^{(n)}_\cdot\big)\big\}\,\d{t}\\
\qquad \qquad\qquad +{p_k^{(n),1/2}\big(u^{(n)}_\cdot,w^{(n)}_\cdot\big)}\circ \d{B_{m-1+k}},
\end{cases}
\forall j \in [m-1], k \in [m].
\end{align}
In the notation of Definition \ref{def:hormander_cond}, by identifying the smooth vector fields $V_{0}^{(n)}:\R^{2m-1}\to \R^{2m-1}$ with elements in $C^\infty(\R^{2m-1}, T \R^{2m-1})$, for any $x=(u_\cdot,w_\cdot) \in \R^{2m-1}$,
\begin{align}\label{ineq:strong_feller_sde_step1_V}
V_{0}^{(n)}(x)&\equiv \sum_{j \in [m-1]} \big\{p_j^{(n)}\big(u_\cdot,w_\cdot\big)-u_j\big\}\, \partial_{u_j}+ \sum_{k \in [m]} \bigg\{-\frac{1}{2} w_k -\frac{1}{4}\partial_{w_k} p_k^{(n)}(u_\cdot,w_\cdot)\bigg\}\, \partial_{w_k},\nonumber\\
V_{k}^{(n)}(x)&\equiv p_k^{(n),1/2}(u_\cdot,w_\cdot)\, \partial_{w_k},\quad \forall k \in [m].
\end{align}
In order to verify the parabolic H\"ormander's condition (\ref{cond:hormander}), it suffices to prove
\begin{align}\label{ineq:strong_feller_sde_step1_hormander}
\mathrm{span}\bigg\{\big\{V_{k}^{(n)}(x)\big\}_{k\in [m]}\bigcup \big\{\big[V_{k}^{(n)},V_{0}^{(n)}\big](x)\big\}_{k\in [m]}\bigg\} = \R^{2m-1},\quad \forall x \in \R^{2m-1}.
\end{align}
In view of the form of $\{V_{k}^{(n)}(x)\}$ in (\ref{ineq:strong_feller_sde_step1_V}), the claim (\ref{ineq:strong_feller_sde_step1_hormander}) follows provided that 
\begin{align}\label{ineq:strong_feller_sde_step1_hormander_1}
\mathrm{span}\Big\{\big\{\big[V_{k}^{(n)},V_{0}^{(n)}\big]^{[u]}(x)\big\}_{k\in [m]}\Big\} = \mathrm{span}\big\{\partial_{u_1},\ldots, \partial_{u_{m-1}}\},\quad \forall x \in \R^{2m-1}.
\end{align}
Here for a vector field $U\in C^\infty(\R^{2m-1}, T \R^{2m-1})$, we write  $U=\sum_{j \in [m-1]} U_j^{[u]}\partial_{u_j}+\sum_{k \in [m]} U_k^{[w]} \partial_{w_k}\equiv U^{[u]}+U^{[w]}$.

\noindent \emph{Claim 1}. The Lie bracket  $\big[V_{k}^{(n)},V_{0}^{(n)}\big]^{[u]}$ is given by 
\begin{align}\label{ineq:strong_feller_sde_step1_Lie_bracket}
\big[V_{k}^{(n)},V_{0}^{(n)}\big]^{[u]}(x)=  \sum_{j \in [m-1]} \big(p_k^{(n),1/2} \partial_{w_k} p_j^{(n)}\big)(u_\cdot,w_\cdot)\, \partial_{u_j}.
\end{align}
The proof will be deferred towards the end. To compute the span of all these Lie brackets, it suffices to consider the matrix
\begin{align*}
\mathsf{M}^{(n)}(x)\equiv \Big(\big(p_k^{(n),1/2} \partial_{w_k} p_j^{(n)}\big)(u_\cdot,w_\cdot)\Big)_{j \in [m-1],k\in [m]} \in \R^{(m-1)\times m}.
\end{align*} 
\noindent \emph{Claim 2}. For $x \in \R^{2m-1}$ and $\mathsf{M}^{(n)}(x)$ defined above, we have
\begin{align}\label{ineq:strong_feller_sde_step1_M_rank}
\mathrm{rank}\big(\mathsf{M}^{(n)}(x)\big)= m-1.
\end{align}
The proof will  again be deferred towards the end.  

Combining Claims 1 and 2 above proves (\ref{ineq:strong_feller_sde_step1_hormander_1}), and thereby (\ref{ineq:strong_feller_sde_step1_hormander}). Consequently, we may apply H\"olmander's theorem in the form of Theorem \ref{thm:hormander} to conclude the strong Feller property of $(P_t^{(n)})$.

\noindent (\textbf{Step 2}). In this step, we prove that for any $t>0$, any compact $\mathcal{X} \subset E_0$ and any test function $f \in B(E_0)$,
\begin{align}\label{ineq:strong_feller_sde_step2}
\lim_{n \to \infty}\sup_{x \in \mathcal{X}} \abs{P_t f(x)-P_t^{(n)}f(x)} = 0.
\end{align}
Let the stopping times $\{\tau^{(n)}\}$ be defined by
\begin{align*}
\tau^{(n)}\equiv \inf\{t\geq 0: X(t) \notin E_0^{(n)}\},\quad \forall n=1,2,\ldots,
\end{align*}
with the convention that $\inf \emptyset = \infty$. As $(X(t))$ and $(X^{(n)}(t))$ share the same initial condition $x\equiv X(0)=X^{(n)}(0)=(u_{\cdot}(0),w_{\cdot}(0)) \in E_0$, by taking $n_0$ large enough such that $x \in \mathcal{X}\subset \mathrm{int}(E_0^{(n)})$ for $n\geq n_0$, w.p.1 we have $\tau^{(n)}>0$ and  $X(t)=X^{(n)}(t)$ for all $t \in [0,\tau^{(n)})$. Then for any $x \in \mathcal{X}$, 
\begin{align*}
\abs{P_t f(x)-P_t^{(n)}f(x)}&=\bigabs{\E_x \big(f(X(t))-f(X^{(n)}(t))\big)(\bm{1}_{\tau^{(n)}\leq t}+\bm{1}_{\tau^{(n)}> t})}\\
& \leq 2\pnorm{f}{\infty}\cdot \Prob_x(\tau^{(n)}\leq t).
\end{align*}
So in order to prove (\ref{ineq:strong_feller_sde_step2}), it suffices to prove
\begin{align}\label{ineq:strong_feller_sde_step2_tau}
\lim_{n \to \infty}\sup_{x \in \mathcal{X}}\Prob_{x} (\tau^{(n)}\leq t)=0.
\end{align}
To this end, using exactly the same argument as in the estimate (\ref{ineq:sde_strong_sol_u_lb}) and the natural identification for $u_m(\cdot), u_m^{(n)}(\cdot)$, with $\delta_0\equiv \min_{a \in [m]} u_a(0) e^{-t}$, w.p.1 we have
\begin{align}\label{ineq:strong_feller_sde_step2_lb}
\min_{a \in [m]}\inf_{s \in [0,t]} u_a^{(n)}(s)\wedge u_a(s)\geq \delta_0,\quad \forall n=1,2,\ldots.
\end{align}
This means that for $n$ large enough, $u,u^{(n)}\in \blacktriangle_{m-1}^{\circ,(n)}$ and therefore
\begin{align*}
\sup_{x \in \mathcal{X}}\Prob_{x} (\tau^{(n)}\leq t)\leq  \sup_{x \in \mathcal{X}}\Prob_x\Big(\max_{a \in [m]}\sup_{s \in [0,t]} \abs{w_a(s)}>n\Big).
\end{align*}
Now following exactly the same proof of (\ref{ineq:sde_strong_sol_tau_infty}) (where the suprema over the initial condition $x \in \mathcal{X}$ amounts to a uniform upper bound on $\max_{a \in [m]}\abs{w_a(0)}$) shows that the RHS of the above display vanishes as $n \to \infty$. This proves (\ref{ineq:strong_feller_sde_step2_tau}), and therefore the claim (\ref{ineq:strong_feller_sde_step2}).

\noindent (\textbf{Step 3}). In this step, we shall combine the claims in Step 1 and Step 2 to conclude. 

To this end, we fix $t>0$ and take a sequence $\{x_n\}\subset E_0$ such that $x_n \to x \in E_0$. Let us fix a compact neighbor $\mathcal{X}\subset E_0$ of $x$, so there exists some $n_0$ such that $x_n \in \mathcal{X}$ for $n\geq n_0$. Then for any $f \in B(E_0)$, and $N\in \N$, if $n\geq n_0$,
\begin{align*}
\abs{P_t f(x_n)-P_t f(x)}&\leq \abs{P_t f(x_n)-P_t^{(N)} f(x_n)}+\abs{P_t^{(N)} f(x_n)-P_t^{(N)} f(x)}\\
&\qquad + \abs{P_t^{(N)} f(x)-P_t f(x)}\\
&\leq 2\sup_{x \in \mathcal{X}} \abs{P_t^{(N)} f(x)-P_t f(x)}+\abs{P_t^{(N)} f(x_n)-P_t^{(N)} f(x)}.
\end{align*}
Now first letting $n\to \infty$ and using the strong Feller property of $P_t^{(N)}$ proved in Step 1, the second term of the above display vanishes. Then letting $N \to \infty$ and using (\ref{ineq:strong_feller_sde_step2}) proved in Step 2 to conclude that $\lim_n \abs{P_t f(x_n)-P_t f(x)}=0$. \qed

\begin{proof}[Proof of (\ref{ineq:strong_feller_sde_step1_Lie_bracket}) in Claim 1]
For the test function $f_j(x)=f_j(u_\cdot,w_\cdot)=u_j$, using (\ref{ineq:strong_feller_sde_step1_V}) with natural identification, for any $x = (u_\cdot,w_\cdot) \in \R^{2m-1}$, we have 
\begin{align*}
V_{0}^{(n)}(f_j)(x) = p_j^{(n)}\big(u_\cdot,w_\cdot\big)-u_j,\quad V_{k}^{(n)}(f_j)(x) = 0.
\end{align*}
This means, 
\begin{align*}
V_{k}^{(n)}\big(V_{0}^{(n)}(f_j)\big)(x)& = p_k^{(n),1/2} \partial_{w_k}\big(V_{0}^{(n)}(f_j)\big)(x) = \big(p_k^{(n),1/2} \partial_{w_k} p_j^{(n)}\big)(u_\cdot,w_\cdot),\\
V_{0}^{(n)}\big(V_{k}^{(n)}(f_j)\big)(x)& = \sum_{\alpha \in [m-1]} (*)_\alpha \partial_{u_\alpha} \big(V_{k}^{(n)}(f_j)\big)(x)+ \sum_{\beta \in [m]} (*)_\beta\, \partial_{w_\beta} \big(V_{k}^{(n)}(f_j)\big)(x)=0.
\end{align*}
By definition of the Lie bracket (cf. Definition \ref{def:vector_field_Lie}), we have
\begin{align*}
\big[V_{k}^{(n)},V_{0}^{(n)}\big](f_j)(x) & = V_{k}^{(n)}\big(V_{0}^{(n)}(f_j)\big)(x)-V_{0}^{(n)}\big(V_{k}^{(n)}(f_j)\big)(x)\\
& = \big(p_k^{(n),1/2} \partial_{w_k} p_j^{(n)}\big)(u_\cdot,w_\cdot).
\end{align*}
Using the identification (\ref{eqn:Lie_bracket_iden}), we have proven (\ref{ineq:strong_feller_sde_step1_Lie_bracket}) in Claim 1.
\end{proof}

We need the following for the proof of (\ref{ineq:strong_feller_sde_step1_M_rank}) in Claim 2.

\begin{lemma}\label{lem:M_ker}
	Suppose a matrix $M \in \R^{m\times m}$ satisfies the following properties:
	\begin{enumerate}
		\item $M_{jk}<0$ for $k\neq j$.
		\item $M_{jk}>0$ for $k=j$.
		\item $\sum_{k \in [m]} M_{jk} =0 $ for all $j \in [m]$. 
	\end{enumerate}
	Then $\mathrm{Ker}(M)= \mathrm{span}(\bm{1})$.
\end{lemma}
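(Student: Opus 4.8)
The plan is to prove the two inclusions $\mathrm{span}(\bm{1})\subseteq \mathrm{Ker}(M)$ and $\mathrm{Ker}(M)\subseteq \mathrm{span}(\bm{1})$ separately. The first is immediate: property (3) states precisely that every row of $M$ sums to zero, i.e.\ $M\bm{1}=0$, so $\bm{1}\in\mathrm{Ker}(M)$. All the content lies in the reverse inclusion, which I would establish by a discrete maximum-principle argument adapted to the sign pattern of $M$.

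For the reverse inclusion, fix an arbitrary $x\in\mathrm{Ker}(M)$ and choose an index $i$ with $x_i=\max_{k\in[m]}x_k$. Reading off the $i$-th coordinate of $Mx=0$ gives $M_{ii}x_i=-\sum_{k\neq i}M_{ik}x_k=\sum_{k\neq i}(-M_{ik})x_k$, while property (3), in the form $M_{ii}=-\sum_{k\neq i}M_{ik}=\sum_{k\neq i}(-M_{ik})$, gives $M_{ii}x_i=\sum_{k\neq i}(-M_{ik})x_i$. Subtracting these two identities yields
\begin{align*}
0=\sum_{k\neq i}(-M_{ik})\,(x_k-x_i).
\end{align*}
By property (1) each coefficient $-M_{ik}$ is \emph{strictly} positive, and by the choice of $i$ each factor $x_k-x_i$ is nonpositive; hence the right-hand side is a sum of nonpositive terms and vanishes, so every term must vanish, which forces $x_k=x_i$ for all $k\neq i$. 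Therefore $x=x_i\bm{1}\in\mathrm{span}(\bm{1})$, completing the argument.

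I do not anticipate a real obstacle here; the one point to watch is that the argument genuinely uses the \emph{strictness} in (1) (not merely $M_{ik}\le 0$), since this strictness — an ``irreducibility'' of the associated weighted graph — is exactly what rules out a higher-dimensional kernel. Note in passing that hypothesis (2) is not needed as a separate assumption: (1) and (3) already force $M_{ii}=\sum_{k\neq i}(-M_{ik})>0$ whenever $m\ge 2$, and for $m=1$ hypotheses (2) and (3) are mutually inconsistent, so the statement is vacuous. When this lemma is invoked in the proof of Claim 2, $M$ will be taken as an $m\times m$ matrix built from the derivative matrix $\big(\partial_{w_a}p_b\big)_{a,b\in\mathcal{A}_0}$ — whose rows sum to zero because $\sum_{b\in\mathcal{A}_0}p_b\equiv 1$ — so that $\dim\mathrm{Ker}(M)=1$ translates, via rank-nullity, into the rank-deficiency statement $\mathrm{rank}\big(\mathsf{M}^{(n)}(x)\big)=m-1$.
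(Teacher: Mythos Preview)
Your proof is correct and matches the paper's argument essentially line for line: both take $x\in\mathrm{Ker}(M)$, pick an index where $x$ attains its maximum, and use the row-sum condition (3) together with the strict negativity (1) to force all coordinates equal. Your remark that hypothesis (2) is redundant (being implied by (1) and (3) for $m\ge 2$) is a valid extra observation not made in the paper.
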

\begin{proof}
	Property (3) entails that $\mathrm{span}(\bm{1})\subset \mathrm{Ker}(M)$, so we only prove the converse direction. Let $x \in \R^m$ be such that $M x =0$. Let $j^\ast$ be such that $x_{j^\ast}=\max_{i \in [m]} x_i$. Then using $M_{j^\ast j^\ast} = -\sum_{k\neq j^\ast} M_{j^\ast k}$ by property (3),
	\begin{align*}
	0 = (Mx)_{j^\ast} = \sum_{k \in [m]} M_{j^\ast k} x_k = \sum_{k\neq j^\ast} M_{j^\ast k} x_k + M_{j^\ast j^\ast} x_{j^\ast} = \sum_{k\neq j^\ast} M_{j^\ast k}(x_k-x_{j^\ast}). 
	\end{align*}
	By (1), $M_{j^\ast k}<0$ for $k\neq j^\ast$. Moreover, $x_k\leq x_{j^\ast}$. The above equation then yields that $M_{j^\ast k}(x_k-x_{j^\ast})=0\Leftrightarrow x_k=x_j^\ast$ for $k\neq j^\ast$. In other words, $x=c\bm{1}$ for some $c \in \R$, and therefore $\mathrm{Ker}(M)\subset \mathrm{span}(\bm{1})$. 
\end{proof}

\begin{proof}[Proof of (\ref{ineq:strong_feller_sde_step1_M_rank}) in Claim 2]
As $p_k^{(n)}(x)>0$ for any $x \in \R^{2m-1}$, it suffices to prove that for any $(u_\cdot,w_\cdot) \in E_0$, the full Jacobian matrix satisfies
\begin{align}\label{ineq:strong_feller_sde_step1_J_rank}
\mathrm{rank}\Big(\big(\partial_{w_k} p_j^{(n)}(u_\cdot,w_\cdot)\big)_{j,k \in [m]}\Big) = m-1.
\end{align}
Consider the change of coordinates $(u_\cdot,z_\cdot)\to (u_\cdot,w_\cdot/u_\cdot)$ (where $/$ denotes coordinate-wise division). Let
\begin{align}\label{ineq:strong_feller_sde_step1_pbar}
\bar{p}_j(u_\cdot,z_\cdot)\equiv \E_{\mathsf{Z}} \prod_{k\neq j} \Phi\,\bigg[\sqrt{u_k}\cdot \bigg(z_j-z_k+\frac{\mathsf{Z}}{\sqrt{u_j}}\bigg) \bigg].
\end{align}
Using the derivative relation $
\partial_{w_k} p_j(u_\cdot,w_\cdot) = u_k^{-1} \partial_{z_k} \bar{p}_j(u_\cdot,w_\cdot/u_\cdot)$, we have
\begin{align}\label{ineq:strong_feller_sde_step1_pbar_2}
&\partial_{w_k} p_j^{(n)}(u_\cdot,w_\cdot) = \partial_{w_k} p_j\big(\chi^{(n)} (u_\cdot,w_\cdot)\big) \nonumber\\
&= \frac{(\chi_{[w]}^{(n)})'(w_k) }{\chi_{[u];k}^{(n)}(u_k)}\cdot  \partial_{z_k}\bar{p}_j \bigg[ (\chi_{[u];\alpha}^{(n)}(u_\alpha))_{\alpha \in [m]}, \bigg(\frac{\chi_{[w]}^{(n)}(w_\beta)}{\chi_{[u];\beta}^{(n)}(u_\beta)}\bigg)_{\beta \in [m]}  \bigg].
\end{align}
As $(\chi_{[w]}^{(n)})'(w_k)> 0$ for all $k \in [m]$, in order to prove (\ref{ineq:strong_feller_sde_step1_J_rank}), it suffices to prove that for any permissible $(u_\cdot,z_\cdot)$,
\begin{align}\label{ineq:strong_feller_sde_step1_Jbar_rank}
\mathrm{rank}\Big(\overline{\mathsf{J}}(u_\cdot,z_\cdot)\equiv \big(\partial_{z_k} \bar{p}_j(u_\cdot,z_\cdot)\big)_{j,k \in [m]}\Big) = m-1.
\end{align}
To this end, we note some properties of $\bar{p}_j$:
\begin{enumerate}
	\item $\partial_{z_k} \bar{p}_j(u_\cdot,z_\cdot)<0$ for $k\neq j$.
	\item $\partial_{z_k} \bar{p}_j(u_\cdot,z_\cdot)>0$ for $k=j$.
	\item $\sum_{k \in [m]} \partial_{z_k} \bar{p}_j(u_\cdot,z_\cdot) = 0$ for all $j \in [m]$.
\end{enumerate}
Indeed, properties (1)-(2) follow directly from differentiating (\ref{ineq:strong_feller_sde_step1_pbar}), and property (3) follows from differentiating the identity $\bar{p}_j(u_\cdot,z_\cdot+c\bm{1})=\bar{p}_j(u_\cdot,z_\cdot)$ with respect to $c \in \R$. Now applying Lemma \ref{lem:M_ker} shows that $\mathrm{Ker}\big(\overline{\mathsf{J}}(u_\cdot,z_\cdot)\big)=\mathrm{span}(\bm{1})$, which therefore proves (\ref{ineq:strong_feller_sde_step1_Jbar_rank}), and thereby the desired claim (\ref{ineq:strong_feller_sde_step1_J_rank}). This completes the proof of (\ref{ineq:strong_feller_sde_step1_M_rank}) in Claim 2.
\end{proof}

\subsection{Proof of Proposition \ref{prop:sde_irreducible}}\label{subsection:proof_sde_irreducible}

We shall use the same notation and the identification as in the proof of Proposition \ref{prop:sde_strong_feller}, except that  $E_0^{(n)}$ is now given as
\begin{align}\label{ineq:sde_irreducible_E_0}
E_0^{(n)}\equiv \blacktriangle_{m-1}^{\circ,(n)} \times [-\kappa_n,\kappa_n]^{m},\quad \forall n=1,2,\ldots.
\end{align}
Here $\kappa_n\geq n$ will be determined later, and the smoothing function $\chi_{[w]}^{(n)}:\R \to \R$ is defined such that $\chi_{[w]}^{(n)}(\R)\subset [-2\kappa_n,2\kappa_n]$, $\chi_{[w]}^{(n)}|_{[-\kappa_n,\kappa_n]}=\mathrm{id}$. Note that all proofs in Proposition \ref{prop:sde_strong_feller} will go through with minor modifications with this slight change.

We fix a tolerance level $\epsilon \in (0,1/2)$ to be chosen later, and the constant $\kappa_n$ in (\ref{ineq:sde_irreducible_E_0}) will be chosen as
\begin{align}\label{ineq:sde_irreducible_kappa}
\kappa_n\equiv 1+\bigg(\frac{2mn}{\epsilon}\bigg)^{1/2}\vee n.
\end{align}

With this choice of $\kappa_n$, we claim that with $L_{m,n,\epsilon}\equiv\big(2mn/\epsilon\big)^{1/2}$, the vectors $\big\{\bar{w}_{(k),\cdot}^{(n)}\equiv L_{m,n,\epsilon}(1-2\cdot \bm{1}_{\cdot\neq k}): k \in [m]\big\}\subset \R^m$ satisfy
\begin{align}\label{ineq:sde_irreducible_step1_wbar}
\sup_{u \in \blacktriangle_{m-1}^{\circ,(n)} }\max_{k \in [m]} \max\Big\{1-p_k^{(n)}(u,\bar{w}_{(k)}^{(n)}),\max_{j\neq k} p_j^{(n)}(u,\bar{w}_{(k)}^{(n)})\Big\}\leq \epsilon.
\end{align}
To prove (\ref{ineq:sde_irreducible_step1_wbar}), with $\{Z_j: j \in [m]\}$ denoting i.i.d. variables with law $\mathsf{Z}$,
\begin{align*}
&1-p_k(u,\bar{w}_{(k)}^{(n)}) = 1-\Prob\bigg(\frac{\bar{w}_{(k),k}^{(n)}}{u_k}+\frac{Z_k}{\sqrt{u_k}}\geq \frac{\bar{w}_{(k),j}^{(n)}}{u_j}+\frac{Z_j}{\sqrt{u_j}},\quad \forall j\neq k\bigg)\\
&\leq \sum_{j\neq k} \Prob\bigg[\frac{Z_k}{\sqrt{u_k}}-\frac{Z_j}{\sqrt{u_j}}<-L_{m,n,\epsilon} \bigg(\frac{1}{u_k}+\frac{1}{u_j}\bigg) \bigg]\leq \frac{1}{L_{m,n,\epsilon}^2}\cdot \sum_{j\neq k} \frac{1}{(u_k^{-1}+u_j^{-1})},
\end{align*}
and for $j\neq k$,
\begin{align*}
&p_j(u,\bar{w}_{(k)}^{(n)})\leq \Prob\bigg( \frac{\bar{w}_{(k),j}^{(n)}}{u_j}+\frac{Z_j}{\sqrt{u_j}} \geq \frac{\bar{w}_{(k),k}^{(n)}}{u_k}+\frac{Z_k}{\sqrt{u_k}}\bigg)\\
&= \Prob\bigg[\frac{Z_j}{\sqrt{u_j}}-\frac{Z_k}{\sqrt{u_k}} \geq L_{m,n,\epsilon} \bigg(\frac{1}{u_k}+\frac{1}{u_j}\bigg)\bigg]\leq \frac{1}{L_{m,n,\epsilon}^2}\cdot \frac{1}{(u_k^{-1}+u_j^{-1})}.
\end{align*}
This means that the right hand side of the above two displays can be further bounded by $2mn/L_{m,n,\epsilon}^2=\epsilon$.  By the choice of $\kappa_n$ in (\ref{ineq:sde_irreducible_kappa}), we may ensure $\bar{w}_{(k)}^{(n)} \in [-\kappa_n,\kappa_n]^m$ and therefore $p_\cdot(u,\bar{w}_{(k)}^{(n)})\equiv p_\cdot^{(n)}(u,\bar{w}_{(k)}^{(n)})$ for all $u \in \blacktriangle_{m-1}^{\circ,(n)}$. This proves (\ref{ineq:sde_irreducible_step1_wbar}).

\noindent (\textbf{Step 1}). Fix a tolerance level $\epsilon>0$ and a point $x_0=(u_{x_0,\cdot},w_{x_0,\cdot}) \in O$. In this step, we shall construct, for every $n$ large enough, a square integrable function $h^{(n)}\equiv (h_1^{(n)},\ldots,h_m^{(n)})$ such that the solution $(u_\cdot^{(n)}(\cdot),w_\cdot^{(n)}(\cdot))$ to the ODE 
\begin{align}\label{ineq:sde_irreducible_ode}
\begin{cases}
\d{u_j^{(n)}} = \big\{p_j^{(n)}(u^{(n)}_\cdot,w^{(n)}_\cdot)- u^{(n)}_j\big\}\,\d{t},\\
\d{w_k}^{(n)} = \big\{-\frac{1}{2} w_k^{(n)} -\frac{1}{4}\partial_{w_k} p_k^{(n)}\big(u^{(n)}_\cdot,w^{(n)}_\cdot\big)\big\}\,\d{t}\\
\qquad \qquad\qquad +{p_k^{(n),1/2}\big(u^{(n)}_\cdot,w^{(n)}_\cdot\big)} h_k^{(n)} \, \d{t},
\end{cases}
\forall j \in [m-1], k \in [m],
\end{align}
with initial condition $(u_\cdot^{(n)}(0),w_\cdot^{(n)}(0))=x=(u_{x,\cdot},w_{x,\cdot})$, satisfies the following: there exists some $t_0>0$ such that $(u_\cdot^{(n)}(t),w_\cdot^{(n)}(t)) \in E_0^{(\floor{n/2})}$ for all $t \in [0,t_0]$, and
\begin{align}\label{ineq:sde_irreducible_step1}
\pnorm{(u_\cdot^{(n)}(t_0),w_\cdot^{(n)}(t_0))-x_0}{}\leq 4m\epsilon.
\end{align}
Our construction of $h^{(n)}$ will be based on two phases.

\noindent (\emph{Phase I}). For a tolerance level $\epsilon>0$, we claim that there exist $t_\ast>0$ and a sequence $0=t_0<t_1<\cdots<t_{m-1}=t_m=t_\ast$ which depend on $t_\ast,u_{x_0}$, such that with the piecewise constant function $q^\ast(t)\equiv \sum_{k \in [m]} e_k \bm{1}_{t \in [t_{k-1},t_k)} \in \R^m$ on $[0,t_\ast)$ and $q^\ast(t_\ast) \equiv e_m \in \R^m$, the solution $u^\ast(\cdot)\in \R^m$ to the ODE
\begin{align}\label{ineq:sde_irreducible_step1_ODE}
\d{u^\ast(t)} = q^\ast(t)-u^\ast(t),\quad u^\ast(0)=\binom{u_x}{1-\sum_{j \in [m-1]} u_{x,j}} \in \R^m
\end{align} 
satisfies
\begin{align}\label{ineq:sde_irreducible_step1_err}
\pnorm{u_{[m-1]}^\ast(t_\ast)-u_{x_0}}{1}\leq \epsilon. 
\end{align}
The proof of this claim will be deferred towards the end. 

Recall $\bar{w}_{(k)}^{(n)}\in \R^m$ defined in (\ref{ineq:sde_irreducible_step1_wbar}). Let us now define $\tilde{w}^{\ast,(n)}\in \R^m$ on $[0,t_\ast]$ by
\begin{align*}
\tilde{w}^{\ast,(n)}(t)\equiv \sum_{k \in [m]}\bar{w}_{(k)}^{(n)} \bm{1}_{q^\ast(t)=e_k} = \sum_{k \in [m]}\bar{w}_{(k)}^{(n)} \bm{1}_{t \in [t_{k-1},t_k)} \in \R^m,\quad \forall t \in [0,t_\ast),
\end{align*}
and $\tilde{w}^{\ast,(n)}(t_\ast)\equiv \bar{w}_{(m)}^{(n)}$. In other words, $\tilde{w}_k^{\ast,(n)}(t)= L_{m,n,\epsilon}(2\cdot \bm{1}_{t \in [t_{k-1},t_k)}-1)$ for all $k \in [m]$ and $t \in [0,t_\ast)$, with the end point $\tilde{w}_k^{\ast,(n)}(t_\ast)$ extended by continuity.

For $\eta>0$, let $\chi_\eta: \R \to [0,1]$ be a $C^\infty$, smoothed function of $\bm{1}_{[-1,1]}$ such that $\chi|_{[-1,1]}=\mathrm{id}$ and $\mathrm{supp}(\chi)=[-1-2\eta,1+2\eta]$.  Now with $\Delta t_k\equiv t_k-t_{k-1}$ and $\eta>0$ being a small enough constant to be specified later, let
\begin{align*}
w_{\eta;k}^{\ast,(n)}(t)&\equiv L_{m,n,\epsilon}\cdot\bigg[2\cdot \chi_\eta\bigg(\frac{2t-(t_{k-1}+t_k)}{\Delta t_k}\bigg)-1\bigg],\quad \forall k \in [m],\, t \in [0,t_\ast].
\end{align*}
Then we may estimate: for all $k \in [m]$ and $t \in [0,t_\ast]$,
\begin{align}\label{ineq:sde_irreducible_step1_w_err}
\abs{\tilde{w}_k^{\ast,(n)}(t)-w_{\eta;k}^{\ast,(n)}(t)}\leq  2L_{m,n,\epsilon}\cdot \big(\bm{1}_{t\in [t_{k-1}-\eta \Delta t_k,t_{k-1}]}+\bm{1}_{t\in [t_k,t_k+\eta \Delta t_k]}\big).
\end{align}
Let $u_\eta^{\ast,(n)}\in \R^{m-1}$ be the solution to the first equation of (\ref{ineq:sde_irreducible_ode}) with $w^{(n)}\equiv w_\eta^{\ast,(n)}$ and the initial condition specified therein, whose existence and uniqueness is guaranteed by the classical Picard-Lindel\"orf theorem for ODE. Then with $\Delta^{(n)} u_\eta^{\ast}\equiv u_\eta^{\ast,(n)}-u^\ast_{[m-1]} \in \R^{m-1}$, by comparing (\ref{ineq:sde_irreducible_ode}) and (\ref{ineq:sde_irreducible_step1_ODE}), 
\begin{align*}
\d \big(\Delta^{(n)} u_\eta^{\ast}\big)_j & = \big\{\big(p_j^{(n)}(u^{\ast,(n)}_{\eta;\cdot},w^{\ast,(n)}_{\eta;\cdot})-q^\ast_j \big)- \big(\Delta^{(n)}  u_\eta^\ast\big)_j \big\}\,\d{t},\quad \forall t \in [0,t_\ast],\, j \in [m-1].
\end{align*}
Solving this ODE, with $G_{\eta;j}^{\ast,(n)}\equiv p_j^{(n)}(u^{\ast,(n)}_{\eta;\cdot},w^{\ast,(n)}_{\eta;\cdot})-q^\ast_j $, we have
\begin{align*}
\big(\Delta^{(n)} u_\eta^{\ast}\big)_j(t)= e^{-t} \big(\Delta^{(n)} u_\eta^{\ast}\big)_j(0)+\int_0^t e^{-(t-s)}G_{\eta;j}^{\ast,(n)}(s)\,\d{s},\quad \forall t \in [0,t_\ast],\, j \in [m-1].
\end{align*}
Using $\Delta^{(n)} u_\eta^{\ast}=0$, we then have for any $t \in [0,t_\ast]$,
\begin{align*}
&\pnorm{\big(\Delta^{(n)} u_\eta^{\ast}\big)(t)}{1}\leq \int_0^t e^{-(t-s)}\pnorm{G_\eta^{\ast,(n)}(s)}{1}\,\d{s}\nonumber\\
&\leq \sup_{s \in [0,t]} \pnorm{p_\cdot^{(n)}(u^{\ast,(n)}_{\eta;\cdot},\tilde{w}^{\ast,(n)}_\cdot)-q^\ast}{1}+c_{m,n}\cdot\max_{k \in [m]}\int_0^t \abs{\tilde{w}_k^{\ast,(n)}(t)-w_{\eta;k}^{\ast,(n)}(t)}\,\d{t}\nonumber\\
&\equiv I_{\eta;1}(t)+I_{\eta;2}(t).
\end{align*}
We handle $I_{\eta;1}(t),I_{\eta;2}(t)$ as follows:
\begin{itemize}
	\item Using the same uniform lower estimate as in (\ref{ineq:strong_feller_sde_step2_lb}), there exists some $n_\ast=n_\ast(\epsilon,x)$ such that for all $n\geq n_\ast$, $u_\eta^{\ast,(n)}(t)\in \blacktriangle_{m-1}^{\circ,(n)}$ for all $\eta>0$ and $t \in [0,t_\ast]$, and therefore (\ref{ineq:sde_irreducible_step1_wbar}) entails that $\sup_{\eta>0}\sup_{t \in [0,t_\ast]}I_{\eta;1}(t)\leq (m-1)\epsilon$ whenever $n\geq n_\ast$.
	\item Using (\ref{ineq:sde_irreducible_step1_w_err}), we have $\sup_{t \in [0,t_\ast]} I_{\eta;2}(t)\leq c_{m,n,\epsilon}'\cdot \eta$. 
\end{itemize}
By choosing $\eta=\eta_\ast\equiv \epsilon/c_{m,n,\epsilon}'$, for $n\geq n_\ast$, we have $\sup_{t \in [0,t_\ast]}\pnorm{\big(\Delta^{(n)} u_{\eta_\ast}^{\ast}\big)(t)}{1}\leq m\epsilon$. Consequently, combined with (\ref{ineq:sde_irreducible_step1_err}), for $n\geq n_\ast$,
\begin{align}\label{ineq:sde_irreducible_step1_phase1_u}
\pnorm{u_{\eta_\ast}^{\ast,(n)}(t_\ast)-u_{x_0}}{1}\leq (m+1)\epsilon. 
\end{align}
Note that at the end of this phase I, both $u_{\eta_\ast}^{\ast,(n)}$ and $w_{\eta_\ast}^{\ast,(n)}$ are well-defined on $[0,t_\ast]$ with the end point of $u_{\eta_\ast}^{\ast,(n)}$ at time $t_\ast$ close to the target $u_{x_0}$. 

\noindent (\emph{Phase II}). In the second phase, we consider $t \in [t_\ast,t_\ast+\epsilon]$, and define $w_{\eta_\ast}^{\ast,(n)}$ on $[t_\ast,t_\ast+\epsilon]$ as the linear interpolation from $w_{\eta_\ast}^{\ast,(n)}(t_\ast)$ at time $t_\ast$ to the target $w_{x_0}$ at time $t_\ast+\epsilon$. Concretely,
\begin{align*}
w_{\eta_\ast}^{\ast,(n)}(t)\equiv w_{\eta_\ast}^{\ast,(n)}(t_\ast)+\epsilon^{-1}(t-t_\ast)\cdot \big(w_{x_0}-w_{\eta_\ast}^{\ast,(n)}(t_\ast)\big),\quad \forall t \in [t_\ast,t_\ast+\epsilon].
\end{align*}
Then $w_{\eta_\ast}^{\ast,(n)}$ is globally Lipschitz on $[0,t_\ast+\epsilon]$ with an a.e. well-defined derivative component-wise. With thus defined $w_{\eta_\ast}^{\ast,(n)}$, we then solve $u_{\eta_\ast}^{\ast,(n)}$ on $[t_\ast,t_\ast+\epsilon]$ via the ODE in the first line of (\ref{ineq:sde_irreducible_ode}) with initial condition at time $t_\ast$ given by $u_{\eta_\ast}^{\ast,(n)}(t_\ast)$ obtained in Phase I. As $\max_{j \in [m-1]}\abs{ (u^{\ast,(n)}_{\eta_\ast;j})'}\leq 1$, we have $
\pnorm{u^{\ast,(n)}_{\eta_\ast}(t_\ast+\epsilon)-u^{\ast,(n)}_{\eta_\ast}(t_\ast)}{1}\leq (m-1)\epsilon$. Combined with (\ref{ineq:sde_irreducible_step1_phase1_u}), 
\begin{align}\label{ineq:sde_irreducible_step1_phase2_u}
\max_{\# \in \{u,w\}}\pnorm{\#_{\eta_\ast}^{\ast,(n)}(t_\ast+\epsilon)-\#_{x_0}}{1}\leq 2m\epsilon.
\end{align}
By the second equation of (\ref{ineq:sde_irreducible_ode}), for $k \in [m]$, let
\begin{align*}
h_{\eta_\ast;k}^{\ast,(n)}\equiv \frac{ 1 }{ p_k^{(n),1/2}(u^{(n)}_{\eta_\ast;\cdot},w^{(n)}_{\eta_\ast;\cdot} )  }\cdot \bigg((w_{\eta_\ast;k}^{\ast,(n)})' + \frac{1}{2}w_{\eta_\ast;k}^{\ast,(n)}+\frac{1}{4} \partial_{w_k} p_k^{(n)} (u_{\eta_\ast;\cdot}^{\ast,(n)},w_{\eta_\ast;\cdot}^{\ast,(n)}) \bigg)
\end{align*}
be defined on $[0,t_\ast+\epsilon]$. Clearly $h_{\eta_\ast}^{\ast,(n)} \in L^2$ due to localization, and therefore proving the claim (\ref{ineq:sde_irreducible_step1}).

\noindent (\textbf{Step 2}). Let us choose $\epsilon>0$ small enough such that $B(x_0,8m\epsilon)\subset O$, $n$ large enough depending on $\epsilon,x$ and $t_0$ be as specified in Step 1. Recall $X^{(n)}$ is the solution to the localized SDE (\ref{ineq:strong_feller_sde}) with initial condition $X^{(0)}=x$. Let $\mu^{(n)}\equiv \Prob_x\circ (X^{(n)})^{-1}$ be the associated measure on $(C^{\alpha},\pnorm{\cdot}{\alpha})$ for $\alpha \in [0,1/2)$. By Theorem \ref{thm:support}, we have $\gamma^{(n)}\equiv \mathscr{S}_x(h^{(n)}) \in \mathrm{supp}_\alpha(\mu^{(n)})$. Using the alternative open set characterization for the support, for any $r>0$,
\begin{align*}
\Prob_x\big(X^{(n)} \in B_\alpha(\gamma^{(n)},r) \big)=\mu^{(n)}\big(B_\alpha(\gamma^{(n)},r)\big)=\mu^{(n)}\big(\big\{\gamma \in C^\alpha: \pnorm{\gamma-\gamma^{(n)}}{}< r\big\}\big)>0.
\end{align*}
Now we may choose $r_0 = r_0(\epsilon,x) \in (0,4m\epsilon)$ small enough such that for any $\gamma \in B_\alpha(\gamma^{(n)},r_0)$, $\gamma(t) \in E_0^{(n)}$ for all $t \in [0,t_0]$, and by definition and the proven claim in Step 1, $\gamma(t_0) \in B(x_0,8m\epsilon)\subset O$. Consequently,
\begin{align*}
\Prob_x(X(t_0)\in O)\geq \Prob_x\big(X^{(n)} \in B_a(\gamma^{(n)},r_0) \big)>0,
\end{align*}
proving the desired claim. \qed

\begin{proof}[Proof of (\ref{ineq:sde_irreducible_step1_err})]
To see this, note that the ODE (\ref{ineq:sde_irreducible_step1_ODE}) has an explicit solution
\begin{align*}
u^\ast(t)= e^{-t} u^\ast(0)+\int_0^t e^{-(t-s)}q^\ast(s)\,\d{s} = e^{-t}u_x+(1-e^{-t}) u(t,q^\ast),
\end{align*}
where with $\mu_t (\d s) \equiv \frac{e^{-(t-s)}}{1-e^{-t}}\,\d{s}$ being a probability measure on $[0,t]$, we write $u(t,q^\ast)\equiv \int_0^t q^\ast (s)\,\mu_t(\d s)$.

Consequently, it suffices to show that, for any $t>0$, there exists some piecewise constant function $q_t$ such that $u(t,q_t)=u_{x_0}$. To this end, for $k \in [m]$, let
\begin{align*}
y_k\equiv e^{-t}+(1-e^{-t})\sum_{j \in [k]} (u_{x_0})_j \in [0,1],\quad t_k\equiv t-\log(1/y_k), 
\end{align*}
and let $t_0\equiv 0$ for notational consistency. Note that $t_m=t$ due to $\sum_k (u_{x_0})_k=1$. Let $q_{t_{[m]}}(s)\equiv \sum_{k \in [m]} e_k \bm{1}_{s \in [t_{k-1},t_k)}$. We may then compute 
\begin{align*}
u(t,q_{t_{[m]}})&=\sum_{k \in [m]} e_k \int_{t_{k-1}}^{t_k} \mu_t(\d s) = \sum_{k \in [m]} \frac{e^{-t}(e^{t_k}-e^{t_{k-1}})}{1-e^{-t}}\cdot e_k\\
& = \sum_{k \in [m]} \frac{y_k-y_{k-1}}{1-e^{-t}}\cdot e_k = \sum_{k \in [m]} (u_{x_0})_k\cdot e_k = u_{x_0},
\end{align*}
proving the reduced claim, and therefore (\ref{ineq:sde_irreducible_step1_err}). 
\end{proof}

\section{Remaining proofs}\label{section:proof_adaptive_inf}

\subsection{Proof of Corollary \ref{cor:suboptimal_arm_rates}}\label{subsection:proof_suboptimal_arm_rates}

	For (\ref{ineq:suboptimal_arm_pull_GTS}), with $x=\bar{\Phi}^-(1/T)$, and using the asymptotic formula $1/T=\bar{\Phi}(x)= (1+\smallo(1))e^{-x^2/2}/(\sqrt{2\pi} x)$, we have $\log T = x^2/2+\log x+\bigo(1)$. This gives $x^2\sim 2\log T$.
	
	For (\ref{ineq:suboptimal_arm_pull_slow}), let $m$ be such that $\limsup_T \log_{[m]}^+ T/r_T<\infty$, where $\log_{[m]}^+ T\equiv \log^+(\log_{[m-1]}^+ T)$ is defined recursively with $\log_{[1]}^+T=1\vee \log T$. Then we may choose $\mathsf{Z}$ with $\bar{\Phi}(x)\sim c_m \exp(-\exp_{[m-1]}(x))$ as $x \uparrow \infty$. Here $\exp_{[m]}(x)=\exp(\exp_{[m-1]}(x))$ is similarly recursively defined with $\exp_{[1]}(x)=\exp(x)$. Using the same calculations as above, we have $\bar{\Phi}^-(1/T)\sim \log_{[m]}^+ T$ as $T \to \infty$. \qed

\subsection{Proof of Theorem \ref{thm:arm_mean_dist}}\label{subsection:proof_arm_mean_dist}

	The claim for optimal arms $a \in \mathcal{A}_0$ follows by Theorem \ref{thm:optimal_arm_pull} upon noting the relation
	\begin{align*}
	\big(n_{a;T}/\sigma^2\big)^{1/2}\cdot \big(\hat{\mu}_{a;T}-\mu_a\big) = \bigg\{\frac{1}{T^{1/2}}\sum_{s \in [T]}\bm{1}_{A_s=a}\xi_s\bigg\}\bigg/ \bigg\{\frac{n_{a;T}}{T}\bigg\}^{1/2},
	\end{align*}
	and applying the continuous mapping theorem. Therefore we will focus on Gaussian approximations for suboptimal arms $a \in \mathcal{A}_+$.
	
	To this end, we recall the filtration $\big\{\mathscr{F}_t\equiv \sigma\big(\{\xi_{s_1},Z_{a;s_2}\}_{a\in [K], s_1 \in [t],s_2 \in [t+1]}\big)\big\}$, and the theoretical sample size $n_{a;T}^\ast\equiv \sigma^2 [\bar{\Phi}^-(1/T)/\Delta_a]^2$. Let us define $X_{T,s}\equiv (n_{a;T}^\ast)^{-1/2} \bm{1}_{A_s=a} \xi_s$ and $\mathscr{F}_{T,s}\equiv \mathscr{F}_s$ for $s \in [T]$. Then $\{(X_{T,s},\mathscr{F}_{T,s}): s \in [T], T\in \mathbb{N}\}$ is a martingale difference array with the desired nested property. 
	
	For condition (1) in Lemma \ref{lem:mtg_clt}, we have for any $\epsilon>0$, 
	\begin{align*}
	&\sum_{s \in [T]} \E[X_{T,s}^2\bm{1}_{\abs{X_{T,s}}>\epsilon }|\mathscr{F}_{T,s-1}] \\
	&= \frac{1}{n_{a;T}^\ast}\sum_{s \in [T]} \E\big[\bm{1}_{A_s=a} \xi_s^2\bm{1}_{\abs{\xi_s}>\epsilon (n_{a;T}^\ast)^{1/2}} |\mathscr{F}_{s-1} \big]= \frac{n_{a;T}}{n_{a;T}^\ast} \cdot \E \xi_1^2\bm{1}_{\abs{\xi_1}>\epsilon (n_{a;T}^\ast)^{1/2}} .
	\end{align*}
	As $n_{a;T}^\ast \to \infty$ and $n_{a;T}/n_{a;T}^\ast \to 1$ in probability as $T \to \infty$ by Theorem \ref{thm:suboptimal_arm_pull}, the right hand side of the above display vanishes as $T \to \infty$ by Lemma \ref{lem:first_moment_tail}. This verifies condition (1) in Lemma \ref{lem:mtg_clt}. 
	
	For condition (2) in Lemma \ref{lem:mtg_clt}, we may easily compute 
	\begin{align*}
	\sum_{s \in [T]} \E[X_{T,s}^2|\mathscr{F}_{T,s-1}] = \frac{n_{a;T}}{n_{a;T}^\ast} \stackrel{\Prob}{\to} 1.
	\end{align*}
	Consequently, Lemma \ref{lem:mtg_clt} applies to conclude that 
	\begin{align*}
	\sum_{s \in [T]} X_{T,s} = (n_{a;T}^\ast)^{-1/2} \sum_{s \in [T]}\bm{1}_{A_s=a} \xi_s\rightsquigarrow \mathcal{N}(0,1).
	\end{align*}
	The claim now follows by noting the relation $
	\big(n_{a;T}/\sigma^2\big)^{1/2} \big(\hat{\mu}_{a;T}-\mu_a\big) =({n_{a;T}^\ast}/{n_{a;T}}) \cdot \sum_{s \in [T]} X_{T,s}$ 
	and another application of Theorem \ref{thm:suboptimal_arm_pull}. \qed

\subsection{Proof of Proposition \ref{prop:var_consist}}\label{subsection:proof_var_consist}

	It suffices to prove that, for any $a \in \mathcal{A}_+$,
	\begin{align}\label{ineq:var_consist_1}
	\hat{\sigma}_a^2\equiv \frac{1}{n_{a;T}}\sum_{t \in [T]} (R_t-\hat{\mu}_{a;t})^2\bm{1}_{A_t = a} \stackrel{\mathbb{P}}{\to} \sigma^2. 
	\end{align}
	Note that with $X_t\equiv \bm{1}_{A_t=a}(\xi_t^2-\sigma^2)\equiv \bm{1}_{A_t=a} \eta_t$, and the filtration $\{\mathscr{F}_t\}$ defined in the proof of Theorem \ref{thm:arm_mean_dist}, $\{(X_t,\mathscr{F}_t)\}$ is a martingale difference sequence. We define $b_T\equiv n_{a;T}^\ast$. Clearly $b_T \uparrow \infty$. 
	
	We shall now verify the conditions in Lemma \ref{lem:mtg_wlln}. For condition (1), note that 
	\begin{align*}
	\sum_{t \in [T]} \Prob\big(\abs{X_t}> b_T\big) &= \sum_{t \in [T]} \E \Big(\bm{1}_{A_t = a}\cdot \E\big[\bm{1}_{ \abs{
			\eta_t}>b_T }|\mathscr{F}_{t-1}\big]\Big) =  \frac{\E n_{a;T}}{n_{a;T}^\ast} \cdot \big[b_T \Prob(\abs{\eta_1}>b_T)\big].
	\end{align*}
	Now using the assumption and Lemma \ref{lem:first_moment_tail}, the right hand side of the above display vanishes as $T \to \infty$. This verifies (1).
	
	For (2), note that 
	\begin{align*}
	\frac{1}{b_T} \sum_{t \in [T]} \E[X_t\bm{1}_{\abs{X_t}\leq b_T}|\mathscr{F}_{t-1}]& = \frac{1}{b_T} \sum_{t \in [T]} \bm{1}_{A_t=a}\cdot  \E\eta_t\bm{1}_{\abs{\eta_t}\leq b_T}= \frac{n_{a;T}}{n_{a;T}^\ast}\cdot \E \eta_1\bm{1}_{\abs{\eta_1}\leq b_T}.
	\end{align*}
	By Theorem \ref{thm:suboptimal_arm_pull} and dominated convergence theorem, the right hand side of the above display vanishes as $T \to \infty$. This verifies (2).
	
	For (3), note that 
	\begin{itemize}
		\item $\E X_t^2\bm{1}_{\abs{X_t}\leq b_T}=\E \bm{1}_{A_t=a}\big(\eta_t^2 \bm{1}_{\abs{\eta_t}\leq b_T}\big)= \E \bm{1}_{A_t=a}\cdot \E \eta_1^2 \bm{1}_{\abs{\eta_1}\leq b_T} $.
		\item $\E\big(\E[X_t\bm{1}_{\abs{X_t}\leq b_T}|\mathscr{F}_{t-1}]\big)^2=\E\big(\bm{1}_{A_t=a}\cdot  \E{\eta_1}\bm{1}_{\abs{\eta_1}\leq b_T}\big)^2 = \E \bm{1}_{A_t=a}\cdot \big(\E {\eta_1}\bm{1}_{\abs{\eta_1}\leq b_T}\big)^2$.
	\end{itemize}
	So we have 
	\begin{align*}
	&\frac{1}{b_T^2} \sum_{t \in [T]} \Big\{\E X_t^2\bm{1}_{\abs{X_t}\leq b_T}-\E\big(\E[X_t\bm{1}_{\abs{X_t}\leq b_T}|\mathscr{F}_{t-1}]\big)^2\Big\}\\
	&= \frac{1}{b_T^2} \sum_{t \in [T]} \E \bm{1}_{A_t=a}\cdot \var\big({\eta_1} \bm{1}_{\abs{\eta_1}\leq b_T}\big)\leq \frac{\E n_{a;T}}{n_{a;T}^\ast}\cdot \frac{\E \eta_1^2 \bm{1}_{\abs{\eta_1}\leq b_T}}{b_T}. 
	\end{align*}
	On the other hand, with $g_T\equiv b_T^{-1}\abs{\eta_1}\bm{1}_{\abs{\eta_1}\leq b_T} \in [0,1]$ and $g_T\to 0$ almost surely, by dominated convergence we have $b_T^{-1} \E \eta_1^2 \bm{1}_{\abs{\eta_1}\leq b_T} = \E\abs{\eta_1} g_T \to 0$. Using the assumption, the right hand side of the above display vanishes as $T \to \infty$. This verifies (3). Thus Lemma \ref{lem:mtg_wlln} applies to conclude that
	\begin{align*}
	\hat{\sigma}_a^2 -\sigma^2= \frac{1}{n_{a;T}}\sum_{t \in [T]} X_t = \frac{n_{a;T}^\ast}{n_{a;T}}\cdot \frac{1}{b_T} \sum_{t \in [T]}X_t \stackrel{\Prob}{\to} 0,
	\end{align*}
	where in the last step we used Theorem \ref{thm:suboptimal_arm_pull} again. \qed

\appendix

\section{Basics of Lie brackets}\label{section:Lie_bracket}

\begin{definition}\label{def:vector_field_Lie}
	Let $M \subset \R^n$ be a smooth (i.e., $C^\infty$) manifold.
	\begin{enumerate}
		\item $U: C^\infty(M)\to C^\infty(M)$ is a smooth \emph{vector field}, if $U$ is a linear map and is a derivation in that $U(fg) = f U(g)+U(f) g$ for all $f,g \in C^\infty(M)$.
		\item For two smooth vector fields $U,V: C^\infty(M)\to C^\infty(M)$, their \emph{Lie bracket} $[U,V]: C^\infty(M)\to C^\infty(M)$ is defined as $[U,V](f)\equiv U(V(f))-V(U(f))$ for all $f \in C^\infty(M)$. 
	\end{enumerate}
\end{definition}

Let  $T_x M$ be the tangent space at $x \in M$ that can be identified as all derivations at $x$ in the sense that
\begin{align*}
T_x M&\equiv \big\{D: C^\infty(M)\to \R, \hbox {s.t. $D$ is a linear map},\\
&\quad\quad\hbox{and } D(fg)=(Df) g(x)+ f(x) D(g),\forall f,g \in C^\infty (M)\big\}.
\end{align*} 
Let $TM$ be the tangent bundle of $M$ that collects direct sum of $\{T_x M: x \in M\}$.

A vector field $U: C^\infty(M)\to C^\infty(M)$ can then be naturally identified as an element in $C^\infty(M,TM)$. Specifically, suppose $x=(x_1,\ldots,x_k): O \to \R^k$ is a local coordinate chart on $O\subset M$, where $k\leq n$. Then a smooth vector field $U$ may represented on $O$ as 
\begin{align*}
U(x) = \sum_{i \in [k]} U_i(x) \partial_{x_i},\quad \forall x \in O
\end{align*}
for some smooth functions $\{U_i \in C^\infty(O)\}$ and coordinate vector fields $\{\partial_{x_i}\}$.

The Lie bracket of two smooth vector fields $U,V:C^\infty(M)\to C^\infty(M)$ with local representation $U=\sum_i U_i \partial_{x_i},V=\sum_i V_i \partial_{x_i}$ on $O$ can then be represented locally as 
\begin{align}\label{eqn:Lie_bracket_coord_form}
[U,V](x)=\sum_{\ell \in [k]}  \bigg(\sum_{i \in [k]} U_i(x) \partial_{x_i} V_\ell(x) - V_i(x) \partial_{x_i} U_\ell(x) \bigg) \,\partial_{x_\ell},\quad \forall x \in O.
\end{align}
In the simplest case $M=\R^n$, a smooth vector field $U: C^\infty(\R^n)\to C^\infty(\R^n)$ can  be identified as a mapping from $\R^n$ to itself (denoted $U^*: \R^n\to \R^n$), with the following relation:
\begin{align}\label{eqn:Lie_bracket_iden}
U(f) (x)\equiv \sum_{i \in [n]} U_i^*(x) \partial_i f(x)=\iprod{U^\ast(x)}{\nabla f(x)},\quad \forall x \in \R^n, f \in C^\infty(\R^n).
\end{align}

\section{Technical tools}

\subsection{Some martingale results}
The following version of the martingale weak law of large numbers is taken from \cite[Theorem 2.13, pp. 29]{hall1980martingale}.
\begin{lemma}\label{lem:mtg_wlln}
	Suppose $\{(X_{n},\mathscr{F}_{n}): n \in \mathbb{N}\}$ is a martingale difference sequence, and $\{b_n\}$ is a sequence of positive numbers such that $b_n\uparrow \infty$. With $\big\{X_{n,i}\equiv X_i \bm{1}_{\abs{X_i}\leq b_n}: i \in [n]\big\}$, and further assuming the following:
	\begin{enumerate}
		\item $\sum_{i \in [n]} \Prob(\abs{X_i}>b_n)\to 0$.
		\item $b_n^{-1} \sum_{i \in [n]} \E[X_{n,i}|\mathscr{F}_{i-1}]\stackrel{\Prob}{\to} 0$.
		\item $b_n^{-2} \sum_{i \in [n]} \big\{\E X_{n,i}^2-\E\big(\E[X_{n,i}|\mathscr{F}_{i-1}]\big)^2\big\}\to 0$.
	\end{enumerate}
	Then $b_n^{-1} \sum_{i \in [n]} X_i \stackrel{\Prob}{\to} 0$. 
\end{lemma}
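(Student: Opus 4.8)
The statement is the classical Hall--Heyde truncation lemma, so the plan is to run the standard three-step proof. Write $S_n\equiv\sum_{i\in[n]}X_i$ and $S_n'\equiv\sum_{i\in[n]}X_{n,i}$ for the original and truncated partial sums. \textbf{Step 1 (reduction to the truncated array).} On the event $\{\abs{X_i}\le b_n\ \text{for all}\ i\in[n]\}$ one has $S_n=S_n'$, so a union bound combined with assumption (1) gives $\Prob(S_n\neq S_n')\le\sum_{i\in[n]}\Prob(\abs{X_i}>b_n)\to 0$; hence it suffices to prove $b_n^{-1}S_n'\stackrel{\Prob}{\to}0$. \textbf{Step 2 (centering along the filtration).} Decompose $X_{n,i}=Y_{n,i}+\E[X_{n,i}\mid\mathscr{F}_{i-1}]$ with $Y_{n,i}\equiv X_{n,i}-\E[X_{n,i}\mid\mathscr{F}_{i-1}]$. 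Assumption (2) states precisely that $b_n^{-1}\sum_{i\in[n]}\E[X_{n,i}\mid\mathscr{F}_{i-1}]\stackrel{\Prob}{\to}0$, so only $b_n^{-1}\sum_{i\in[n]}Y_{n,i}$ remains to be handled.

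\textbf{Step 3 ($L^2$ control of the martingale part).} For each fixed $n$, $\{(Y_{n,i},\mathscr{F}_i):i\in[n]\}$ is a martingale difference array: $Y_{n,i}$ is $\mathscr{F}_i$-measurable (a measurable function of $X_i$ minus an $\mathscr{F}_{i-1}$-measurable compensator — here it is essential that the truncation level $b_n$ does not depend on $i$), and $\E[Y_{n,i}\mid\mathscr{F}_{i-1}]=0$ by construction. Orthogonality of the increments then gives
\begin{align*}
b_n^{-2}\,\E\Big(\sum_{i\in[n]}Y_{n,i}\Big)^{2}
=b_n^{-2}\sum_{i\in[n]}\E Y_{n,i}^{2}
=b_n^{-2}\sum_{i\in[n]}\Big(\E X_{n,i}^{2}-\E\big(\E[X_{n,i}\mid\mathscr{F}_{i-1}]\big)^{2}\Big),
\end{align*}
where the last equality uses $\E Y_{n,i}=0$ together with the tower identity $\E\big[X_{n,i}\,\E[X_{n,i}\mid\mathscr{F}_{i-1}]\big]=\E\big(\E[X_{n,i}\mid\mathscr{F}_{i-1}]\big)^{2}$. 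By assumption (3) the right-hand side tends to $0$, whence $b_n^{-1}\sum_{i\in[n]}Y_{n,i}\to 0$ in $L^{2}$, and therefore in probability by Chebyshev's inequality. Combining the three steps, $b_n^{-1}S_n=b_n^{-1}S_n'+\smallop(1)=b_n^{-1}\sum_{i\in[n]}Y_{n,i}+b_n^{-1}\sum_{i\in[n]}\E[X_{n,i}\mid\mathscr{F}_{i-1}]+\smallop(1)=\smallop(1)$, which is the claim.

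There is no real obstacle here: this is a textbook argument, and indeed the statement is quoted verbatim from \cite[Theorem 2.13]{hall1980martingale}, so one could alternatively just cite it. The only two places calling for a moment's care are (i) the fact that $b_n$ depends on $n$ alone, which is what makes $\{Y_{n,i}\}_{i\in[n]}$ a genuine $\mathscr{F}_i$-martingale difference sequence for each fixed $n$ and legitimizes the cancellation of cross terms in Step 3; and (ii) that the variance identity $\E Y_{n,i}^{2}=\E X_{n,i}^{2}-\E\big(\E[X_{n,i}\mid\mathscr{F}_{i-1}]\big)^{2}$ holds with \emph{equality}, so that the bookkeeping matches assumption (3) exactly rather than merely up to a universal constant.
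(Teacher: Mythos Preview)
Your proof is correct and is the standard Hall--Heyde argument. The paper does not give its own proof of this lemma at all; it simply cites \cite[Theorem~2.13, p.~29]{hall1980martingale} directly, which you yourself note as an alternative.
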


The following version of the martingale central limit theorem is taken from \cite[Corollary 3.1, pp. 58]{hall1980martingale}.
\begin{lemma}\label{lem:mtg_clt}
	Suppose $\{(X_{n,i},\mathscr{F}_{n,i}): i \in [k_n], n \in \mathbb{N}\}$ is a martingale difference array with the nested property $\mathscr{F}_{n,i}\subset \mathscr{F}_{n+1,i}$ for all $i \in [k_n]$ and $n \in \mathbb{N}$. Further assume the following:
	\begin{enumerate}
		\item For any $\epsilon>0$, $\sum_{i \in [k_n]} \E[X_{n,i}^2\bm{1}_{\abs{X_{n,i}}>\epsilon }|\mathscr{F}_{n,i-1}]\stackrel{\mathbb{P}}{\to} 0$.
		\item $\sum_{i \in [k_n]} \E[X_{n,i}^2|\mathscr{F}_{n,i-1}]\stackrel{\mathbb{P}}{\to} 1$.
	\end{enumerate}
	Then $\sum_{i \in [k_n]} X_{n,i}\rightsquigarrow \mathcal{N}(0,1)$. 
\end{lemma}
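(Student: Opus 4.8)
The plan is, first and foremost, simply to quote: Lemma \ref{lem:mtg_clt} is precisely \cite[Corollary 3.1, pp.~58]{hall1980martingale}, so no new argument is required. Should a self-contained proof be preferred, I would reproduce the standard characteristic-function argument of McLeish/Brown type, outlined below. Via L\'evy's continuity theorem I would reduce to showing $\E\exp(\mathrm{i}tT_n)\to\exp(-t^2/2)$ for each fixed $t\in\R$, where $T_n\equiv\sum_{i\in[k_n]}X_{n,i}$; write also $U_n\equiv\sum_{i\in[k_n]}X_{n,i}^2$.

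The first substantive step would be to extract uniform asymptotic negligibility of the increments from the conditional Lindeberg condition (1): a standard maximal/truncation estimate gives $m_n\equiv\max_{i\in[k_n]}\abs{X_{n,i}}\stackrel{\Prob}{\to}0$, and a by-now-routine truncation bound on the martingale $\sum_i\bigl(X_{n,i}^2-\E[X_{n,i}^2\mid\mathscr{F}_{n,i-1}]\bigr)$, together with (1), upgrades condition (2) to $U_n\stackrel{\Prob}{\to}1$.

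Next I would introduce the auxiliary complex product $\mathcal{T}_n\equiv\prod_{i\in[k_n]}(1+\mathrm{i}tX_{n,i})$, whose key feature is that $\E[1+\mathrm{i}tX_{n,i}\mid\mathscr{F}_{n,i-1}]=1$ by the martingale-difference property, so that iterated conditioning yields $\E\mathcal{T}_n=1$ for every $n$. Expanding $\log(1+z)=z-z^2/2+r(z)$ with $\abs{r(z)}\le\abs{z}^3$ for $\abs{z}\le1/2$, one finds on the event $\{m_n\le1/(2\abs{t})\}$ (of probability $\to1$) that $e^{\mathrm{i}tT_n}=\mathcal{T}_n\,e^{-t^2U_n/2}\,(1+\smallop(1))=\mathcal{T}_n\,e^{-t^2/2}\,(1+\smallop(1))$ using the previous step; taking expectations and invoking $\E\mathcal{T}_n=1$ would then give the claim.

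The hard part — the only place genuine care is needed — is justifying the passage to expectations in the last line, i.e.\ establishing uniform integrability of $\{\mathcal{T}_n\}$; the naive bound $\abs{1+\mathrm{i}tX_{n,i}}\le\exp(t^2X_{n,i}^2/2)$ only yields $\abs{\mathcal{T}_n}\le\exp(t^2U_n/2)$, which is not uniformly bounded. I would resolve this by the usual stopping device: set $\nu_n\equiv k_n\wedge\min\{j:\sum_{i\le j}X_{n,i}^2>2\}$, note that the stopped product is deterministically bounded by $\exp(t^2(1+m_n^2))$ and hence uniformly integrable, and observe that $U_n\stackrel{\Prob}{\to}1<2$ forces $\Prob(\nu_n<k_n)\to0$, so the stopped array is interchangeable with the original throughout. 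Everything else (Lindeberg $\Rightarrow$ negligibility, the Taylor remainder estimate, $U_n\to1$) is routine, yielding $T_n\rightsquigarrow\mathcal{N}(0,1)$.
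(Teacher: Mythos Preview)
Your proposal is correct and matches the paper exactly: the paper does not prove this lemma but simply attributes it to \cite[Corollary 3.1, pp.~58]{hall1980martingale}, which is precisely what you do first. The optional self-contained sketch you add (the McLeish/Brown characteristic-function argument with the stopped product to handle uniform integrability) is standard and sound, but not needed here.
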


The following martingale Bernstein's inequality due to \cite{freedman1975tail} will be useful.

\begin{lemma}\label{ineq:mtg_bern}
	Let $X_1,X_2,\ldots$ be a martingale difference sequence with respect to the filtration $\{\mathscr{F}_n:n\geq 0\}$ such that $\sup_n \abs{X_i}\leq b$ almost surely. For $n\geq 1$, let $S_n\equiv \sum_{i=1}^n X_i$ be the associated partial sum, and $\langle S\rangle_n\equiv \sum_{i=1}^n \E(X_i^2|\mathscr{F}_{i-1})$. Then for any finite stopping time $\tau$ with respect to $\{\mathscr{F}_n\}$, and any $x\geq 0$,
	\begin{align*}
	\Prob\bigg(\max_{n\leq \tau} \abs{S_n}>x, \langle S\rangle_\tau\leq v^2\bigg)\leq 2\exp\bigg(-\frac{x^2}{2v^2+2bx/3}\bigg).
	\end{align*}
\end{lemma}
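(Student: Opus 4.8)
The plan is to reproduce the classical exponential-supermartingale argument of Freedman, first reducing to a one-sided estimate and then symmetrizing. The starting point is to fix $\lambda\in[0,3/b)$ and show that $Z_n^\lambda\equiv\exp\bigl(\lambda S_n-\psi_b(\lambda)\langle S\rangle_n\bigr)$ is a nonnegative supermartingale with $Z_0^\lambda=1$, where $\psi_b(\lambda)\equiv b^{-2}(e^{\lambda b}-1-\lambda b)$. The key one-step bound is that, since $X_i\le b$ and $\E[X_i\mid\mathscr{F}_{i-1}]=0$, the monotonicity of $y\mapsto(e^y-1-y)/y^2$ yields $e^{\lambda X_i}-1-\lambda X_i\le\psi_b(\lambda)X_i^2$ pointwise, hence $\E[e^{\lambda X_i}\mid\mathscr{F}_{i-1}]\le1+\psi_b(\lambda)\E[X_i^2\mid\mathscr{F}_{i-1}]\le\exp\bigl(\psi_b(\lambda)\E[X_i^2\mid\mathscr{F}_{i-1}]\bigr)$; since $\langle S\rangle_n-\langle S\rangle_{n-1}=\E[X_n^2\mid\mathscr{F}_{n-1}]$, multiplying the increments telescopes to $\E[Z_n^\lambda\mid\mathscr{F}_{n-1}]\le Z_{n-1}^\lambda$.

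Next I would record the standard polynomial envelope $\psi_b(\lambda)\le\tfrac{\lambda^2/2}{1-\lambda b/3}$ for $\lambda\in[0,3/b)$, which follows from the crude estimate $k!\ge2\cdot3^{k-2}$ on the Taylor coefficients of the exponential; this is what converts the Bennett-type exponent into the advertised Bernstein form. To handle the running maximum jointly with the accumulated-variance event, introduce the entry time $\sigma\equiv\inf\{n\ge1:S_n>x\}$ and apply optional stopping to $Z^\lambda$ at the bounded times $\sigma\wedge\tau\wedge m$, obtaining $\E Z^\lambda_{\sigma\wedge\tau\wedge m}\le1$; letting $m\to\infty$ (legitimate since $\tau<\infty$ a.s.\ and $Z^\lambda\ge0$, so Fatou applies) gives $\E Z^\lambda_{\sigma\wedge\tau}\le1$. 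On the event $\{\max_{n\le\tau}S_n>x,\ \langle S\rangle_\tau\le v^2\}$ we have $\sigma\le\tau$, $S_\sigma>x$, and, by monotonicity of $\langle S\rangle$, $\langle S\rangle_\sigma\le v^2$, so $Z^\lambda_{\sigma\wedge\tau}\ge\exp\bigl(\lambda x-\psi_b(\lambda)v^2\bigr)\ge\exp\bigl(\lambda x-\tfrac{\lambda^2 v^2/2}{1-\lambda b/3}\bigr)$. Markov's inequality then bounds the probability of this event, and the choice $\lambda=x/(v^2+bx/3)\in[0,3/b)$ produces exactly $\exp\bigl(-x^2/(2v^2+2bx/3)\bigr)$.

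Finally, running the identical argument for the martingale difference sequence $\{-X_i\}$ — which satisfies the same boundedness and conditional-variance hypotheses with the same $\langle S\rangle$ — controls $\{\max_{n\le\tau}(-S_n)>x,\ \langle S\rangle_\tau\le v^2\}$, and a union bound over the two one-sided events yields the stated factor of $2$. I expect the main obstacle to be not a single computation but the care required in the optional-stopping step: one must pass to the unbounded stopping time $\sigma\wedge\tau$ through truncation and Fatou rather than directly, and one must use the monotonicity of $\langle S\rangle_n$ so that freezing at $\sigma$ does not forfeit the constraint $\langle S\rangle_\tau\le v^2$. Extracting the sharp constant $2/3$ additionally forces the use of the precise envelope for $\psi_b$ rather than the looser $\psi_b(\lambda)\le\lambda^2/(2(1-\lambda b))$.
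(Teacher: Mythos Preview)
Your argument is correct and is essentially Freedman's original proof. Note, however, that the paper does not actually prove this lemma: it simply states the inequality and attributes it to \cite{freedman1975tail}. Your write-up therefore supplies a complete proof where the paper gives only a citation, and the route you take (exponential supermartingale $Z_n^\lambda=\exp(\lambda S_n-\psi_b(\lambda)\langle S\rangle_n)$, the envelope $\psi_b(\lambda)\le\tfrac{\lambda^2/2}{1-\lambda b/3}$, optional stopping at $\sigma\wedge\tau\wedge m$ followed by Fatou, then optimizing $\lambda$ and symmetrizing) is the standard one.
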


\subsection{A generalized Gr\"onwall's inequality}

The following is a generalized version of the classical Gr\"onwall's inequality. 

\begin{lemma}[Generalized Gr\"onwall's inequality]\label{lem:gronwall_ineq}
	Let $I$ be an interval of $\R$ with left end point $a$. Suppose that the integral inequality for $u:I\to \R_{\geq 0}$
	\begin{align}\label{ineq:gronwall_ineq_cond}
	u(t)\leq \alpha(t)+\int_a^t \beta(s)\cdot \mathsf{F}(u(s))\,\d{s},\quad \forall t \in I. 
	\end{align}
	is satisfied for non-negative functions $\alpha,\beta: I\to \R_{\geq 0}$ and $\mathsf{F}\in C^1(\R_{\geq 0}\to \R_{>0})$ such that $\mathsf{F}$ is  non-decreasing on $\R_{\geq 0}$. Then, for any $t_0\geq 0$, with $\mathsf{G}(t)\equiv \int_{t_0}^t (\mathsf{F}(x))^{-1}\d{x}$, 
	\begin{align*}
	u(t)\leq \mathsf{G}^{-1}\bigg[\mathsf{G}\bigg(\sup_{s \in [a,t]}\alpha(s)\bigg)+\int_a^t \beta(s)\,\d{s}\bigg],\quad \forall t \in I.
	\end{align*}
	In particular, if $\mathsf{F}(x)=x^p$ for some $p \in (0,1]$, 
	\begin{align*}
	u(t)\leq 
	\begin{cases}
	\big[\big(\sup_{s \in [a,t]}\alpha(s)\big)^{1-p}+(1-p)\int_a^t \beta(s)\,\d{s}\big]^{1/(1-p)}, & p \in (0,1);\\
	\big(\sup_{s \in [a,t]}\alpha(s)\big)\cdot \exp\big(\int_a^t \beta(s)\,\d{s}\big), & p=1.
	\end{cases}	
	\end{align*}
\end{lemma}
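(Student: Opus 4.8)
The plan is the classical Bihari--LaSalle argument: reduce (\ref{ineq:gronwall_ineq_cond}) to a separable scalar differential inequality and integrate it. First I would fix $t\in I$ and set $A(t)\equiv\sup_{s\in[a,t]}\alpha(s)$, which is nondecreasing in $t$ and which we may take to be finite (otherwise the claimed bound is vacuous). Since $\alpha(s)\le A(t)$ for every $s\in[a,t]$, the hypothesis gives $u(s)\le A(t)+\int_a^s\beta(r)\mathsf{F}(u(r))\,\d{r}$ for all $s\in[a,t]$. Introduce the majorant $v(s)\equiv A(t)+\int_a^s\beta(r)\mathsf{F}(u(r))\,\d{r}$, so that $u(s)\le v(s)$ on $[a,t]$ and $v(a)=A(t)$. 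The integrability of $\beta\,\mathsf{F}(u)$ on $[a,t]$ is implicit in (\ref{ineq:gronwall_ineq_cond}), so $v$ is absolutely continuous and nondecreasing on $[a,t]$, with $v'(s)=\beta(s)\mathsf{F}(u(s))$ for a.e.\ $s$, and $v([a,t])=[A(t),v(t)]$.

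Next, since $\mathsf{F}$ is nondecreasing and $u(s)\le v(s)$, I obtain $v'(s)\le\beta(s)\mathsf{F}(v(s))$ for a.e.\ $s$, and since $\mathsf{F}>0$ everywhere I may divide to get $v'(s)/\mathsf{F}(v(s))\le\beta(s)$. The function $\mathsf{G}(x)=\int_{t_0}^x(\mathsf{F}(y))^{-1}\,\d{y}$ is $C^1$ with $\mathsf{G}'=1/\mathsf{F}$, and on the compact interval $[A(t),v(t)]$ it is Lipschitz (as $1/\mathsf{F}$ is continuous, hence bounded, there). Therefore $s\mapsto\mathsf{G}(v(s))$ is absolutely continuous with $\tfrac{\d{}}{\d{s}}\mathsf{G}(v(s))=v'(s)/\mathsf{F}(v(s))$ a.e.; integrating over $[a,t]$ gives $\mathsf{G}(v(t))-\mathsf{G}(A(t))\le\int_a^t\beta(s)\,\d{s}$. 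Because $\mathsf{G}'=1/\mathsf{F}>0$, $\mathsf{G}$ is a strictly increasing continuous bijection onto its range, so $\mathsf{G}^{-1}$ is well defined and increasing there; applying it and recalling $u(t)\le v(t)$ gives the asserted bound $u(t)\le\mathsf{G}^{-1}\big[\mathsf{G}(A(t))+\int_a^t\beta(s)\,\d{s}\big]$. Independence of $t_0$ is immediate, since changing $t_0$ adds a constant to $\mathsf{G}$ that cancels between $\mathsf{G}^{-1}$ and the $\mathsf{G}$ inside the bracket.

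For the two displayed specializations I would take $\mathsf{F}(x)=x^p$ and pick a convenient base point: with $p\in(0,1)$ and $t_0=0$ one has $\mathsf{G}(x)=x^{1-p}/(1-p)$ and $\mathsf{G}^{-1}(y)=[(1-p)y]^{1/(1-p)}$, so the general bound becomes $\big[A(t)^{1-p}+(1-p)\int_a^t\beta(s)\,\d{s}\big]^{1/(1-p)}$; with $p=1$ and $t_0=1$ one has $\mathsf{G}(x)=\log x$ and $\mathsf{G}^{-1}(y)=e^y$, giving $A(t)\exp\big(\int_a^t\beta(s)\,\d{s}\big)$. The one point that needs a word of care is domain validity of $\mathsf{G}^{-1}$: the argument $\mathsf{G}(A(t))+\int_a^t\beta(s)\,\d{s}$ must lie in $\mathsf{G}(\R_{\ge 0})$. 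This is automatic on the lower side (both terms keep it $\ge\mathsf{G}(0)$) and holds on the upper side whenever $\mathsf{F}$ is nonintegrable at $+\infty$, in particular for $\mathsf{F}(x)=x^p$ with $p\le 1$, which is the only regime used in this paper. The main obstacle is thus merely the bookkeeping needed to justify the chain rule and the fundamental theorem of calculus for $\mathsf{G}\circ v$ under only integrability (not continuity) of $\beta\,\mathsf{F}(u)$; the algebraic core of the argument is elementary.
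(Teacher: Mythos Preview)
Your proof is correct and follows essentially the same Bihari--LaSalle route as the paper: define the majorant $v(s)=A(t)+\int_a^s\beta(r)\mathsf{F}(u(r))\,\d{r}$ (the paper calls it $w$), observe $v'\le\beta\,\mathsf{F}(v)$, divide by $\mathsf{F}(v)$, integrate to get $\mathsf{G}(v(t))\le\mathsf{G}(A(t))+\int_a^t\beta$, and invert. Your treatment is if anything slightly more careful than the paper's (you discuss absolute continuity of $\mathsf{G}\circ v$ and the domain of $\mathsf{G}^{-1}$, which the paper passes over), but the argument is the same.
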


\begin{proof}
	Fix $t \in I$. Let $A(t)\equiv \sup_{s \in [a,t]}\alpha(s)$, and 
	\begin{align*}
	w(s)\equiv A(t)+\int_a^s \beta(r) \cdot \mathsf{F}(u(r))\,\d{r},\quad \forall s \in [a,t].
	\end{align*}
	Then  (\ref{ineq:gronwall_ineq_cond}) entails that $u(s)\leq w(s)$ for all $s \in [a,t]$. Using that $\beta(\cdot)\geq 0$ and $\mathsf{F}$ is non-decreasing, we have $w'(s)=\beta(s)\mathsf{F}(u(s))\leq \beta(s)\mathsf{F}(w(s))$ for $s \in [a,t]$. As $\mathsf{G}'=1/\mathsf{F}'\geq 0$, it then follows that
	\begin{align*}
	\frac{\d}{\d s} \mathsf{G}(w(s))=\mathsf{G}'(w(s))w'(s)\leq \mathsf{G}'(w(s))\cdot \beta(s)\mathsf{F}(w(s)) = \beta(s),\quad \forall s \in [a,t].
	\end{align*}
	Integrating the above display for $s \in [a,t]$, using that $w(a)=A(t)$, we have 
	\begin{align*}
	\mathsf{G}(w(t))\leq \mathsf{G}(A(t))+\int_a^t \beta(s)\,\d{s}\equiv \mathsf{G}(A(t))+B(t). 
	\end{align*}
	As $\mathsf{G}:\R_{\geq 0}\to \R_{\geq 0}$ is continuously increasing, we then have
	\begin{align*}
	u(t)\leq w(t)\leq \mathsf{G}^{-1}\big(\mathsf{G}(A(t))+B(t)\big),
	\end{align*}
	proving the desired general inequality.
	
	If $\mathsf{F}(x)=x^p$ for $p \in (0,1)$, then $\mathsf{G}(t)=\int_{t_0}^t x^{-p}\,\d{x}=(1-p)^{-1}\big(t^{1-p}-t_0^{1-p}\big)$ and $\mathsf{G}^{-1}(y)=\big((1-p)y+t_0^{1-p}\big)^{1/(1-p)}$. Consequently, with $c_p\equiv (1-p)^{-1}t_0^{1-p}$,
	\begin{align*}
	u(t)\leq \mathsf{G}^{-1}\big((1-p)^{-1}(A(t))^{1-p}+B(t)-c_p\big)\leq \big((A(t))^{1-p}+(1-p)B(t)\big)^{1/(1-p)}.
	\end{align*}
	If $\mathsf{F}(x)=x$, then $\mathsf{G}(t)=\log(t/t_0)$ and $\mathsf{G}^{-1}(y)=t_0 e^y$. Consequently, we recover the standard Gr\"onwall's inequality with 
	\begin{align*}
	u(t)\leq \mathsf{G}^{-1}\big(\log(A(t))+B(t)-\log t_0\big)=\exp\big(\log(A(t))+B(t)\big)=A(t)e^{B(t)},
	\end{align*}
	as desired.
\end{proof}

\section{Auxiliary results}

The following lemma provides a uniform control for the error process.
\begin{lemma}\label{lem:max_err}
	Let $\xi_1,\ldots,\xi_n$ be independent random variables with mean $0$ and variance $1$. Then there exists some universal constant $c>0$ such that
	\begin{align*}
	\Prob\bigg(\max_{k \geq 1} \biggabs{\frac{1}{\sqrt{k}\log k}\sum_{i \in [k]}\xi_i}\geq t\bigg)\leq c\cdot t^{-2},\quad \forall t> 0.
	\end{align*}
	In particular, for $E_{\xi}(x)$ defined in (\ref{def:event_E_xi}), we have $
	\Prob\big(E_{\xi}^c(x)\big)\leq cK\cdot x^{-2}$.
\end{lemma}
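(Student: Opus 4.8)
The displayed bound is a maximal inequality for the partial sums $S_k\equiv\sum_{i\in[k]}\xi_i$, and the plan is a dyadic peeling argument driven by Kolmogorov's maximal inequality. Since $\{S_k\}$ is a mean-zero $L^2$ martingale with $\E S_k^2=\sum_{i\in[k]}\var(\xi_i)=k$ (this uses only independence and the variance normalization), Kolmogorov's maximal inequality yields $\Prob\big(\max_{1\le k\le N}\abs{S_k}\ge\lambda\big)\le N/\lambda^2$ for every $N\in\N$ and $\lambda>0$. Throughout I read the normalization ``$\log k$'' at $k=1$ as $\log_+ k=\log(k\vee e)$, consistent with the definition of $E_\xi$ in \eqref{def:event_E_xi}; this only makes the $k=1$ term well-defined and changes nothing for $k\ge 2$. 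One may also assume $t\ge 1$, since for $t\le 1$ the asserted bound is trivial once $c\ge 1$.

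First I would partition $\{k\ge 1\}$ into the dyadic blocks $I_j\equiv\{k:2^j\le k<2^{j+1}\}$, $j\ge 0$. The key observation is that on $I_j$ with $j\ge 1$ one has $\log(k\vee e)\ge j\log 2$ (because $k\ge 2^j$), hence $\sqrt k\,\log(k\vee e)\ge 2^{j/2}j\log 2$ on $I_j$. Applying Kolmogorov's maximal inequality on $\{1\le k\le 2^{j+1}\}$ with $\lambda=t\,2^{j/2}j\log 2$ then gives
\begin{align*}
\Prob\Big(\max_{k\in I_j}\frac{\abs{S_k}}{\sqrt k\,\log(k\vee e)}\ge t\Big)\le\frac{2^{j+1}}{\big(t\,2^{j/2}j\log 2\big)^2}=\frac{2}{(\log 2)^2}\cdot\frac{1}{t^2j^2}.
\end{align*}
The block $j=0$ is the singleton $\{1\}$, contributing $\Prob(\abs{\xi_1}\ge t)\le t^{-2}$ by Chebyshev's inequality. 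Summing over $j\ge 0$ and using $\sum_{j\ge 1}j^{-2}=\pi^2/6$ yields the claim with the universal constant $c=1+\tfrac{\pi^2}{3(\log 2)^2}$.

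For the statement about $E_\xi(x)$, I would note that $(t+1)^{1/2}\log\big((t+1)\vee e\big)\ge t^{1/2}\log(t\vee e)$ for every $t\ge 1$, so that
\begin{align*}
E_\xi^c(x)\subseteq\bigcup_{a\in[K]}\Big\{\max_{t\ge 1}\frac{\bigabs{\sum_{i\in[t]}\xi_{a;i}}}{t^{1/2}\log(t\vee e)}\ge x\Big\}.
\end{align*}
Applying the first part to each noise sequence $\{\xi_{a;i}\}_{i\ge 1}$ with threshold $x$, together with a union bound over the $K$ arms, then gives $\Prob\big(E_\xi^c(x)\big)\le cK\,x^{-2}$.

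I do not expect a substantial obstacle; the one point that needs care is the way the logarithmic normalization is matched to the block index. Without the $\log k$ factor the naive dyadic union bound would produce $\sum_{j}(\text{length of }I_j)/(t^2\,2^j)=\sum_j 2t^{-2}=\infty$, so it is essential that the extra factor $\log(k\vee e)\ge j\log 2$ on $I_j$ converts this into the summable series $\sum_j t^{-2}j^{-2}$; any correct proof must exploit the logarithm in essentially this manner.
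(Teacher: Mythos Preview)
Your proof is correct and follows essentially the same blocking/peeling strategy as the paper, which uses $e$-adic blocks and L\'evy's maximal inequality in place of your dyadic blocks and Kolmogorov's maximal inequality. If anything, your choice of Kolmogorov's inequality is cleaner here since it requires only independence and finite second moments, whereas L\'evy's inequality as usually stated needs symmetric summands.
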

\begin{proof}
	We prove the claim by a standard blocking argument. Note that
	\begin{align*}
	\Prob\bigg(\max_{k\geq 1} \biggabs{\frac{1}{\sqrt{k}\log k}\sum_{i \in [k]}\xi_i}\geq t\bigg)
	&\leq \sum_{j =1}^\infty  \Prob\bigg(\max_{e^{j-1}\leq k\leq e^{j}} \biggabs{\sum_{i \in [k]}\xi_i}\geq t e^{(j-1)/2}(j-1)\bigg).
	\end{align*}
	By L\'evy's maximal inequality (cf. \cite[Theorem 1.1.1]{de2012decoupling}), the right hand side of the above display can be further bounded by 
	\begin{align*}
	c\sum_{j=1}^\infty \Prob\bigg(\biggabs{\sum_{i \in [e^j]}\xi_i}\geq te^{(j-1)/2} (j-1)\bigg)\leq c \sum_{j=1}^\infty \frac{1}{t^2j^2}\leq c\cdot t^{-2},
	\end{align*} 
	as desired. 
\end{proof}

The following provides certain regularity condition for the c.d.f. of the convoluted random variables.

\begin{lemma}\label{lem:Phi_Psi_1}
	Let $\mathsf{Z}$ be a random variable with c.d.f. $\Phi$. With $\mathsf{Z}'$ denoting an independent copy of $\mathsf{Z}$, let
	\begin{align*}
	\Phi_\ast(z) \equiv \sup_{(a,a')\in\partial B_2(1)} \Prob(a\mathsf{Z}+a'\mathsf{Z}'\le z),\,	\Psi_\ast(z) \equiv \inf_{(a,a')\in\partial B_2(1)} \Prob(a\mathsf{Z}+a'\mathsf{Z}'\le z).
	\end{align*}
	Suppose $\Phi(z) \in (0,1)$ for all $z \in \R$. Then for every $B>0$, 
	\begin{align*}
	\sup_{z \in [-B,B]} \Phi_\ast(z)\vee \big(1-\Psi_\ast(z)\big)<1.
	\end{align*}
\end{lemma}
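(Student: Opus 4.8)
The plan is to establish the bound $\sup_{z \in [-B,B]} \Phi_\ast(z)\vee(1-\Psi_\ast(z)) < 1$ by a two-step argument: first show that each of the functions $z \mapsto \sup_{(a,a')} \Prob(a\mathsf{Z}+a'\mathsf{Z}' \le z)$ and $z \mapsto \sup_{(a,a')} \Prob(a\mathsf{Z}+a'\mathsf{Z}' > z)$ is strictly less than $1$ \emph{pointwise} on $[-B,B]$, and then upgrade this to a uniform bound over the compact interval using a compactness/continuity argument. The key observation underlying pointwise strictness is that the condition $\Phi(z) \in (0,1)$ for all $z \in \R$ forces $\mathsf{Z}$ to have full support on $\R$ (in the sense that no half-line has probability $0$ or $1$), and the same is then true for any convex combination $a\mathsf{Z}+a'\mathsf{Z}'$ with $(a,a') \in \partial B_2(1)$, since at least one of $a,a'$ is nonzero and the corresponding summand already charges both $(-\infty, c)$ and $(c,\infty)$ for every $c$.

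\medskip

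\textbf{Step 1 (pointwise strictness).} Fix $z \in [-B,B]$ and $(a,a') \in \partial B_2(1)$. Without loss of generality $|a| \ge 1/\sqrt{2} > 0$. Conditioning on $\mathsf{Z}'$ and using that $a\mathsf{Z}$ has a c.d.f. taking values in $(0,1)$ everywhere (a consequence of $\Phi(z) \in (0,1)$ for all $z$, after rescaling by the nonzero constant $a$), we get $\Prob(a\mathsf{Z}+a'\mathsf{Z}' \le z) = \E_{\mathsf{Z}'}\Prob(a\mathsf{Z} \le z - a'\mathsf{Z}') < 1$, and similarly $\Prob(a\mathsf{Z}+a'\mathsf{Z}' > z) < 1$. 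So both $\Phi_\ast(z) < 1$ and $1 - \Psi_\ast(z) < 1$ pointwise, but this does not yet give the uniform bound because the strict inequality degrades as we vary $(a,a')$ and $z$.

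\medskip

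\textbf{Step 2 (uniform bound via compactness).} Here the main obstacle is that $\Phi_\ast$ is a supremum over the (compact) sphere $\partial B_2(1)$, and to exchange supremum and the compactness argument I need some regularity of the map $(z, a, a') \mapsto \Prob(a\mathsf{Z}+a'\mathsf{Z}' \le z)$. The cleanest route is: argue that for each fixed $z$, the supremum defining $\Phi_\ast(z)$ is actually attained and is a function $\le \rho_z < 1$; then show $z \mapsto \Phi_\ast(z)$ is upper semicontinuous (or use a direct $\varepsilon$-net argument on $[-B,B] \times \partial B_2(1)$). A concrete implementation: by an explicit tail estimate, choose $M = M(B)$ large enough that $\Prob(|\mathsf{Z}| > M) \le 1/4$; then for any $(a,a')$, on the event $\{|\mathsf{Z}| \le M, |\mathsf{Z}'| \le M\}$ (probability $\ge 1/2$) we have $|a\mathsf{Z}+a'\mathsf{Z}'| \le 2M$, and since $\Phi$ is continuous with $\Phi(z)\in(0,1)$ everywhere, there is a $\delta = \delta(M) > 0$ with $\Phi(z-M)/\Phi(M) \cdots$ — more simply, pick $z_0 = B + 2M$ and note $p_0 := \inf_{(a,a')} \Prob\big(a\mathsf{Z}+a'\mathsf{Z}' \in [z, z+1]\big)$ can be bounded below uniformly in $z \in [-B,B]$ by a positive constant, because conditionally on $\mathsf{Z}'$ with $|\mathsf{Z}'|\le M$ the law of $a\mathsf{Z}$ puts mass at least $\Phi(z+1 - a'\mathsf{Z}') - \Phi(z - a'\mathsf{Z}')$ on that interval, and this is bounded below using continuity of $\Phi$ and compactness of the relevant range of arguments. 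This gives $\Phi_\ast(z) \le 1 - p_0$ and $1 - \Psi_\ast(z) \le 1 - p_0$ uniformly for $z \in [-B, B]$, hence the claim with the constant $1 - p_0 < 1$.

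\medskip

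\textbf{Where the difficulty lies.} The genuine content is Step 2: making the lower bound $p_0 > 0$ \emph{uniform} over both $z \in [-B,B]$ and the direction $(a,a') \in \partial B_2(1)$, including the degenerate directions where one of $a, a'$ vanishes. The degenerate case $a' = 0$ (so the random variable is just $a\mathsf{Z}$ with $|a| = 1$) is actually the easiest — it reduces to continuity and strict monotonicity-type properties of $\Phi$ — and by a continuity argument the nearly-degenerate directions are controlled by it. The remaining non-degenerate directions are handled by the conditioning argument above, with the tail truncation at level $M(B)$ providing the needed uniformity in $z$. I would organize the write-up so that the tail truncation is done first, then the uniform lower bound on the interval-mass $p_0$, and finally read off the conclusion.
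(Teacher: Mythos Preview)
Your overall strategy (condition on one variable, use that the other has unbounded support, then make things uniform over the compact set $[-B,B]\times\partial B_2(1)$) is in the right spirit and close to the paper's argument. But the specific mechanism you commit to in Step~2 has a genuine gap.

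You propose to lower bound $p_0:=\inf_{(a,a'),\,z}\Prob\bigl(a\mathsf Z+a'\mathsf Z'\in[z,z+1]\bigr)$ by a positive constant, via an expression of the form $\Phi(z+1-a'\mathsf Z')-\Phi(z-a'\mathsf Z')$ and an appeal to ``continuity of $\Phi$ and compactness.'' Two issues: (i) the lemma does \emph{not} assume $\Phi$ is continuous; the only hypothesis is $\Phi(z)\in(0,1)$ for all $z$, which says the support of $\mathsf Z$ is unbounded in both directions but allows gaps. (ii) Even if $\Phi$ were continuous, nothing prevents $\Prob(\mathsf Z\in I)=0$ for bounded intervals $I$ (take, e.g., a continuous $\Phi$ whose support is $\bigcup_{n\in\mathbb Z}[3n,3n+1]$). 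So the interval-mass can be zero, and your uniform lower bound $p_0>0$ is simply false under the stated hypotheses.

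The fix, and what the paper does, is to replace interval mass by a \emph{tail} event, which is exactly what $\Phi(z)\in(0,1)$ controls. Assume $|a|\ge 1/\sqrt 2$ (the other case is symmetric in $a'$). With $u_z:=2|z|+1$ and $T_z:=\sqrt 2\,(z+u_z)$, on the event $\{\mathsf Z>T_z,\ |\mathsf Z'|\le u_z\}$ (for $a>0$; use $\{\mathsf Z<-T_z,\ |\mathsf Z'|\le u_z\}$ for $a<0$) one checks directly that
\[
a\mathsf Z+a'\mathsf Z'> \tfrac{1}{\sqrt 2}\,T_z-|a'|\,u_z\ge z.
\]
The probability of this event is $\Prob(|\mathsf Z|\le u_z)\cdot\bigl(\Prob(\mathsf Z>T_z)\wedge\Prob(\mathsf Z<-T_z)\bigr)=:\delta(z)$, which is strictly positive by $\Phi\in(0,1)$ everywhere and depends only on $z$, not on $(a,a')$. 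Since $u_z$ and $T_z$ are bounded for $z\in[-B,B]$, one has $\inf_{z\in[-B,B]}\delta(z)>0$, giving $\Phi_\ast(z)\le 1-\delta(z)$ uniformly. The argument for $1-\Psi_\ast(z)$ is symmetric. This avoids any continuity assumption and any interval-mass bound.
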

\begin{proof}
	\noindent (1). We first prove the claim for $\Phi_\ast$. Let $u_z\equiv 2\abs{z}+1$, $p_z\equiv \Prob(\abs{\mathsf{Z}}\leq u_z)>0$, and let $T_z\equiv \sqrt{2}(z+u_z)$. Fix $a,a' \in \partial B_2(1)$. Without loss of generality, we assume $\abs{a}\geq 1/\sqrt{2}$. Then:
	\begin{itemize}
		\item Suppose $a>0$. Then on the event $\{\mathsf{Z}>T_z, \abs{\mathsf{Z}'}\leq u_z\}$, we have $
		a \mathsf{Z}+a' \mathsf{Z}'> a T_z - \abs{a'} u_z\geq (1/\sqrt{2}) T_z-u_z= z$. 
		Thus, by independence of $\mathsf{Z},\mathsf{Z}'$, 
		\begin{align*}
		\Prob\big(a \mathsf{Z}+a' \mathsf{Z}'>z\big)\geq \Prob\big(\mathsf{Z}>T_z, \abs{\mathsf{Z}'}\leq u_z\big)\geq p_z\cdot \Prob(\mathsf{Z}>T_z).
		\end{align*}
		\item Suppose $a<0$. Then on the event $\{\mathsf{Z}<-T_z, \abs{\mathsf{Z}'}\leq u_z\}$ we may similar conclude $a \mathsf{Z}+a' \mathsf{Z}'>z$, and therefore $
		\Prob\big(a \mathsf{Z}+a' \mathsf{Z}'>z\big)\geq p_z\cdot  \Prob(\mathsf{Z}<-T_z)$. 
	\end{itemize}
	Combining the two cases, we have 
	\begin{align*}
	\inf_{(a,a')\in\partial B_2(1)}\Prob\big(a \mathsf{Z}+a' \mathsf{Z}'>z\big)\geq p_z\cdot \big(\Prob(\mathsf{Z}>T_z)\wedge \Prob(\mathsf{Z}<-T_z)\big)\equiv \delta(z).
	\end{align*}
	Equivalently, $\Phi_\ast(z)\leq 1-\delta(z)$. The first claim now follows as $\inf_{z \in [-B,B]} \delta(z)>0$ for all $B>0$.
	
	\noindent (2). Next, we prove the claim for $\Psi_\ast$. The argument is genuinely similar. Using the same $u_z,p_z$ as defined above and $S_z\equiv \sqrt{2}(u_z-z)$, and assume $\abs{a}\geq 1/\sqrt{2}$. Then:
	\begin{itemize}
		\item Suppose $a>0$. Then on the event $\{\mathsf{Z}\leq -S_z, \abs{\mathsf{Z}'}\leq u_z\}$, we have $
		a \mathsf{Z}+a' \mathsf{Z}'\leq -a S_z + \abs{a'} u_z\leq -(1/\sqrt{2}) S_z+u_z=z$. This leads to $\Prob\big(a \mathsf{Z}+a' \mathsf{Z}'\leq z\big)\geq p_z\cdot \Prob(\mathsf{Z}\leq -S_z)$. 
		\item Suppose $a<0$. Then on the event $\{\mathsf{Z}\geq S_z, \abs{\mathsf{Z}'}\leq u_z\}$, we again have $a \mathsf{Z}+a' \mathsf{Z}'\leq z$, which leads to $\Prob\big(a \mathsf{Z}+a' \mathsf{Z}'\leq z\big)\geq p_z\cdot \Prob(\mathsf{Z}\geq S_z)$. 
	\end{itemize} 
	Combining the two cases and arguing as in (1) to conclude the proof for $\Psi_\ast$.
\end{proof}

The following tail estimate is classical.
\begin{lemma}\label{lem:first_moment_tail}
Suppose $X$ has a finite first moment $\E\abs{X}<\infty$. Then we have $\lim_{t \to \infty} t\Prob(\abs{X}>t)=0$.
\end{lemma}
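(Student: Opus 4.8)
The plan is to control $t\,\Prob(\abs{X}>t)$ by a truncated first moment and then pass to the limit via dominated convergence. The starting point is the elementary pointwise inequality $t\,\bm{1}_{\abs{X}>t}\le \abs{X}\,\bm{1}_{\abs{X}>t}$, valid for every $t>0$; taking expectations gives
\begin{align*}
t\,\Prob(\abs{X}>t)\le \E\big[\abs{X}\,\bm{1}_{\abs{X}>t}\big],\qquad \forall t>0.
\end{align*}
It therefore suffices to show that the right-hand side tends to $0$ as $t\to\infty$.

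For this, observe that $\abs{X}\,\bm{1}_{\abs{X}>t}\to 0$ almost surely as $t\to\infty$: on the almost sure event $\{\abs{X}<\infty\}$, once $t>\abs{X(\omega)}$ the indicator vanishes. Moreover $0\le \abs{X}\,\bm{1}_{\abs{X}>t}\le \abs{X}$ for all $t>0$, and $\abs{X}$ is integrable by hypothesis. Hence the dominated convergence theorem yields $\E[\abs{X}\,\bm{1}_{\abs{X}>t}]\to 0$, which combined with the displayed inequality gives $\lim_{t\to\infty} t\,\Prob(\abs{X}>t)=0$.

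An equivalent route, if one prefers to avoid pointwise limits, is to use the layer-cake identity $\E\abs{X}=\int_0^\infty \Prob(\abs{X}>s)\,\d{s}<\infty$: since $s\mapsto \Prob(\abs{X}>s)$ is nonincreasing, for any $t>0$ one has $\tfrac{t}{2}\,\Prob(\abs{X}>t)\le \int_{t/2}^{t}\Prob(\abs{X}>s)\,\d{s}$, and the right-hand side is the tail of a convergent integral, hence vanishes as $t\to\infty$. There is no genuine obstacle here; the only point requiring (routine) care is the justification of the limit, which is handled by either dominated convergence or the convergent-integral remark above.
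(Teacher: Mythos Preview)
Your proposal is correct. Both routes you give are valid and complete.

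The paper takes a slightly different path: it argues by contradiction from the layer-cake identity $\E\abs{X}=\int_0^\infty \Prob(\abs{X}>s)\,\d s<\infty$. Assuming a sequence $t_n\to\infty$ along which $t_n\Prob(\abs{X}>t_n)\ge\epsilon$, it thins to $t_{n+1}>2t_n$ so that the intervals $[t_n/2,t_n]$ are disjoint, and then monotonicity of $s\mapsto\Prob(\abs{X}>s)$ forces $\int_{t_n/2}^{t_n}\Prob(\abs{X}>s)\,\d s\ge \epsilon/2$ for each $n$, contradicting integrability. Your second route is precisely the direct (non-contradiction) version of this argument and is cleaner: the single inequality $\tfrac{t}{2}\Prob(\abs{X}>t)\le\int_{t/2}^t\Prob(\abs{X}>s)\,\d s$ already does the job without any subsequence extraction. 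Your primary route via the bound $t\Prob(\abs{X}>t)\le\E[\abs{X}\bm{1}_{\abs{X}>t}]$ and dominated convergence is a genuinely different and equally standard argument; it has the minor advantage of not even needing monotonicity of the tail.
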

\begin{proof}
We include below a quick proof. Let $F(t)\equiv \Prob(\abs{X}>t)$. Then $F$ is a non-decreasing function on $[0,\infty)$ and $\int_0^\infty F(t)\,\d{t} = \E\abs{X}<\infty$. Suppose on the contrary that for some sequence $\{t_n\}$, $\liminf_n t_n F(t_n)\geq 2\epsilon>0$. Then by passing to a further subsequence if necessary, we assume $t_{n+1}> 2t_n$. This means that the intervals $\{[t_n/2,t_n]\}$ are disjoint. Since $F$ is non-increasing, for some large $n_0 \in \N$ and all $n\geq n_0$, for all $t \in [t_n/2,t_n]$, we have $F(t)\geq F(t_n)\geq \epsilon/t_n$. This means that $\int_0^\infty F(t)\,\d{t}\geq \sum_{n\geq n_0} \int_{t_n/2}^{t_n} F(t)\,\d{t}\geq \sum_{n\geq n_0} (\epsilon/2)=\infty$, a contradiction.
\end{proof}

The following lemma formally proves that $\mathcal{N}(0,1)$ is the unique invariant probability measure for the (rescaled) Ornstein–Uhlenbeck semigroup.

\begin{lemma}\label{lem:sde_1d_inv}
The semigroup associated with the stochastic differential equation 
\begin{align}\label{ineq:sde_1d}
\d{w(t)} = -\frac{1}{2}w(t)+\d{B(t)},\quad \forall t \in \R.
\end{align}
has $\mathcal{N}(0,1)$ as its unique invariant distribution.
\end{lemma}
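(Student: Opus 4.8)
The plan is to prove both existence and uniqueness of the invariant distribution for the one-dimensional Ornstein--Uhlenbeck SDE \eqref{ineq:sde_1d}. The SDE is linear with constant diffusion coefficient, so its solution from a deterministic initial condition $w(0)=x$ is explicitly Gaussian: $w(t)=x e^{-t/2}+\int_0^t e^{-(t-s)/2}\,\d{B(s)}$, which is $\mathcal{N}\big(xe^{-t/2},\,1-e^{-t}\big)$ by the It\^o isometry. First I would record this explicit representation and compute the corresponding transition kernel $P_t(x,\cdot)=\mathcal{N}\big(xe^{-t/2},1-e^{-t}\big)$.

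Next, for existence I would verify directly that $\mu=\mathcal{N}(0,1)$ satisfies $P_t^\ast\mu=\mu$ for all $t\geq 0$: if $X_0\sim\mathcal{N}(0,1)$ is independent of $B$, then $X_t = X_0 e^{-t/2}+\int_0^t e^{-(t-s)/2}\,\d{B(s)}$ is a sum of two independent Gaussians with variances $e^{-t}$ and $1-e^{-t}$, hence $X_t\sim\mathcal{N}(0,1)$. This is the computation already appearing in the proof of Lemma \ref{lem:Phi_Psi_1}'s ambient context (or more precisely, exactly the kind of Ornstein--Uhlenbeck stationarity check), and it is routine. For uniqueness I would argue by convergence of the semigroup: for \emph{any} probability measure $\nu$ on $\R$, push it forward through $P_t$; using the representation above, if $W_0\sim\nu$, then $P_t^\ast\nu$ is the law of $W_0 e^{-t/2}+G_t$ where $G_t\sim\mathcal{N}(0,1-e^{-t})$ is independent of $W_0$. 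As $t\to\infty$, $W_0 e^{-t/2}\to 0$ almost surely and $G_t\rightsquigarrow\mathcal{N}(0,1)$, so $P_t^\ast\nu\rightsquigarrow\mathcal{N}(0,1)$ by Slutsky's theorem. If $\nu$ were itself invariant, then $P_t^\ast\nu=\nu$ for all $t$, forcing $\nu=\mathcal{N}(0,1)$.

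Alternatively, and perhaps more cleanly in the spirit of the rest of the paper, I could invoke Theorem \ref{thm:Doob}: the OU semigroup is strongly Feller because $P_t(x,\cdot)$ has the smooth Gaussian density $(2\pi(1-e^{-t}))^{-1/2}\exp(-(y-xe^{-t/2})^2/(2(1-e^{-t})))$, which depends continuously (indeed smoothly) on $x$ for each fixed $t>0$, so $x\mapsto P_tf(x)$ is continuous for bounded measurable $f$; and it is irreducible because this Gaussian density is strictly positive on all of $\R$, so $P_t(x,O)>0$ for every nonempty open $O$ and every $x$. Then Theorem \ref{thm:Doob} gives at most one invariant measure, and the existence computation above supplies exactly one, namely $\mathcal{N}(0,1)$. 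I would likely present the direct convergence argument as the main line since it is entirely self-contained and elementary, mentioning the Doob-theorem route as a remark.

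I do not anticipate a genuine obstacle here: the linearity of the drift and the constant, nondegenerate diffusion make everything explicit. The only point requiring a little care is justifying that the explicit solution formula is valid (standard for linear SDEs, e.g.\ via It\^o's formula applied to $e^{t/2}w(t)$) and that the resulting Gaussian law is indeed invariant --- both are one-line verifications. If anything, the mild subtlety is purely notational: the SDE is posed on $t\in\R$ (two-sided), but since the statement concerns the semigroup $(P_t)_{t\geq 0}$ and its invariant distribution, it suffices to work on $[0,\infty)$ with an arbitrary initial law, exactly as above.
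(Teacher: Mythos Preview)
Your proposal is correct and follows essentially the same approach as the paper: write down the explicit Gaussian solution $w(t)\sim\mathcal{N}(xe^{-t/2},1-e^{-t})$, verify $\mathcal{N}(0,1)$ is invariant, and for uniqueness send $t\to\infty$ in the invariance relation (the paper does this via dominated convergence on bounded test functions, you phrase it via Slutsky, but it is the same computation). The Doob-theorem alternative you mention is not used in the paper's proof of this lemma, but it is a valid remark.
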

\begin{proof}
First, note that the SDE (\ref{ineq:sde_1d}) has an explicit solution
\begin{align*}
w(t) = e^{-t/2} w(0) + \int_0^t e^{-(t-s)/2}\,\d{B(s)}\sim \mathcal{N}\big(e^{-t/2}w(0)+1-e^{-t}\big).
\end{align*}
This means that the semigroup $(P_t)$ associated with (\ref{ineq:sde_1d}) can be computed as
\begin{align*}
P_t f(x) = \E_x f(w(t)) = \E f\big(e^{-t/2}x+\sqrt{1-e^{-t} }\cdot \mathsf{Z}_0\big),\quad \mathsf{Z}_0\sim \mathcal{N}(0,1). 
\end{align*}
Clearly $\mathcal{N}(0,1)$ is an invariant distribution to $(P_t)$. Now for any invariant measure $\mu$ of $(P_t)$, by Definition \ref{def:semigroup} we have for all $t\geq 0$ and any $A \in \mathcal{B}(\R)$,
\begin{align*}
\mu(A)=\int \Prob_x\big(w(t) \in A \big)\,\mu(\d x) = \int \E\bm{1}_A\big(e^{-t/2}x+\sqrt{1-e^{-t} }\cdot \mathsf{Z}_0\big)\, \mu(\d x).
\end{align*}
So a standard measure theoretic argument shows that for any $f \in B(\R)$,
\begin{align*}
\mu(f)\equiv \int f(x)\,\mu(\d x) = \int \E f\big(e^{-t/2}x+\sqrt{1-e^{-t} }\cdot \mathsf{Z}_0\big)\, \mu(\d x).
\end{align*}
Sending $t \to \infty$, we have $\mu(f) = \E f(\mathsf{Z}_0)$ for all $f \in B(\R)$. This shows that any invariant measure $\mu$ must have law $\mathcal{N}(0,1)$. 
\end{proof}

\section{Simulation methods for the SDE}

\begin{algorithm}[!t]
	\caption{Euler-Maruyama simulation of the SDE (\ref{def:sde_time_change_renor})}
	\label{alg:EM-uw}
	\textbf{Input}
	\begin{itemize}
		\item index set of optimal arms $\mathcal A_0$;
		\item  step size $\Delta t>0$, terminal time $T>0$, burn-in time $T_{\mathrm{burn}}\in[T]$; 
		\item thinning factor $q\in\mathbb N$, Monte-Carlo size $M\in\mathbb N$ for evaluating $p_a$;
		\item clipping parameters $\varepsilon\in(0,1)$ and $\delta\in(0,1)$;
		\item initial condition $(u^0,w^0)$ with $u^0\in\Delta_{\mathcal{A}_0}^\circ$.
	\end{itemize}
	
	\medskip
	\textbf{Procedure}
	\begin{enumerate}
		\item Set $N \leftarrow \lfloor T/\Delta t \rfloor$, $k_{\mathrm{burn}} \leftarrow \lfloor T_{\mathrm{burn}}/\Delta t \rfloor$.
		Initialize an empty list $\texttt{Samples}$.
		
		\item \textbf{For} $k=0,1,\dots,N-1$ \textbf{do}
		\begin{enumerate}
			\item (\emph{Compute $p^k$}). For each $a\in\mathcal A_0$, compute
			\[
			\widehat p_a^k
			\;\leftarrow\;
			\frac1M\sum_{\ell=1}^M
			\prod_{b\in\mathcal A_0\setminus\{a\}}
			\Phi\,\bigg[\sqrt{u_b^k}\cdot\bigg(\frac{w_a^k}{u_a^k}-\frac{w_b^k}{u_b^k}
			+\frac{Z^{(\ell)}}{\sqrt{u_a^k}}\bigg)\bigg],
			\]
			where $Z^{(\ell)}\stackrel{\text{i.i.d.}}{\sim}\mathcal N(0,1)$.
			Then set $p_a^k \leftarrow  p_a^k\vee\delta$ for all $a\in\mathcal A_0$, and renormalize
			$p^k\leftarrow p^k/\sum_{c\in\mathcal A_0}p_c^k$.
			
			\item (\emph{Brownian increment}). Draw i.i.d.\ $\xi_a^{k+1}\sim\mathcal N(0,1)$ for all $a\in\mathcal A_0$,
			and set $\Delta B_a^{k+1}\leftarrow \sqrt{\Delta t}\,\xi_a^{k+1}$.
			
			\item (\emph{Euler-Maruyama update}). For each $a\in\mathcal A_0$, set
			\begin{align*}
			u_a^{k+1} &\leftarrow u_a^k + (p_a^k-u_a^k)\Delta t,\\
			w_a^{k+1} &\leftarrow w_a^k - \tfrac12 w_a^k\,\Delta t + \sqrt{p_a^k}\,\Delta B_a^{k+1}.
			\end{align*}
			
			\item (\emph{Project $u^{k+1}$ to the simplex}). For each $a\in\mathcal A_0$, set
			$\widetilde u_a^{k+1}\leftarrow (u_a^{k+1}\vee \epsilon)\wedge (1-\epsilon)$,
			and then renormalize
			\[
			u^{k+1}\leftarrow \widetilde u^{k+1}\Big/\sum_{c\in\mathcal A_0}\widetilde u_c^{k+1}.
			\]
			
			\item (\emph{Store after burn-in with thinning}).
			If $k\ge k_{\mathrm{burn}}$ and $(k-k_{\mathrm{burn}})\equiv 0 \pmod q$, append $(u^k,w^k)$ to $\texttt{Samples}$ and  store $\{w_a^k/\sqrt{u_a^k}\}_{a\in\mathcal A_0}$.
		\end{enumerate}
		\item \textbf{end for}
		
		\item \textbf{return} $\texttt{Samples}$.
	\end{enumerate}
     
     	\textbf{Output}
     	\begin{itemize}
     		\item stored samples $\{(u^k,w^k)\}$ approximating the invariant law;
     		\item $\{w_a^k/\sqrt{u_a^k}\}$ for each $a\in\mathcal A_0$ used in Figure \ref{fig:comp_N_normal}.
     	\end{itemize}
\end{algorithm}

Recall $\{p_a\}_{a \in \mathcal{A}_0}$ defined in (\ref{def:p_a}) and the SDE in (\ref{def:sde_time_change_renor}). Below we shall only describe the simulation method for Gaussian Thompson sampling with $\mathsf{Z}\sim \mathcal{N}(0,1)$; the general sampling scheme follows from obvious changes. Our simulation method, based on Euler-Maruyama discretization scheme, is summarized in Algorithm \ref{alg:EM-uw}.

In our simulations used in Figure \ref{fig:arm_pull} and \ref{fig:comp_N_normal}, we make the following choice of the simulation parameters in Algorithm \ref{alg:EM-uw}: 
\begin{itemize}
	\item step size $\Delta t = 0.02$, terminal time $T=10^5$, burn-in time $T_{\mathrm{burn}}=2\times 10^4$;
	\item thinning factor $q=5$, Monte-Carlo size $M=400$ (only when $\abs{\mathcal{A}_0}\geq 3$);
	\item clipping factor $\epsilon=10^{-6}$ and $\delta = 10^{-10}$;
	\item initial condition $u^0=\abs{\mathcal{A}_0}^{-1}\bm{1}_{\mathcal{A}_0}$ and $w^0=0_{\mathcal{A}_0}$.
\end{itemize}
For the samples from Gaussian Thompson sampling obtained in Figure \ref{fig:arm_pull}, we use a smaller number $T=2\times 10^4$ and the histogram is plotted with $2\times 10^4$ Monte-Carlo simulations.

\section*{Acknowledgments}
The research of Q. Han is partially supported by NSF grant DMS-2143468.

\bibliographystyle{alpha}
\bibliography{mybib}

\end{document}